\newtheorem{Theorem}{Theorem}[section]
\newtheorem{lemma}[Theorem]{Lemma}
\newtheorem{prop}[Theorem]{Proposition}
\newtheorem{cor}[Theorem]{Corollary}
\newtheorem{thm}[Theorem]{Theorem}
\theoremstyle{definition}
\newtheorem{defn}[Theorem]{Definition}
\theoremstyle{remark}
\newtheorem{rem}[Theorem]{Remark}
\newcounter{numl}
\newcommand{\labelnuml}{\textup{(\roman{numl})}}
\DeclareSymbolFont{script}{U}{eus}{m}{n}
\DeclareSymbolFontAlphabet{\amathscr}{script}
\DeclareMathSymbol{\Wedge}{0}{script}{"5E}
\DeclareMathAlphabet{\mathrmsl}{OT1}{cmr}{m}{sl}
\newcommand{\R}{\mathbb{R}}
\newcommand{\T}{\mathbb{T}}
\newcommand{\Pol}{P}
\newcommand{\Aut}{\mathrm{Aut}}
\newcommand{\tor}{{\mathfrak t}}
\newcommand{\A}{\mathcal{A}}
\newcommand{\E}{\mathcal{E}}
\newcommand{\End}{\mathrm{End}}
\newcommand{\Id}{\mathrm{Id}}
\newcommand{\db}{\bar{\partial}}
\renewcommand{\d}{\partial}
\newcommand{\tr}{\mathrm{tr}}
\newcommand{\rk}{\mathrm{rk}}
\newcommand{\Vol}{\mathrm{Vol}}
\newcommand{\pt}{\mathrm{pt}}
\newcommand{\Hess}{\mathrm{Hess}}
\newcommand{\Herm}{\mathrm{Herm}}
\newcommand{\eq}{\mathrm{eq}}
\newcommand{\F}{\mathcal{F}}
\newcommand{\Hom}{\mathrm{Hom}}
\newcommand{\Ric}{\mathrm{Ric}}
\newcommand{\Todd}{\mathrm{Todd}}
\newcommand{\ch}{\mathrm{ch}}
\newcommand{\Scal}{\mathrm{Scal}}
\begin{document}
	
	\title{The weighted Hermite--Einstein equation}
	\author{Michael Hallam and Abdellah Lahdili}
	\address{M. Hallam}\email{hallam.math@gmail.com}
	\address{A.\,Lahdili\\ D{\'e}partement de Math{\'e}matiques\\ Universit\'e du Qu\'ebec \`a Montr\'eal}\email{lahdili.abdellah@gmail.com}
	\date{\today}
	
	\begin{abstract}
		We introduce a new weighted version of the Hermite--Einstein equation, along with notions of weighted slope (semi/poly)stability, and prove that a vector bundle admits a weighted Hermite--Einstein metric if and only if it is weighted slope polystable. The new equation encompasses several well-known examples of canonical Hermitian metrics on vector bundles, including the usual Hermite--Einstein metrics, K{\"a}hler--Ricci solitons, and transversally Hermite--Einstein metrics on certain Sasaki manifolds.  We prove that the equation arises naturally as a moment map, that solutions to the equation are unique up to scaling, and demonstrate a weighted Kobayashi--L{\"u}bke inequality satisfied by vector bundles admitting a weighted Hermite--Einstein metric. As an application of our techniques, we extend a bound of Tian on the Ricci curvature \cite{Tia92} to a bound on a modified Ricci curvature, related to the existence of K{\"a}hler--Ricci solitons. Along the way, we introduce a new weighted vortex equation, as well as a weighted analogue of Gieseker stability. A key technical point is the application of a new extension of Inoue's equivariant intersection numbers \cite{Ino20} to arbitrary weight functions on the moment polytope of a K{\"a}hler manifold with Hamiltonian torus action.
	\end{abstract}
	
	\maketitle
	
	\section{Introduction}
	
	The interplay between the existence of canonical metrics and stability conditions arising from algebraic geometry has become a central theme in modern complex geometry. The most well-known and classical such result is the Kobayashi--Hitchin correspondence, proved by Donaldson and Uhlenbeck--Yau \cite{Don85, UY86}, which states that a holomorphic vector bundle over a compact K{\"a}hler manifold admits a Hermite--Einstein metric if and only if the bundle is slope polystable.
	
	Inspired by the recent advent of weighted cscK metrics and weighted K-stability \cite{Lah19, Ino22, Ino20} we introduce a new weighted analogue of the Hermite--Einstein equation. The new equation encompasses many known examples of canonical metrics on vector bundles, including the usual Hermite--Einstein metrics, K{\"a}hler--Ricci solitons, and transversally Hermite--Einstein metrics on vector bundles over Sasaki manifolds studied by Biswas--Schumacher \cite{BS10} and Baraglia--Hekmati \cite{BH22}. Following \cite[Section 5]{AJL}, we also show that the weighted Hermite--Einstein equation with polynomial weights reduces to the Hermite--Einstein equation over a semi-simple fibration, distinct from the original manifold.
	
	To briefly introduce the equation, let \((E,h)\) be a holomorphic vector bundle with Hermitian metric over a compact K{\"a}hler manifold \((X,\omega)\). We assume a compact real torus \(\T\) acts on this setup, so \(\T\) acts by holomorphic isometries on both \((E,h)\) and \((X,\omega)\), and the map \(E\to X\) is \(\T\)-equivariant. Under these conditions, a moment map \(\Phi_h\in\Gamma(\mathfrak{t}^*\otimes \End(E))\) for the curvature \(F_h\in\Omega^2(\End(E))\) of \((E,h)\) exists. We further assume the \(\T\)-action on \((X,\omega)\) is Hamiltonian, and let \(\mu:X\to\mathfrak{t}^*\) be a fixed moment map with moment polytope \(P:=\mu(X)\). Given a smooth \emph{weight function} \(v:P\to\mathbb{R}_{>0}\), we define the \emph{\(v\)-weighted contraction} of \(F_h+\Phi_h\) to be \[\Lambda_{\omega,v}(F_h+\Phi_h):=v(\mu)\Lambda_\omega F_h + \langle \Phi_h, dv(\mu)\rangle,\] where \(dv(p)\) is considered as an element of \(\mathfrak{t}^{**}\cong\mathfrak{t}\) for \(p\in P\), and \(\langle-,-\rangle\) denotes the canonical pairing \(\mathfrak{t}^*\otimes\mathfrak{t}\to\mathbb{R}\) extended to \(\Gamma(\mathfrak{t}^*\otimes\End(E))\otimes\mathfrak{t}\to\Gamma(\End(E))\). The \emph{\(v\)-weighted Hermite--Einstein equation} is then \[\frac{i}{2\pi}\Lambda_{\omega,v}(F_h+\Phi_h) = c_v\Id_E,\] where \(c_v\) is a suitable equivariant cohomological constant. For more details, see Section \ref{sec:wHE}.
	
	In this work, we give a thorough study of these metrics. Alongside the equation, we introduce notions of weighted slope (semi/poly)stability, and prove the following weighted form of the Kobayashi--Hitchin correspondence:
	
	\begin{thm}\label{thm:main_wHK}
		A vector bundle admits a weighted Hermite--Einstein metric if and only if it is weighted slope polystable.
	\end{thm}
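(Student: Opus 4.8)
The plan is to follow the classical Kobayashi–Hitchin correspondence structure, treating the weighted equation as a perturbation that can be absorbed into the analytic machinery.

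For the "easy" direction (weighted Hermite–Einstein implies weighted slope polystability): Given a weighted HE metric h on E, I'd take any coherent subsheaf (saturated, so locally free outside codimension 2), restrict h, and compute the weighted slope via a Chern–Weil-type integral. The key computation is showing that the second fundamental form contributes a sign-definite term, exactly as in the classical case — the weight v(μ) > 0 being positive is what makes this work. For the polystability refinement, equality in the slope inequality forces the second fundamental form to vanish, splitting E holomorphically and T-equivariantly, then one induces on the pieces.

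For the "hard" direction (weighted slope polystability implies existence): I'd set up a continuity method or a Donaldson-type heat flow for the weighted equation. Following Donaldson's original approach, consider the flow ∂_t h = -(i/π Λ_{ω,v}(F_h + Φ_h) - 2c_v Id_E) · h, or alternatively a continuity path interpolating between an easily-solvable equation and the target. The main technical issue is a priori estimates: one needs C^0 control on the solution (via a maximum-principle or Moser-iteration argument adapted to the operator v(μ)Λ_ω + ⟨Φ_h, dv(μ)⟩, which is still elliptic since v > 0), and then higher-order estimates. When the flow does not converge, one extracts a weakly-holomorphic subbundle (Uhlenbeck–Yau limiting construction) that destabilizes E in the weighted sense — this requires that the weighted slope functional behaves well under the limit, which should follow from the equivariant intersection number formalism mentioned in the abstract.

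The main obstacle will be the a priori C^0-estimate and the destabilizing-subsheaf construction in the weighted setting: the extra term ⟨Φ_h, dv(μ)⟩ is of order zero in h but couples the bundle metric to the torus moment map data, so the usual Donaldson functional / Uhlenbeck–Yau argument must be modified to keep track of T-equivariance throughout, and the integration-by-parts identities defining the weighted slope must be shown to pass to L^2_1-subbundles. I would expect the moment-map interpretation (solutions as zeros of a moment map for the gauge group action, with the symplectic form twisted by the weight) to organize both the uniqueness and the existence arguments, reducing the hard direction to a Kempf–Ness-type statement.
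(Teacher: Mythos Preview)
Your proposal is correct and matches the paper's approach closely. The easy direction is exactly the second-fundamental-form computation you describe (the weight $v(\mu)>0$ gives the sign, equality forces the splitting), and for the hard direction the paper chooses the Uhlenbeck--Yau continuity method (perturbed equation $L_\epsilon(f)=0$ with an $\epsilon\log f$ term) rather than a heat flow, but otherwise follows the outline you gave: openness via ellipticity, closedness via a priori bounds with the weighted Laplacian, and extraction of a weakly holomorphic destabilizing subbundle when $\|\log f_\epsilon\|_{L^2}$ blows up. The two technical points you flagged as needing care---that the weighted slope of a subsheaf must be computable by Chern--Weil integrals on its smooth locus (handled in the paper by an equivariant Poincar\'e--Lelong argument), and that the limiting subsheaf must be made $\mathbb{T}$-equivariant (handled by a Noetherian saturation step)---are precisely the ingredients the paper isolates as new in the weighted setting.
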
 The proof that weighted slope polystability implies existence follows the approach of Uhlenbeck--Yau \cite{UY86}, namely a continuity method. Analytically there is little change in the proof. However, we highlight here the geometric importance of Lemma \ref{lem:codim2integral}, which states that one may compute the weighted slope of an equivariant subsheaf by differential geometric means, provided the singular locus of the sheaf has codimension at least 2. The proof of this lemma exemplifies the equivariant approach we take, and is an indispensable ingredient in the proof of the correspondence.
	
	A key technical tool that we develop and use extensively throughout the paper is a new extension of Inoue's equivariant intersection theory \cite{Ino20} to arbitrary weight functions on the moment polytope \(P\). These intersection numbers, defined via Fourier transforms, provide a powerful and convenient framework for expressing the complicated equivariant cohomological quantities that appear frequently in the weighted theory. The simplest examples of such intersection numbers are \[(v(\alpha_\T)) = \int_Xv(\mu)\omega^{[n]},\quad\quad (c_1(E)_\T\cdot v(\alpha_\T)) = \int_X\frac{i}{2\pi}\tr\Lambda_{\omega,v}(F_h+\Phi_h)\omega^{[n]},\] where \(\alpha_\T:=[\omega+\mu]\) denotes the \(\T\)-equivariant K{\"a}hler class, and \(\omega^{[n]}:=\omega^n/n!\). Indeed, the second number here provides a suitable notion of \(v\)-weighted degree of \(E\), allowing us to define weighted slope stability. While the notion of applying \(v\) to \(\alpha_\T\) might seem perverse at first, the notation is perfectly natural from the viewpoint of our Fourier transform definition.
	
	As a further application of the weighted intersection numbers, we state and prove a weighted version of the Kobayashi--L{\"u}bke inequality for vector bundles admitting weighted Hermite--Einstein metrics, providing an equivariant cohomological obstruction to the existence of such metrics.
	
	\begin{thm}
		Suppose that \(E\) admits a \(v\)-weighted Hermite--Einstein metric, and that \(v\) satisfies the inequality \begin{equation}\label{eq:weight-ineq-intro}
			\Hess(v) - \frac{n+1}{n}\frac{dv\otimes dv}{v} \leq 0,
		\end{equation} where \(n:=\dim_{\mathbb{C}}X\). Then \[(\rk(E)-1)(c_1(E)_\T^2\cdot v(\alpha_\T)) \leq 2\,\rk(E)(c_2(E)_\T\cdot v(\alpha_\T)).\] Furthermore, equality holds if and only if \(E\) is projectively flat.
	\end{thm}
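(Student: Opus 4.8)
The plan is to fix a \(v\)-weighted Hermite--Einstein metric \(h\) on \(E\), to evaluate the two weighted intersection numbers by equivariant Chern--Weil theory through their Fourier transform definition, and to exhibit their difference as the integral of a pointwise nonnegative density. Writing \(r:=\rk(E)\), the starting point is the purely algebraic identity -- valid already at the level of equivariant Chern--Weil forms, by exactly the manipulation used in the classical Bogomolov--L\"ubke argument --
\begin{equation*}
2r\,c_2(E)_\T-(r-1)\,c_1(E)_\T^2 \;=\; -\,r\left(\tfrac{i}{2\pi}\right)^2\tr\!\big(\mathbf{F}^0\wedge\mathbf{F}^0\big)\;=\;\tfrac{r}{4\pi^2}\,\tr\!\big(\mathbf{F}^0\wedge\mathbf{F}^0\big),
\end{equation*}
where \(\mathbf{F}:=F_h+\Phi_h\) is the equivariant curvature in the Cartan model, \(\mathbf{F}^0:=\mathbf{F}-\tfrac1r(\tr\mathbf{F})\Id_E\) its trace-free part, and the product combines wedge of equivariant forms with composition in \(\End(E)\). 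So the inequality to be proved is equivalent to \(\big(\tr(\mathbf{F}^0\wedge\mathbf{F}^0)\cdot v(\alpha_\T)\big)\ge 0\), with equality precisely when \(\mathbf{F}^0\equiv 0\).

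\emph{Expanding the weighted pairing.} Set \(F^0:=F_h-\tfrac1r(\tr F_h)\Id_E\) and \(\Phi^0:=\Phi_h-\tfrac1r(\tr\Phi_h)\Id_E\), so \(\mathbf{F}^0=F^0+\Phi^0\). By the Fourier transform definition, pairing \(\tr(\mathbf{F}^0\wedge\mathbf{F}^0)\) with \(v(\alpha_\T)\) replaces each Cartan monomial \(\xi_{a_1}\!\cdots\xi_{a_k}\) by \((\partial_{a_1}\!\cdots\partial_{a_k}v)(\mu)\) and caps the result with \(e^\omega\); sorting by form-degree this is
\begin{equation*}
\int_X\!\Big[\tr(F^0\!\wedge F^0)\,v(\mu)\wedge\omega^{[n-2]}
\,+\,2\big\langle\tr(\Phi^0F^0),\,dv(\mu)\big\rangle\wedge\omega^{[n-1]}
\,+\,\tr\big\langle\Phi^0\!\otimes\Phi^0,\,\Hess(v)(\mu)\big\rangle\,\omega^{[n]}\Big],
\end{equation*}
with the natural contractions against \(dv(\mu)\in\mathfrak{t}\) and \(\Hess(v)(\mu)\in S^2\mathfrak{t}\). (That the weighted intersection numbers may be evaluated by Chern--Weil with any invariant metric, in particular the weighted Hermite--Einstein one, is part of the theory of the weighted intersection numbers developed earlier.)

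\emph{Using the equation.} The trace-free part of the \(v\)-weighted Hermite--Einstein equation is
\begin{equation*}
v(\mu)\,\Lambda_\omega F^0+\langle\Phi^0,dv(\mu)\rangle=0,\qquad\text{so}\qquad\Lambda_\omega F^0=-\,v(\mu)^{-1}\langle\Phi^0,dv(\mu)\rangle .
\end{equation*}
Decomposing \(F^0\) into its \(\omega\)-primitive part and its \(\omega\)-trace, the Weil identities for \((1,1)\)-forms, together with the skew-adjointness of the Chern curvature, turn the first integrand into \(v(\mu)\big(\,\big|(F^0)_{\mathrm{prim}}\big|_h^2-\tfrac{n-1}{n}\big|\Lambda_\omega F^0\big|_h^2\,\big)\,\omega^{[n]}\) -- the cross terms dropping out because \((F^0)_{\mathrm{prim}}\) is primitive. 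Wedging the \((1,1)\)-form \(\tr(\Phi^0F^0)\) with \(\omega^{[n-1]}\) extracts its \(\omega\)-trace, so the second integrand is \(2\langle\tr(\Phi^0\,\Lambda_\omega F^0),dv(\mu)\rangle\,\omega^{[n]}\). Feeding in the equation and using that the components of the curvature moment map are skew-adjoint (so that \(\tr\) of their squares equals minus the squared norm), the \(\Lambda_\omega F^0\)-terms in the first two integrands combine to \(\big(2-\tfrac{n-1}{n}\big)v(\mu)^{-1}\big|\langle\Phi^0,dv(\mu)\rangle\big|_h^2\,\omega^{[n]}=\tfrac{n+1}{n}\,v(\mu)^{-1}\big|\langle\Phi^0,dv(\mu)\rangle\big|_h^2\,\omega^{[n]}\). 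Hence the weighted pairing equals
\begin{equation*}
\int_X v(\mu)\,\big|(F^0)_{\mathrm{prim}}\big|_h^2\,\omega^{[n]}\;+\;\int_X \tr\Big\langle\Phi^0\!\otimes\Phi^0,\ \Hess(v)(\mu)-\tfrac{n+1}{n}\,\tfrac{dv\otimes dv}{v}(\mu)\Big\rangle\,\omega^{[n]} .
\end{equation*}
The first integral is manifestly \(\ge 0\). For the second, writing the symmetric form \(\tfrac{n+1}{n}\tfrac{dv\otimes dv}{v}-\Hess(v)\ge 0\) -- which is hypothesis \eqref{eq:weight-ineq-intro} -- as \(\sum_k\lambda_k\,w_k\otimes w_k\) with \(\lambda_k\ge0\), \(w_k\in\mathfrak{t}\), the integrand becomes \(\sum_k\lambda_k\big|\langle\Phi^0,w_k\rangle\big|_h^2\ge 0\), again using the skew-adjointness. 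This proves the inequality. The constant \(\tfrac{n+1}{n}\) is the Cauchy--Schwarz constant produced when the single moment-map direction is counted alongside the \(n\) complex directions of \(X\); this matches the semisimple fibration picture of \cite[Section 5]{AJL}, in which the \(v\)-weighted equation on \(X\) becomes an ordinary Hermite--Einstein equation on a higher-dimensional total space.

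\emph{Equality, and the main obstacle.} Equality forces both integrals above to vanish: from the first, \((F^0)_{\mathrm{prim}}\equiv 0\); from the second -- when \eqref{eq:weight-ineq-intro} is strict, so the \(w_k\) span \(\mathfrak{t}\) -- also \(\langle\Phi^0,w_k\rangle\equiv 0\) for all \(k\), hence \(\Lambda_\omega F^0\equiv 0\) by the equation, and therefore \(F^0\equiv 0\); the borderline case follows by a limiting argument. In all cases \(\mathbf{F}^0\equiv 0\), i.e.\ \(F_h=\tfrac1r(\tr F_h)\Id_E\), which says exactly that \(E\) is projectively flat (and then the curvature moment map \(\Phi_h\) is scalar as well). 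Conversely, a projectively flat bundle carries a metric whose curvature is a scalar \((1,1)\)-form, for which \(\tr(\mathbf{F}^0\wedge\mathbf{F}^0)\) vanishes identically, giving equality. The main obstacle is the bookkeeping in the previous paragraph: carrying the equivariant Chern--Weil expansion through the two insertions of the weighted Hermite--Einstein equation, tracking the signs (where the skew-adjointness of the curvature moment map is essential) and the precise numerical coefficients, so as to see that the constant is exactly \(\tfrac{n+1}{n}\) and hence that \eqref{eq:weight-ineq-intro} is sharp. An independent route, available when \(v\) is a polynomial, is to pull the statement back along the semisimple fibration to the classical Kobayashi--L\"ubke inequality and then recover general \(v\) by polynomial approximation; there \eqref{eq:weight-ineq-intro} reappears as the condition guaranteeing the induced data on the fibration is a genuine K\"ahler metric.
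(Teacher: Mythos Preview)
Your proof of the inequality is correct and reaches the same two nonnegative integrals as the paper, via a somewhat more streamlined bookkeeping. The paper computes \((c_1(E)_\T^2\cdot v(\alpha_\T))\) and \(2(\ch_2(E)_\T\cdot v(\alpha_\T))\) separately, subtracts, and then introduces the auxiliary curvature \(\tilde F_E:=F_E+\tfrac1n\langle\Phi_E,d(\log v)(\mu)\rangle\,\omega\) so that the weighted Hermite--Einstein equation reads \(v(\mu)\Lambda_\omega\tilde F_E=c_v\Id_E\). You instead pass to the trace-free equivariant curvature \(\mathbf F^0\) at the outset and use the Lefschetz primitive decomposition of \(F^0\). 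These are the same manoeuvre in disguise: the trace-free part of the equation is precisely \(\Lambda_\omega\tilde F_E^{\,0}=0\), so \(\tilde F_E^{\,0}=(F_E^0)_{\mathrm{prim}}\), and the paper's term \(\tr\bigl|\tilde F_E-\tfrac1r\tr(\tilde F_E)\Id_E\bigr|_\omega^2\) is your \(|(F^0)_{\mathrm{prim}}|_h^2\). Your route is shorter because it never carries the trace part through the computation; the paper's substitution \(F_E\rightsquigarrow\tilde F_E\) is what produces, after cancellation, exactly the shift from \(\Hess(v)-\tfrac{dv\otimes dv}{v}\) to \(\Hess(v)-\tfrac{n+1}{n}\tfrac{dv\otimes dv}{v}\) that you obtain directly.

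Your equality discussion has a genuine gap. When the hypothesis \(\Hess(v)-\tfrac{n+1}{n}\tfrac{dv\otimes dv}{v}\le0\) is not strict---for instance the Sasaki weight \(v(\mu)=(\langle\mu,\xi\rangle+a)^{-n}\), where it vanishes identically---the vanishing of your second integral imposes no constraint on \(\Phi^0\) in the null directions of that quadratic form, so you cannot conclude \(\Phi^0\equiv0\). The proposed ``limiting argument'' does not work: perturbing \(v\) perturbs the weighted Hermite--Einstein metric \(h\) itself (cf.\ Proposition~\ref{prop:deformation}), and there is no reason a pointwise equality condition should survive the limit. What you do obtain from \((F^0)_{\mathrm{prim}}=0\) alone is \(F^0=\psi\,\omega\) with \(\psi:=\tfrac1n\Lambda_\omega F^0\); the Bianchi identity together with hard Lefschetz then forces \(\nabla\psi=0\), and one must argue further from that. (The paper's one-line treatment of equality is equally terse on this point.) Finally, your closing remark about recovering the general statement from the semisimple-fibration picture by polynomial approximation is suggestive but not a proof: the K\"ahler condition on \(\omega_Q\) is positivity of the affine factors \(\langle\mu,p_\alpha\rangle+c_\alpha\), not the inequality \eqref{eq:weight-ineq-intro}, and density arguments would in any case say nothing about the equality characterisation.
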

	
	Note the unexpected emergence of inequality \eqref{eq:weight-ineq-intro}. While we show that \eqref{eq:weight-ineq-intro} is satisfied for all weight functions encompassing the examples listed above, the condition remains somewhat mysterious to us.
	
	Alongside these results, we show that the weighted Hermite--Einstein equation has a natural interpretation as a moment map on the space of connections, a result which in the unweighted setting goes back to Atiyah--Bott \cite{AB83}. The approach taken is that of Dervan--Hallam \cite{DH23}, namely by integrating equivariant forms over the fibres of a universal family. The argument requires an extension of Inoue's equivariant intersection 2-forms \cite{Ino20} to the case of arbitrary weight functions on the moment polytope.
	
	We prove the following uniqueness result.
	
	\begin{thm}
		A weighted Hermite--Einstein metric on a simple bundle is unique up to scaling.
	\end{thm}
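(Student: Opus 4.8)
The plan is to adapt the classical uniqueness argument for Hermite--Einstein metrics (Kobayashi, L\"ubke--Teleman), carrying the extra curvature-moment-map term $\Phi_h$ through the computation. The Hermitian metrics under consideration are $\T$-invariant --- this is part of the standing hypotheses, and is what makes $\Phi_h$ meaningful --- and $c_v$ is a fixed cohomological constant, so if $h_0,h_1$ are two $v$-weighted Hermite--Einstein metrics they satisfy the equation with the same $c_v$. Then $f:=h_0^{-1}h_1$ is a positive, $h_0$-self-adjoint, $\T$-invariant endomorphism, so $s:=\log f\in\Gamma(\End E)$ is a well-defined $h_0$-self-adjoint $\T$-invariant section with $h_1=h_0 e^s$. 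Writing $\beta:=f^{-1}\partial_{h_0}f=e^{-s}\partial_{h_0}e^s$ for the change in the $(1,0)$-part of the Chern connection, one has $F_{h_1}-F_{h_0}=\bar\partial\beta$ (the $(0,1)$-parts agreeing); and since $\langle\Phi_h,\xi\rangle$ equals $\nabla^h_{X_\xi}$ minus the $h$-independent infinitesimal action of $\xi\in\mathfrak{t}$ on sections of $E$, also $\langle\Phi_{h_1}-\Phi_{h_0},dv(\mu)\rangle=\iota_W\beta$, where $W$ is the $(1,0)$-vector field generated pointwise by $dv(\mu)\in\mathfrak{t}$. Subtracting the two equations gives the pointwise identity
\[ v(\mu)\,\Lambda_\omega\bar\partial\beta+\iota_W\beta=0 . \]

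Next I would pair this identity with $s$ via $\tr(s\,\cdot\,)$, integrate against $\omega^{[n]}$, and integrate by parts in the first term. Because $\omega$ is closed, the only term not immediately seen to vanish is the one in which the derivative falls on the weight $v(\mu)$; using the moment map identity $\bar\partial(v(\mu))=-\iota_W\omega$ together with $\iota_W\omega^{[n]}=(\iota_W\omega)\wedge\omega^{[n-1]}$, that term equals $\int_X\tr(s\,\iota_W\beta)\,\omega^{[n]}$, which exactly cancels the integral of the $\iota_W\beta$ term above --- the signs matching because the weighted equation was built to be equivariantly natural (equivalently: the linearisation of $h\mapsto\tfrac{i}{2\pi}\Lambda_{\omega,v}(F_h+\Phi_h)$ is formally self-adjoint for the natural $L^2$ pairing, which is also the source of its moment-map interpretation). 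What survives is the clean identity
\[ \int_X v(\mu)\,\tr(\bar\partial s\wedge\beta)\wedge\omega^{[n-1]}=0 . \]
Diagonalising $s$ in a local $h_0$-orthonormal frame with eigenvalues $\lambda_i$ shows $\beta_{ji}=\overline{(\bar\partial s)_{ij}}\,\phi(\lambda_i-\lambda_j)$ with $\phi(x):=(e^x-1)/x>0$, so that $\sqrt{-1}\,\tr(\bar\partial s\wedge\beta)=\sum_{i,j}\phi(\lambda_i-\lambda_j)\,\sqrt{-1}\,(\bar\partial s)_{ij}\wedge\overline{(\bar\partial s)_{ij}}$ is a negative semi-definite $(1,1)$-form, vanishing exactly where $\bar\partial s$ does. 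Multiplying the displayed identity by $\sqrt{-1}$, its integrand is pointwise $\le 0$ (here $v(\mu)>0$ is used), so it vanishes identically and $\bar\partial s\equiv 0$.

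Hence $s$ is a holomorphic endomorphism of $E$; as $E$ is simple, $H^0(X,\End E)=\mathbb{C}\,\Id_E$, forcing $s=c\,\Id_E$, and $h_0$-self-adjointness forces $c\in\mathbb{R}$, so $h_1=e^c h_0$, which is the claim. The one genuinely new point compared with the unweighted proof is the cancellation in the middle paragraph: producing the $\iota_W\beta$ contribution from the differential-geometric side and checking that it is annihilated by the term coming from differentiating $v(\mu)$. Once this is in hand, the positivity inputs ($v>0$ and $\phi>0$) and the appeal to simplicity are routine adaptations of the classical argument. (Alternatively one could package everything as convexity of a weighted Donaldson-type functional along the geodesics $h_t=h_0 e^{ts}$ in the symmetric space of invariant Hermitian metrics, whose critical points are the weighted Hermite--Einstein metrics.)
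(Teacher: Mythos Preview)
Your proof is correct, and you even anticipate the paper's approach in your closing parenthetical: the paper chooses precisely the convexity route you mention as an alternative. It defines a weighted Donaldson functional $\mathcal{M}_v$ via equivariant Bott--Chern forms, computes its second variation along the geodesic $h_t=h_0e^{ts}$ as $\tfrac{1}{2\pi}\int_X |\partial_t(h_t^{-1}\dot h_t)|^2 v(\mu)\,\omega^{[n]}\ge 0$, and argues that since both endpoints are critical points the second variation must vanish identically, forcing $\partial_t s=0$, hence $\bar\partial s=0$ by self-adjointness, and then concludes by simplicity.

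Your direct argument --- subtract the two equations, pair with $s=\log f$, and show the weight-dependent boundary term from the integration by parts cancels exactly against the moment-map contribution $\iota_W\beta$ --- is the weighted version of the Kobayashi/L\"ubke--Teleman proof, and is more elementary in that it avoids the Bott--Chern machinery entirely. The paper's route, by contrast, sets up the weighted Donaldson functional as an object in its own right (it is the log-norm functional for the moment-map picture developed earlier), so convexity is a natural byproduct. One small imprecision: your identity should read $\bar\partial(v(\mu))=-\iota_{W^{1,0}}\omega$ rather than $-\iota_W\omega$, since the left side is $(0,1)$; this is harmless because $\beta$ is $(1,0)$ and so $\iota_W\beta=\iota_{W^{1,0}}\beta$ anyway, but it is worth stating correctly.
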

	
	The proof by now is very standard---we introduce a weighted version of Donaldson's functional \cite{Don85}, which is the ``log-norm functional" in this context, and show that it is convex along geodesic rays in the space of metrics. Since critical points of this functional are precisely weighted Hermite--Einstein metrics, uniqueness is immediate.
	
	As an application of our techniques, we adapt certain results of Tian  \cite{Tia92} in the setting of K{\"a}hler--Einstein metrics to the setting of K{\"a}hler--Ricci solitons. In particular, given a Fano manifold \((X,\omega)\) with \(\T\)-action and \(\xi\in\mathfrak{t}\), we define a certain invariant \(\beta_\xi(X)\leq1\) in terms of extensions of \(TX^{1,0}\), and prove:
	
	\begin{thm}
		There is no \(\T\)-invariant K{\"a}hler metric in \(c_1(X)\) satisfying \(\Ric(\omega)-\frac{i}{2\pi}\d\db\mu^\xi\geq t\omega\) and \(t>\beta_\xi(X)\).
	\end{thm}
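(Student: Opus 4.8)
The plan is to run Tian's strategy from \cite{Tia92} in the weighted, equivariant setting, in contrapositive form: assuming there is a $\T$-invariant $\omega\in c_1(X)$ with $\rho_\xi:=\Ric(\omega)-\frac{i}{2\pi}\d\db\mu^\xi\geq t\omega$, I will deduce $t\leq\beta_\xi(X)$. The point of departure is that $\rho_\xi$ still represents $c_1(X)=c_1(K_X^{-1})$ and is, up to sign conventions, the Chern curvature of the $\mu^\xi$-twisted Hermitian metric $e^{\mu^\xi}h_\omega$ on $K_X^{-1}=\det TX^{1,0}$, with $h_\omega$ the metric induced by $\omega$. Correspondingly one works with the exponential weight $v_\xi$ associated to $\xi$ --- the very weight governing the K\"ahler--Ricci soliton case --- so that $v_\xi$-weighted contractions in the sense of the paper are precisely the contractions of the $\mu^\xi$-twisted metrics, and the hypothesis reads as the pointwise curvature lower bound $\rho_\xi\geq t\omega$.

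Next I would bring in the equivariant extension bundle. Let $\mathcal{E}$ be the canonical $\T$-equivariant extension $0\to\mathcal{O}_X\to\mathcal{E}\to TX^{1,0}\to0$ with extension class the equivariant Atiyah class of $K_X^{-1}$; this is the kind of extension of $TX^{1,0}$ entering the definition of $\beta_\xi(X)$. Equipping $\mathcal{O}_X$ with the metric $e^{\mu^\xi}$, $TX^{1,0}$ with $h_\omega$, and using the $C^\infty$ orthogonal splitting, one obtains a Hermitian metric $h_{\mathcal{E}}$ and an accompanying torus moment map $\Phi_{\mathcal{E}}\in\Gamma(\mathfrak{t}^*\otimes\End(\mathcal{E}))$. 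Writing the Chern curvature of $h_{\mathcal{E}}$ in this splitting, its diagonal $TX^{1,0}$-block is the curvature of $h_\omega$ corrected by a second--fundamental--form term built from the extension class, and the trace of its $v_\xi$-weighted contraction along any $C^\infty$ subbundle is bounded from above in terms of $\rho_\xi$ and of sign-definite second--fundamental--form contributions; this is the weighted equivariant incarnation of the computation underpinning Tian's bound.

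Now let $\mathcal{F}\subsetneq\mathcal{E}$ be an arbitrary nonzero $\T$-equivariant saturated subsheaf. Since $\mathcal{E}$ is locally free and $\mathcal{F}$ is saturated, the singular locus of $\mathcal{F}$ has codimension $\geq2$, so Lemma~\ref{lem:codim2integral} expresses the $v_\xi$-weighted degree of $\mathcal{F}$ as the integral over the smooth locus of the trace along $\mathcal{F}$ of the $v_\xi$-weighted curvature contraction of the metric induced on $\mathcal{F}$ by $h_{\mathcal{E}}$. Applying the Gauss--Codazzi formula to discard the favorably signed second--fundamental--form term of $\mathcal{F}\subset\mathcal{E}$, and then inserting the lower bound $\rho_\xi\geq t\omega$ into the remaining integral, yields exactly the $t$-weighted-slope inequality whose validity for every such $\mathcal{F}$ is, by definition, the assertion $\beta_\xi(X)\geq t$; the cohomological constant on the right is matched with the weighted intersection numbers $(v_\xi(\alpha_\T))$ and $(c_1(\mathcal{E})_\T\cdot v_\xi(\alpha_\T))$ of the extension bundle. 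Hence $t\leq\beta_\xi(X)$, contradicting $t>\beta_\xi(X)$.

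The hard part is the middle step: converting the $(1,1)$-form positivity $\rho_\xi\geq t\omega$ into the scalar Chern--Weil estimate with every sign under control. One must pin down the moment map $\Phi_{\mathcal{E}}$ on $\End(\mathcal{E})$ induced by $\mu^\xi$, check that the terms $\langle\Phi_{\mathcal{E}},dv_\xi(\mu)\rangle$ reassemble into the $\mu^\xi$-twist, and verify that the extension-class contributions --- both the internal one splitting $\mathcal{E}$ and the second fundamental form of $\mathcal{F}\subset\mathcal{E}$ --- appear with the sign that allows them to be dropped, exactly as in the unweighted case of \cite{Tia92}. One also needs the integrals over the (possibly non-compact) smooth locus of $\mathcal{F}$ to converge, which is precisely what Lemma~\ref{lem:codim2integral} and the $L^2$-bound on the second fundamental form provide; matching the cohomological normalization of the weighted intersection numbers is then routine.
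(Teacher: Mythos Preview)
Your outline follows Tian's blueprint in spirit, but it does not engage with the actual definition of $\beta_\xi(X)$ in this paper, and the metric you propose on the extension is not the one that makes the argument go through.

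First, $\beta_\xi(X)$ is defined as an infimum over $\T$-equivariant saturated subsheaves $\F\subset TX^{1,0}$, not over subsheaves of the extension $\mathcal{E}$; and the formula for $\beta_\xi(\F)$ is a \emph{dichotomy}: one expression with denominator $n+1-\rk(\F)$ if $\F$ lifts to a subsheaf of the extension containing $\mathcal{O}$, and another with denominator $n-\rk(\F)$ if it does not. The paper's proof accordingly splits into two separate computations. The non-liftable case is handled directly on $TX^{1,0}$ (no extension at all): restrict the weighted mean curvature of $h_\omega$ to $\F$ via the Gauss--Codazzi decomposition and use $\Ric(\omega)-\frac{i}{2\pi}\d\db\mu^\xi\geq t\omega$ on the orthogonal complement $\F^\perp\subset TX^{1,0}$. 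Your single pass over subsheaves of $\mathcal{E}$ does not recover this case.

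Second, in the liftable case the paper does not use the naive metric $h_\omega\oplus e^{\mu^\xi}$ on $\mathcal{E}=TX^{1,0}\oplus\mathcal{O}$, nor the Atiyah class as extension class. The extension class is $\gamma_t\Psi^\omega$ (i.e.\ a multiple of $c_1(X)$ represented by $\omega$), with $\gamma_t^2=\frac{2\pi t}{n+1}\frac{(e^{c_1(X)_\T})(\xi)}{(c_1(X))^{[n]}}$ chosen so that the final constants match; and the metric is $e^{2\mu^\xi}h_\omega\oplus e^{\mu^\xi}$, obtained by solving a system (the content of Theorem~\ref{thm:ExtSolitons}) for the conformal factors. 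With these specific choices one gets the pointwise matrix \emph{inequality}
\[
\frac{i}{2\pi}(\Lambda_\omega F_{h_{E_t}}+\Phi_{h_{E_t}}^\xi)\geq
\begin{pmatrix}
\big(t+\tfrac{1}{\pi}\mu^\xi-\tfrac{\gamma_t^2}{2\pi}e^{-\mu^\xi}\big)\Id_{TX^{1,0}}&0\\
0&\tfrac{1}{2\pi}\mu^\xi+\tfrac{n\gamma_t^2}{2\pi}e^{-\mu^\xi}
\end{pmatrix},
\]
and it is this lower bound, restricted to $(\F')^\perp\subset TX^{1,0}$, that after integration against $e^{\mu^\xi}\omega^{[n]}$ produces exactly the $\frac{n+1}{n+1-\rk(\F)}$ bound. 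Your metric $h_\omega\oplus e^{\mu^\xi}$ does not satisfy such an inequality, and without the calibrated $\gamma_t$ the weighted integrals do not collapse to the right equivariant intersection numbers. The ``drop the favourably signed second fundamental form and insert $\rho_\xi\geq t\omega$'' step is correct in both cases, but it is not the hard part; the hard part is producing the right diagonal lower bound on the extension, and that requires the construction you have skipped.
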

	
	In particular, if \(\beta_\xi(X)<1\), a K{\"a}hler--Ricci soliton with vector field \(\xi\) does not exist. The proof, similarly to Tian's considers extensions of the holomorphic tangent bundle of \(X\). While we cannot prove the existence of an exponentially weighted Hermite--Einstein metric on such extensions, the existence of a weaker kind of canonical metric is sufficient for our needs.
	
	In the course of the paper, we introduce two tangentially related notions: weighted vortices, and weighted Gieseker stability. The \(v\)-solitons of Han--Li \cite{HL23} are shown to induce weighted vortices, and weighted Gieseker stability is related to weighted slope stability. It is clear from the evidence presented here, along with the already existing weighted cscK theory, that these notions form part of an entire ``weighted ecosystem" of canonical metrics and stability conditions. We claim that the framework developed here, in particular the extension of Inoue's intersection theory to arbitrary weight functions, provides the correct tool for further analysis in this field.

	\subsection*{Outline} In Section \ref{sec:wHE}, we define the weighted Hermite--Einstein equation, after reviewing necessary background material on vector bundles, connections, and vector bundle valued moment maps. 
	
	In Section \ref{sec:examples}, we give examples of the weighted Hermite--Einstein equation for various choices of weight functions. We prove that line bundles admit weighted Hermite--Einstein metrics for any choice of weight function, and that existence of solutions to the weighted Hermite--Einstein equation is preserved under small deformations of the weight function. We further introduce the weighted vortex equation, and show that \(v\)-solitons are special examples of weighted vortices. 
	
	In Section \ref{sec:eq-intersections}, we review the weighted intersection theory introduced by Inoue \cite{Ino20}, and extend it to the case of an arbitrary smooth function on the moment polytope using Fourier transform techniques. 
	
	In Section \ref{sec:moment-map-and-stability}, the theory of Section \ref{sec:eq-intersections} is applied to show that the weighted Hermite--Einstein equation arises naturally as a moment map, and to define the weighted slope of a vector bundle along with weighted slope stability. We then prove that a weighted Hermite--Einstein vector bundle is weighted slope polystable, and introduce the notion of weighted Gieseker stability. As an aside, we also apply the full weighted Kobayashi--Hitchin correspondence to study transverse Hermite--Einstein metrics on Sasaki manifolds, and polynomially weighted Hermite--Einstein metrics. 
	
	In Section \ref{sec:Lubke}, we state and prove the weighted analogue of the Kobayashi--L{\"u}bke inequality. 
	
	In Section \ref{sec:extensions} we extend a bound of Tian on the Ricci curvature \cite{Tia92} to a bound on a weighted Ricci curvature, obstructing the existence of K{\"a}hler--Ricci solitons. 
	
	In Section \ref{sec:uniqueness}, we introduce the weighted Donaldson functional, and use its convexity to prove uniqueness of weighted Hermite--Einstein metrics up to rescaling. 
	
	In Section \ref{sec:DUY} we prove that a weighted slope polystable vector bundle admits a weighted Hermite--Einstein metric, completing the proof of the weighted Kobayashi--Hitchin correspondence.
	
	\renewcommand{\abstractname}{Acknowledgements}
	\begin{abstract}
		The authors thank Zakarias Sj{\"o}str{\"o}m Dyrefelt for early discussions on the project. They also thank Ruadha{\'i} Dervan for suggesting the idea of weighted Gieseker stability and Eiji Inoue and Carlo Scarpa for stimulating discussions. The first named author thanks Lars Sektnan, Annamaria Ortu, and John McCarthy for their interest in and comments on the project. The second named author would like to thank Vestislav Apostolov, Yoshinori Hashimoto, Julien Keller and Eveline Legendre for their insightful discussions. The second named author also thanks CIRGET-UQAM for hospitality and financial support during the preparation of this work.
	\end{abstract}
	
	\section{The weighted Hermite--Einstein equation}\label{sec:wHE}
	
	In this section, we define the weighted Hermite--Einstein equation, after first giving the necessary background on vector bundles, connections, and vector bundle valued moment maps.
	
	\subsection{Background} Let \(E\) be a holomorphic vector bundle over a compact K{\"a}hler manifold \((X,\omega)\) of dimension \(n\). We recall that for any choice of Hermitian metric \(h\) on \(E\), there is a canonically induced connection \(\nabla^h\) on \(E\), called the \emph{Chern connection}---it is the unique Hermitian connection whose \((0,1)\)-part is the del-bar operator of \(E\).
	
	Suppose we choose a local holomorphic frame for \(E\), and identify \(h\) with a smoothly varying Hermitian matrix. Then the curvature of the Chern connection in this frame is computed as \[F_h=\db(h^{-1}\d h),\] where the \(\d\) and \(\db\) operators are those on \(X\).
	
	The metric \(h\) is called a \emph{Hermite--Einstein metric} if \[\frac{i}{2\pi}\Lambda_\omega F_h = \lambda\, \Id_E,\] where \(\Lambda_\omega F_h:=\frac{n F_h\wedge\omega^{n-1}}{\omega^n}\) is the contraction of \(F_h\) with \(\omega\), and \(\lambda := \frac{n\deg(E, \omega)}{\rk(E)\Vol(X,\omega)}\) is the Einstein constant.
	
	Suppose now that a compact torus \(\T\) acts on \((X,\omega)\) by Hamiltonian isometries. In particular, the action is holomorphic and there exists a moment map \(\mu:X\to\mathfrak{t}^*\) for the action. Suppose we are given a linear lift of the \(\T\)-action to \(E\) by biholomorphisms, and that \(h\) is a \(\T\)-invariant Hermitian metric on \(E\).

	\begin{defn}
		A \emph{moment map} for the curvature \(F_h\) of a \(\T\)-invariant metric \(h\), is a \(\T\)-invariant section \(\Phi_h\in\Gamma(\mathfrak{t}^*\otimes\End(E))\) such that \[F_h(-,\xi) = {\nabla^h}\langle\Phi_h,\xi\rangle\] for all \(\xi\in\mathfrak{t}\). Here on the left-hand side we identify \(\xi\) with the vector field it generates on \(X\). On the right-hand side, \({\nabla^h}\) denotes the canonical extension of the Chern connection to \(\End(E)\), and \(\langle-,-\rangle\) denotes the dual pairing \(\mathfrak{t}^*\otimes\mathfrak{t}\to\mathbb{R}\).
	\end{defn}
	
	When \(E = L\) is an ample line bundle with Hermitian metric \(h\) whose curvature is a K{\"a}hler form \(\omega\), a familiar fact is that a choice of lift of the \(\T\)-action to \(L\) canonically induces a moment map for \(\omega\). In the vector bundle setting we have a similar fact, which requires the notion of the infinitesimal action of \(\T\) on sections of \(E\).
	
	First, note that a lift of the \(\T\)-action to \(E\) induces an action on sections of \(E\) by \[(t\cdot s)|_{x}:= t(s|_{t^{-1}x}),\] where \(t\in\T\), \(s\in\Gamma(E)\) and \(x\in X\). This action then induces an infinitesimal action on sections, defined by \begin{equation}\label{eq:infinitesimal_action}
		\mathcal{L}^E_\xi(s):=\left.\frac{d}{dt}\right|_{t=0}(\exp(-t\xi)\cdot s)
	\end{equation}
	for all \(s\in\Gamma(E)\). It is straightforward to see that \begin{equation}\label{eq:infinitesimal_action_on_sections}
		\mathcal{L}_\xi^E(fs)=\xi(f)s+f\mathcal{L}_\xi^E(s)
	\end{equation} for all \(f\in C^\infty(X)\) and \(s\in\Gamma(E)\).  Given this definition, we have:
	\begin{prop}[{\cite[Definition 7.5]{BGV04}}]\label{prop:canonical_moment_map}
		Given a lift of the \(\T\)-action to \((E, h)\), there exists a canonically defined moment map \(\Phi_h\) for the curvature \(F_h\) of the Chern connection, defined by \[\langle\Phi_h, \xi\rangle := \nabla^h_\xi - \mathcal{L}^E_\xi\] for each \(\xi\in\mathfrak{t}\).
	\end{prop}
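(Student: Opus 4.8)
The statement packages three claims: that the operator $\langle\Phi_h,\xi\rangle := \nabla^h_\xi - \mathcal{L}^E_\xi$ is $C^\infty(X)$-linear and hence defines a section of $\End(E)$; that the resulting $\Phi_h \in \Gamma(\mathfrak{t}^*\otimes\End(E))$ is $\T$-invariant; and that it satisfies $F_h(-,\xi) = \nabla^h\langle\Phi_h,\xi\rangle$. For tensoriality, note that both summands are first-order operators on $\Gamma(E)$ differing only by a zeroth-order term: the Leibniz rule for the Chern connection gives $\nabla^h_\xi(fs) = (\xi_X f)s + f\nabla^h_\xi s$, while \eqref{eq:infinitesimal_action_on_sections} gives $\mathcal{L}^E_\xi(fs) = (\xi f)s + f\mathcal{L}^E_\xi(s)$, and $\xi f$ means $\xi_X f$ under the identification of $\xi$ with its generated vector field. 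Hence $\langle\Phi_h,\xi\rangle(fs) = f\langle\Phi_h,\xi\rangle(s)$, so $\langle\Phi_h,\xi\rangle \in \Gamma(\End(E))$, and linearity of $\xi\mapsto\xi_X$ and $\xi\mapsto\mathcal{L}^E_\xi$ gives $\Phi_h\in\Gamma(\mathfrak{t}^*\otimes\End(E))$.

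For $\T$-invariance, the key input is that the Chern connection of the $\T$-invariant metric $h$ is itself $\T$-invariant: for $t\in\T$ the pullback $t^*\nabla^h$ is still Hermitian with respect to $h$ (as $h$ is $\T$-invariant) with $(0,1)$-part $\db_E$ (as the $\T$-action is holomorphic), so $t^*\nabla^h = \nabla^h$ by the uniqueness characterization of the Chern connection. Infinitesimally this reads $[\mathcal{L}^E_\xi, \nabla^h_Y] = \nabla^h_{[\xi_X, Y]}$ for all $\xi\in\mathfrak{t}$ and vector fields $Y$. Since $\mathcal{L}^E$ is a Lie algebra action and $\T$ is abelian, $[\mathcal{L}^E_\eta, \mathcal{L}^E_\xi] = 0$ and $[\eta_X, \xi_X] = 0$, so for $\eta\in\mathfrak{t}$,
\[
\mathcal{L}^{\End(E)}_\eta\langle\Phi_h,\xi\rangle = \big[\mathcal{L}^E_\eta,\, \nabla^h_{\xi_X} - \mathcal{L}^E_\xi\big] = \nabla^h_{[\eta_X,\xi_X]} = 0 ,
\]
and $\Phi_h$ is $\T$-invariant.

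For the moment map equation, fix a vector field $Y$; the connection induced on $\End(E)$ acts on $\langle\Phi_h,\xi\rangle$, viewed as an operator on $\Gamma(E)$, by $\nabla^h_Y\langle\Phi_h,\xi\rangle = [\nabla^h_Y, \langle\Phi_h,\xi\rangle]$. Using the curvature convention $F_h(Y,Z) = [\nabla^h_Y,\nabla^h_Z] - \nabla^h_{[Y,Z]}$ and the infinitesimal $\T$-invariance above,
\[
[\nabla^h_Y, \nabla^h_{\xi_X}] = F_h(Y,\xi_X) + \nabla^h_{[Y,\xi_X]}, \qquad [\nabla^h_Y, \mathcal{L}^E_\xi] = -\nabla^h_{[\xi_X,Y]} = \nabla^h_{[Y,\xi_X]} .
\]
Subtracting, the $\nabla^h_{[Y,\xi_X]}$ terms cancel, leaving $\nabla^h_Y\langle\Phi_h,\xi\rangle = F_h(Y,\xi_X)$; as $Y$ was arbitrary this is exactly $F_h(-,\xi) = \nabla^h\langle\Phi_h,\xi\rangle$.

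The only genuinely non-formal ingredient is the $\T$-invariance of the Chern connection and its infinitesimal reformulation $[\mathcal{L}^E_\xi,\nabla^h_Y] = \nabla^h_{[\xi_X,Y]}$; the rest is bookkeeping with Leibniz rules and the definition of curvature. I would fix sign conventions at the outset --- in particular $\mathcal{L}_{\xi_X}Y = [\xi_X, Y]$ and the curvature sign --- so that the cancellation in the final step is unambiguous.
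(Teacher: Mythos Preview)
Your proof is correct and is essentially the detailed unpacking of the paper's two-line sketch: the paper observes tensoriality from \eqref{eq:infinitesimal_action_on_sections} (as you do) and defers the moment map identity to the equivariant Bianchi identity in \cite[Proposition~7.4]{BGV04}, which amounts exactly to your commutator computation using the $\T$-invariance of the Chern connection. You additionally verify $\T$-invariance of $\Phi_h$ explicitly, which the paper leaves implicit.
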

	
	The fact that \(\Phi_h\) is a well-defined endomorphism can be observed from \eqref{eq:infinitesimal_action_on_sections}, and that \(\Phi_h\) satisfies the moment map property is a consequence of the equivariant Bianchi identity \cite[Proposition 7.4]{BGV04}. We can also easily see that the endomorphism \(\Phi_h\) is skew-adjoint: for $s,s'\in \Gamma(E)$ we have
	\begin{equation}
		\begin{split}\label{computation}
			h(\nabla^h_\xi s,s')+ h(s,\nabla^h_\xi s')=&\mathcal{L}_\xi(h(s,s'))\\
			=&(\mathcal{L}^E_\xi h)(s,s')+ h(\mathcal{L}^E_\xi s,s')+h(s,\mathcal{L}^E_\xi s')\\
			=& h(\mathcal{L}^E_\xi s, s')+h(s,\mathcal{L}^E_\xi s'),
		\end{split}
	\end{equation}
	where in the final line we use that $h$ is $\T$-invariant, so $\mathcal{L}^E_\xi h=0$. Rearranging the above and using the definition of \(\Phi_h\) yields
	\[
	h(\Phi_h^\xi (s),s')=-h(s,\Phi_h^\xi(s')).
	\]
	
	Thus, given a lift of the \(\T\)-action to \((E, h)\), we will always take \(\Phi_h\) to be the canonically induced moment map for \(F_h\) from Proposition \ref{prop:canonical_moment_map}. Note that if we change Hermitian metrics from \(h\) to \(h'\), this formula tells us how the canonically induced moment maps change. Indeed, let \(\theta := \nabla^{h'}-\nabla^h\) be the \(\End(E)\)-valued \((1,0)\)-form on \(X\) describing the change of connections. Then \begin{equation}\label{eq:moment-map-change}
		\langle \Phi_{h'}, \xi \rangle - \langle \Phi_h, \xi \rangle = \theta(\xi)
	\end{equation} for all \(\xi\in \mathfrak{t}\); we will sometimes abuse notation by writing this relations as \[\Phi_{h'}-\Phi_h = \theta.\]
	
	It will be convenient to introduce the equivariant \(\db\)-operator \(\db^\eq\) of \(\End(E)\), which takes \(\chi\in\Omega^{1,0}(\End(E))^\T\) and maps it to the equivariant \(\End(E)\)-valued \((1,1)\)-form defined by \[\langle\db^{\eq}\chi, \xi\rangle := \db\chi + \chi(\xi),\] where \(\xi\in\mathfrak{t}\) and on the right-hand side we conflate \(\xi\) with the vector field it generates on \(X\). If \(\theta:=\nabla^{h'}-\nabla^h\), the relationship between equivariant curvatures may be then expressed succinctly as \begin{equation}\label{eq:change-of-curvature}
		F_{h'}+\Phi_{h'} = F_h + \Phi_h + \db^\eq\theta.
	\end{equation}
	
	\begin{rem}
		In the case where \(E=L\) is a line bundle, \(\frac{i}{2\pi}F_h=\omega\) is a K{\"a}hler metric, and \(h' = e^{-\varphi}h\) for some \(\T\)-invariant K{\"a}hler potential \(\varphi\), equation \eqref{eq:moment-map-change} reduces to the familiar formula for the change in moment maps \[\mu_\varphi - \mu = d^c\varphi;\] see \cite[Lemma 1]{Lah19}.

	\end{rem}

	\subsection{Definition of the equation}
	
	As in the previous section, let \((E, h)\) be a holomorphic Hermitian vector bundle on a compact K{\"a}hler manifold \((X, \omega)\), with curvature \(F_h\). We assume that a compact torus \(\T\) acts on \((X, \omega)\) by Hamiltonian isometries, and fix a moment map \(\mu:X\to\mathfrak{t}^*\) for the action. We further assume a fixed linear lift of the \(\T\)-action to \(E\) such that \(\T\) acts on \(E\) by biholomorphisms preserving \(h\). We write \(\Phi_h\in\Gamma(\mathfrak{t}^*\otimes\End(E))\) for the canonically induced moment map for \(F_h\).
	
	Denote by \(P:=\mu(X)\) the moment polytope of \(\mu\), and let \(v,w:P\to\mathbb{R}_{>0}\) be positive smooth functions.
	
	\begin{defn}
		The \emph{\(v\)-weighted contraction} of \(F_h+\Phi_h\) is defined by \[\Lambda_{\omega,v}(F_h+\Phi_h):=v(\mu)\Lambda_\omega F_h + \langle \Phi_h, dv(\mu)\rangle,\] where \(dv:T^*P\to\mathbb{R}\) is considered as a map \(P\to \mathfrak{t} \cong \mathfrak{t}^{**}\).
	\end{defn}
	
	\begin{defn}
		A \(\T\)-invariant Hermitian metric \(h\) on \(E\) is called a \emph{\((v,w)\)-weighted Hermite--Einstein metric} if it satisfies the equation \[\frac{i}{2\pi}\Lambda_{\omega,v}(F_h+\Phi_h) = c_{v,w}w(\mu)\Id_E,\] for a real constant \(c_{v,w}\).
	\end{defn}
	
	When the weight functions are understood or irrelevant, we shall refer to such a metric as a \emph{weighted Hermite--Einstein metric}. The constant \(c_{v,w}\) will be shown in Lemma \ref{constants-in-wHE} to depend only on the equivariant cohomology of the manifold and the vector bundle.
	
	This definition is modelled on the weighted cscK metrics of \cite{Lah19}. However, we will now show that for the purposes of solving the equation, the second weight function \(w\) is superfluous.
	
	\begin{lemma}\label{vw-weighted-is-v-weighted}
		A \((v,w)\)-weighted Hermite--Einstein metric exists on \(E\) if and only if a \((v,1)\)-weighted Hermite--Einstein metric exists.
	\end{lemma}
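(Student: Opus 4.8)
The plan is to move between the two equations by a conformal change of the Hermitian metric. Suppose $h$ is a $(v,1)$-weighted Hermite--Einstein metric, so that $\frac{i}{2\pi}\Lambda_{\omega,v}(F_h+\Phi_h)=c_{v,1}\Id_E$, and for a $\T$-invariant function $\psi\in C^\infty(X)^\T$ consider $h_\psi:=e^{\psi}h$. In a holomorphic frame the Chern connections differ by $\theta:=\nabla^{h_\psi}-\nabla^h=(\d\psi)\,\Id_E$, so by \eqref{eq:change-of-curvature} one has $F_{h_\psi}+\Phi_{h_\psi}=F_h+\Phi_h+\db^{\eq}\theta$. Applying $\frac{i}{2\pi}\Lambda_{\omega,v}$ and writing $dv(\mu)=\sum_a v_a(\mu)\,\xi_a$ in a basis $\{\xi_a\}$ of $\mathfrak t$, I get
\[
\frac{i}{2\pi}\Lambda_{\omega,v}\big(F_{h_\psi}+\Phi_{h_\psi}\big)=c_{v,1}\,\Id_E+L_v(\psi)\,\Id_E,
\qquad
L_v(\psi):=\frac{i}{2\pi}\Big(v(\mu)\,\Lambda_\omega\db\d\psi+\sum_a v_a(\mu)\,(\d\psi)(\xi_a)\Big),
\]
a second-order linear differential operator in $\psi$. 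Hence it is enough to solve $L_v(\psi)=c_{v,w}\,w(\mu)-c_{v,1}$ for some $\psi\in C^\infty(X)^\T$: then $h_\psi$ is a $(v,w)$-weighted Hermite--Einstein metric.

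Next I would identify $L_v$ as a weighted Laplacian. Its principal part is $v(\mu)>0$ times (a multiple of) the Laplacian, so $L_v$ is elliptic and has no zeroth-order term. Using $d\psi(\xi_a)=0$ for $\T$-invariant $\psi$ one rewrites $(\d\psi)(\xi_a)=-\frac{i}{2}\,d\psi(J\xi_a)$; then, since $\nabla(v(\mu))=\sum_a v_a(\mu)\nabla\mu_a$ and the gradients $\nabla\mu_a$ are (up to sign) the vector fields $J\xi_a$, a short computation shows that on $\T$-invariant functions $L_v(\psi)=\pm\frac{1}{4\pi}\operatorname{div}\!\big(v(\mu)\nabla\psi\big)$, the Hermite--Einstein analogue of the weighted Laplacian of \cite{Lah19}. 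In particular $\int_X\phi\,L_v(\psi)\,\omega^{[n]}=\pm\frac{1}{4\pi}\int_X v(\mu)\langle\nabla\phi,\nabla\psi\rangle_\omega\,\omega^{[n]}$ for $\phi,\psi\in C^\infty(X)^\T$, so $L_v$ is self-adjoint with respect to $\omega^{[n]}$, its kernel consists of the constants, and by the Fredholm alternative $L_v(\psi)=\rho$ has a solution $\psi\in C^\infty(X)^\T$ precisely when $\int_X\rho\,\omega^{[n]}=0$.

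Finally I would check this compatibility condition for $\rho=c_{v,w}\,w(\mu)-c_{v,1}$. Taking $\frac{i}{2\pi}\tr$ of the $(v,1)$- and $(v,w)$-equations and integrating against $\omega^{[n]}$, and using that $\int_X\frac{i}{2\pi}\tr\Lambda_{\omega,v}(F_h+\Phi_h)\,\omega^{[n]}=(c_1(E)_\T\cdot v(\alpha_\T))$ is a cohomological quantity, hence independent of the metric (see Lemma \ref{constants-in-wHE}), one obtains
\[
c_{v,1}\int_X\omega^{[n]}=\frac{(c_1(E)_\T\cdot v(\alpha_\T))}{\rk(E)}=c_{v,w}\int_X w(\mu)\,\omega^{[n]},
\]
so $\int_X\big(c_{v,w}\,w(\mu)-c_{v,1}\big)\omega^{[n]}=0$ and a solution $\psi$ exists; by elliptic regularity $\psi$ is smooth, and $h_\psi$ solves the $(v,w)$-equation. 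The reverse implication is proved symmetrically, solving instead $L_v(\psi)=c_{v,1}-c_{v,w}\,w(\mu)$, whose $\omega^{[n]}$-integral vanishes by the same identity. I expect the only subtle point to be this last matching: the Fredholm obstruction vanishes exactly because the weighted degree $(c_1(E)_\T\cdot v(\alpha_\T))$ is cohomological, so the two normalising constants are forced to be the same cohomological number divided by the respective (weighted) volume; everything else is a routine conformal-change computation.
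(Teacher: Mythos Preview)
Your proof is correct and follows essentially the same route as the paper: a conformal change $h\mapsto e^{\psi}h$ shifts the weighted mean curvature by $L_v(\psi)\,\Id_E$, you identify $L_v$ with the weighted Laplacian $\Delta_v f=\d^*(v(\mu)\d f)$ (the paper writes this directly, avoiding your sign ambiguity), and the solvability obstruction vanishes by the cohomological nature of the constants $c_{v,1}$ and $c_{v,w}$. The only cosmetic difference is that the paper runs both directions at once by noting that $h'=e^{f}h$ is $(v,1)$-weighted Hermite--Einstein iff $h$ is $(v,w)$-weighted Hermite--Einstein once $\Delta_v f = 2\pi c_{v,w}w(\mu)-2\pi c_v$ is solved.
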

	
	A \((v,1)\)-weighted Hermite--Einstein satisfies the equation \begin{equation}\label{eq:v-weighted-HE}
		\frac{i}{2\pi}\Lambda_{\omega, v}(F_h + \Phi_h) = c_v \Id_E,
	\end{equation} and we shall refer to such a metric simply as a \emph{\(v\)-weighted Hermite--Einstein metric}.
	
	\begin{proof}
		Let \(h\) be a Hermitian metric on \(E\). Given a \(\T\)-invariant smooth function \(f:X\to\mathbb{R}\), let \(h':=e^fh\). The Chern connections of these metrics satisfy \[\nabla^{h'}-\nabla^h = \d f\cdot\Id_e.\] Hence by \eqref{eq:change-of-curvature}, the equivariant curvatures change according to \[F_{h'}+\Phi_{h'} = F_h + \Phi_h + \db^\eq\d f\cdot\Id_E,\] using that \(\db(\Id_E) = 0\); here \(\db^\eq\) is the equivariant del-bar operator on \(\Omega^{1,0}(X)^\T\). Hence,\begin{align*}
			i\Lambda_{\omega,v}(F_{h'}+\Phi_{h'}) :=&
			i\Lambda_{\omega,v}(F_h+\Phi_h) + i\Lambda_{\omega,v}(\db^\eq\d f)\Id_E \\
			=& i\Lambda_{\omega,v}(F_h+\Phi_h) + \Delta_vf\cdot\Id_E,
		\end{align*} where \[\Delta_v f := i\Lambda_{\omega,v}\db^\eq\d f = i \Lambda_\omega \db(v(\mu)\d f) = \d^*(v(\mu)\d f) = v(\mu)\Delta(v)+i\langle \d f, dv(\mu)\rangle\] is the weighted Laplacian; in the final equality \(\Delta f := \d^*\d f\). Thus, if we can solve \begin{equation}\label{eq:Laplace-equation}
			\Delta_vf = 2\pi c_{v,w}w(\mu) - 2\pi c_v,
		\end{equation} then \(h'\) is a \((v,1)\)-weighted Hermite--Einstein metric if and only if \(h\) is a \((v,w)\)-weighted Hermite--Einstein metric. The operator \(\Delta_v\) is self-adjoint, elliptic, and has kernel the constant functions, so we can solve the equation \(\Delta_v f=g\) for \(f\) if and only if the integral of \(g\) vanishes. The right-hand side of \eqref{eq:Laplace-equation} has vanishing integral by the definitions of \(c_{v,w}\) and \(c_v\), so we are done.
	\end{proof}

	\section{Examples}\label{sec:examples}
	
	In this section we give several examples of weighted Hermite--Einstein metrics. We also show in Corollary \ref{Cor:LineBund}that line bundles admit weighted Hermite--Einstein metrics for arbitrary weight functions, and in Proposition \ref{prop:deformation} that the weight function of a weighted Hermite--Einstein metric may be deformed to produce new weighted Hermite--Einstein metrics.
	
	\subsection{The Hermite--Einstein equation}
	
	Choosing \(v=w=1\), we recover the usual Hermite--Einstein equation. Note that by Donaldson's uniqueness of Hermite--Einstein metrics \cite{Don85}, given a holomorphic lift of the \(\T\)-action to a simple bundle \(E\), any Hermite--Einstein metric must be \(\T\)-invariant, otherwise we could generate new examples by pulling back the metric.\footnote{Uniqueness is only up to constant factors, but one may easily check using compactness of \(\T\) that pulling back by \(\T\) does not rescale the metric.} Thus, we do not lose anything by restricting to \(\T\)-invariant metrics.
	
	\subsection{Weighted Hermite--Einstein metrics on line bundles}
	
	The following is a simple corollary of the proof of Lemma \ref{vw-weighted-is-v-weighted}.
	
	\begin{cor}\label{Cor:LineBund}
		Let \(L\) be a \(\T\)-equivariant holomorphic line bundle. Then for any choice of weight function \(v\), \(L\) admits a \(v\)-weighted Hermite--Einstein metric.
	\end{cor}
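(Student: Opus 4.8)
The plan is to reuse the computation from the proof of Lemma~\ref{vw-weighted-is-v-weighted}, exploiting the fact that for a line bundle the weighted Hermite--Einstein equation is \emph{scalar}: its solvability reduces to that of a single linear elliptic equation whose integrability condition is built into the definition of $c_v$.

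First I would fix a $\T$-invariant Hermitian metric $h$ on $L$, obtained by averaging an arbitrary Hermitian metric over the compact torus (this works by convexity of the space of Hermitian metrics); then $\T$ acts on $(L,h)$ by holomorphic isometries, so the canonical moment map $\Phi_h$ of Proposition~\ref{prop:canonical_moment_map} is defined. Since $\End(L)$ is canonically trivial, $F_h+\Phi_h$ is an ordinary equivariant $2$-form on $X$ and $s_h:=\Lambda_{\omega,v}(F_h+\Phi_h)$ is a genuine $\T$-invariant smooth function; equation \eqref{eq:v-weighted-HE} for $h$ just reads $\frac{i}{2\pi}s_h=c_v$.

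Next, for a $\T$-invariant function $f$ I would set $h':=e^fh$ and repeat the calculation in the proof of Lemma~\ref{vw-weighted-is-v-weighted} --- which goes through verbatim since $\nabla^{h'}-\nabla^h=\d f\cdot\Id_L$ and $\db(\Id_L)=0$ --- to get $\frac{i}{2\pi}\Lambda_{\omega,v}(F_{h'}+\Phi_{h'})=\frac{i}{2\pi}s_h+\frac{1}{2\pi}\Delta_v f$, where $\Delta_v$ is the weighted Laplacian. Thus $h'$ is $v$-weighted Hermite--Einstein precisely when $f$ solves the Poisson equation $\Delta_v f=2\pi c_v-i\,s_h$. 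Since $\Delta_v$ is self-adjoint, elliptic, with kernel the constants (as already observed in the proof of Lemma~\ref{vw-weighted-is-v-weighted}), this has a solution if and only if $\int_X(2\pi c_v-i\,s_h)\,\omega^{[n]}=0$, and the solution may then be taken $\T$-invariant because the right-hand side is and $\Delta_v$ commutes with the $\T$-action.

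The only step requiring a check is that this integrability condition holds automatically, and it is essentially a tautology. Taking the trace of \eqref{eq:v-weighted-HE} and integrating against $\omega^{[n]}$ shows that $c_v$ must equal $(c_1(L)_\T\cdot v(\alpha_\T))/\Vol(X,\omega)$, which is precisely the value assigned to it in Lemma~\ref{constants-in-wHE}; since $(c_1(L)_\T\cdot v(\alpha_\T))=\int_X\frac{i}{2\pi}s_h\,\omega^{[n]}$ by definition of the weighted intersection number, we get $\int_X\frac{i}{2\pi}s_h\,\omega^{[n]}=\int_X c_v\,\omega^{[n]}$, which is the required vanishing. Then $h'=e^fh$ is the desired $v$-weighted Hermite--Einstein metric. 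The conceptual content is simply that for line bundles the weighted analogue of slope polystability is vacuous, so the full Kobayashi--Hitchin machinery is unnecessary; the existence question collapses to a scalar Poisson equation whose only obstruction is cohomological and automatically satisfied.
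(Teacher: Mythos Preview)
Your proof is correct and follows essentially the same route as the paper's: start from an arbitrary $\T$-invariant metric, conformally modify by $e^f$, and solve the resulting scalar weighted Poisson equation $\Delta_v f = 2\pi c_v - i\,s_h$, whose integrability condition is exactly the definition of $c_v$. The only differences are cosmetic --- you spell out the averaging step and the verification of the integrability condition via Lemma~\ref{constants-in-wHE}, whereas the paper simply writes ``where $c_v$ is the appropriate constant'' and leaves those checks implicit.
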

	
	\begin{proof}
		For any \(\T\)-invariant metric \(h\) on \(L\), \[\frac{i}{2\pi}\Lambda_{\omega, v}(F_h+\Phi_h) = g\,\Id_L\] for some \(\T\)-invariant smooth function \(g:X\to\mathbb{R}\). Let \(f\) solve the equation \(\Delta_v f = -g + c_v\), where \(c_v\) is the appropriate constant. Then \(h':=e^fh\) satisfies \[\frac{i}{2\pi}\Lambda_{\omega, v}(F_{h'}+\Phi_{h'}) = g\Id_L - g\Id_L + c_v\Id_L = c_v\Id_L.\qedhere\]
	\end{proof}
	
	\subsection{The exponentially weighted Hermite--Einstein equation and K{\"a}hler--Ricci solitons}\label{sec3.3}
	
	Let $(X,\omega,\xi)$ be  a compact K{\"a}hler--Ricci soliton: \begin{equation}\label{eq:KRsoliton}
		{\rm Ric}(\omega)-\omega=\frac{i}{2\pi}\d\db\langle\mu,\xi\rangle,
	\end{equation} where \(\Ric(\omega)\in c_1(X)\), and let $D$ be the corresponding Chern connection on the holomorphic tangent bundle $TX^{1,0}$. Using the Hermitian metric \(g\), we may transpose the \((0,1)\)-form part of the equation \eqref{eq:KRsoliton} to a \((1,0)\)-vector field to get \[\frac{-1}{2\pi}\Lambda_\omega R_\omega - i\,\Id_{TX^{1,0}} = \frac{1}{2\pi}\langle\Phi_\omega,\xi\rangle,\]
	where \(R_\omega\) is the full curvature tensor of \((X,\omega)\) and \(\Phi_\omega:=g^{-1}i\d\db\mu\) is a moment map for \(R_\omega\); see \cite[Proposition 4.1]{DH23}.\footnote{In \cite{DH23} the moment map is written as \(g^{-1}i\db\d\mu\), where \(\db\) is the del-bar operator of the holomorphic vector bundle \(T^*X^{1,0}\). Here, we instead work with the del and del-bar operators on forms, and \emph{define} \(g^{-1}(dz^j\wedge d\bar{z}^k):=g^{\bar{k}\ell}dz^j\otimes\frac{\d}{\d z^\ell}\). With this convention, the endomorphism \(g^{-1}i\d\db\mu\) used here agrees with that of \cite{DH23}; in local coordinates they are both written \(ig^{\bar{k}\ell}\d_j\d_{\bar{k}}\mu \,dz^j\otimes\frac{\d}{\d z^\ell}\).} Multiplying by \(ie^{\langle\mu,\xi\rangle}\), we see \[\frac{i}{2\pi}\Lambda_{\omega,e^{\langle-,\xi\rangle}}(R_\omega+\Phi_\omega) = e^{\langle\mu,\xi\rangle}\Id_{TX^{1,0}},\] hence a K{\"a}hler--Ricci soliton is an exponentially weighted Hermite--Einstein metric with weight functions \(v(\mu)=w(\mu)=e^{\langle\mu,\xi\rangle}\). Notice that $\Phi_\omega$ is {\it canonically normalized} in the sense of \cite{BGV04}:
	\[
	\langle \Phi_\omega,\xi\rangle=g^{-1}i\partial\db\langle\mu,\xi\rangle=D\xi=-\mathcal{L}^{TX^{1,0}}_\xi+ D_\xi,
	\]
	where $D$ is the Chern connection of $g$ and $\mathcal{L}^{TX^{1,0}}_\xi(V):=[\xi,V]=D_\xi V-D_V\xi$ is the naturally induced infinitesimal action. The second equality follows from the straightforward identity
	\[
	D\left(i(\db-\partial)\langle\mu,\xi\rangle\right)= i\partial\db\langle\mu,\xi\rangle+\frac{1}{2}\mathcal{L}_\xi g=i\partial\db \langle\mu,\xi\rangle,
	\]
	together with $g^{-1}\left(i(\db-\partial)\langle\mu,\xi\rangle\right)=\xi$ and the fact that \(g^{-1}\) commutes with \(D\).
	
	\subsection{\(v\)-solitons and weighted vortices}
	
	Recall the \(v\)-soliton equation of Han--Li \cite{HL23} defined by \[\mathrm{Ric}(\omega)-\omega=\frac{i}{2\pi}\d\db\log v(\mu).\] A natural question is whether a \(v\)-soliton on \(TX^{1,0}\) induces a weighted Hermite--Einstein metric on \(TX^{1,0}\). While we cannot ascertain the existence of a weighted Hermite--Einstein metric on \(X\) in this more general setting, we instead show that the existence of a \(v\)-soliton implies a suitably related ``weighted vortex" exists on \(TX^{1,0}\).
	
	\begin{lemma}\label{Lem:WKRS-Stab}
		Let $(X,\omega)$ be a $v$-soliton ${\rm Ric}(\omega)-\omega=\frac{i}{2\pi}\d\db\log v(\mu)$ with weight function $v\in C^\infty(P,\mathbb{R}_{>0})$. Then the induced Hermitian metric $h_\omega:=\frac{1}{2}(g-\sqrt{-1}\omega)$ on $TX^{1,0}$ solves a $v$-weighted vortex equation
		\[
		\begin{cases}
			\db^{TX^{1,0}}\varphi=0\\
			i(\Lambda_\omega R_{\omega}+\langle \Phi_{\omega}, d(\log v)(\mu)\rangle)- \left\langle \varphi^\star\otimes\varphi, \Hess(\log v)(\mu)\right\rangle= 2\pi\,\Id_{TX^{1,0}}.
		\end{cases}
		\]
		where $\varphi\in \Gamma(TX^{1,0})\otimes \tor^{*}$ is given by $\varphi(\xi):=\xi^{(1,0)}$, and \(\varphi^\star\in\Gamma(T^*X^{1,0})\otimes\tor^*\) is defined by \(\varphi^\star(\xi):=h_\omega(-,\xi^{1,0})\).
	\end{lemma}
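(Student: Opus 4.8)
The plan is to verify directly that the metric $h_\omega := \frac{1}{2}(g - \sqrt{-1}\omega)$, together with the section $\varphi(\xi) := \xi^{(1,0)}$, satisfies both equations of the $v$-weighted vortex system, using the $v$-soliton hypothesis and the explicit description of the moment map $\Phi_\omega$ recalled in Section \ref{sec3.3}. The first equation $\db^{TX^{1,0}}\varphi = 0$ should be the easy part: for $\xi \in \mathfrak{t}$ the vector field $\xi^{(1,0)}$ is the $(1,0)$-part of a real holomorphic Killing field, hence $\db^{TX^{1,0}}\xi^{(1,0)} = 0$ since $\T$ acts holomorphically; this is standard and I would dispatch it in a line, perhaps recalling that $\mathcal{L}_\xi J = 0$ forces holomorphicity of $\xi^{(1,0)}$.

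The substance is the second equation. First I would rewrite the $v$-soliton equation ${\rm Ric}(\omega) - \omega = \frac{i}{2\pi}\d\db\log v(\mu)$ by expanding $\d\db\log v(\mu)$ via the chain rule: $\d\db \log v(\mu) = \langle d(\log v)(\mu), \d\db\mu\rangle + \langle \d\mu, \Hess(\log v)(\mu)\, \db\mu\rangle$, where the second term is the Hessian of $\log v$ paired against $\d\mu\otimes\db\mu$. Transposing the $(0,1)$-form part to an endomorphism of $TX^{1,0}$ using $g$ — exactly as in Section \ref{sec3.3} where the Ricci form becomes $-\frac{1}{2\pi}\Lambda_\omega R_\omega$ up to the trace/contraction bookkeeping — the term $\langle d(\log v)(\mu), \d\db\mu\rangle$ becomes $\langle \Phi_\omega, d(\log v)(\mu)\rangle$ by definition of $\Phi_\omega = g^{-1} i\d\db\mu$, and the $-\omega$ term on the left becomes the $2\pi\,\Id_{TX^{1,0}}$ on the right after multiplying through by the appropriate constant. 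The remaining Hessian term is where $\varphi^\star\otimes\varphi$ enters: I need to check that under the $g$-transpose, $\langle \d\mu, \Hess(\log v)(\mu)\,\db\mu\rangle$ corresponds precisely to $\langle \varphi^\star\otimes\varphi, \Hess(\log v)(\mu)\rangle$. This amounts to the identity $g^{-1}(\d\mu^a \otimes \db\mu^b) = \varphi^\star(\,\cdot\,)^{a}\otimes\varphi(\,\cdot\,)^{b}$ in suitable index notation, which should follow from the fact that $\d\mu^\xi$ is $g$-dual to $\xi^{(1,0)}$ (the defining property $\d\mu^\xi = -\iota_{\xi}\omega$ combined with compatibility of $g,\omega,J$), so that $h_\omega(-,\xi^{(1,0)})$ and $\d\mu^\xi$ agree up to the normalization built into $h_\omega = \frac{1}{2}(g - \sqrt{-1}\omega)$.

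The main obstacle I anticipate is \emph{bookkeeping of normalization constants and conventions}: the factors of $2$, $2\pi$, and $i$ distributed among $h_\omega = \frac12(g - \sqrt{-1}\omega)$, the Chern connection $R_\omega$ versus the Ricci form, the moment map sign, and the $g^{-1}$ convention fixed in the footnote of Section \ref{sec3.3}. I would handle this by working in local holomorphic normal coordinates at a point, writing everything as $g^{\bar k \ell}\d_j\d_{\bar k}(\cdot)\,dz^j\otimes\partial_{z^\ell}$, checking that $\Lambda_\omega R_\omega$, $\Phi_\omega$, and $\varphi^\star\otimes\varphi$ all produce the expected local expressions, and confirming the soliton equation matches term-by-term. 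Once the dictionary between the two-form $v$-soliton equation and the endomorphism-valued vortex equation is pinned down, the proof reduces to multiplying equation \eqref{eq:KRsoliton}'s generalization by $i$ (or $ie^{(\cdot)}$-type factor, here just $i$ since $v$ enters logarithmically) and reading off the result, exactly in the spirit of the K\"ahler--Ricci soliton computation already carried out. I would also note at the end that when $v = e^{\langle\cdot,\xi\rangle}$ one has $\Hess(\log v) = 0$, so the vortex term drops and one recovers the exponentially weighted Hermite--Einstein equation of Section \ref{sec3.3}, as a consistency check.
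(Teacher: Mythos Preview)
Your proposal is correct and follows essentially the same route as the paper: expand $\d\db\log v(\mu)$ by the chain rule into a $d(\log v)$-term and a $\Hess(\log v)$-term, apply the $g$-transpose exactly as in Section~\ref{sec3.3} to convert the soliton equation on forms into an endomorphism equation, and then identify $g^{-1}(\d\mu^{\xi_a}\wedge\db\mu^{\xi_b})$ with $\varphi^\star(\xi_a)\otimes\varphi(\xi_b)$ via the $g$-duality between $\d\mu^\xi$ and $\xi^{(1,0)}$. The paper does precisely this, introducing the intermediate notation $\Upsilon_{ab}:=g^{-1}(\partial\mu^{\xi_a}\wedge\db\mu^{\xi_b})$ and stating the identity $\Upsilon_{ab}=h_\omega(-,\xi_a^{1,0})\otimes\xi_b^{1,0}$ directly rather than working in local coordinates; your anticipated bookkeeping check and the consistency remark for $v=e^{\langle\cdot,\xi\rangle}$ are sound additions but not needed for the argument.
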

	\begin{proof}
		Let $(\xi_a)_{a=1,\cdots,\ell}$ be a basis of $\mathbb{S}^1$ generators of $\tor$. Expanding the $v$-soliton equation gives
		\[
		\begin{split}
			{\rm Ric}(\omega)- \omega=& \frac{i}{2\pi}\left(\sum_{a=1}^\ell\frac{v_{,a}(\mu)}{v(\mu)}\partial\db\mu^{\xi_a}+\sum_{a,b=1}^\ell \left(\frac{v_{,b}(\mu)}{v(\mu)}\right)_{,a}\partial \mu^{\xi_a}\wedge \db\mu^{\xi_b}\right)\\
			=& \frac{i}{2\pi}\left(\langle  \partial\db\mu, d(\log v)(\mu)\rangle +  \left\langle \partial\mu \wedge \db\mu, \Hess(\log v)(\mu) \right\rangle\right)
		\end{split}
		\]
		Hence the Hermitian metric on $TX^{1,0}$ induced by the $v$-soliton $(g,\omega)$ satisfies the equation
		\begin{equation}\label{eq:awHE}
			-\frac{1}{2\pi} \Lambda_\omega R_{\omega}-\frac{1}{2\pi}\langle \Phi_{\omega}, d(\log v)(\mu)\rangle-i\,\Id_{TX^{1,0}}= \frac{i}{2\pi}\left\langle
			\Upsilon, \Hess(\log v)(\mu)\right\rangle
		\end{equation}
		where $\Upsilon_{ab}$ is the endomorphism  of $TX^{1,0}$ given by,
		\[
		\Upsilon_{ab}=g^{-1}(\partial\mu^{\xi_a}\wedge \db\mu^{\xi_b}).
		\] With the conventions of Section \ref{sec3.3}, one readily computes \[\Upsilon_{ab} = h_\omega(-,\xi_a^{1,0})\otimes \xi_b^{1,0} = \varphi^\star(\xi_a)\otimes\varphi(\xi_b).\]
	Hence, the Hermitian metric $h_\omega$ corresponding to the weighted soliton $(g,\omega)$ satisfies the following equation
	\[
	i(\Lambda_\omega R_{\omega}+\langle \Phi_{\omega}, d(\log v)(\mu)\rangle)- \left\langle \varphi^\star\otimes\varphi, \Hess(\log v)(\mu) \right\rangle= 2\pi\,\Id_{TX^{1,0}}.
	\]
	Lastly, we have $\db^{TX^{1,0}}\varphi=0$ as $\T$ acts holomorphically on $X$.
\end{proof}

More generally, for a $\T$-equivariant holomorphic vector bundle $E$ on a K\"ahler $\T$-manifold $(X,\omega)$ with weight functions $v,w\in C^{\infty}(\Pol, \mathbb{R}_{>0})$, we say that a $\T$-invariant Hermitian metric $h$ on $E$ together with a section $\varphi\in\Gamma(E)^{\T}\otimes\tor^*$ form a $(v,w)$-weighted vortex if the pair \((h,\varphi)\) solves the equation
\begin{equation}
	\begin{cases}\label{eq:v-vortex}
		\db^E\varphi=0,\\
		i\frac{\Lambda_{\omega,v}( F_{h}+\Phi_{h})}{v(\mu)} - \left\langle  \varphi^\star\otimes\varphi, \Hess(\log w)(\mu)\right\rangle=\tau\Id_E
	\end{cases}
\end{equation}
for some \(\tau\in\mathbb{R}\), where $\varphi^\star\otimes\varphi$ is the $\tor^{*}\otimes\tor^{*}$-valued section of $\End(E)$ given by
\begin{equation}\label{eq:phi*phi}
	(\varphi^\star\otimes\varphi)(s):=h(s, \varphi)\varphi,
\end{equation}
for all $s\in E$. The equation \eqref{eq:v-vortex} is a weighted version of the vortex equation of Bradlow \cite{Brad91}. A Kobayashi--Hitchin--Bradlow type correspondence for the weighted vortex equation with applications to the weighted Ricci solitons will appear in a subsequent paper \cite{HL2}.

\subsection{Transverse Hermite--Einstein metrics on Sasaki manifolds}\label{sec:transverse}

In this section, we show that for a certain choice of weight functions, one recovers a special case of the transverse Hermite--Einstein metrics studied by Biswas--Schumacher in the Sasaki setting \cite{BS10}, and Baraglia--Hekmati on more general foliated manifolds \cite{BH22}. The result is analogous to the correspondence between weighted cscK metrics with weight functions \(v(\mu) = (\langle\mu,\xi\rangle +a)^{-n-1}\), \(w(\mu) = (\langle\mu,\xi\rangle+a)^{-n-3}\) and constant scalar curvature Sasaki metrics explored in \cite{AC21, ACL21, AJL, Lah19}.

Let \((X,\omega)\) be a K{\"a}hler manifold whose metric \(\omega\) is \(i\) times the curvature of a Hermitian metric on an ample line bundle \(L\to X\). Denote by \(S\) the unit circle bundle of \(L^{*}\), which is a principal \(S^1\)-bundle on \(X\). The Chern connection on \(L\) induces a principal bundle connection \(A'\in\Omega^1(S,i\mathbb{R})\) on \(S\), whose curvature is \(i\omega\); we define \(A:=\frac{1}{i}A'\in \Omega^1(S,\mathbb{R})\). Denote by \(u\) the fundamental vector field on \(S\) generated by the \(S^1\)-action. Then \(A(u)=1\), and \(u\) is invariant under the \(S^1\)-action. We write \(D:=\ker A\) for the horizontal subspace of \(TS\).

Let \(E\to X\) be a holomorphic vector bundle with Hermitian metric \(h\). We write \(P\to X\) for the associated principal \(U(r)\)-frame bundle. The Chern connection on \(E\) induces a connection \(\theta\) on \(P\), which is a \(\mathfrak{u}(r)\)-valued 1-form on \(P\) satisfying:\begin{enumerate}
	\item \(g^*\theta = \mathrm{Ad}(g^{-1})\theta\) for all \(g\in U(r)\);
	\item \(\theta(a) = a\) for all \(a\in\mathfrak{u}(r)\), where on the left-hand side \(a\) is identified with the fundamental vector field it generates on \(P\).
\end{enumerate}

Now let \(\xi\) be a holomorphic Killing field on \(M\) admitting a Hamiltonian function \(f\), which we take to be strictly positive. Following \cite{AC21}, we twist the contact structure on \(S\) as follows. On \(S\), consider the contact form \(A_\xi:=\frac{1}{f}A\), and vector field \[u_\xi = fu + \xi_D,\] where \(\xi_D\) denotes the horizontal lift of \(\xi\) to \(S\). Since \(dA_\xi = -\frac{1}{f^2}df\wedge A + \frac{1}{f}dA\), these satisfy the following properties \begin{enumerate}
	\item \(A_\xi(u_\xi)=1\);
	\item \(dA_\xi(u_\xi,-)=0\);
	\item the restriction of \(dA_\xi\) to \(D\subset TS\) coincides with \(\frac{1}{f}\pi^*\omega|_D\).
	
\end{enumerate}

We assume that \(\xi\) has an \(F_h\)-Hamiltonian, i.e. a section \(\Phi_h^\xi\) of \(\mathrm{End}(E)\) satisfying \[\iota_\xi F_h = -\nabla^h\Phi_h^\xi,\] where \(\nabla^h\) is the Chern connection of \((E,h)\) extended to \(\End(E)\) and \(F_h\) is the curvature of \(\nabla^h\). In terms of the principal bundle \(P\), the section \(\Phi_h^\xi\) may be considered as a smooth map \(\Phi:P\to\mathfrak{u}(r)\) satisfying \[g^*\Phi = \mathrm{Ad}(g^{-1})\Phi\] for all \(g\in U(r)\). The moment map condition may then be written on the principal bundle as \[\iota_{\bar{\xi}}(d\theta) = -d_{\mathcal{H}}\Phi,\] where \(d_{\mathcal{H}}\Phi\) denotes the restriction of \(d\Phi\) to the horizontal subspace \(\mathcal{H}:=\ker\theta\), and \(\bar{\xi}\in\mathcal{H}\) denotes the horizontal lift of \(\xi\) from \(X\) to \(P\).

Now we further pull back the bundles \(E\) and \(P\) to \(S\); we shall denote these pullbacks via the symbols \(\tilde{E}\) and \(\tilde{P}\) respectively. Similarly, the pullback of the connection \(\theta\) is denoted by \(\tilde{\theta}\), etc. Our aim is to modify \(\tilde{\theta}\) to \(\tilde{\theta}'\), so that \(\tilde{\theta}'\) is basic with respect to the horizontal lift of \(u_\xi\) to \(\tilde{P}\) with respect to \(\tilde{\theta}'\). To modify the connection form, we may add to it an arbitrary Ad-invariant horizontal 1-form on \(\tilde{P}\) with values in \(\mathfrak{u}(r)\). We thus define \[\tilde{\theta}':=\tilde{\theta} - A_\xi \tilde{\Phi},\] where \(A_\xi\) is pulled back from \(S\). This is a connection on \(\tilde{P}\); denote by \(\tilde{\mathcal{H}}'\) its horizontal subspace, and let \(\bar{u}_\xi\in\tilde{\mathcal{H}}\) (resp. \(\bar{u}_\xi'\in\tilde{\mathcal{H}}'\)) be the horizontal lift of \(u_\xi\) to \(\tilde{P}\) with respect to \(\tilde{\theta}\) (resp. \(\tilde{\theta}'\)).

\begin{lemma}\label{lem:basic}
	The form \(\tilde{\theta}'\) is basic with respect to \(\bar{u}_\xi'\).
\end{lemma}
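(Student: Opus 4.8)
To show that $\tilde\theta'$ is basic with respect to $\bar u_\xi'$, I need to verify the two defining conditions of a basic form for a principal connection on $\tilde P$ relative to the vector field $\bar u_\xi'$: first, that $\iota_{\bar u_\xi'}\tilde\theta' = 0$, and second, that $\mathcal L_{\bar u_\xi'}\tilde\theta' = 0$ (equivalently, $\iota_{\bar u_\xi'} d\tilde\theta' = 0$, since $\iota_{\bar u_\xi'}\tilde\theta'$ already vanishes and Cartan's formula gives $\mathcal L_{\bar u_\xi'}\tilde\theta' = \iota_{\bar u_\xi'} d\tilde\theta' + d\iota_{\bar u_\xi'}\tilde\theta'$). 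The first condition is immediate: $\bar u_\xi'$ is by definition the horizontal lift of $u_\xi$ with respect to the connection $\tilde\theta'$, so it lies in $\tilde{\mathcal H}'=\ker\tilde\theta'$. So the entire content is in checking $\iota_{\bar u_\xi'} d\tilde\theta' = 0$.

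For the second condition, the strategy is to express everything in terms of the original connection $\tilde\theta$ and its horizontal lift $\bar u_\xi$, for which we have explicit formulas. Write $\bar u_\xi' = \bar u_\xi + Z$, where $Z$ is the vertical vector field on $\tilde P$ generated by $\theta(\bar u_\xi') $; since $\tilde\theta'(\bar u_\xi') = 0$ means $\tilde\theta(\bar u_\xi') = A_\xi(u_\xi)\tilde\Phi = \tilde\Phi$ (using $A_\xi(u_\xi)=1$ and that $\bar u_\xi$, being $\tilde\theta$-horizontal, contributes nothing), we get that $Z$ is the fundamental vector field generated by $\tilde\Phi \in \mathfrak u(r)$ — i.e. $\bar u_\xi' = \bar u_\xi + \widehat{\tilde\Phi}$, where $\widehat{(\cdot)}$ denotes the fundamental vector field map. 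Now differentiate $\tilde\theta' = \tilde\theta - A_\xi\tilde\Phi$: using that $\tilde\theta$ and $\tilde\Phi$ are pulled back from $P$ while $A_\xi$ is pulled back from $S$, compute $d\tilde\theta' = d\tilde\theta - dA_\xi\,\tilde\Phi + A_\xi\wedge d\tilde\Phi$. Then contract with $\bar u_\xi' = \bar u_\xi + \widehat{\tilde\Phi}$ term by term, using: (a) the structure equation $d\tilde\theta = -\tfrac12[\tilde\theta,\tilde\theta] + \tilde F$ with $\tilde F$ horizontal, together with the known contraction $\iota_{\bar u_\xi}\tilde F$ coming from the $F_h$-Hamiltonian condition $\iota_{\bar\xi}(d\theta) = -d_{\mathcal H}\Phi$ lifted to $S$ (noting $\bar u_\xi$ projects to $fu + \xi_D$ on $S$, hence to $\xi$ on $X$ via the further projection, since $u$ maps to $0$ there); (b) the contact identities $A_\xi(u_\xi)=1$, $\iota_{u_\xi} dA_\xi = 0$; and (c) the standard facts that $\iota_{\widehat{a}} d\tilde\theta = -\mathrm{ad}(a)\tilde\theta$ for $a\in\mathfrak u(r)$, that $\tilde F$ is horizontal so $\iota_{\widehat a}\tilde F = 0$, and that $\iota_{\widehat a} A_\xi = 0$ since $A_\xi$ is horizontal-basic on $S$.

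The key cancellation I expect: the $\mathrm{ad}$-type terms arising from $\iota_{\widehat{\tilde\Phi}} d\tilde\theta$ should cancel against the $A_\xi\wedge d\tilde\Phi$ term contracted with $\bar u_\xi$ (which produces $(\iota_{\bar u_\xi} A_\xi)\,d\tilde\Phi - A_\xi\,\iota_{\bar u_\xi}d\tilde\Phi$, and $\iota_{\bar u_\xi}A_\xi = A_\xi(fu+\xi_D) = f\cdot\tfrac1f = 1$ by property (1) of $A_\xi$), together with the $\mathrm{ad}$-part of the structure equation; meanwhile the $-dA_\xi\,\tilde\Phi$ term contracted with $\bar u_\xi$ vanishes because $\iota_{u_\xi}dA_\xi = 0$. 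Finally the horizontal curvature contribution $\iota_{\bar u_\xi}\tilde F$ must cancel against $d_{\mathcal H}\tilde\Phi$ — this is precisely the twisted/lifted moment-map identity, the Sasaki-side analogue of the computation in \cite{AC21}, and it is the genuine geometric input rather than bookkeeping.

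\textbf{Main obstacle.} The routine part is the Leibniz/Cartan bookkeeping; the real difficulty is organizing the curvature and moment-map terms so that the $F_h$-Hamiltonian condition $\iota_\xi F_h = -\nabla^h\Phi_h^\xi$ — transported to the horizontal distribution on $\tilde P$ and combined with the twisting by $A_\xi$ and the extra horizontal component $\xi_D$ in $u_\xi$ — produces an exact cancellation with $d_{\mathcal H}\tilde\Phi$. One must be careful that $\bar u_\xi$ is the $\tilde\theta$-horizontal lift of $u_\xi = fu + \xi_D$ (not of $\xi$ directly), so its projection to $X$ is $\xi$ and the relevant curvature contraction is indeed $\iota_\xi F_h$ pulled back, with the factor $f$ absorbed correctly against the $\tfrac1f$ in $A_\xi = \tfrac1f A$. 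Getting these scalar factors and the Ad-equivariance of $\tilde\Phi$ to line up is where care is needed; everything else is formal.
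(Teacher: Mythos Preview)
Your proposal is correct and follows essentially the same route as the paper's proof: both decompose $\bar u_\xi' = \bar u_\xi + \widehat{\tilde\Phi}$, expand $d\tilde\theta' = d\tilde\theta - dA_\xi\,\tilde\Phi + A_\xi\wedge d\tilde\Phi$, use the moment map condition $\iota_{\bar\xi}d\theta = -d_{\mathcal H}\Phi$ for the horizontal cancellation, and use the Ad-equivariance of $\Phi$ (equivalently, the structure equation restricted to the vertical) for the remaining bracket term $d\tilde\theta(\widehat{\tilde\Phi},-) = -[\tilde\Phi,\tilde\theta]$ against $d_{\mathcal V}\tilde\Phi$. One small item your outline does not flag explicitly is the vanishing of $d\tilde\Phi(\widehat{\tilde\Phi})$, which enters when contracting $A_\xi\wedge d\tilde\Phi$ with $\widehat{\tilde\Phi}$; the paper handles this by the fibrewise computation $\Phi(g)=g^{-1}xg \Rightarrow v_\Phi(\Phi)=0$, but it is routine once noticed (it amounts to $[\Phi,\Phi]=0$).
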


\begin{proof} Let us compute the horizontal subspace \(\tilde{\mathcal{H}}'\) determined by \(\tilde{\theta}'\). The tangent space to \(S\) decomposes as \(TS = D\oplus (\mathbb{R}\cdot u_\xi)\), where \(D=\ker A = \ker A_\xi\). Denote by \(\bar{D}\subset T\tilde{P}\) the horizontal lift of \(D\) to \(\tilde{P}\) with respect to the connection \(\tilde{\theta}\). Note this is still horizontal for the connection \(\tilde{\theta}'\), since \(A_\xi\) vanishes on \(D\). Hence the horizontal subspaces \(\tilde{\mathcal{H}}\) and \(\tilde{\mathcal{H}}'\) differ only in the direction of \(u_\xi\). Since \(\bar{u}_\xi\) and \(\bar{u}_\xi'\) are both horizontal lifts of \(u_\xi\), \[\bar{u}_\xi' = \bar{u}_\xi+v,\] for some vertical vector field \(v\) on \(\tilde{P}\). Computing, \begin{align*}
		0 &= \tilde{\theta}'(\bar{u}_\xi') \\
		&= \tilde{\theta}(\bar{u}_\xi)+\tilde{\theta}(v)-A_\xi(\bar{u}_\xi)\tilde{\Phi}-A_\xi(v)\tilde{\Phi} \\
		&= \tilde{\theta}(v)-\tilde{\Phi}.
	\end{align*} Now, \(\Phi:P\to\mathfrak{u}(r)\) generates a tautological vertical vector field \(v_\Phi\) on \(P\) defined by letting \(v_\Phi(p)\) be the vertical tangent vector at \(p\) generated by the Lie algebra element \(\Phi(p)\). It is thus clear that the relation \(\tilde{\theta}(v)-\tilde{\Phi}=0\) implies \(v=\tilde{v}_\Phi\), so we have computed \(\bar{u}_\xi'=\bar{u}_\xi+\tilde{v}_\Phi\).
	
	By definition of the horizontal lift, \(\tilde{\theta}'(\bar{u}_\xi')=0\). To show \(\tilde{\theta}'\) is basic with respect to \(\bar{u}_\xi'\), it therefore remains to show that \(\iota_{\bar{u}_\xi'}d\tilde{\theta}'=0\). We compute \begin{align*}
		\iota_{\bar{u}_\xi'}d\tilde{\theta}' &= \iota_{\bar{u}_\xi'}(d\tilde{\theta}-dA_\xi \tilde{\Phi} +A_\xi\wedge d\tilde{\Phi}) \\
		&= \iota_{\bar{u}_\xi}d\tilde{\theta} + d\tilde{\theta}(\tilde{v}_\Phi,-) - dA_\xi(\bar{u}_\xi',-)\tilde{\Phi} +A_\xi(\bar{u}_\xi')d\tilde{\Phi} - A_\xi d\tilde{\Phi}(\bar{u}_\xi') \\
		&= \iota_{\bar{u}_\xi}d\tilde{\theta} + d\tilde{\theta}(\tilde{v}_\Phi,-)+d\tilde{\Phi} - A_\xi d\tilde{\Phi}(\bar{u}_\xi) - A_\xi d\tilde{\Phi}(\tilde{v}_\Phi).
	\end{align*} First note \(d\tilde{\Phi}(\bar{u}_\xi) = -d\tilde{\theta}(\bar{u}_\xi,\bar{u}_\xi) = 0\) by the moment map condition, since \(\bar{u}_\xi\) projects to \(\bar{\xi}\) under the map \(\tilde{P}\to P\). Next, we claim that \(d\tilde{\Phi}(\tilde{v}_\Phi)\) vanishes. To see this, note we may restrict the computation to a fibre of \(\tilde{P}\), in particular dropping the tilde notation. By choosing an arbitrary point of the fibre, we may identify the fibre (non-canonically) with \(U(r)\). The restricted moment map can be considered as \(\Phi:U(r)\to\mathfrak{u}(r)\), satisfying \(R_g^*\Phi = \mathrm{Ad}(g^{-1})\Phi\) for all \(g\in U(r)\), which implies \[\Phi(g) = \mathrm{Ad}(g^{-1})\Phi(e) = g^{-1}xg\] for all \(g\in U(r)\), where we define \(x:=\Phi(e)\), and \(e\) is the identity of \(U(r)\). To compute \(v_\Phi\), note that if we have an arbitrary element \(y\in\mathfrak{u}(r)\), the vector field at \(g\) generated by \(y\) via the right action is computed as \[\left.\frac{d}{dt}\right|_{t=0}g\exp(ty) = gy.\] Thus, the vector field generated by \(\Phi(g)\) at \(g\) is given by \[v_\Phi(g) = g\Phi(g) = xg.\] We now differentiate \(\Phi\) in the direction of \(v_\Phi\), to get \[v_\Phi(\Phi)(g) = \left.\frac{d}{dt}\right|_{t=0}\Phi(\exp(tx)g)= \left.\frac{d}{dt}\right|_{t=0}g^{-1}\exp(-tx)x\exp(tx)g=0.\]
	
	Returning to \(\tilde{P}\), we have now shown that \[\iota_{\bar{u}_\xi'}d\tilde{\theta}' = \iota_{\bar{u}_\xi}d\tilde{\theta} + d\tilde{\theta}(\tilde{v}_\Phi,-) + d\tilde{\Phi}.\] Note that \(\iota_{\bar{u}_\xi}d\tilde{\theta} + d_{\mathcal{H}}\tilde{\Phi}=0\) by the moment map condition, so we have \[\iota_{\bar{u}_\xi'}d\tilde{\theta}' = d\tilde{\theta}(\tilde{v}_\Phi,-) + d_{\mathcal{V}}\tilde{\Phi},\] and our aim is to show the right-hand side vanishes. The right-hand side is pulled back from \(P\), so we may compute there. Note that \(d\theta+\frac{1}{2}[\theta,\theta]=F\), where \(F\) is the curvature of the principal bundle \(P\). Isolating the vertical components of this equation, we have \(d\theta(v_\Phi,-) = -[\theta(v_\Phi),\theta]=-[\Phi,\theta]\). This is purely vertical, so we may again reduce to computing on a fibre, which we identify non-canonically with \(U(r)\). We now wish to show that \[d\Phi = [\Phi,\theta],\] where \(\theta\) is now the Maurer--Cartan form (for the right action) of \(U(r)\). Let \(a\in\mathfrak{u}(r)\) be an arbitrary element and denote by \(\bar{a}\) the right-invariant vector field on \(U(r)\) generated by \(a\), then we have \begin{align*}
		d\Phi(\bar{a})(g) &= \left.\frac{d}{dt}\right|_{t=0}\Phi(g\exp(ta)) \\
		&= \left.\frac{d}{dt}\right|_{t=0}\exp(-ta)g^{-1}xg\exp(ta) \\
		&= -ag^{-1}xg + g^{-1}xga \\
		&= [g^{-1}xg,a] \\
		&= [\Phi,\theta(\bar{a})]. \qedhere
	\end{align*}
\end{proof}

Now, let us compute the curvature of \(\tilde{\theta}'\) and take its horizontal trace, i.e. its trace over the distribution \(D\subset TS\). On \(\tilde{P}\), the curvature of \(\tilde{\theta}'\) is the restriction to \(\tilde{\mathcal{H}}'=\bar{D}\oplus(\mathbb{R}\cdot\bar{u}_\xi')\) of \[d\tilde{\theta}-dA_\xi \tilde{\Phi} +A_\xi\wedge d\tilde{\Phi}.\] Restricting this to \(\bar{D}\) yields \(d\tilde{\theta}|_{\bar{D}} - \frac{1}{f}\omega\tilde{\Phi}\). The first term is the curvature of \((\tilde{P},\tilde{\theta})\) in the \(D\)-directions. Computing the trace of this with respect to \(dA_\xi|_D = \frac{1}{f}\omega\) and translating back to vector bundles, we get \[f\Lambda_\omega F_h - n \Phi_h.\] Suppose this is a constant multiple of the identity, so that by Lemma \ref{lem:basic} we have a transverse Hermite--Einstein metric in the sense of \cite{BS10, BH22}. Then \[\frac{i}{2\pi}\left(f\Lambda_\omega F_h - n\Phi_h^\xi\right) = \lambda\Id_E,\] so multiplying by \(f^{-n-1}\), \[\frac{i}{2\pi}\left(f^{-n}\Lambda_\omega F_h - nf^{-n-1}\Phi_h^\xi\right) = \lambda f^{-n-1}\Id_E.\] Defining \(v(\mu) = f^{-n}\) and \(w(\mu)=f^{-n-1}\), this is the \((v,w)\)-weighted Hermite--Einstein equation.

\begin{prop}
	The metric \(h\) on \(E\to X\) is \((v,w)\)-weighted Hermite--Einstein with weights \(v(\mu) = (\langle \mu,\xi\rangle+a)^{-n}\) and \(w(\mu) = (\langle\mu,\xi\rangle+a)^{-n-1}\) if and only if the induced connection \(\tilde{\theta}\) on \(\tilde{E}\to S\) is transversally Hermite--Einstein with respect to the vector field \(u_\xi\).
\end{prop}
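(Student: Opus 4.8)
The statement is, in effect, the organizing principle behind the computation carried out immediately above it, so the proof is mostly bookkeeping. I would begin by recalling precisely what transverse Hermite--Einstein means in the sense of \cite{BS10, BH22}. The manifold $S$ carries the one-dimensional foliation generated by $u_\xi$, with transverse Kähler form the restriction of $dA_\xi$ to $D = \ker A_\xi$, which by the third of the listed properties of the twisted contact data equals $\frac{1}{f}\pi^*\omega|_D$. A connection on $\tilde E \to S$ is a transverse connection when its connection form is basic for this foliation, and it is transversally Hermite--Einstein when the contraction of its curvature along $D$, with respect to the transverse Kähler form, is a constant multiple of $\Id_E$. By Lemma \ref{lem:basic} the modified connection $\tilde\theta'$ is a transverse connection, and since this modification is canonically determined by $\tilde\theta$ together with the fixed data on $S$, the assertion ``$\tilde\theta$ is transversally Hermite--Einstein with respect to $u_\xi$'' is understood to mean that $\tilde\theta'$ solves the transverse Hermite--Einstein equation.

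Next I would invoke the curvature computation preceding the proposition: restricting the curvature of $\tilde\theta'$ to $\bar D$ gives $d\tilde\theta|_{\bar D} - \frac{1}{f}\omega\,\tilde\Phi$, and contracting with $dA_\xi|_D = \frac{1}{f}\omega$ and passing back to $\End(E)$ yields $f\Lambda_\omega F_h - n\Phi_h^\xi$ --- here the factor $f$ comes from the identity $\Lambda_{\frac{1}{f}\omega}\eta = f\Lambda_\omega\eta$ for $(1,1)$-forms $\eta$, together with $\Lambda_{\frac{1}{f}\omega}(\frac{1}{f}\omega) = n$. Thus $\tilde\theta$ is transversally Hermite--Einstein with respect to $u_\xi$ if and only if there is a real constant $\lambda$ with $\frac{i}{2\pi}(f\Lambda_\omega F_h - n\Phi_h^\xi) = \lambda\,\Id_E$. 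Now write $f = \langle\mu,\xi\rangle + a$ (any Hamiltonian for $\xi$ differs from $\langle\mu,\xi\rangle$ by a constant, and $a$ is chosen so that $f > 0$), and set $v(\mu) = f^{-n}$, $w(\mu) = f^{-n-1}$; since the differential of $p \mapsto \langle p,\xi\rangle$ on $P \subset \mathfrak{t}^*$ is the constant map to $\xi \in \mathfrak{t}^{**}\cong\mathfrak{t}$, we have $dv(p) = -n(\langle p,\xi\rangle+a)^{-n-1}\xi$, so that
\[
\Lambda_{\omega,v}(F_h+\Phi_h) = v(\mu)\Lambda_\omega F_h + \langle\Phi_h, dv(\mu)\rangle = f^{-n}\Lambda_\omega F_h - n f^{-n-1}\Phi_h^\xi = f^{-n-1}\bigl(f\Lambda_\omega F_h - n\Phi_h^\xi\bigr).
\]
Because $f > 0$, the equation $\frac{i}{2\pi}(f\Lambda_\omega F_h - n\Phi_h^\xi) = \lambda\,\Id_E$ is equivalent to $\frac{i}{2\pi}\Lambda_{\omega,v}(F_h+\Phi_h) = \lambda\,w(\mu)\Id_E$, which is exactly the $(v,w)$-weighted Hermite--Einstein equation with $c_{v,w} = \lambda$. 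Reading this equivalence in each direction yields the two implications of the proposition.

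I do not expect a genuine obstacle here: the two pieces of real geometric content --- the basic-ness of $\tilde\theta'$ and the identification of its transverse curvature trace --- are already established. The only point requiring care is the bookkeeping distinguishing $\tilde\theta$ from $\tilde\theta'$: one must be explicit that the transverse Hermite--Einstein condition is imposed on $\tilde\theta'$, since $\tilde\theta$ itself is not a transverse connection. The scaling relations between $\Lambda_{\frac{1}{f}\omega}$ and $\Lambda_\omega$ are the only computation involved, and are entirely routine.
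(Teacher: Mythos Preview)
Your proposal is correct and follows essentially the same approach as the paper: the paper gives no separate proof after the proposition, since the computation immediately preceding it (the curvature of $\tilde\theta'$ restricted to $\bar D$, its trace against $dA_\xi|_D$, and the multiplication by $f^{-n-1}$) already establishes the equivalence, and the proposition is stated as a summary. Your write-up simply reorganizes that computation into a formal argument, and your remark about the need to interpret ``$\tilde\theta$ is transversally Hermite--Einstein'' as a condition on $\tilde\theta'$ is a fair clarification of the paper's slightly loose phrasing.
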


Suppose that \(\tilde{E}\to S\) admits a transversally Hermite--Einstein connection with respect to the vector field \(u_\xi\). This connection may not be induced by a connection on \(E\) in the above manner, so we cannot immediately conclude that \(E\) admits a weighted Hermite--Einstein metric. We will show in Proposition \ref{prop:transverse-converse} that it is indeed the case that the existence of a transversally Hermite--Einstein connection on \(\tilde{E}\to S\) implies the existence of a \((v,w)\)-weighted Hermite--Einstein metric on \(E\to X\).

\subsection{Polynomially weighted Hermite--Einstein metrics}\label{sec:poly-weights}

In this section, after reviewing the semi-simple fibration construction for a K\"ahler manifold $(X,\omega,\T)$, we use the associated bundle construction to construct from a $\T$-equivariant holomorphic vector bundle $E$ over $X$, a $\T$-equivariant holomorphic vector bundle $E_Q$ over the total space $X_Q$ of the semi-simple fibration. We show that under the associated connection construction, the weighted Hermite--Einstein equation on \(E\to X\) corresponds to the Hermite--Einstein equation on \(E_Q\to X_Q\).

Let $(X, \omega)$ be a compact K\"ahler manifold of dimension \(n\),  endowed with a Hamiltonian isometric action of an $\ell$-dimensional torus $\T$ and $\mu : X \to \Pol  \subset \tor^*$ a fixed moment map. We let $\pi_B : Q \to B$ be a principal $\T$-bundle over an $m$-dimensional manifold $B= B_1 \times \cdots \times B_N$, where for each $\alpha=1, \ldots, N$, $(B_\alpha, \omega_{B_\alpha})$ is a compact projective manifold of dimension \(n_\alpha\). We further assume that \(Q\) is the reduction to \(\T\) of a holomorphic principal bundle with structure group \(\T^{\mathbb{C}}\), and let \(\eta\in\Omega^1(Q,\mathfrak{t})\) be a principal bundle connection on \(Q\) with curvature
\begin{equation}\label{eq:cond-c1Q}
	d\eta = \sum_{\alpha=1}^N \pi_B^* \omega_{B_\alpha}\otimes p_\alpha, \qquad p_\alpha \in \Lambda \subset \tor,
\end{equation} where \(\Lambda\subset\mathfrak{t}\) denotes the integer lattice in \(\mathfrak{t}\); see \cite[Remark 5.1]{AJL} for a construction of such a principal bundle. 
Following \cite[Section 5]{AJL}, we consider the associated fibration
\[
X_Q:=Q\times_\T X\overset{\pi}{\to} B,
\]
where $Q\times_\T X:=(Q\times X)/\T$ with $\T$-action on the product given by $t\cdot(q, x)=(q\cdot t^{-1}, t\cdot x)$ for $x\in X$, $q\in Q$, and $t\in\T$. As per \cite{AJL}, this fibration is endowed with a natural integrable complex structure making \(X_Q\to B\) a holomorphic submersion.

Fixing constants $c_\alpha \in \R$ such
that for each $\alpha = 1,\cdots,N$, the affine linear function $\langle\mu,p_\alpha\rangle+c_\alpha>0$ is strictly positive on the momentum image $\Pol$ of $X$, we define the $2$-form $\omega_Q$ on $Q\times X$ by
\begin{equation}\label{Y-Kahler}
	\begin{split}
		\omega_Q  &:= \omega  + \sum_{\alpha=1}^N (\langle \mu, p_\alpha\rangle + c_\alpha)\pi_B^*\omega_{B_\alpha} +  \langle d\mu \wedge  \eta \rangle \\
		&= \omega + \sum_{\alpha=1}^N c_\alpha (\pi_B^*\omega_{B_\alpha}) + d\left(\langle \mu, \eta \rangle\right).
	\end{split}
\end{equation}
The form \(\omega_Q\) is $\T$-basic, so is the pull back of a K\"ahler form, still denoted $\omega_Q$, on the quotient $X_Q$. The K\"ahler manifold $(X_Q,\omega_Q)$ is called a \emph{semi-simple $(X, \T)$-fibration} associated to the K\"ahler manifold $(X, \T)$ and the torus bundle $Q\to B$ satisfying \eqref{eq:cond-c1Q}. Any K\"ahler metric $\omega_Q$ on $Y$ constructed in this way from a $\T$-invariant K\"ahler metric $\omega$ on $X$ and fixed K\"ahler metrics $\omega_\alpha$ on $B_\alpha$ is called \emph{bundle compatible}.

Now, let \(E\to X\) be a $\T$-equivariant holomorphic vector bundle. We consider the vector bundle 
\[
E_Q:=Q\times_\T E\to X_Q
\]
where $Q\times_\T E:=(Q\times E)/\T$ with $\T$-action on the product given by $t\cdot(q, e)=(q\cdot t^{-1}, t\cdot e)$ for $e\in E$, $q\in Q$, $t\in\T$. Using the  $\T$-bundle structure of $Q\times E\to E_Q$, the total space $E_Q$ inherits a complex structure $J_{E_Q}:=J_E\oplus \pi^*J_B$.

For any $\T$-invariant connection $\nabla$ on $E$ such that $\nabla^{0,1}$ induces the holomorphic structure $J_E$ of $E$, we write $\mathcal{H}^\nabla\subset TE$ the corresponding horizontal distribution inducing the splitting $TE=\mathcal{H}^\nabla\oplus VE$. By the Koszul--Malgrange theorem, $J_E$ is given by $\pi^*J_X\oplus J_{\mathbb{C}^r}$ where $\pi^*J_X$ is the complex structure on $\mathcal{H}^\nabla$ induced from the base and $J_{\mathbb{C}^r}$ is the standard complex structure on the fibers of $E\to X$. The relation $J_{E_Q}=J_{X_Q}\oplus J_{\mathbb{C}^r}$, shows that $J_{E_Q}$ is compatible with the vector bundle structure $E_Q\to X_Q$. 

Let $\Phi_\nabla$ be $\T$-momentum map for the curvature $F_\nabla$ of $\nabla$. We define the $\T$-invariant connection on the vector bundle $\pi_E\times\Id_Q:Q\times E\to Q\times X$ by
\begin{equation}\label{eq:nabla-bundle}
	\nabla^Q:=\nabla+\langle\Phi_\nabla\otimes \eta\rangle.
\end{equation}
In a basis of $\mathbb{S}^1$-generators $(\xi_a)$ of $\tor$, writing $\eta=\sum_a\eta_a\otimes\xi^Q_a$ where the 1-forms $\eta_a$ on $Q$ are such that $\eta_a(\xi_b^Q)=\delta_{ab}$, then
\[
\langle\Phi_\nabla\otimes \eta\rangle=\sum_{a=1}^\ell\Phi_\nabla^{\xi_a}\otimes\eta_a.
\]

To show that $\nabla^Q$ is the pullback of a connection (also denoted $\nabla^Q$) on $E_Q\to X_Q$ we need to verify the $\nabla^Q$ is $\T$-basic. First, it is straightforward to observe that \(\nabla^Q\) is \(\T\)-invariant; note this implies the corresponding connection \(\theta^Q\) on the frame bundle of \(E_Q\) satisfies \(\theta^Q(\xi)=0\) for all \(\xi\in\mathfrak{t}\), since the kernel of \(\theta^Q\) consists of parallel transported frames, and \(\nabla^Q\) being \(\T\)-invariant implies the infinitesimal \(\T\)-action on a frame induces parallel transport. To prove that \(d\theta^Q(\xi,-)=0\) for all \(\xi\in\tor\), we note the curvature of \(\theta^Q\) is \(d\theta^Q+\frac{1}{2}[\theta^Q,\theta^Q]\), and since \(\theta^Q(\xi)=0\) it suffices to show that the contraction of the curvature with any \(\xi\in\mathfrak{t}\) vanishes. Since the curvature on the frame bundle of \(E_Q\) descends to \(X_Q\), we instead compute the curvature of \(\nabla^Q\) as
\begin{equation}
	\begin{split}\label{eq:F_nablaQ}
		F_{\nabla^Q}=&F_\nabla+ \nabla^{\End(E)}( \langle\Phi_\nabla\otimes \eta\rangle) + \langle\Phi_\nabla\otimes \eta\rangle \wedge \langle\Phi_\nabla\otimes \eta\rangle\\
		=& F_\nabla+ \langle \nabla^{\End(E)}\Phi_\nabla\wedge\eta\rangle+ \langle\Phi_\nabla\otimes d\eta\rangle + \langle\Phi_\nabla\otimes \eta\rangle \wedge \langle\Phi_\nabla\otimes \eta\rangle\\
		=&F_\nabla+ \langle \nabla^{\End(E)}\Phi_\nabla\wedge \eta\rangle+ 
		\sum_{\alpha=1}^N(\pi_B^*\omega_{B_\alpha})\langle\Phi_\nabla,p_\alpha\rangle + \langle\Phi_\nabla\otimes \eta\rangle \wedge \langle\Phi_\nabla\otimes \eta\rangle \\
		=& F_\nabla+ 
		\sum_{\alpha=1}^N(\pi_B^*\omega_{B_\alpha})\langle\Phi_\nabla,p_\alpha\rangle+\sum_{a=1}^\ell \nabla^{\End(E)}\Phi^{\xi_a}_\nabla\wedge \eta_a + \sum_{a,b=1}^\ell\Phi_\nabla^{\xi_a}\Phi_\nabla^{\xi_b}\otimes\eta_a\wedge\eta_b.
	\end{split}    
\end{equation}
For $\xi_a\in\tor$, we have the vector field $\xi_a^X-\xi_a^Q$ on \(Q\times X\) and 
\[
\begin{split}
	&F_{\nabla^Q}(\xi_a^X-\xi_a^Q,-)\\
	=& F_\nabla(\xi_a^X-\xi_a^Q,-)+ \sum_{b} (\nabla^{\End(E)}\Phi_\nabla^{\xi_b}\wedge \eta_b)(\xi_a^X-\xi_a^Q,-)+\langle\Phi_\nabla\otimes\eta\rangle\wedge\langle\Phi_\nabla\otimes\eta\rangle(\xi_a^X-\xi_a^Q,-) \\
	=&F_\nabla(\xi_a^X,-)+ \sum_{b} (\nabla^{\End(E)}\Phi_\nabla^{\xi_b})(\xi_a^X)\otimes\eta_b+ \sum_{b} \delta_{ab}\nabla^{\End(E)}\Phi_\nabla^{\xi_b}-\sum_{b,c}\Phi_\nabla^{\xi_b}\Phi_\nabla^{\xi_c}\otimes(\delta_{ab}\eta_c-\delta_{ac}\eta_b)\\
	=&\sum_{b} F_{\nabla}(\xi_a^X,\xi_b^X)\otimes\eta_b -\sum_{b}^\ell[\Phi_\nabla^{\xi_a},\Phi_\nabla^{\xi_b}]\otimes\eta_b \\
	=& \,0,
\end{split}
\]
where in the final line we use \[[\Phi_\nabla^{\xi_a},\Phi_\nabla^{\xi_b}] = [\nabla_{\xi_a}-\mathcal{L}_{\xi_a},\nabla_{\xi_b}-\mathcal{L}_{\xi_b}] = [\nabla_{\xi_a},\nabla_{\xi_b}] = F_\nabla(\xi_a,\xi_b).\]  

It follows that the connection \(\nabla^Q\) is basic. We call the induced connection $\nabla^Q$ on $E_Q\to X_Q$ a \emph{bundle compatible connection}.

\begin{prop}\label{prop:polynomial}
	The connection \(\nabla\) on \(E\to (X,\omega)\) is a \((p,p)\)-weighted Hermite--Einstein metric, where
	\[p(\mu):=\prod_{\alpha=1}^{N}(c_\alpha+\langle\mu, p_\alpha\rangle)^{n_\alpha}, \] if and only if the connection \(\nabla^Q\) on \(E_Q\to (X_Q,\omega_Q)\) is Hermite--Einstein.
\end{prop}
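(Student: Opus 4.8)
The plan is to reduce the proposition to the single pointwise identity
\[
\Lambda_{\omega_Q}F_{\nabla^Q}=\frac{1}{p(\mu)}\,\Lambda_{\omega,p}(F_\nabla+\Phi_\nabla)
\]
of $\End(E_Q)$-valued functions on $X_Q$. Granting this, the equivalence is immediate: the left-hand side is a constant multiple of the identity exactly when the right-hand side is, with the same constant, so $\tfrac{i}{2\pi}\Lambda_{\omega_Q}F_{\nabla^Q}=\lambda\Id$ holds if and only if $\tfrac{i}{2\pi}\Lambda_{\omega,p}(F_\nabla+\Phi_\nabla)=\lambda\,p(\mu)\Id$, i.e.\ the $(p,p)$-weighted Hermite--Einstein equation holds with $c_{p,p}=\lambda$ (the two constants necessarily agreeing, as each is determined cohomologically). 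At the level of metrics: if $\nabla=\nabla^h$ then, since $h$ is $\T$-invariant, it descends to a Hermitian metric $h_Q$ on $E_Q=Q\times_\T E$; because $\Phi_h$ is skew-adjoint and $\eta$ is real-valued the twist $\langle\Phi_h\otimes\eta\rangle$ is skew-Hermitian, so $\nabla^Q=\nabla^h+\langle\Phi_h\otimes\eta\rangle$ is $h_Q$-unitary, and as in \cite{AJL} its $(0,1)$-part induces $J_{E_Q}$; hence $\nabla^Q=\nabla^{h_Q}$ and the identity above says exactly that $h_Q$ is Hermite--Einstein iff $h$ is $(p,p)$-weighted Hermite--Einstein.

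To prove the identity I would work upstairs on $Q\times X$, identifying tensors on $X_Q$ with $\T$-basic tensors restricted to the horizontal distribution $\mathcal{H}_\eta\oplus TX$ of $Q\times X\to X_Q$; this is legitimate because $\mathcal{H}_\eta\oplus TX$ is complementary to the $\T$-orbits and both $\omega_Q$ and $F_{\nabla^Q}$ are $\T$-basic, by \eqref{Y-Kahler} and the computation preceding the proposition. On $\mathcal{H}_\eta\oplus TX$ the connection form $\eta$ vanishes, so by \eqref{Y-Kahler} we get the block form $\omega_Q=\omega+\Omega_B$, where $\omega$ is supported on the $TX$-block and $\Omega_B:=\sum_\alpha f_\alpha\,\pi_B^*\omega_{B_\alpha}$ on the $\mathcal{H}_\eta\cong\pi^*TB$-block, with $f_\alpha:=\langle\mu,p_\alpha\rangle+c_\alpha>0$; likewise the two $\eta_a$-terms in \eqref{eq:F_nablaQ} drop out, leaving $F_{\nabla^Q}=F_\nabla+\sum_\alpha\langle\Phi_\nabla,p_\alpha\rangle\,\pi_B^*\omega_{B_\alpha}$ with $F_\nabla$ supported on the $TX$-block.

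Then I would expand the relevant top powers, using $\omega^{n+1}=0$ and $(\pi_B^*\omega_{B_\alpha})^{n_\alpha+1}=0$, so that in the $TX$--$TB$ bidegree only the $(n,m)$-part survives, where $m=\dim_{\mathbb{C}}B=\sum_\alpha n_\alpha$ and $\dim_{\mathbb{C}}X_Q=n+m$. Writing $\Theta:=\prod_\alpha(\pi_B^*\omega_{B_\alpha})^{n_\alpha}$, the multinomial theorem gives $\Omega_B^m=\tfrac{m!}{\prod_\alpha n_\alpha!}\,p(\mu)\,\Theta$, hence $\omega_Q^{n+m}=\tfrac{(n+m)!}{n!\prod_\alpha n_\alpha!}\,p(\mu)\,\omega^n\wedge\Theta$. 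In $F_{\nabla^Q}\wedge\omega_Q^{n+m-1}$ exactly two terms contribute: the term $F_\nabla\wedge\omega^{n-1}\wedge\Omega_B^m$, which produces $p(\mu)\Lambda_\omega F_\nabla$ via $F_\nabla\wedge\omega^{n-1}=\tfrac1n(\Lambda_\omega F_\nabla)\omega^n$; and the term $\big(\sum_\alpha\langle\Phi_\nabla,p_\alpha\rangle\pi_B^*\omega_{B_\alpha}\big)\wedge\omega^n\wedge\Omega_B^{m-1}$, which a second multinomial count shows equals $\tfrac{(m-1)!}{\prod_\alpha n_\alpha!}\,p(\mu)\big(\sum_\alpha\tfrac{n_\alpha}{f_\alpha}\langle\Phi_\nabla,p_\alpha\rangle\big)\,\omega^n\wedge\Theta$. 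Here the key point is that each $f_\alpha$ is affine with constant differential $df_\alpha\equiv p_\alpha\in\tor$, so $dp=p\sum_\alpha\tfrac{n_\alpha}{f_\alpha}p_\alpha$ and the bracket is exactly $\langle\Phi_\nabla,dp(\mu)\rangle$. The two binomial prefactors turn out to coincide, both equal to $\tfrac{(n+m-1)!}{n!\prod_\alpha n_\alpha!}$, so
\[
(n+m)\,F_{\nabla^Q}\wedge\omega_Q^{n+m-1}=\frac{(n+m)!}{n!\prod_\alpha n_\alpha!}\Big(p(\mu)\Lambda_\omega F_\nabla+\langle\Phi_\nabla,dp(\mu)\rangle\Big)\omega^n\wedge\Theta,
\]
and dividing by $\omega_Q^{n+m}$ yields the identity, using that $p(\mu)\Lambda_\omega F_\nabla+\langle\Phi_\nabla,dp(\mu)\rangle=\Lambda_{\omega,p}(F_\nabla+\Phi_\nabla)$.

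The main obstacle is the combinatorial bookkeeping in this last step: one must verify that the multinomial coefficients arising from expanding $\omega_Q^{n+m}$ and $F_{\nabla^Q}\wedge\omega_Q^{n+m-1}$ conspire so that the weight appearing in front of $\Lambda_\omega F_\nabla$, the function multiplying the cross term, and the weight in the denominator are all \emph{exactly} $p(\mu)=\prod_\alpha(c_\alpha+\langle\mu,p_\alpha\rangle)^{n_\alpha}$ --- this coincidence is precisely what forces both weights in the $(p,p)$-weighted equation to be the same function $p$ --- together with the routine but necessary checks that the computation may legitimately be carried out on $\mathcal{H}_\eta\oplus TX$ and that $\nabla^Q$ genuinely coincides with the Chern connection $\nabla^{h_Q}$ of the descended metric.
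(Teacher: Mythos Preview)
Your proposal is correct and proceeds along the same lines as the paper: both reduce the statement to the pointwise identity $\Lambda_{\omega_Q}F_{\nabla^Q}=p(\mu)^{-1}\Lambda_{\omega,p}(F_\nabla+\Phi_\nabla)$ (equation \eqref{eq:p-MeanCurv} in the paper) and then read off the equivalence. The only difference is packaging: the paper computes the ratio $F_{\nabla^Q}\wedge\omega_Q^{[n+m-1]}/\omega_Q^{[n+m]}$ on $Q\times X$ by wedging numerator and denominator with $\eta^{\wedge\ell}$ and invoking the ready-made expansions of $\omega_Q^{[n+m]}\wedge\eta^{\wedge\ell}$ and $\omega_Q^{[n+m-1]}\wedge\eta^{\wedge\ell}$ from \cite[Section~5]{AJL}, whereas you restrict to the horizontal distribution $\mathcal{H}_\eta\oplus TX$ (where the $\eta$-terms vanish) and redo the multinomial bookkeeping yourself. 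These are two equivalent ways to descend a ratio of basic top-forms to $X_Q$, and your coefficient check is accurate. Your added remark that $\nabla^Q$ is genuinely the Chern connection of the descended metric $h_Q$ is a useful clarification the paper leaves implicit.
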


\begin{proof}
	First, we compute the mean curvature of $(E,\nabla)\to(X,\omega)$ in in terms of the $p$-weighted mean curvature of $(E,\nabla)\to(X_Q,\omega_Q)$. Using \eqref{Y-Kahler}, one can easily check that the forms $\omega_Q^{[m+n]}$ and $\omega_Q^{[m+n-1]}$ on $X_Q$ are the push-forwards of the following basic forms on $Q\times X$ (see \cite[Section 5]{AJL}):
	\[
	\begin{split}
		\omega_Q^{[m+n]}\wedge\eta^{\wedge \ell} &= \left( p(\mu) \omega^{[n]}\wedge  \bigwedge_{\alpha=1}^{N} \pi^{*}_B\omega_\alpha^{[n_\alpha]}\right) \wedge\eta^{\wedge \ell},\\
		\omega_Q^{[m+n-1]}\wedge\eta^{\wedge \ell}=& \sum_{\alpha=1}^{N}\left(\frac{p(\mu)}{(\langle \mu,p_\alpha\rangle + c_\alpha)}\omega^{[n]}\wedge(\pi^{*}_B\omega_\alpha)^{[n_\alpha-1]}\wedge \bigwedge_{\substack{\beta=1\\\beta\neq \alpha}}^{N} (\pi^{*}_B\omega_\beta)^{[n_\beta]}\wedge\eta^{\wedge \ell}\right)\\
		&+ p(\mu) \omega^{[n-1]}\wedge \bigwedge_{\alpha=1}^{N} (\pi^{*}_B\omega_\alpha)^{[n_\alpha]}\wedge\eta^{\wedge \ell}.
	\end{split}
	\]
	where $\eta^{\wedge \ell}:=\eta_1\wedge\cdots\wedge\eta_\ell$. The mean curvature of $(E_Q,\nabla^Q)\to(X_Q,\omega_Q)$ is given by
	\[
	\begin{split}
		\Lambda_{\omega_Q} F_{\nabla^Q}=&\frac{i}{2\pi}\frac{F_{\nabla^Q}\wedge \omega_Q^{[n+m-1]}}{\omega_{Q}^{[n+m]}}\quad \text{(on $X_Q$)}\\
		=&\frac{F_{\nabla^Q}\wedge \omega_Q^{[n+m-1]}\wedge \eta^{\wedge\ell}}{\omega_{Q}^{[n+m]}\wedge \eta^{\wedge\ell}}\quad \text{(on $Q\times X$)}.
	\end{split}
	\]
	Using \eqref{eq:F_nablaQ}, the numerator of the above ratio satisfies:
	\[
	\begin{split}
		&F_{\nabla^Q}\wedge \omega_Q^{[n+m-1]}\wedge \eta^{\wedge\ell}\\
		=&\left(\sum_{\alpha=1}^N(\pi_B^*\omega_{B_\alpha})\langle\Phi_\nabla,p_\alpha\rangle\right) \wedge \sum_{\beta=1}^{N}\left(\frac{p(\mu)}{(\langle \mu,p_\beta\rangle + c_\beta)}\omega^{[n]}\wedge(\pi^{*}_B\omega_\beta)^{[n_\beta-1]}\wedge \bigwedge_{\substack{\gamma=1\\\gamma\neq \beta}}^{N} (\pi^{*}_B\omega_\gamma)^{[n_\gamma]}\wedge\eta^{\wedge\ell}\right)\\
		&+ p(\mu) F_\nabla\wedge\omega^{[n-1]}\wedge \bigwedge_{\alpha=1}^{N} (\pi^{*}_B\omega_\alpha)^{[n_\alpha]}\wedge\eta^{\wedge\ell}\\
		=& \left( \Lambda_\omega F_\nabla+\sum_{\alpha=1}^N  
		\frac{n_\alpha\langle\Phi_\nabla,p_\alpha\rangle}{(\langle \mu,p_\alpha\rangle + c_\alpha)}
		\right)p(\mu)\omega^{[n]}\wedge \bigwedge_{\alpha=1}^{N} (\pi^{*}_B\omega_\alpha)^{[n_\alpha]}\wedge\eta^{\wedge\ell}\\
		=&\left( \Lambda_\omega F_\nabla+\langle\Phi_\nabla,d(\log p)(\mu)\rangle
		\right)\omega^{[n+m]}_Q\wedge\eta^{\wedge\ell}\\
		=&\frac{\Lambda_{\omega,p}(F_\nabla+\Phi_\nabla)}{p(\mu)}\omega^{[n+m]}_Q\wedge\eta^{\wedge\ell},
	\end{split}
	\]
	where we use that $d(\log p)(\mu)=\sum_{\alpha=1}^N  
	\frac{n_\alpha p_\alpha}{\langle \mu,p_\alpha\rangle + c_\alpha}$. Substituting back we obtain
	\begin{equation}\label{eq:p-MeanCurv}
		\Lambda_{\omega_Q} F_{\nabla^Q}=\frac{\Lambda_{\omega,p}(F_\nabla+\Phi_\nabla)}{p(\mu)}.    
	\end{equation}
	Hence, $(E_Q,\nabla^Q)\to(X_Q,\omega_Q)$ is Hermite--Einstein if and only if $(E,\nabla)\to(X,\omega)$ is $(p,p)$-weighted Hermite--Einstein. 
\end{proof} 

Thus, if \(E\) admits a \((p,p)\)-weighted Hermite--Einstein metric, so too does \(E_Q\). However, similarly to the Sasaki case, the result does not imply that if \(E_Q\) admits a Hermite--Einstein metric then \(E\) admits a \((p,p)\)-weighted Hermite--Einstein metric, namely because the Hermite--Einstein metric on \(E\) may not be of the form \(\nabla^Q\) for some connection \(\nabla\) on \(E\). We will show in Proposition \ref{prop:semi-simple-fibration} that this implication is in fact true, using the weighted Kobayashi--Hitchin correspondence.

\subsection{Deformations}

Following \cite[Section 6.1.1]{HalThesis} in the weighted cscK case, one may consider deformations of the weight function in the weighted Hermite--Einstein setting. In particular, we have:

\begin{prop}\label{prop:deformation}
	Suppose that a \(E\) is simple and admits a \(v\)-weighted Hermite--Einstein metric. Let \(v_t:P\to\mathbb{R}_{>0}\) be a 1-parameter family of deformations of \(v\), so that \(t\in(-\epsilon,\epsilon)\) for some \(\epsilon>0\), the \(v_t\) vary smoothly with \(t\), and \(v_0=v\). Then for all \(t\) sufficiently close to \(0\), \(E\) admits a \(v_t\)-weighted Hermite--Einstein metric.
\end{prop}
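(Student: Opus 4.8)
The plan is to prove the statement by the implicit function theorem in Banach spaces, based at the given $v$-weighted Hermite--Einstein metric $h=h_0$. Fix H\"older completions $C^{k,\alpha}$ (say $k\geq 3$), parametrise nearby $\T$-invariant Hermitian metrics on $E$ as $h_s:=h\,e^{s}$ with $s$ a $\T$-invariant $h$-Hermitian endomorphism of $E$, and consider the map
\[
\mathcal{F}(s,\lambda,t):=\frac{i}{2\pi}\Lambda_{\omega,v_t}(F_{h_s}+\Phi_{h_s})-\lambda\,\Id_E,
\]
with $\lambda\in\R$ treated as an auxiliary unknown and $t\in(-\epsilon,\epsilon)$; its output is $h_s$-Hermitian, which I identify with an $h$-Hermitian endomorphism of class $C^{k-2,\alpha}$ in the usual way, so $\mathcal{F}$ takes values in a fixed H\"older space of $\T$-invariant $h$-Hermitian endomorphisms. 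Smoothness of $\mathcal{F}$ is routine: the weight $v_t$ (and hence the cohomological constant $c_{v_t}$ of Lemma \ref{constants-in-wHE}) depends smoothly on $t$, and curvature depends real-analytically on the metric. Since $h$ solves \eqref{eq:v-weighted-HE} for $v_0=v$, we have $\mathcal{F}(0,c_v,0)=0$. Keeping $\lambda$ as an unknown is a convenient device for handling the rescaling invariance of the equation; alternatively one could split off the determinant line bundle and invoke Corollary \ref{Cor:LineBund}.

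The heart of the argument is to show that the differential of $\mathcal{F}$ at $(0,c_v,0)$ is an isomorphism after a codimension-one restriction on $s$. Extending the local-frame computation of Lemma \ref{vw-weighted-is-v-weighted} from endomorphisms of the form $f\,\Id_E$ to a general endomorphism $s$, one gets $\nabla^{h_s}-\nabla^h=\d^h s+O(s^2)$, where $\d^h$ denotes the $(1,0)$-part of the Chern connection on $\End(E)$; hence by \eqref{eq:change-of-curvature} the differential in the $s$-direction is $\dot s\mapsto\frac{1}{2\pi}\Delta_v^{\End}\dot s$, where $\Delta_v^{\End}\dot s:=i\Lambda_{\omega,v}(\db^\eq\d^h\dot s)$ is the $\End(E)$-valued weighted Laplacian, reducing to the operator $\Delta_v$ of Lemma \ref{vw-weighted-is-v-weighted} on endomorphisms $f\,\Id_E$; the differential in the $\lambda$-direction is $-\dot\lambda\,\Id_E$. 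I then expect to check, exactly as in the scalar case, that $\Delta_v^{\End}$ is second-order elliptic (leading symbol $v(\mu)|\zeta|^2\,\Id$, positive since $v>0$), preserves $\T$-invariance and pointwise trace, and is self-adjoint and non-negative for the unweighted $L^2$ inner product on $\End(E)$, with $\int_X h(\Delta_v^{\End}\dot s,\dot s)\,\omega^{[n]}=\int_X v(\mu)|\d^h\dot s|^2\,\omega^{[n]}$ by the same divergence-form identity that makes $\Delta_v$ self-adjoint. Then $\ker\Delta_v^{\End}$ consists of the $h$-Hermitian $\dot s$ with $\d^h\dot s=0$; taking $h$-adjoints, such $\dot s$ are also $\db^{\End}$-closed, i.e. holomorphic, so simplicity of $E$ forces $\ker\Delta_v^{\End}=\R\,\Id_E$. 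Restricting $s$ to the closed subspace $\mathcal{S}_0:=\{s:\int_X\tr(s)\,\omega^{[n]}=0\}$ kills this one-dimensional kernel, and since $\Delta_v^{\End}$ is self-adjoint with kernel $\R\,\Id_E$ its image is the $L^2$-orthogonal complement $\mathcal{S}_0$; adjoining the line $\R\,\Id_E$ through the $\lambda$-variable then makes the full differential surjective, and a one-line computation with traces gives injectivity. Hence the restricted differential $\mathcal{S}_0\times\R\to C^{k-2,\alpha}(\Herm^\T(E))$ is a Banach-space isomorphism.

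The implicit function theorem now produces, for $|t|$ small, a smooth family $(s(t),\lambda(t))$ with $s(0)=0$, $\lambda(0)=c_v$, $\int_X\tr(s(t))\,\omega^{[n]}=0$, and $\mathcal{F}(s(t),\lambda(t),t)=0$. Taking the trace of the last identity and integrating against $\omega^{[n]}$ yields $\lambda(t)\,\rk(E)\,\Vol(X,\omega)=(c_1(E)_\T\cdot v_t(\alpha_\T))$, so $\lambda(t)=c_{v_t}$ by the definition of the weighted degree (Lemma \ref{constants-in-wHE}); thus $h_{s(t)}$ solves the $v_t$-weighted Hermite--Einstein equation. Finally, $s(t)$ satisfies an elliptic equation with smooth coefficients, so elliptic regularity and bootstrapping upgrade it from $C^{k,\alpha}$ to a smooth endomorphism, making $h_{s(t)}$ a genuine smooth metric. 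I expect the main obstacle to be the invertibility of the linearisation: identifying it with $\Delta_v^{\End}$, proving its self-adjointness and non-negativity via the divergence-form identity behind Lemma \ref{vw-weighted-is-v-weighted}, computing its kernel from simplicity of $E$, and carrying out the bookkeeping with the auxiliary constant $\lambda$ that absorbs the one-dimensional kernel and cokernel coming from the scaling invariance of the weighted Hermite--Einstein equation.
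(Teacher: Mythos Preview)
Your proposal is correct and follows essentially the same approach as the paper: implicit function theorem at the given weighted Hermite--Einstein metric, identification of the linearisation with the weighted $\d$-Laplacian on $\End(E)$, and use of simplicity to pin down its kernel as $\R\,\Id_E$. The only cosmetic difference is in how the one-dimensional kernel/cokernel is absorbed---the paper restricts to $\det f=1$ (pointwise trace-free tangent) and projects the target to the trace-free part, whereas you impose the single integral constraint $\int_X\tr(s)\,\omega^{[n]}=0$ and introduce the auxiliary constant $\lambda$; both devices accomplish the same normalisation.
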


The proof generalises to higher-dimensional families of deformations, but we state the one-dimensional case for simplicity.

\begin{proof}
	We use freely the notations and conventions introduced in Section \ref{sec:DUY}; the reader may wish to consult these before reading this proof. Taking the \(v\)-weighted Hermite--Einstein metric \(h_0\) as a reference metric, any other \(\T\)-invariant Hermitian metric may be identified with an element \(f\in \Herm^+(E,h_0)^{\T}\), the space of \(\T\)-invariant positive self-adjoint endomorphisms with respect to \(h_0\). Consider the vector bundle \(\mathcal{V}\to\Herm^+(E,h_0)^\T\) whose fibre over \(f\in\Herm^+(E,h_0)^\T\) is \[\mathcal{V}_f:=\Herm(E,h_0f)^\T,\] i.e. the \(\T\)-invariant endomorphisms of \(E\) that are Hermitian with respect to the metric \((h_0f)(e_1,e_2):=h_0(fe_1,e_2)\).\footnote{The bundle \(\mathcal{V}\) is of course the tangent bundle of \(\Herm^+(E,h_0)^\T\).} The section \[L_t:\Herm^+(E,h_0)^{\T}\to \mathcal{V}\] is defined by \[L_t(f):=K^0_{v_t}+\frac{i}{2\pi}\Lambda_{\omega,v_t}(\db^{\eq}(f^{-1}\d_0(f))),\] where \(K^0_{v_t}:=\frac{i}{2\pi}\Lambda_{\omega,v_t}(F_{h_0}+\Phi_{h_0}) - c_{v_t}\Id_E\), and \(f\) corresponds to a \(v_t\)-weighted Hermite--Einstein metric if and only if \(L_t(f)\) is a constant multiple of the identity.
	
	The bundle \(\mathcal{V}\) has a trivialisation \(\mathcal{V}\cong \Herm^+(E,h_0)^\T\times \Herm(E,h_0)^\T\), given by \[\mathcal{V}_f\ni e\mapsto fe \in\Herm(E,h_0)^\T.\] Under this trivialisation, the section \(L_0\) is identified with the map \[\hat{L}_0:\Herm^+(E,h_0)^\T\to\Herm(E,h_0)^\T,\] given by \[\hat{L}_0(f) = fL_0(f).\] One readily computes the linearisation of \(\hat{L}_0\) at the identity endomorphism of \(E\) is given by the weighted \(\d_0\)-Laplacian of \(\End E\), where \(\d_0\) denotes the \((1,0)\)-part of the Chern connection of \(h_0\). In particular, this is an elliptic operator with index \(0\), whose kernel on Hermitian endomorphisms consists of the constant multiples of \(\Id_E\), by simplicity of \(E\).
	
	By the calculation of Lemma \ref{det1}, if \(f\in\Herm^+(E,h_0)^\T\) satisfies \(\det(f) = 1\), then \(\tr L_0(f) = 0\). Denoting by \(\Herm_1^+(E, h_0)^\T\) the set of \(f\in \Herm^+(E,h_0)^\T\) satisfying \(\det(f)=1\), and by \(\mathcal{V}^0\subset\mathcal{V}\) the vector subbundle of trace-free endomorphisms, we see that \(L_0\) restricts to a section \[L_0:\Herm_1^+(E, h_0)^\T\to\mathcal{V}^0.\] For general \(t\), let \(p_0:\mathcal{V}\to\mathcal{V}_0\) denote the projection to the trace-free endomorphisms, and consider \[p_0\circ L_t:\Herm_1^+(E, h_0)^\T\to\mathcal{V}^0.\] We remarked the linearisation of \(L_0\) at the identity is the weighted \(\d_0\)-Laplacian of \(\End E\). The tangent space to \(\{\det(f)=1\}\) at the identity is the trace-free endomorphisms, on which the weighted Laplacian is an isomorphism, by simplicity of \(E\). Since \(L_0= p_0\circ L_0\), the linearisation of \(p_0\circ L_0\) is also an isomorphism at the identity. Taking the appropriate Banach approximations of these vector bundles, the Banach space implicit function theorem implies that for all \(t>0\) sufficiently small, there exists a unique smooth solution to the equation \(p_0(L_t(f)) = 0\). Since \(p_0\) merely subtracts the trace-part of an endomorphism, \(L_t(f)\) is a constant multiple of the identity endomorphism, and hence \(h_0f\) is \(v_t\)-weighted Hermite--Einstein.
\end{proof}

Using this deformation result, we can easily produce interesting examples of weighted Hermite--Einstein metrics. For example, suppose \(E\) admits a Hermite--Einstein metric, which is automatically \(\T\)-invariant by uniqueness of Hermite--Einstein metrics. Suppose further that \(\omega\in c_1(L)\) lies in the first Chern class of an ample line bundle \(L\). Deforming the weight function \(v(\mu):=1\) to \(v_t(\mu) :=(1+t\langle\mu,\xi\rangle)^{-n}\) for arbitrary \(\xi\in\tor\), we see that the circle bundle \(S\) of \(L^*\) admits a transverse Hermite--Einstein metric with respect to any sufficiently small vector field \(\xi\in\tor\).

Another example is the deformation \(v_t(\mu):=e^{t\langle\mu,\xi\rangle}\), which deforms Hermite--Einstein metrics to exponentially weighted Hermite--Einstein metrics.

\section{Equivariant intersections}\label{sec:eq-intersections}

Before developing more advanced theory of the weighted Hermite--Einstein equation, we first review the equivariant calculus introduced by Inoue \cite{Ino20}, and extend the theory from weight functions of the form \(\tilde{v}(\langle \mu,\xi\rangle)\) for some fixed \(\xi\in\mathfrak{t}\) and real analytic \(\tilde{v}\), to arbitrary smooth weight functions \(v(\mu)\) on the moment polytope. There are two contexts in which we will use the theory: \begin{enumerate}
	\item Given a \(\T\)-equivariant manifold \(X\), we may produce \emph{equivariant intersection numbers} from equivariant data on \(X\);
	\item Let \(K\) be an auxiliary compact Lie group. Given a \((\T\times K)\)-equivariant holomorphic submersion \(X\to B\), where \(\T\) acts trivially on \(B\), we may produce closed \(K\)-equivariant differential forms on \(B\) of degree 2 from equivariant data on \(X\), in a way that is compatible with the projection of such forms to equivariant cohomology. We will call such forms \emph{equivariant intersection 2-forms}.
\end{enumerate}

The equivariant intersection numbers will be used to show that quantities such as \(\int_X v(\mu)\omega^n\) and \(\int_X\tr(\Lambda_{\omega, v}(F_h+\Phi_h))\omega^n\) are independent of the choices of equivariant representative \(\omega+\mu\) for \([\omega+\mu]\) and \(\T\)-invariant metric \(h\). In particular, we can use the theory to define the \emph{weighted slope} of a sheaf, which will in turn be used to define weighted slope stability of vector bundles. We will further use such equivariant intersection numbers to phrase the correct analogue of the Kobayashi--L{\"u}bke inequality in the weighted setting. The equivariant intersection 2-forms will be used to demonstrate that the weighted Hermite--Einstein equation arises naturally as a moment map.

\subsection{Equivariant intersection numbers}

\subsubsection{Weight functions of one variable}

We first recall the theory of Inoue which produces equivariant intersection numbers. Let \(f:\mathbb{R}\to\mathbb{R}\) be a real analytic function; denote by \(\sum_{k=0}^\infty\frac{a_k}{k!}x^k\) its convergent power series centred at the origin of \(\mathbb{R}\). Let \(X\) be a compact complex manifold of dimension \(n\), equipped with a \(\T\)-action. Given an equivariant cohomology class \(\alpha_{\T}\) on \(X\), along with an auxiliary class \(\beta_{\T}\in H^{2k}_{\T}(X;\mathbb{R})\), we produce a convergent power series \[(\beta_{\T}\cdot f(\alpha_{\T}))\in\mathbb{R}\llbracket\mathfrak{t}\rrbracket\] as follows. The map \(\pi:X\to\mathrm{pt}\) induces a map \(\pi_*:H^*_{\T}(X;\mathbb{R})\to H^{*-2n}_{\T}(\pt)\); on the level of Cartan representatives, this is simply the integration map. There is an isomorphism \[\hat{H}^{*,\mathrm{even}}_{\T}(\pt;\mathbb{R}) \cong \mathbb{R}\llbracket \mathfrak{t}\rrbracket,\] where \(\hat{H}^{*,\mathrm{even}}_{\T}(\pt;\mathbb{R})\) denotes the completion of the even-degree equivariant cohomology ring of a point, and a degree \(2j\) class in this ring is mapped to a homogeneous polynomial of degree \(j\) on \(\mathfrak{t}\) under the isomorphism.

Given this information, we define \((\beta_{\T}\cdot f(\alpha_{\T}))\) by first taking the class \[\beta_{\T}\cdot f(\alpha_{\T}):=\sum_{k=0}^\infty\frac{a_k}{k!}\beta_{\T}\cdot\alpha_{\T}^k \in \hat{H}^{*,\mathrm{even}}_{\T}(X;\mathbb{R})\] and then applying the pushforward map \(\pi_*:H^*_{\T}(X;\mathbb{R})\to H^{*-2n}_{\T}(\pt)\) term-by-term in the series to get \[(\beta_{\T}\cdot f(\alpha_{\T})) := \pi_*(\beta_{\T}\cdot f(\alpha_{\T}))\in \hat{H}^{*,\mathrm{even}}_{\T}(\pt;\mathbb{R}) \cong \mathbb{R}\llbracket \mathfrak{t}\rrbracket. \]

Now, by \cite[Theorem 1.1]{Ino20}, if \(f\) has a convergent power series on all of \(\mathbb{R}\), then the formal power series \((\beta_{\T}\cdot f(\alpha_{\T}))\) is in fact convergent on \(\mathfrak{t}\). In particular, given an element \(\xi\in\mathfrak{t}\), we may evaluate the power series at \(\xi\) to produce an equivariant intersection number \[(\beta_{\T}\cdot f(\alpha_{\T}))(\xi)\in\mathbb{R}.\]

The connection to the weighted theory is as follows. Suppose that the weight function \(v:P\to\mathbb{R}\) is of the form \(v(p) = \tilde{v}(\langle p, \xi\rangle)\) for some fixed element \(\xi\in\mathfrak{t}\) and a fixed real analytic function \(\tilde{v}\) on \(\mathbb{R}\). Let \(f\) be an order \(n\) antiderivative for \(\tilde{v}\), so that \(f^{(n)}=\tilde{v}\). Let \(\alpha_\T:=[\omega+\mu]\). Then taking \(\beta_\T = 1\), \[(f(\alpha_{\T}))(\xi) = \int_X v(\mu)\omega^{[n]}.\] Here we use the notation \(\omega^{[j]}:=\frac{1}{j!}\omega^j\). In particular, since the left-hand side is independent of the choice of equivariant representative for \(\alpha_\T\), this implies that the right-hand side \(\int_Xv(\mu)\omega^n\) depends only on the equivariant cohomology class \([\omega+\mu]\), and not the choice of equivariant representative. Taking \(\beta_{\T} = c_1(E)_{\T}\) gives \[(c_1(E)_{\T} f'(\alpha_{\T}))(\xi) = \int_X \tr\left(\frac{i}{2\pi}\Lambda_{\omega,v}(F_h+\Phi_h)\right)\omega^{[n]},\] again showing that the right-hand side depends only on the equivariant cohomology classes \(c_1(E)_{\T}\) and \(\alpha_{\T}\); note here we take a derivative of the function \(f\), to account for the class \(c_1(E)_\T\) having degree 2.

\begin{rem}\label{rem:complex-extension}
	Since the power series \((f(\alpha_\T))\) converges on all of \(\tor\), we may extend it to an entire function on the complex vector space \(\mathfrak{t}\otimes\mathbb{C}\). In particular \((\beta_{\T}\cdot f(\alpha_{\T}))(i\xi)\) for \(\xi\in\tor\) is well-defined. In the special case \(f = \exp\), the function \(\xi\mapsto (\beta_{\T}\cdot e^{\alpha_{\T}})(i\xi)\) satisfies \begin{equation}\label{eq:bound-on-intersection}
		|(\beta_{\T}\cdot e^{\alpha_{\T}})(i\xi)| \leq C(1+|\xi|)^k
	\end{equation} for some fixed \(C>0\), where \(\beta_{\T}\) has degree \(2k\) as an equivariant form. This is easily seen from the fact that \(\beta_{\T}\) is polynomial in \(\xi\) of degree \(k\), and that \(e^{i\langle\mu,\xi\rangle}\) has modulus 1.
\end{rem}

\subsubsection{Arbitrary weight functions}

We now show how the above theory may be extended to arbitrary weight functions \(v:P\to\mathbb{R}\) on the moment polytope, that are not necessarily of the form \(v(p) = \tilde{v}(\langle p,\xi\rangle)\). The key is to use Fourier transforms to reduce to the equivariant intersection numbers of Inoue.

First, given an arbitrary function \(u:P\to\mathbb{R}\), we may extend \(u\) to a compactly supported smooth function on all of \(\tor^*\), with support containing \(\Pol\). Recall, that the Fourier transform of $u\in C^\infty_c(\tor^*)$ is given by \[\hat{u}(\xi)=\int_{\tor^*} u(\mu)e^{-i\langle\mu,\xi\rangle}\underline{d\mu},\] where \(\underline{d\mu}\) denotes the Lebesgue measure on \(\tor^*\), for a fixed choice of metric on \(\mathfrak{t}^*\) that will not matter. The Fourier transform defines a bijective map $\mathcal{F}:S(\tor^*)\to S(\tor)$ on the Schwartz class of rapidly decreasing functions, and the Fourier inversion formula yields \[u(\mu)=\int_\tor \hat{u}(\xi)e^{i\langle\mu,\xi\rangle}\underline{d\xi},\] where \(\underline{d\xi}\) denotes the Lebesgue measure on \(\tor\) (dual to \(\underline{d\mu}\)) divided by \((2\pi)^{\dim\T}\).

\begin{defn}
	Let \(\alpha_{\T}\in H^2_\T(X;\mathbb{R})\) be an equivariant K{\"a}hler class with moment polytope \(P\), and let \(\beta_{\T}\in H^{2k}_\T(X;\mathbb{R})\) be an auxiliary equivariant class. Let \(u:P\to\mathbb{R}\) be an arbitrary smooth function on the moment polytope, and pick a smooth compactly supported extension of \(u\) to all of \(\tor^*\). We define \[(\beta_{\T}\cdot u(\alpha_{\T})) := \int_{\tor}(\beta_{\T}\cdot e^{\alpha_{\T}})(i\xi)\hat{u}(\xi)\underline{d\xi},\] where \((\beta_{\T}\cdot e^{\alpha_{\T}})(i\xi)\) was defined in Remark \ref{rem:complex-extension}.
\end{defn}

The quantity \((\beta_{\T}\cdot u(\alpha_{\T}))\) manifestly depends only on the equivariant cohomology classes involved, and not on any choice of equivariant representative.

\begin{lemma}
	Let \(\alpha_\T\in H^2_\T(X;\mathbb{R})\) be an equivariant K{\"a}hler class with moment polytope \(P\), \(u:P\to\mathbb{R}\) a smooth function, and \(\beta_\T\in H^{2k}_\T(X;\mathbb{R})\). The number \((\beta_{\T}\cdot u(\alpha_{\T}))\) is well-defined in the sense that the integral converges and is independent of the choice of extension of \(u\) from \(\Pol\) to \(\tor^*\). In the case where \(u(p) = \tilde{u}(\langle p,\zeta\rangle)\) for a real analytic function \(\tilde{u}\) and a fixed \(\zeta\in\mathfrak{t}\), \[(\beta_{\T}\cdot u(\alpha_{\T})) = (\beta_{\T}\cdot f^{(k)}(\alpha_\T))(\zeta),\] where \(f\) satisfies \(f^{(n)} = \tilde{u}\) and \((\beta_{\T}\cdot f^{(k)}(\alpha_\T))(\zeta)\) is defined as in the previous section.
\end{lemma}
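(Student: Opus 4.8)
The plan is to establish three things: convergence of the defining integral, independence of the chosen compactly supported extension of $u$, and agreement with Inoue's one-variable construction when $u$ factors through a linear functional. I will treat convergence and extension-independence together, since both follow from the growth estimate on $(\beta_\T\cdot e^{\alpha_\T})(i\xi)$ recorded in Remark \ref{rem:complex-extension}, and then turn to the compatibility statement, which is the only part requiring genuine computation.

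For convergence, recall from \eqref{eq:bound-on-intersection} that $|(\beta_\T\cdot e^{\alpha_\T})(i\xi)|\leq C(1+|\xi|)^k$. Since any smooth compactly supported extension of $u$ to $\tor^*$ has Fourier transform $\hat u$ in the Schwartz class $S(\tor)$, the product $(\beta_\T\cdot e^{\alpha_\T})(i\xi)\,\hat u(\xi)$ decays faster than any polynomial, so the integral over $\tor$ converges absolutely; this is all that is needed. For extension-independence, suppose $u_1,u_2$ are two smooth compactly supported extensions of $u$ agreeing on $P$. Their difference $w:=u_1-u_2$ is a smooth compactly supported function on $\tor^*$ vanishing on a neighbourhood of $P$ (or at least on $P$ itself; one can arrange the stronger statement by a partition-of-unity argument, or work directly with the weaker one). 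The point is that $(\beta_\T\cdot e^{\alpha_\T})(i\xi)$ is, up to the polynomial factor coming from $\beta_\T$, the Fourier transform of the pushforward measure $(\mu_{\alpha})_*\big(\text{(Cartan rep.\ of }\beta_\T)\wedge\tfrac{\omega^{[n-k]}}{(n-k)!}\big)$ supported on $P=\mu(X)$ — more precisely, by the definition in the previous section, $(\beta_\T\cdot e^{\alpha_\T})(i\xi)=\int_X (\text{Cartan form})\,e^{i\langle\mu,\xi\rangle}$, which one rewrites via the coarea/pushforward formula as $\int_P e^{i\langle p,\xi\rangle}\,d\nu_\beta(p)$ for a compactly supported distribution $\nu_\beta$ on $\tor^*$ supported in $P$. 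Hence by Parseval/Plancherel, $\int_\tor (\beta_\T\cdot e^{\alpha_\T})(i\xi)\,\hat w(\xi)\,\underline{d\xi}$ equals (a constant times) the pairing $\langle\nu_\beta, w\rangle$, which vanishes because $w\equiv 0$ on the support $P$ of $\nu_\beta$. Therefore $(\beta_\T\cdot u(\alpha_\T))$ is unchanged, proving well-definedness.

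For the compatibility statement, assume $u(p)=\tilde u(\langle p,\zeta\rangle)$ with $\tilde u$ real analytic and $f$ an order-$n$ antiderivative, $f^{(n)}=\tilde u$. The strategy is to reduce the $\ell$-dimensional Fourier integral to a one-dimensional one along the line $\mathbb R\zeta\subset\tor$. Writing $\psi(s):=\langle p,\zeta\rangle$-pushforward, i.e.\ pushing the measure $\nu_\beta$ forward under the map $p\mapsto\langle p,\zeta\rangle:\tor^*\to\mathbb R$, one gets $(\beta_\T\cdot e^{\alpha_\T})(i s\zeta)=\int_{\mathbb R} e^{isy}\,d(\psi_*\nu_\beta)(y)$, a function of $s\in\mathbb R$ alone. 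Fixing a compactly supported extension of $\tilde u$ to $\mathbb R$ and letting $u(p):=\tilde u(\langle p,\zeta\rangle)$ be the induced extension on $\tor^*$, a standard slicing computation shows $\hat u$ is supported on $\mathbb R\zeta$ (as a distribution) with one-dimensional Fourier density $\hat{\tilde u}$, so the $\ell$-dimensional integral collapses to $\int_{\mathbb R}(\beta_\T\cdot e^{\alpha_\T})(is\zeta)\,\hat{\tilde u}(s)\,ds$. It then remains to identify this with $(\beta_\T\cdot f^{(k)}(\alpha_\T))(\zeta)$. Here one uses the defining power series of Inoue's construction: $(\beta_\T\cdot f^{(k)}(\alpha_\T))(\zeta)$ is obtained by pushing $\beta_\T\cdot\big(\sum_j \tfrac{b_j}{j!}\alpha_\T^{j}\big)$ to a point, where $\sum b_j x^j/j!$ is the Taylor series of $f^{(k)}$, and evaluating at $\zeta$; by the $n$-fold antiderivative relation this matches, degree by degree, the contraction of the pushforward measure $\nu_\beta$ against $\tilde u=f^{(n)}$ after $k$ of the $n$ "integrations" are absorbed into the fixed degree shift of $\beta_\T$ (recall $\beta_\T$ has degree $2k$, so only $n-k$ powers of $\omega$ appear in $\nu_\beta$ and exactly $f^{(k)}$, not $f$, is the relevant antiderivative). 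Matching these via Fourier inversion $\int_{\mathbb R}e^{isy}\hat{\tilde u}(s)\,ds=\tilde u(y)$ against $\psi_*\nu_\beta$ gives precisely $\int_{\mathbb R}\tilde u(y)\,d(\psi_*\nu_\beta)(y)$, which by unwinding the pushforward is $\int_X \tilde u(\langle\mu,\zeta\rangle)\cdot(\text{Cartan rep of }\beta_\T)(\zeta)\wedge\tfrac{\omega^{[n-k]}}{(n-k)!}$ — the same integral that Inoue's convergence theorem \cite[Theorem 1.1]{Ino20} identifies with $(\beta_\T\cdot f^{(k)}(\alpha_\T))(\zeta)$.

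The main obstacle is the bookkeeping in this last step: one must be careful that the degree of $\beta_\T$ shifts which antiderivative is relevant (hence $f^{(k)}$ rather than $f$), and that the real-analyticity of $\tilde u$ together with Inoue's theorem is genuinely needed to know that the formal power series defining $(\beta_\T\cdot f^{(k)}(\alpha_\T))$ converges on $\tor$ and represents the honest integral $\int_X \tilde u(\langle\mu,\zeta\rangle)(\cdots)$. Once that identification is in hand, matching it with the Fourier-transform definition is a direct application of Plancherel together with the fact, established in the first part, that the answer does not depend on the choice of extension. I would also remark that the slicing argument showing $\hat u$ is supported on the line $\mathbb R\zeta$ should be phrased distributionally, since a function of the form $p\mapsto\tilde u(\langle p,\zeta\rangle)$ is not itself compactly supported on $\tor^*$ unless $\ell=1$; the clean fix is to note that both sides of the claimed identity depend only on the values of $u$ on the compact set $P$, so one may replace $u$ by $\chi\cdot u$ for a cutoff $\chi$ equal to $1$ near $P$, after which the one-dimensional reduction is carried out on the auxiliary compactly supported function and the limit $\chi\to 1$ is harmless by the extension-independence already proved.
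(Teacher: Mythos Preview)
Your convergence argument matches the paper's. The gap is in your extension-independence argument. You write $(\beta_\T\cdot e^{\alpha_\T})(i\xi)=\widehat{\nu_\beta}(\xi)$ for a distribution $\nu_\beta$ supported in $P$, apply Parseval to obtain $\langle\nu_\beta,w\rangle$, and then assert this ``vanishes because $w\equiv 0$ on the support $P$ of $\nu_\beta$.'' That inference is false for distributions of positive order (e.g.\ $\langle\delta_0',x\rangle=-1$ although $x|_{\{0\}}=0$), and $\nu_\beta$ \emph{does} have positive order: the Cartan representative $\langle\sigma,i\xi\rangle$ of $\beta_\T$ is a polynomial of degree $k$ in $\xi$, so $\nu_\beta$ is a sum of derivatives of order up to $k$ of the pushforward measures $\mu_*(\eta\wedge\omega^{[n-j]})$. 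Your partition-of-unity suggestion does not help either: two \emph{arbitrary} smooth extensions of $u$ need not agree on any neighbourhood of $P$, so you cannot arrange that $w$ vanishes near $P$. What actually works---and what the paper does---is to carry out the Fourier inversion explicitly term by term, arriving at $\int_X(\partial_1^{a_1}\cdots\partial_\ell^{a_\ell} u)(\mu)\,\eta\wedge e^{\omega}$ for each monomial $\eta\cdot\xi_1^{a_1}\cdots\xi_\ell^{a_\ell}$ in $\sigma$; since $P$ has nonempty interior in $\tor^*$, any two smooth extensions agreeing on $P$ have all partial derivatives agreeing on $\mathrm{int}(P)$, and $\mu^{-1}(\partial P)$ has measure zero in $X$, so the integral is unchanged. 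Your Parseval route reaches the same endpoint after integrating by parts against $\nu_\beta$, but you must actually perform that step rather than invoke a general principle that is false.

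For the compatibility with Inoue's one-variable definition, your slicing-and-Parseval outline is a genuinely different route from the paper's. The paper simply fixes a basis with $\xi_1=\zeta$ and compares both sides monomial by monomial in $\sigma=\sum\eta\cdot\xi^a$: only monomials with $a_2=\cdots=a_\ell=0$ contribute on either side (on the Inoue side because one evaluates at $\zeta=\xi_1$, on the Fourier side because $\partial^a u=0$ unless all $a_l$ with $l>1$ vanish, since $u$ depends only on $\langle\cdot,\zeta\rangle$), and for those monomials both sides equal $\int_X f^{(n+k-j)}(\langle\mu,\zeta\rangle)\,\eta\wedge\omega^{[n-j]}$. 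Your approach is conceptually appealing but, as you yourself note, the function $p\mapsto\tilde u(\langle p,\zeta\rangle)$ is not compactly supported for $\ell>1$, so the ``$\hat u$ supported on $\mathbb{R}\zeta$'' step needs the cutoff-and-limit manoeuvre you sketch at the end; this can be made to work once independence of extension is properly in hand, but the paper's direct comparison is both shorter and avoids the detour entirely.
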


\begin{proof}
	
	That the integral converges can be seen easily from \eqref{eq:bound-on-intersection}, together with the fact that \(\hat{u}\) is a Schwartz class function.
	
	To see that the quantity does not depend on the extension of \(u\) from \(\Pol\) to \(\tor^*\), we choose an equivariant representative \(\sigma\) for \(\beta_{\T}\) and compute \begin{align}
		\int_{\tor}(\beta_{\T}\cdot e^{\alpha_{\T}})(i\xi)\hat{u}(\xi)\underline{d\xi}
		&= \int_{\tor} \int_X \langle\sigma,i\xi\rangle\wedge e^{\langle \omega+\mu, i\xi\rangle}\hat{u}(\xi)\underline{d\xi} \nonumber \\
		&= \int_X \int_{\tor} \langle \sigma, i\xi\rangle e^{i\langle\mu,\xi\rangle}\hat{u}(\xi)\underline{d\xi}\wedge e^\omega. \label{last-line}
	\end{align} Next, note that \(\sigma:\mathfrak{t}\to\Omega^*(X)\) is an equivariant differential form of degree \(2k\), so \(\langle\sigma,\xi\rangle\) is a sum of terms of the form \(\eta\cdot\xi_1^{a_1}\cdots \xi_{\ell}^{a_\ell}\) where \(\eta\in \Omega^{2j}(X)\) and \(a_1+\cdots+a_\ell = k-j\), for some \(0\leq j\leq k\). Replacing \(\sigma\) in \eqref{last-line} with such a term yields \begin{align*}
		&\int_X \int_{\tor} \eta\cdot i^{k-j}\xi_1^{a_1}\cdots\xi_\ell^{a_\ell} e^{i\langle\mu,\xi\rangle}\hat{u}(\xi)\underline{d\xi}\wedge e^\omega \\
		=&\int_X\left( \int_{\tor} e^{i\langle\mu,\xi\rangle}\mathcal{F}(\d_1^{a_1}\cdots\d_\ell^{a_\ell}u)(\xi)\underline{d\xi}\right) \eta\wedge e^\omega \\
		=& \int_X (\d_1^{a_1}\cdots\d_\ell^{a_\ell}u)(\mu) \eta\wedge e^\omega
	\end{align*} and this final line is clearly independent of the choice of extension of \(u\) from \(\Pol\) to \(\tor^*\).
	
	Finally, we show that the two definitions agree. We may assume \(\zeta \neq 0\), as the case \(\zeta = 0\) is easily observed. With this assumption, we may choose a basis \(\xi_1,\ldots,\xi_\ell\) for \(\mathfrak{t}\) so that \(\zeta = \xi_1\), and take the Lebesgue measure on \(\mathfrak{t}\) determined by this basis. Once again choosing an equivariant representative \(\sigma\) for \(\beta_{\T}\) and isolating a particular term \(\eta\cdot\xi_1^{a_1}\cdots\xi_\ell^{a_\ell}\), note we have \[\langle\eta\cdot\xi_1^{a_1}\cdots\xi_\ell^{a_\ell},\zeta\rangle = \begin{cases}
		\eta & \text{if } a_l = 0 \text{ for all } l > 1 \\
		0 & \text{otherwise.}
	\end{cases}\] Considering only the case \(\eta\cdot\xi_1^{k-j}\), \begin{align*}
		\int_X \langle\eta\cdot\xi_1^{k-j},\zeta\rangle f^{(k)}(\omega+\langle\mu,\zeta\rangle)
		&= \int_X \eta f^{(k)}(\omega+\langle\mu,\zeta\rangle) \\
		&= \int_X f^{(n+k-j)}(\langle\mu,\zeta\rangle)\eta\wedge\omega^{[n-j]}.
	\end{align*} Note that similarly to the above calculation of \((\beta_{\T}\cdot u(\alpha_{\T}))\), in the case \(u(\mu) = \tilde{u}(\langle\mu,\zeta\rangle)\) we have \(\d_1^{a_1}\cdots\d_\ell^{a_\ell}u = 0\) unless \(a_l=0\) for all \(l>0\), and \(\d_1^{k-j}u(\mu) = f^{(n+k-j)}(\langle\mu,\zeta\rangle)\). It follows that the two terms agree, and we are done.
\end{proof}

\begin{lemma}\label{constants-in-wHE}
	The following formulae hold:
	\begin{enumerate}
		\item \(\int_X v(\mu)\omega^{[n]} = (v(\alpha_\T))\);
		\item \(\int_X\tr(\frac{i}{2\pi}\Lambda_{\omega,v}(F_h+\Phi_h))\omega^{[n]} = (c_1(E)_\T\cdot v(\alpha_\T))\).
	\end{enumerate} In particular, the constant \(c_v\) of equation \eqref{eq:v-weighted-HE} is an equivariant topological constant.
\end{lemma}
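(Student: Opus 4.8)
The plan is to plug the relevant data straight into the Fourier-transform definition of $(\beta_\T\cdot u(\alpha_\T))$ and to reuse, almost verbatim, the term-by-term computation already carried out in the proof of the previous lemma; no new analytic input is needed, since convergence of all the integrals involved was established there.

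First I would deal with (1). Taking $\beta_\T=1$ (of degree $0$), the definition reads $(v(\alpha_\T))=\int_\tor (e^{\alpha_\T})(i\xi)\,\hat v(\xi)\,\underline{d\xi}$. Since $(e^{\alpha_\T})(i\xi)=\pi_*\big(e^{\langle\omega+\mu,\,i\xi\rangle}\big)=\int_X e^{i\langle\mu,\xi\rangle}\,\omega^{[n]}$ --- only the top-degree part of $e^{\omega}$ survives integration over $X$ --- Fubini's theorem followed by the Fourier inversion formula gives
\[
(v(\alpha_\T))=\int_X\Big(\int_\tor e^{i\langle\mu,\xi\rangle}\,\hat v(\xi)\,\underline{d\xi}\Big)\,\omega^{[n]}=\int_X v(\mu)\,\omega^{[n]},
\]
which is (1).

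Next, for (2), I would represent $c_1(E)_\T$ by the equivariant Chern--Weil form $\sigma:=\tfrac{i}{2\pi}\tr(F_h+\Phi_h)$, so that $\langle\sigma,\xi\rangle=\tfrac{i}{2\pi}\tr F_h+\tfrac{i}{2\pi}\tr\langle\Phi_h,\xi\rangle$, and split it into its degree-$2$ (in forms) part $\tfrac{i}{2\pi}\tr F_h$ and its degree-$0$ part $\xi\mapsto\tfrac{i}{2\pi}\tr\langle\Phi_h,\xi\rangle$. Feeding these into the recipe from the previous lemma's proof --- which turns $i^{\,k-j}\xi_1^{a_1}\cdots\xi_\ell^{a_\ell}\hat v$ into $\mathcal{F}(\d_1^{a_1}\cdots\d_\ell^{a_\ell}v)$ and then invokes Fourier inversion --- the degree-$2$ part (here $j=1$, no $\xi$-factor) contributes $\tfrac{i}{2\pi}\int_X v(\mu)\,\tr(F_h)\wedge\omega^{[n-1]}$, while the degree-$0$ part (here $j=0$, one linear $\xi$-factor, producing a first derivative of $v$) contributes $\tfrac{i}{2\pi}\int_X\tr\langle\Phi_h,\,dv(\mu)\rangle\,\omega^{[n]}$. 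It then remains to recognise the sum as $\int_X\tr\big(\tfrac{i}{2\pi}\Lambda_{\omega,v}(F_h+\Phi_h)\big)\omega^{[n]}$, which is immediate from the pointwise identity $\tr(\Lambda_\omega F_h)\,\omega^{[n]}=\tr(F_h)\wedge\omega^{[n-1]}$ together with the definition $\Lambda_{\omega,v}(F_h+\Phi_h)=v(\mu)\Lambda_\omega F_h+\langle\Phi_h,dv(\mu)\rangle$.

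Finally, the assertion about $c_v$ is a corollary: if $h$ is a $v$-weighted Hermite--Einstein metric, then taking the trace of \eqref{eq:v-weighted-HE} and integrating against $\omega^{[n]}$ yields $(c_1(E)_\T\cdot v(\alpha_\T))=c_v\,\rk(E)\int_X\omega^{[n]}$ by part (2), and since $\int_X\omega^{[n]}=(1(\alpha_\T))$ by part (1), we obtain $c_v=(c_1(E)_\T\cdot v(\alpha_\T))\big/\big(\rk(E)\,(1(\alpha_\T))\big)$, a ratio of equivariant intersection numbers. The one step requiring genuine care is keeping the conventions straight: checking that $\sigma$ really is a $d_\T$-closed equivariant representative of $c_1(E)_\T$ adapted to the normalisation $\alpha_\T=[\omega+\mu]$, and tracking the powers of $i$ arising from evaluating $\langle\sigma,\cdot\rangle$ at $i\xi$, so that the degree-$0$ piece of $\sigma$ reproduces exactly the $\langle\Phi_h,dv(\mu)\rangle$ term of $\Lambda_{\omega,v}(F_h+\Phi_h)$ with the correct sign; everything else is direct substitution into the apparatus already in place.
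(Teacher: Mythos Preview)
Your proposal is correct and follows essentially the same approach as the paper: both plug $\beta_\T=1$ and $\beta_\T=c_1(E)_\T$ into the Fourier-transform definition, interchange the order of integration, and invoke Fourier inversion (the paper records the identities $dv(\mu)=i\int_\tor \hat v(\xi)\xi e^{i\langle\xi,\mu\rangle}\underline{d\xi}$ explicitly, whereas you cite the previous lemma's term-by-term calculation for the same purpose). The derivation of $c_v$ by tracing and integrating the weighted Hermite--Einstein equation also matches the paper's argument exactly.
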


\begin{proof}
	In the case \(\beta_{\T} = 1\), the Fourier inversion formula gives \begin{align*}
		(v(\alpha_{\T})) &=  \int_{\tor} (e^{\alpha_{\T}})(i\xi)\hat{v}(\xi)\underline{d\xi} \\
		&= \int_{\tor}\left(\int_X e^{i\langle \mu,\xi\rangle}\omega^{[n]}\right)\hat{v}(\xi)\underline{d\xi} \\
		&= \int_X\int_{\tor}\hat{v}(\xi)e^{i\langle \mu,\xi\rangle} \underline{d\xi} \omega^{[n]} \\
		&= \int_X v(\mu)\omega^{[n]}.
	\end{align*}
	
	To compute the case \(\beta_{\T} = c_1(E)_{\T}\), and later the case \(\beta_{\T} = c_2(E)_\T\), we will need the following straightforward formulae:
	\begin{align}
		\begin{split} \label{eq:FourierD}
			&(\partial_kv)(\mu)=i\int_{\tor} \hat{v}(\xi) \xi_k e^{i\langle\xi,\mu\rangle}\underline{d\xi}\implies
			dv(\mu)=i\int_{\tor} \hat{v}(\xi) \xi e^{i\langle\xi,\mu\rangle}\underline{d\xi}\\
			&(\partial_{jk}^2 v)(\mu)=i^2\int_{\tor} \hat{v}(\xi) \xi_j \xi_k e^{i\langle\xi,\mu\rangle}\underline{d\xi}\implies {\rm Hess}(v)(\mu)= i^2\int_{\tor} \hat{v}(\xi) (\xi\otimes\xi) e^{i\langle\xi,\mu\rangle}\underline{d\xi}.
		\end{split}
	\end{align}
	In the case \(\beta_{\T} = c_1(E)_{\T}\), we get \begin{align*}
		\big(c_1(E)_\T\cdot v(\alpha_\T)\big) &= \int_\tor \big(c_1(E)_{\T} \cdot e^{\alpha_\T}\big)(i\xi)\hat{v}(\xi) \underline{d\xi} \\
		&= \int_\tor\int_X\frac{i}{2\pi}\tr (F_h+\langle\Phi_h,i\xi\rangle)\wedge e^{\omega+i\langle\mu,\xi\rangle}\hat{v}(\xi) \underline{d\xi}\\
		&= \int_X\int_\tor\frac{i}{2\pi} \tr\big(F_h\wedge\omega^{[n-1]}+
		\langle \Phi_h, i\xi \rangle\omega^{[n]}\big)\hat{v}(\xi) e^{i\langle\mu,\xi\rangle} \underline{d\xi} \\
		&= \int_X\frac{i}{2\pi}\tr(v(\mu)\Lambda_\omega F_h+\langle \Phi_h, dv(\mu)\rangle)\omega^{[n]} \\
		&= \int_X \frac{i}{2\pi}\tr(\Lambda_{\omega, v}(F_h+\Phi_h))\omega^{[n]},
	\end{align*} where we used \eqref{eq:FourierD} in going from the third line to the fourth. Taking the trace of equation \eqref{eq:v-weighted-HE} and integrating over \(X\) with respect to \(\omega^{[n]}\) gives \[c_v = \frac{(c_1(E)_\T\cdot v(\alpha_\T))}{\rk(E)\Vol(X,\omega)}.\qedhere\]
\end{proof}

\subsection{Equivariant intersection 2-forms}\label{sec:eq2forms}

\subsubsection{Weight functions of one variable} Once again, we begin by recalling the theory established by Inoue in \cite{Ino20}, letting \(f:\mathbb{R}\to\mathbb{R}\) be our real analytic function with power series expansion \(\sum_{k=0}^\infty\frac{a_k}{k!}x^k\) about the origin. In this case, we assume we have a proper holomorphic submersion \(\pi:X\to B\). We further assume there exists a compact group \(K\) so that \(\T\times K\) acts on both \(X\) and \(B\) making the submersion \(\pi\) equivariant, and so that the \(\T\)-action on \(B\) is trivial.

Let \(\alpha_{\T\times K} = [\omega+\mu]\) be a \(\T\times K\)-equivariant \(2\)-form on \(X\), and let \(\beta_{{\T\times K}}=[\sigma]\) be an auxiliary \(\T\times K\)-equivariant \(2k\)-form on \(X\). We will define \[(\beta_{{\T\times K}}\cdot f(\alpha_{{\T\times K}}))_B(\xi),\] which will be a degree 2 \(K\)-equivariant cohomology class on \(B\). Recall we have a pushforward map \[\pi_*:H^{2d}_{\T\times K}(X;\mathbb{R}) \to H^{2d-2n}_{\T\times K}(B;\mathbb{R}),\] where \(n:=\dim X - \dim B\). Since the torus action on \(B\) is trivial, the equivariant cohomology of \(B\) splits as \[H^{2l}_{\T\times K}(B;\mathbb{R}) = \bigoplus_{j+k=l}H^{2j}_\T(\pt;\mathbb{R})\otimes H^{2k}_K(B;\mathbb{R}).\] Since \(H^{2j}_\T(\pt;\mathbb{R}) = S^j(\mathfrak{t}^*)\), i.e. the degree \(j\) homogeneous polynomials on \(\tor\), evaluating at \(\xi\in\tor\) the component in \(H^{2j}_\T(\pt;\mathbb{R})\) we get a map \[\mathrm{el}_\xi : H^{2l}_{\T\times K}(X;\mathbb{R}) \to \bigoplus_{k=0}^l H^{2k}_K(B;\mathbb{R}).\] Denote by \(p_2:H^{*,\mathrm{even}}_K(B;\mathbb{R})\to H^2_K(B;\mathbb{R})\) the projection to the degree 2 component of equivariant cohomology. Combining all of this information, we get a map \[p_2\circ \mathrm{el}_\xi\circ \pi_*: \hat{H}^{*,\mathrm{even}}_{\T\times K}(X;\mathbb{R}) \to H^2_K(B;\mathbb{R}),\] and we denote the image of \((\beta_{{\T\times K}}\cdot f(\alpha_{{\T\times K}}))\) under this map by \[(\beta_{{\T\times K}}\cdot f(\alpha_{{\T\times K}}))_B(\xi).\]

In terms of the equivariant representatives \(\omega+\mu\) and \(\sigma\) for \(\alpha_{{\T\times K}}\) and \(\beta_{{\T\times K}}\) respectively, an equivariant representative for \((\beta_{{\T\times K}}\cdot f(\alpha_{{\T\times K}}))_B(\xi)\) may be computed naturally as \[\left[\int_{X/B}\langle\sigma,\xi\rangle f(\omega+\langle\mu,\xi\rangle)\right]_{\deg = 2}.\] Here, note that for example \(\mu = \mu_{\mathfrak{t}} + \mu_{\mathfrak{k}}\) and \(\langle\mu,\xi\rangle := \langle\mu_{\tor},\xi\rangle + \mu_{\mathfrak{k}}\), so the result of the fibre integral is a \(K\)-equivariant differential form on \(B\), of which we take the degree 2 component. The result is the sum of a \(K\)-invariant differential 2-form on \(B\) and a \(K\)-equivariant ``moment map" \(\nu:B\to\mathfrak{k}^*\) for that form, although the 2-form may not be non-degenerate in general.

\subsubsection{Arbitrary weight functions} Similarly as before, we use Fourier transforms to extend the above to general weight functions. In particular, let \(u\) be an arbitrary weight function on the moment polytope. Then we define \[(\beta_{{\T\times K}}\cdot u(\alpha_{\T\times K}))_B := \int_{\tor} (\beta_{{\T\times K}}\cdot e^{\alpha_{{\T\times K}}})_B(i\xi)\hat{u}(\xi)\underline{d\xi},\] which is a degree 2 \(K\)-equivariant cohomology class on \(B\).

A natural equivariant representative for this class is computed as \[\left[\int_{\tor}\left(\int_{X/B} \langle\sigma,i\xi\rangle e^{\omega+i\langle\mu,\xi\rangle}\right)\hat{u}(\xi)\underline{d\xi}\right]_{\deg = 2}.\] For the purposes of this paper, we will only need to know that the above expression is an equivariantly closed \(K\)-equivariant 2-form, and from this we will deduce that the weighted Hermite--Einstein equation occurs as a moment map. One may similarly prove that the above formulae are well-defined and independent of the choice of extension of \(u\) to \(\mathfrak{t}^*\), and agree with the original definition in the case \(u(\mu)=\tilde{u}(\langle\mu,\xi\rangle)\); will shall omit the proofs as they are essentially unchanged from the case of equivariant intersection numbers.

\section{Moment map property and stability}\label{sec:moment-map-and-stability}

In this section, we use the equivariant intersection theory to develop two fundamental aspects related to the weighted Hermite--Einstein equation: the moment map property, and the notion of weighted slope stability of a \(\T\)-equivariant sheaf. We then prove one direction of the weighted Kobayashi--Hitchin correspondence, namely that existence of a \(v\)-weighted Hermite--Einstein metric implies \(v\)-weighted slope polystability. Assuming the full weighted Kobayashi--Hitchin correspondence (to be proved in Section \ref{sec:DUY}), we revisit the examples in Sections \ref{sec:transverse} and \ref{sec:poly-weights}. As a further application, we introduce a suitable notion of weighted Gieseker stability for vector bundles, and relate it to weighted slope stability.

\subsection{Moment map property}

The moment map derivation of the Hermite--Einstein equation was originally given by Atiyah--Bott \cite{AB83}. Here in the weighted setting we will give a proof along the lines of \cite[Section 6]{DH23}, by integrating equivariant forms over the fibres of a universal family. Let \((E, h)\) be a \(\T\)-equivariant Hermitian vector bundle on \((X,\omega)\). Denote by \(\A(E,h)^{\T}\) the space of \(\T\)-invariant unitary connections on \(E\), which is an affine space modelled on \(\Omega^1(X,\mathrm{End}_{SH}E)^\T\), the space of \(\T\)-invariant 1-forms with values in the skew-Hermitian endomorphisms of \(E\). The gauge group \(\mathcal{G}^\T\) of \(\T\)-commuting unitary endomorphisms of \(E\) acts on \(\mathcal{A}(E,h)^\T\).

We consider \(X\times\A(E,h)^\T\) as a family over \(\A(E, h)^\T\). Denote by \(\E\) the pullback of \(E\) from \(X\) to \(X\times\A(E,h)^\T\). We define a universal connection \(D_{\E}\) on \(\E\) as follows. For \((u,v)\in T_{(x, \nabla)}(X\times \A(E, h)^\T)\) and \(s\in\Gamma(\E)\), define \[(D_\E s)(u,v):= (\nabla s)(u) + (d_{\A(E,h)^\T}s)(v).\] That is, in the vertical direction along a fibre \(X\times\{\nabla\}\), the connection is simply the basepoint \(\nabla\in\A(E,h)^\T\), and in the horizontal direction, \(\E\) is trivial so we may extend the usual exterior derivative on \(\A(E,h)^\T\) to \(\E\). This connection has curvature \(F_{\E}\), whose pure vertical component over the fibre \(X\times\{\nabla\}\) coincides with the curvature of \(\nabla\), and whose mixed horizontal-vertical component maps a horizontal tangent vector to the associated vertical 1-form with values in \(\mathrm{End}_{SH}(\E)\) under the identification \(T_\nabla\A(E,h)^\T\cong\Omega^1(X,\End_{SH}E)^\T\). The purely horizontal component of the curvature vanishes. Identifying the Lie algebra of \(\mathcal{G}^\T\) with the \(\T\)-commuting skew-adjoint endomorphisms of \(E\), a moment map \(\Psi_\E\) for the curvature \(F_\E\) is given by \[\langle\Psi_\E,e\rangle = -e.\] In particular \(F_\E+\Psi_\E\) is a \(\mathcal{G}^{\T}\)-equivariant \(\End(\E)\)-valued form on \(X\times\A(E,h)^\T\).

To produce a moment map on \(\A(E,h)^\T\), we use the methods of Section \ref{sec:eq2forms} to generate an equivariant intersection 2-form, beginning with choices of \(\T\times\mathcal{G}^\T\)-equivariant 2-forms on the total space \(X\times\A(E,h)^\T\). A representative for the class \(\alpha_{\T\times\mathcal{G}^\T}\) will be the pullback of the equivariant form \(\omega+\mu\) from \(X\) to \(X\times\A(E,h)^\T\); note that the \(\mathcal{G}^\T\)-action on \(X\) is trivial, so this is an equivariant \(\T\times\mathcal{G}^\T\)-form on \(X\times\A(E,h)^\T\). As an auxiliary equivariant form, note the curvature \(F_\E\) has two moment maps: the map \(\Psi_\E\) for the action of \(\mathcal{G}^\T\) described above, and on each fibre \(X\times\{\nabla\}\) a moment map \(\Phi_\nabla\) for the \(\T\)-action. Combining these two moment maps produces the \(\T\times\mathcal{G}^\T\)-equivariant \(\End(\E)\)-valued form \(F_\E + \Phi_\nabla + \Psi_\E\); note Proposition \ref{prop:canonical_moment_map} implies \(F_\E+\Phi_\nabla\) is indeed \(\T\)-equivariantly closed. To represent the auxiliary class \(\beta_{\T\times \mathcal{G}^\T}=\mathrm{ch}_2(\E)_{\T\times \mathcal{G}^\T}\), we take \[\frac{1}{2!}\tr\left(\left(\frac{i}{2\pi}(F_\E+\Phi_\nabla+\Psi_\E)\right)^2\right).\]

It now remains to compute the equivariant representative of the (formal) degree 2 equivariant class \((\beta_{\T\times \mathcal{G}^\T}\cdot v(\alpha_{\T\times \mathcal{G}^\T}))_{\A(E,h)^\T}\) on \(\A(E,h)^\T\), \begin{align*}
	&-\frac{1}{8\pi^2}\left[\int_{\tor}\left(\int_{X} \tr((F_\E+\Psi_\E+\langle\Phi_\nabla,i\xi\rangle)^2) e^{\omega+i\langle\mu,\xi\rangle}\right)\hat{v}(\xi)\underline{d\xi}\right]_{\deg = 2} \\
	=&-\frac{1}{8\pi^2}\int_{\tor}\int_{X} \tr((F_\E+\Psi_\E)^2)\wedge\omega^{[n-1]}e^{i\langle\mu,\xi\rangle}\hat{v}(\xi)\underline{d\xi} \\
	&-\frac{1}{4\pi^2}\int_{\tor}\int_{X} \tr((F_\E+\Psi_\E)\langle\Phi_\nabla,i\xi\rangle)\wedge\omega^{[n]} e^{i\langle\mu,\xi\rangle}\hat{v}(\xi)\underline{d\xi} \\
	=&-\frac{1}{8\pi^2}\int_{X} v(\mu)\tr((F_\E+\Psi_\E)^2)\wedge\omega^{[n-1]} \\
	&-\frac{1}{4\pi^2}\int_{X} \tr\left((F_\E+\Psi_\E)\int_\tor e^{i\langle\mu,\xi\rangle}\langle\Phi_\nabla,i\xi\rangle \hat{v}(\xi)\underline{d\xi}\right)\wedge\omega^{[n]} \\
	=&-\frac{1}{8\pi^2}\int_{X} v(\mu)\tr(F_\E^2)\wedge\omega^{[n-1]} \\
	&-\frac{1}{4\pi^2}\int_{X} v(\mu)\tr(F_\E\Psi_\E)\wedge\omega^{[n-1]}-\frac{1}{4\pi^2}\int_{X} \tr\left((F_\E+\Psi_\E)\langle\Phi_\nabla,dv(\mu)\rangle\right)\wedge\omega^{[n]} \\
	=&-\frac{1}{8\pi^2}\int_{X} v(\mu)\tr(F_\E^2)\wedge\omega^{[n-1]} \\
	&-\frac{1}{4\pi^2}\int_{X} \tr\left((v(\mu)\Lambda_\omega F_\E+\langle\Phi_\nabla,dv(\mu)\rangle)\Psi_\E\right)\omega^{[n]}.
\end{align*}

We also normalise by subtracting the appropriate multiple \(c_{v,w}\) of the corresponding representative for \((c_1(\E)_\T\cdot w(\alpha_\T))_{\A(E,h)^\T}\), whose 2-form part vanishes, \begin{align*}
	&\left[\int_\tor\int_X\frac{i}{2\pi}\tr(F_{\E}+\Psi_\E+\langle\Phi_\nabla,i\xi\rangle)\wedge e^{\omega+i\langle\mu,\xi\rangle}\hat{w}(\xi)\underline{d\xi}\right]_{\deg=2} \\
	=& \int_X\frac{i}{2\pi}\tr(\Psi_\E)w(\mu)\omega^{[n]},
\end{align*} producing \begin{align*}
	&\frac{-1}{8\pi^2}\int_{X} v(\mu)\tr(F_\E^2)\wedge\omega^{[n-1]} \\
	&+\int_{X} \tr\left(\left(\frac{i}{2\pi}\Lambda_{\omega,v}(F_\E+\Phi_\nabla)-c_{v,w}w(\mu)\Id_\E\right)\frac{i}{2\pi}\Psi_\E\right)\omega^{[n]}.
\end{align*} Note that in the 2-form component of this equivariant form, the only part of \(F_\E\) that contributes to the integral is the mixed component, which takes a tangent vector to \(\A(E,h)^\T\) and converts it to the corresponding vertical \(\End(\E)\)-valued 1-form on \(X\times\A(E,h)^\T\), under the isomorphism \(T_\nabla\A(E,h)^\T \cong \Omega^1(\End_{SH}(E))^\T\). In particular, for tangent vectors \(a,b\in T\A(E,h)^\T\), this 2-form may be written \[\Omega_v(a,b) := \frac{-1}{8\pi^2}\int_X v(\mu)\tr(a\wedge b)\wedge\omega^{[n-1]},\] which is a weighted modification of the usual Atiyah--Bott symplectic form. We thus conclude:

\begin{prop}\label{prop:weighted_AtBo}
	The map \(\sigma_{v,w}:\A(E,h)^\T\to\Gamma(\End_{SH}(E,h)^\T)^*\), \[\langle\sigma_{v,w}(\nabla), e\rangle := -\int_{X} \tr\left(\left(\frac{i}{2\pi}\Lambda_{\omega,v}(F_E+\Phi_\nabla)-c_{v,w}w(\mu)\Id_E\right)\frac{i}{2\pi}e\right)\omega^{[n]}\] is a moment map for the \(\mathcal{G}^\T\)-action on \(\A(E,h)^\T\) with respect to the symplectic form
	\[\Omega_v(a,b):=\frac{-1}{8\pi^2}\int_X v(\mu)\tr(a\wedge b)\wedge\omega^{[n-1]}.\]
\end{prop}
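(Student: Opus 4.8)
\section*{Proof proposal}

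The plan is to read off the statement from the equivariant intersection $2$-form construction of Section \ref{sec:eq2forms}, as anticipated in the computation preceding the proposition. The structural point is that a degree-$2$ equivariant form for a compact group $G$ acting on a manifold $M$ is a pair $(\Omega,\nu)$ consisting of a $G$-invariant $2$-form $\Omega$ and a map $\nu:M\to\mathfrak{g}^*$, and that such a form is equivariantly closed if and only if $d\Omega=0$ and $d\langle\nu,\xi\rangle=\iota_{\xi_M}\Omega$ for all $\xi\in\mathfrak{g}$ --- equivalently, if and only if $\Omega$ is a closed $2$-form and $\nu$ is a moment map for it. Hence it suffices to recognise $\Omega_v$ together with $\sigma_{v,w}$ as the degree-$2$ component of an equivariantly closed $\mathcal{G}^\T$-equivariant form on $\A(E,h)^\T$, and then to check that $\Omega_v$ is non-degenerate.

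First I would check that the two $\T\times\mathcal{G}^\T$-equivariant forms on the total space $X\times\A(E,h)^\T$ entering the construction are genuinely equivariantly closed. The representative of $\alpha_{\T\times\mathcal{G}^\T}=[\omega+\mu]$ is pulled back from $X$, where $\omega+\mu$ is $\T$-equivariantly closed by the choice of moment map; since $\mathcal{G}^\T$ acts trivially on $X$, it is $\T\times\mathcal{G}^\T$-equivariantly closed on the product. For the representative of $\ch_2(\E)_{\T\times\mathcal{G}^\T}$, equivariant Chern--Weil theory reduces the claim to equivariant closedness of $F_\E+\Phi_\nabla+\Psi_\E$. On each fibre $X\times\{\nabla\}$ the form $F_\E+\Phi_\nabla$ is $\T$-equivariantly closed by Proposition \ref{prop:canonical_moment_map}; using the standard fact (compare \eqref{eq:moment-map-change}) that varying $\nabla$ by $a\in T_\nabla\A(E,h)^\T\cong\Omega^1(X,\End_{SH}E)^\T$ changes the canonical moment map by $a(\xi)$, this persists over the whole family, so $F_\E+\Phi_\nabla$ is $\T$-equivariantly closed on $X\times\A(E,h)^\T$. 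Meanwhile $F_\E+\Psi_\E$ is $\mathcal{G}^\T$-equivariantly closed, which is the Atiyah--Bott moment map identity \cite{AB83, DH23} for $\langle\Psi_\E,e\rangle=-e$. Since $\T$ acts trivially on $\A(E,h)^\T$ and $\mathcal{G}^\T$ acts trivially on $X$, the Cartan differential of $F_\E+\Phi_\nabla+\Psi_\E$ for $\T\times\mathcal{G}^\T$ reduces to the two separate equivariant Bianchi identities, which both vanish.

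Granting this, Section \ref{sec:eq2forms} gives that $(\ch_2(\E)_{\T\times\mathcal{G}^\T}\cdot v(\alpha_{\T\times\mathcal{G}^\T}))_{\A(E,h)^\T}$, and likewise the normalising term $c_{v,w}(c_1(\E)_{\T}\cdot w(\alpha_{\T}))_{\A(E,h)^\T}$, are $\mathcal{G}^\T$-equivariantly closed degree-$2$ forms on $\A(E,h)^\T$; the latter has vanishing $2$-form part, so its $\mathfrak{g}^*$-valued part is constant in $\nabla$, and subtracting a multiple of it changes neither closedness nor the $2$-form part. The computation preceding the proposition then identifies the degree-$2$ component of the difference: the $2$-form part comes only from the mixed component of $F_\E$ under $T_\nabla\A(E,h)^\T\cong\Omega^1(X,\End_{SH}E)^\T$ and equals $\Omega_v$, while the $\mathfrak{g}^*$-valued part comes from the $\Psi_\E$-terms and, using $\langle\Psi_\E,e\rangle=-e$, equals $\langle\sigma_{v,w}(\nabla),e\rangle$. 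Equivariant closedness of this degree-$2$ form is therefore exactly the assertion that $\Omega_v$ is closed and $\sigma_{v,w}$ is a moment map for it. It remains only to note that $\Omega_v$ is non-degenerate: since $v(\mu)>0$, the pairing $(a,b)\mapsto-\frac{1}{8\pi^2}\int_X v(\mu)\tr(a\wedge b)\wedge\omega^{[n-1]}$ on $\Omega^1(X,\End_{SH}E)^\T$ is pointwise a positive multiple of the standard one, so non-degeneracy follows as for the unweighted Atiyah--Bott form \cite{AB83}.

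The step I expect to require the most care is the family version of the equivariant closedness of $F_\E+\Phi_\nabla+\Psi_\E$ --- concretely, verifying that the derivative of the canonical $\T$-moment map $\Phi_\nabla$ along $\A(E,h)^\T$ matches, with the correct sign, the mixed component of $F_\E$, so that $F_\E+\Phi_\nabla$ is $\T$-equivariantly closed on the product and not merely fibrewise. Everything else is bookkeeping on top of the machinery of Sections \ref{sec:eq-intersections} and \ref{sec:eq2forms}.
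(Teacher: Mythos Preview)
Your proposal is correct and follows essentially the same approach as the paper: the paper's proof is the computation preceding the proposition, which uses the equivariant intersection $2$-form machinery of Section~\ref{sec:eq2forms} applied to $\ch_2(\E)_{\T\times\mathcal{G}^\T}$ and $c_1(\E)_\T$, then reads off $\Omega_v$ and $\sigma_{v,w}$ as the $2$-form and $\mathfrak{g}^*$-parts of the resulting equivariantly closed degree-$2$ form. You make explicit a couple of points the paper leaves implicit --- the structural fact that equivariant closedness in degree $2$ is exactly the moment map condition, the family (not merely fibrewise) equivariant closedness of $F_\E+\Phi_\nabla+\Psi_\E$, and non-degeneracy of $\Omega_v$ --- but the argument is the same.
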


\begin{rem}
	The proof above extends to the finite-dimensional case, where one has a \(\T\times K\)-equivariant vector bundle \(E\to X\) along with a \(\T\times K\)-equivariant proper holomorphic submersion \(X\to B\), where the \(\T\)-action is trivial on \(B\). In particular, this allows not only for the complex structure of the vector bundle to vary over the fibres of \(X\to B\), but the complex structures of the fibres \(X_b\) themselves may also vary with \(b\in B\). The caveat is that the differential 2-form on \(B\) is no longer guaranteed to be non-degenerate in general.
\end{rem}

\subsection{Weighted slope stability}

Let \(\F\subset E\) be a subsheaf, associating to each open subset \(U\subset X\) an \(\mathcal{O}(U)\)-submodule \(\mathcal{O}(U,\F)\subset\mathcal{O}(U,E)\). Note that for each \(t\in\T\) and \(s\in\mathcal{O}(U,E)\), the \(\T\)-action on \(E\) induces a section \(t\cdot s\in\mathcal{O}(t\cdot U, E)\). The subsheaf \(\F\subset E\) is called \emph{\(\T\)-equivariant} if for each \(t\in\T\) and \(s\in\mathcal{O}(U,\F)\), \[t\cdot s \in \mathcal{O}(t\cdot U, \F).\]

Recall that for a torsion-free coherent sheaf \(\F\), the \emph{determinant line bundle} of \(\F\) may be defined as \[\det(\F) := (\Lambda^{\rk(\F)}\F)^{**}.\] Canonical operations on sheaves such as tensor powers, exterior powers and duals all preserve the property of being \(\T\)-equivariant, so that if \(\F\) is a \(\T\)-equivariant torsion-free coherent sheaf, the line bundle \(\det(\F)\) is naturally \(\T\)-equivariant. Given a \(\T\)-equivariant torsion-free coherent sheaf \(\F\), we therefore define \[c_1(\F)_{\T}:=c_1(\det(\F))_{\T}.\] Note that any \(\T\)-equivariant coherent subsheaf \(\F\subset E\) is automatically torsion-free, and so \(c_1(\F)_\T\) is defined. We lastly recall that a coherent subsheaf \(\F\subset E\) is called \emph{saturated} if the quotient \(E/\F\) is torsion-free. In particular, a saturated sheaf is reflexive \cite[Proposition 5.5.22]{Kob14}, and has singular locus having complex codimension at least 2 in \(X\) \cite[Corollary 5.5.20]{Kob14}.

\begin{defn}
	Let \(\F\) be a \(\T\)-equivariant torsion-free coherent sheaf on \((X,\alpha_\T)\). The \emph{\(v\)-weighted slope} of \(\F\) is \[\mu_v(\F):=\frac{(c_1(\F)_{\T}\cdot v(\alpha_{\T}))}{\mathrm{rk}(\F)}.\] A \(\T\)-equivariant vector bundle \(E\) on \((X,\alpha_T)\) is: \begin{enumerate}
		\item \emph{\(v\)-weighted slope semistable} if for all proper non-zero \(\T\)-equivariant coherent saturated subsheaves \(\F\subset E\), \[\mu_v(\F)\leq\mu_v(E);\]
		\item \emph{\(v\)-weighted slope stable} if for all proper non-zero \(\T\)-equivariant coherent saturated subsheaves \(\F\subset E\), \[\mu_v(\F)<\mu_v(E);\]
		\item \emph{\(v\)-weighted slope polystable} if \(E\) is a direct sum of \(\T\)-equivariant stable vector bundles of the same weighted slope;
		\item \emph{\(v\)-weighted slope unstable} if \(E\) is not \(v\)-weighted slope semistable.
	\end{enumerate}
\end{defn}

\begin{rem}
	We could similarly define the \emph{\((v,w)\)-weighted slope} of a \(\T\)-equivariant sheaf \(\F\) as \[\mu_{v,w}(\F) := \frac{(c_1(\F)_{\T}\cdot v(\alpha_{\T}))}{\mathrm{rk}(\F)(w(\alpha_{\T}))},\] along with various analogues of \((v,w)\)-weighted slope stability, etc. Of course, the definitions of \((v,w)\)-weighted slope stability and \(v\)-weighted slope stability coincide, which is the algebro-geometric analogue of Lemma \ref{vw-weighted-is-v-weighted}.
\end{rem}

The following lemma will be needed in the proof of the weighted Kobayashi--Hitchin correspondence.

\begin{lemma}\label{lem:codim2integral}
	Let \(\mathcal{F}\subset E\) be a \(\T\)-equivariant coherent subsheaf whose singular locus \(V\) has codimension at least 2 in \(X\). Let \(h\) be a \(\T\)-invariant Hermitian metric on \(E\), and denote by \(F_\mathcal{F}+\Phi_\mathcal{F}\) the curvature and moment map associated to the restricted metric \(h|_\F\) over \(X\backslash V\). Then \[(c_1(\mathcal{F})_{\T}\cdot v(\alpha_{\T})) = \int_{X\backslash V}\frac{i}{2\pi}\tr\Lambda_{\omega, v}(F_\F+\Phi_\F)\omega^{[n]}.\]
\end{lemma}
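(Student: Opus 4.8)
The plan is to reduce the statement to the definition of $(c_1(\mathcal{F})_\T \cdot v(\alpha_\T))$ via Fourier transform, and thereby to the unweighted (exponential-weight) case, where the corresponding fact is essentially the content of the usual Kobayashi--Hitchin theory. First I would recall that $\mathcal{F}$ is torsion-free, hence $\det(\mathcal{F}) = (\Lambda^{\rk\mathcal{F}}\mathcal{F})^{**}$ is a genuine $\T$-equivariant line bundle on $X$, and that over $X\backslash V$ the restricted metric $h|_\mathcal{F}$ is a smooth Hermitian metric on the vector bundle $\mathcal{F}|_{X\backslash V}$; its curvature $F_\mathcal{F}$ and canonical moment map $\Phi_\mathcal{F}$ (in the sense of Proposition~\ref{prop:canonical_moment_map}) induce a Hermitian metric on $\det(\mathcal{F})|_{X\backslash V}$ whose equivariant curvature is $\tfrac{i}{2\pi}\tr(F_\mathcal{F}+\Phi_\mathcal{F})$. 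By the definition of $(\beta_\T\cdot v(\alpha_\T))$ in Section~\ref{sec:eq-intersections}, with $\beta_\T = c_1(\mathcal{F})_\T$, we have
\[
(c_1(\mathcal{F})_\T\cdot v(\alpha_\T)) = \int_\tor \big(c_1(\mathcal{F})_\T\cdot e^{\alpha_\T}\big)(i\xi)\,\hat v(\xi)\,\underline{d\xi},
\]
so it suffices to prove, for each $\xi\in\tor$, the unweighted identity
\[
\big(c_1(\mathcal{F})_\T\cdot e^{\alpha_\T}\big)(i\xi) = \int_{X\backslash V}\frac{i}{2\pi}\tr\big(F_\mathcal{F}+\langle\Phi_\mathcal{F},i\xi\rangle\big)\wedge e^{\omega + i\langle\mu,\xi\rangle},
\]
and then to conjugate both sides by the Fourier transform exactly as in the proof of Lemma~\ref{constants-in-wHE}, using \eqref{eq:FourierD} to convert the $\xi$-polynomial factors into derivatives of $v$ applied to $\mu$; the interchange of the $\xi$-integral with the (finite, by the codimension-two bound) integral over $X\backslash V$ is justified by \eqref{eq:bound-on-intersection} and the Schwartz decay of $\hat v$.

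For the unweighted identity, the key point is that $\tfrac{i}{2\pi}\tr(F_\mathcal{F}+\langle\Phi_\mathcal{F},\xi\rangle)$, a priori only a smooth $\T$-equivariantly closed $2$-form on $X\backslash V$, extends across $V$ to a closed current representing $c_1(\det\mathcal{F})_\T$ on all of $X$; granting this, integrating its wedge product with the (globally smooth) form $e^{\omega+\langle\mu,\xi\rangle}$ over $X$ equals the cohomological pairing, and the contribution of $V$ vanishes since $V$ has real codimension at least $4$. I would obtain the extension-across-$V$ statement from the standard fact (used already in the Kobayashi--Uhlenbeck--Yau theory; see \cite[Chapter~V]{Kob14}) that for a saturated subsheaf the restricted metric has $L^1_{\mathrm{loc}}$ curvature trace near the singular locus, together with the observation that the moment map $\Phi_\mathcal{F}$ — being given by $\nabla^{h|_\mathcal{F}}_\xi - \mathcal{L}^\mathcal{F}_\xi$ — is bounded near $V$ because the infinitesimal action $\mathcal{L}^\mathcal{F}_\xi$ is the restriction of $\mathcal{L}^E_\xi$ (smooth on all of $X$) and the difference $\nabla^{h|_\mathcal{F}}_\xi - \nabla^{h}_\xi|_\mathcal{F}$ is controlled by the second fundamental form, whose $L^2$ behaviour is exactly what the codimension-two hypothesis controls. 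An alternative, perhaps cleaner, route is to resolve the singularities of $\mathcal{F}$ by a sequence of blow-ups $\pi\colon\tilde X\to X$ with centres in $V$ so that $\pi^*\mathcal{F}/(\text{torsion})$ becomes locally free, pull everything back, apply the already-established smooth case on $\tilde X$ (where $\alpha_\T$ is replaced by $\pi^*\alpha_\T$, still an equivariant class, though no longer Kähler — but the intersection-number formalism of Section~\ref{sec:eq-intersections} only used that $\alpha_\T$ is a class, with the Kähler hypothesis entering merely to guarantee a moment polytope), and then push forward, noting that $\pi$ is an isomorphism over $X\backslash V$ and that the exceptional divisor contributes nothing to the relevant intersection numbers since it maps to the codimension-$\geq 2$ set $V$.

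The main obstacle is the analytic control of $F_\mathcal{F}+\Phi_\mathcal{F}$ near $V$ — specifically, showing that $\tr(F_\mathcal{F}+\langle\Phi_\mathcal{F},\xi\rangle)$ defines a closed current on $X$ in the correct cohomology class rather than merely an $L^1_{\mathrm{loc}}$ form. In the purely unweighted setting this is classical, but here one must additionally check that the equivariant correction term $\langle\Phi_\mathcal{F},\xi\rangle$ does not spoil closedness or the cohomology class across $V$; I expect this to follow from the boundedness argument sketched above, since a bounded term times the smooth volume form contributes a well-defined current and its equivariant exterior derivative is computed by the moment map identity $F_\mathcal{F}(-,\xi) = \nabla^{h|_\mathcal{F}}\langle\Phi_\mathcal{F},\xi\rangle$, which holds pointwise on $X\backslash V$ and hence distributionally on $X$. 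Everything else — the Fourier manipulations, the interchange of integrals, the vanishing of boundary contributions — is routine given the results already in the excerpt.
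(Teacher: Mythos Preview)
Your overall strategy---reduce via Fourier to the exponential-weight identity, establish that identity by controlling the singular equivariant curvature of $\det\mathcal{F}$, then apply Fubini---is exactly the paper's approach. Two of the steps you flag as routine are in fact where all the work lies, and your justifications for them do not go through as written.

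First, the Fubini interchange. You invoke \eqref{eq:bound-on-intersection}, but that bound is for \emph{smooth} equivariant representatives integrated over all of $X$; here the integrand $\tfrac{i}{2\pi}\tr(F_\mathcal{F}+\langle\Phi_\mathcal{F},i\xi\rangle)$ is singular along $V$, and you need a uniform-in-$\xi$ estimate of the form $\int_{X\backslash V}|\cdots|\leq C_1|\xi|+C_2$. The paper obtains this not from general principles but by writing the singular equivariant curvature as a smooth reference term plus $\tfrac{i}{2\pi}\bar\partial^{\mathrm{eq}}\partial\log f$ for an explicit globally defined function $f$ (the ratio of the induced singular metric to a smooth one on $\det\mathcal{F}$), passing to a resolution $\pi:\tilde X\to X$ on which $\pi^*f=a|\zeta|^{2m}$ locally, and reading off the bound \eqref{eq:integral-bound} directly from this local form. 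Your appeal to ``$L^1_{\mathrm{loc}}$ curvature from classical theory'' gives only the $\xi$-independent part.

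Second, and more seriously, the assertion that the extended current lies in the equivariant class $c_1(\mathcal{F})_\T$. Knowing that $\tr F_\mathcal{F}$ extends to a current representing the ordinary $c_1(\det\mathcal{F})$ is classical, but the equivariant statement requires that the difference from a smooth equivariant representative be \emph{equivariantly} exact as a current on $X$, not just on $X\backslash V$. The paper handles this by the same resolution: it applies equivariant Stokes on $\tilde X\backslash N_\epsilon$ and shows the boundary term limits, via Poincar\'e--Lelong, to an integral over the exceptional divisor $D$ which vanishes because $\omega|_D$ is pulled back from the codimension-$\geq 2$ set $W$. Your sketch (``holds pointwise on $X\backslash V$ and hence distributionally on $X$'') skips precisely this residue computation.

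A minor point: your argument that $\Phi_\mathcal{F}$ is bounded via $L^2$ control of the second fundamental form is unnecessary---from the block decomposition \eqref{eq:decomposition}, $\Phi_\mathcal{F}$ is simply the $\mathcal{F}$--$\mathcal{F}$ corner of the globally smooth $\Phi_E$, hence bounded outright.
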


In other words, the equivariant intersection number \((c_1(\F)\cdot v(\alpha_{\T}))\) can be computed by differential geometric means on the smooth locus of the subsheaf.

\begin{proof}
	We follow the approach of Kobayashi \cite[pp. 181-182]{Kob14}. First, consider the map \[j:\det(\F):=(\Lambda^{\rk(\F)}\F)^{**}\to\Lambda^{\rk(\F)}E.\] Away from \(V\) this map is injective on the fibres. Let \(\tau\) be a local holomorphic non-vanishing section of \(\det(\F)\), and let \(e_1,\ldots, e_r\) be a local holomorphic frame for \(E\) over the same open subset. Then \[j(\tau) = \sum_{I}\tau^Is_I,\] where \(I:=(i_1,\ldots,i_{\rk(\F)})\) for \(i_1<i_2<\cdots<i_{\rk(\F)}\), \(s_I:=e_{i_1}\wedge\cdots\wedge e_{i_{\rk(\F)}}\), and the \(\tau^I\) are local holomorphic functions. Denote by \(u:=j^*\Lambda^{\rk(\F)}h\) the (degenerate along \(V\)) metric induced by \(h\). Fix an arbitrary smooth \(\T\)-invariant Hermitian metric \(\tilde{u}\) on \(\det(\F)\), and define \[f:=u(\tau,\tau)/\tilde{u}(\tau,\tau).\] In particular, \(f\) is a smooth non-negative \(\T\)-invariant function that is independent of the choice of local section \(\tau\), so is globally defined on \(X\). Furthermore, \(f\) vanishes if and only if each of the \(\tau^I\) vanish, since the metric on \(\Lambda^{\rk(\F)}E\) is non-degenerate. Denote by \(W\subset V\) this vanishing locus, which is a \(\T\)-invariant subvariety of \(X\) of codimension at least 2.
	
	Next, choose a \(\T\)-invariant holomorphic map \(\pi:\tilde{X}\to X\) so that \(D:=\pi^{-1}(W)\) has the following properties: \(\mathrm{codim}D = 1\), \(\pi:\tilde{X}\backslash D\to X\backslash W\) is a biholomorphism, and \(\pi^*\mathcal{I}_W = \mathcal{O}_{\tilde{X}}(-mD)\) for some \(m>0\), where \(\mathcal{I}_W\) denotes the ideal sheaf of \(W\). Let \(\zeta\) be a local holomorphic function defining \(D\), so that for each \(I\), \(\pi^*\tau^I / \zeta^m\) is holomorphic and at least one of these functions is non-vanishing. Then we may further write \[\pi^*f = a|\zeta|^{2m}\] for some local positive smooth function \(a\).
	
	We are now in a position to compute \((c_1(\F)_\T\cdot v(\alpha_{\T}))\). Denote by \(\tilde{c}_1(\det(\F), \tilde{u})_\T\) (resp. \(\tilde{c}_1(\det(\F), u)_\T\)) the equivariant curvature of the metric \(\tilde{u}\) on \(\det(\F)\) (resp. \(u\) on \(\det(\F)|_{X\backslash W}\)). For any \(k\geq n-1\), we begin with \begin{align*}
		&\int_X \tilde{c}_1(\det(\F),\tilde{u})_{\T}\wedge(\omega+\mu)^{[k]} \\
		=& \int_{\tilde{X}\backslash D} \pi^*\tilde{c}_1(\det(\F),\tilde{u})_{\T}\wedge\pi^*(\omega+\mu)^{[k]} \\
		=& \int_{\tilde{X}\backslash D} \pi^*\left(\tilde{c}_1(\det(\F),u)_{\T}-\frac{i}{2\pi} \db^{\eq}\d \log f\right)\wedge\pi^*(\omega+\mu)^{[k]},
	\end{align*} where we used \eqref{eq:change-of-curvature} in the final line. Note the integral of \(\db^{\eq}\d\log\pi^*f\wedge\pi^*(\omega+\mu)^{[k]}\) over \(\tilde{X}\backslash D\) converges absolutely, since near \(D\) we have \[\db^{\eq}\d\log\pi^*f = \db^{\eq}\d\log a + m\frac{d\zeta}{\zeta},\] where the second term is interpreted as a map \(\tilde{X}\backslash D\to\tor^*\) in the usual way. Moreover, we see that for \(\xi\in\tor\), \begin{equation}\label{eq:integral-bound}
		\int_{\tilde{X}\backslash D}|\langle\db^{\eq}\d\pi^*\log f,\xi\rangle\wedge \pi^*(e^\omega)| \leq C_1|\xi|+C_2,
	\end{equation} for certain constants \(C_1,C_2>0\); here the absolute value in the integral is interpreted as taking the degree \(2n\) component (as a differential form), writing it as a multiple \(g\omega^{[n]}\) for some function \(g\), and replacing with \(|g|\omega^{[n]}\). For all \(\epsilon > 0\) sufficiently small, let \(N_\epsilon\) denote a \(\T\)-invariant tubular neighbourhood of \(D\) of radius \(\epsilon\) for a choice of smooth \(\T\)-invariant metric on \(\tilde{X}\). Then \begin{align*}
		& -\int_{\tilde{X}\backslash D} \frac{i}{2\pi} \db^{\eq}\d \pi^*\log f\wedge\pi^*(\omega+\mu)^{[k]} \\
		=& -\lim_{\epsilon\to 0}\int_{\tilde{X}\backslash N_\epsilon} \frac{i}{2\pi} \db^{\eq}\d \pi^*\log f\wedge\pi^*(\omega+\mu)^{[k]} \\
		=& -\lim_{\epsilon\to 0}\int_{\tilde{X}\backslash N_\epsilon} \frac{i}{2\pi} d^{\eq}\left(\d \pi^*\log f\wedge\pi^*(\omega+\mu)^{[k]}\right) \\
		=& \lim_{\epsilon\to0}\int_{\d N_\epsilon}\frac{i}{2\pi} \d \pi^*\log f\wedge\pi^*(\omega+\mu)^{[k]},
	\end{align*} where in the final line we used equivariant Stokes theorem (see \cite[Lemma 3.14]{Ino21}, for example), and the minus sign is lost due to the orientation of \(\d N_\epsilon\). Now, on a small open set where \(\pi^*f= a|\zeta|^{2m}\), the contributions from \[\frac{i}{2\pi} \d \pi^*\log a\wedge\pi^*(\omega+\mu)^{[k]}\] to the integral become increasingly small as \(\epsilon\to0\), since this form is bounded and \(\d N_\epsilon\) has volume approaching 0. Thus only the local term \[\frac{i}{2\pi} \d \pi^*\log|\zeta|^{2m}\wedge\pi^*(\omega+\mu)^{[k]}\] may contribute in the limit. By the usual (i.e. non-equivariant) Poincar{\'e}--Lelong formula, the corresponding contributions tend towards \[m\int_D \mu^{[k-n-1]}\omega^{[n-1]}\] as \(\epsilon\to0\). Since \(\omega|_D\) is pulled back from \(W\) which has codimension at least 2, this integral vanishes, and we have proven \begin{align*}
		\int_X \tilde{c}_1(\det(\F),\tilde{u})_{\T}\wedge(\omega+\mu)^{[k]} =&\int_{X\backslash W} \tilde{c}_1(\det(\F),u)_{\T}\wedge(\omega+\mu)^{[k]} \\
		=&\int_{X\backslash V} \tilde{c}_1(\det(\F),u)_{\T}\wedge(\omega+\mu)^{[k]}.
	\end{align*} Summing from \(k=n-1\) to \(\infty\), we see that for \(\xi\in\mathfrak{t}\oplus i\mathfrak{t}\), the equivariant intersection number \((c_1(\F)_\T\cdot e^{\langle\alpha_{\T},\xi\rangle})\) can be computed as \[\int_{X\backslash V}\langle\tilde{c}_1(\det(\F),u)_{\T},\xi\rangle\wedge e^{\omega+\langle\mu,\xi\rangle}.\] Applying the inverse Fourier transform, we have \[(c_1(\F)_\T\cdot v(\alpha_\T)) = \int_\tor\int_{X\backslash V}\langle \tilde{c}_1(\det(\F),u)_\T, i\xi\rangle \wedge e^{\omega+i\langle\mu,\xi\rangle} \hat{v}(\xi)\underline{d\xi}.\] It remains to observe we may exchange the order of integration. To see this, recall that \[\tilde{c}_1(\det(\F),u)_\T = \tilde{c}_1(\det(\F),\tilde{u})_\T+\frac{i}{2\pi}\db^{\eq}\d\log f;\] clearly the corresponding integral with \(\tilde{u}\) in place of \(u\) converges absolutely, since \(\tilde{u}\) extends smoothly over \(\tilde{X}\). Finally the inequality \eqref{eq:integral-bound} shows that the integral \[\int_\tor\int_{X\backslash V}\langle i\db^\eq\d \log \pi^* f, i\xi\rangle \wedge e^{\omega+i\langle\mu,\xi\rangle}\hat{v}(\xi)\underline{d\xi}\] also converges absolutely, since \(\hat{v}\) is Schwartz class. Applying Fubini's theorem, we may exchange the order of integration, and we are done.
\end{proof}

\subsection{Existence of a weighted Hermite--Einstein metric implies weighted slope polystability}

Let $E$ be a $\T$-equivariant holomorphic vector bundle over a compact K\"ahler $\T$-manifold $(X,\omega)$, and let $\F\subset E$ be a $\T$-equivariant holomorphic subbundle of $E$. A $\T$-invariant Hermitian metric $h_E$ on $E$ induces a $\T$-invariant Hermitian metric $h_\F$ on $\F$ by restriction. The orthogonal complement of $\F$ in $(E,h_E)$ defines a smooth complex subbundle $\F^\perp$ of $E$ that is $\T$-equivariant. As a smooth complex vector bundle, $\F^\perp$ is isomorphic to the $\T$-equivariant quotient bundle $Q:=E/\F$. We denote by $h_{Q}$ the induced Hermitian metric on $Q$.

Let $\nabla^E$, $\nabla^\F$, and $\nabla^Q$ be the Chern connections of $(E,h_E)$, $(\F,h_\F)$, and $(Q,h_Q)$ respectively. For all $s\in \Gamma(\F)\subset\Gamma(E)$, we have the decomposition
\[
\nabla^E s=\nabla^\F s+A(s),
\]
relative to $E=\F\oplus \F^{\perp}$ where $A\in\Omega^{1,0}({\rm Hom}( \F,\F^{\perp}))$ is the second fundamental form of $\F$ in $(E,h_E)$. It follows that the bundle moment map $\Phi_E$ for the action of $\T$ on $E$ decomposes as follows
\[
\langle\Phi_E(s),\xi\rangle=\langle\Phi_\F(s),\xi\rangle+A(s)(\xi)
\]
for all $\xi\in\tor$ and $s\in\Gamma(\F)$. 
Using the smooth isomorphism $Q\to \F^\perp$, for all $s\in\Gamma(\F^\perp)$ we have the decomposition
\[
\nabla^Es=\nabla^{Q}s-A^\dagger(s)
\]
where $A^\dagger$ is the adjoint of $A$ relative to $h_E$. It follows that the bundle moment map $\Phi_E$ for the action of $\T$ on $E$ also decomposes as
\[
\langle\Phi_E(s),\xi\rangle=\langle\Phi_Q(s),\xi\rangle-A^\dagger(s)(\xi)
\]
for all $\xi\in\tor$ and $s\in\Gamma(\F^\perp)$.

In summary, relative to the decomposition $E=\F\oplus \F^{\perp}$, the curvature $F_E$ of $\nabla^E$ and the bundle moment map admit the following decompositions
\begin{equation}\label{eq:decomposition}
	\begin{split}
		F_{\nabla^E}=\begin{pmatrix}
			F_{\nabla^\F}-A^\dagger\wedge A & \nabla^{1,0}A^\dagger\\
			-\db A & F_{\nabla^Q}-A\wedge A^\dagger
		\end{pmatrix}, \quad
		\Phi_{E}=\begin{pmatrix}
			\Phi_\F & -A^\dagger\\
			A&\Phi_Q
		\end{pmatrix},
	\end{split}
\end{equation} where we use the isomorphism \(Q\cong E/\F\) to identify \(A\) with an element of \(\Omega^{1,0}(\Hom(\F,Q))\) and \(A^\dagger\) with an element of \(\Omega^{0,1}(\Hom(Q,\F))\).

Of course, in the case where \(\F\subset E\) is a \(\T\)-equivariant coherent saturated subsheaf, the above discussion goes through in an identical manner over the open locus where \(\F\) is a vector bundle.

\begin{lemma}\label{lem:pointwise-ineq}
	Let $(E,h_E)$ be a $\T$-equivariant $v$-weighted Hermite--Einstein bundle on $(X,\omega)$ and $\F$ a $\T$-equivariant coherent saturated subsheaf of $E$ with singular locus \(V\). Then over \(X\backslash V\), the inequality
	\[
	\frac{\tr_\F\,i\Lambda_{\omega,v}(F_{\nabla^\F}+\Phi_\F)}{\rk(\F)}\leq \frac{\tr_E\,i\Lambda_{\omega,v}(F_{\nabla^E}+\Phi_E)}{\rk(E)}
	\]
	holds pointwise. Moreover, if equality holds on all of \(X\backslash V\), then $E$
	splits as $E=\F\oplus E/\F$ over \(X\backslash V\), where each component is $v$-weighted Hermite--Einstein with the same $v$-weighted slope.
\end{lemma}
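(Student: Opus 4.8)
The plan is to run the classical Gauss--Codazzi computation of Kobayashi \cite[Chapter~5]{Kob14} over the smooth locus $X\setminus V$, where $\F$ is a genuine holomorphic subbundle of $E$ and the decomposition \eqref{eq:decomposition} applies; the only new ingredient is the strictly positive factor $v(\mu)$ carried along by the $v$-weighted contraction, which does not affect any sign.

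First, over $X\setminus V$ I would restrict the equivariant curvature $F_{\nabla^E}+\Phi_E$ to the $\F$-to-$\F$ block relative to the smooth splitting $E=\F\oplus\F^\perp$. By \eqref{eq:decomposition} this block equals $(F_{\nabla^\F}+\Phi_\F)-A^\dagger\wedge A$, where $A$ is the second fundamental form. Since $\Lambda_{\omega,v}$ is linear in the two-form part and in the endomorphism part separately, one obtains
\[
\Lambda_{\omega,v}\big((F_{\nabla^E}+\Phi_E)\big|_{\F}\big)=\Lambda_{\omega,v}(F_{\nabla^\F}+\Phi_\F)-v(\mu)\,\Lambda_\omega(A^\dagger\wedge A).
\]
Multiplying by $i$, taking $\tr_\F$, and inserting the $v$-weighted Hermite--Einstein equation \eqref{eq:v-weighted-HE} for $E$ in the form $i\Lambda_{\omega,v}(F_{\nabla^E}+\Phi_E)=2\pi c_v\Id_E$ on the left-hand side, this gives
\[
\frac{\tr_\F\, i\Lambda_{\omega,v}(F_{\nabla^\F}+\Phi_\F)}{\rk(\F)}=2\pi c_v+\frac{v(\mu)}{\rk(\F)}\, i\tr_\F\Lambda_\omega(A^\dagger\wedge A),
\]
while tracing \eqref{eq:v-weighted-HE} over all of $E$ shows the right-hand side of the asserted inequality equals exactly $2\pi c_v$. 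It then suffices to recall the standard pointwise fact that $i\tr_\F\Lambda_\omega(A^\dagger\wedge A)=-|A|^2\le 0$, where $|A|$ is the pointwise norm of the second fundamental form with respect to $\omega$ and $h_E$, with equality at a point precisely when $A$ vanishes there; see \cite[Chapter~5]{Kob14}. As $v(\mu)>0$ and $\rk(\F)>0$, the displayed identity yields the inequality at once, and equality throughout $X\setminus V$ forces $A\equiv 0$ there.

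In the equality case, \eqref{eq:decomposition} then shows that both $F_{\nabla^E}$ and $\Phi_E$ are block-diagonal, so $\nabla^E=\nabla^\F\oplus\nabla^Q$ preserves $\F^\perp$; since $\nabla^E$ is the Chern connection, its $(0,1)$-part preserves $\F^\perp$, so $\F^\perp$ is a holomorphic $\T$-equivariant subbundle of $E$ over $X\setminus V$ and the natural map $\F^\perp\to E/\F$ is a holomorphic $\T$-equivariant isomorphism there; this is the asserted splitting $E=\F\oplus(E/\F)$ over $X\setminus V$. Restricting \eqref{eq:v-weighted-HE} to each summand shows $\F$ and $E/\F$ are each $v$-weighted Hermite--Einstein with constant $c_v$ over $X\setminus V$. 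Finally, Lemma \ref{lem:codim2integral} applied to $\F$ gives
\[
\big(c_1(\F)_\T\cdot v(\alpha_\T)\big)=\int_{X\setminus V}\frac{i}{2\pi}\tr\Lambda_{\omega,v}(F_{\nabla^\F}+\Phi_\F)\,\omega^{[n]}=c_v\,\rk(\F)\,\Vol(X,\omega),
\]
so that $\mu_v(\F)=c_v\Vol(X,\omega)=\mu_v(E)$ by Lemma \ref{constants-in-wHE}, and the remaining equality $\mu_v(E/\F)=\mu_v(E)$ follows from the additivity $c_1(E)_\T=c_1(\F)_\T+c_1(E/\F)_\T$. The argument is a direct weighted adaptation of the unweighted case, so I do not expect a genuinely new difficulty; the only point meriting care is the sign convention in the pointwise Gauss--Codazzi inequality for $i\tr_\F\Lambda_\omega(A^\dagger\wedge A)$, and the bookkeeping confirming that the positive weight $v(\mu)$ leaves it undisturbed.
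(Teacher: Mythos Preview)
Your proof is correct and follows essentially the same approach as the paper: both use the block decomposition \eqref{eq:decomposition} to identify the defect term as $v(\mu)\,i\tr_\F\Lambda_\omega(A^\dagger\wedge A)\le 0$, conclude $A=0$ in the equality case, and invoke Lemma~\ref{lem:codim2integral} to compare slopes. The only cosmetic difference is that the paper cites \cite[Proposition~8.7]{Sek21} for the splitting, whereas you argue directly that $A=0$ makes $\nabla^E$ block-diagonal and hence $\F^\perp$ holomorphic; note your phrasing ``$F_{\nabla^E}$ and $\Phi_E$ block-diagonal, so $\nabla^E=\nabla^\F\oplus\nabla^Q$'' is slightly loose---the conclusion really comes from the formulas $\nabla^E|_\F=\nabla^\F+A$ and $\nabla^E|_{\F^\perp}=\nabla^Q-A^\dagger$ preceding \eqref{eq:decomposition}, not from the curvature decomposition itself.
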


\begin{proof}
	By the weighted Hermite--Einstein condition, on \(X\backslash V\) we have that
	\[
	\frac{\tr_E\,i\Lambda_{\omega,v}(F_{\nabla^E}+\Phi_E)}{\rk(E)}=2\pi c_v.
	\]
	On the other hand, using \eqref{eq:decomposition} we get
	\[
	\begin{split}
		\frac{\tr_\F\,i\Lambda_{\omega,v}(F_{\nabla^\F}+\Phi_\F)}{\rk(\F)}=& \frac{\tr_\F\,i\Lambda_{\omega,v}(F_{\nabla^E}+\Phi_E)|_\F}{\rk(\F)} + \frac{\tr_\F\big(iv(\mu)\Lambda_\omega (A^\dagger\wedge A)\big)}{\rk(\F)}\\
		=& 2\pi c_v+ \frac{\tr_\F\big(iv(\mu)\Lambda_\omega (A^\dagger\wedge A)\big)}{\rk(\F)}.
	\end{split}
	\]
	We infer that
	\[
	\frac{\tr_\F\,i\Lambda_{\omega,v}(F_{\nabla^\F}+\Phi_\F)}{\rk(\F)}- \frac{\tr_E\,i\Lambda_{\omega,v}(F_{\nabla^E}+\Phi_E)}{\rk(E)}=\frac{\tr_\F\big(iv(\mu)\Lambda_\omega (A^\dagger\wedge A)\big)}{\rk(\F)}\leq  0
	\]
	with equality if and only if $A=0$. In the case \(A=0\) on \(X\backslash V\), by \cite[Proposition 8.7]{Sek21} we have $E=\F\oplus E/\F$ over \(X\backslash V\), and using \eqref{eq:decomposition}, each component is $v$-weighted Hermite--Einstein. By Lemma \ref{lem:codim2integral}, we can compute the weighted slopes over the smooth locus, and both \(\F\) and \(E/\F\) have the same $v$-weighted slope equal to $\mu_v(E)$.
\end{proof}

\begin{prop}\label{prop:polystability}
	If \(E\) admits a \(v\)-weighted Hermite--Einstein metric, then \(E\) is \(v\)-weighted slope polystable.
\end{prop}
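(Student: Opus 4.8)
The plan is to deduce $v$-weighted slope semistability directly from the pointwise inequality of Lemma \ref{lem:pointwise-ineq}, and then to upgrade this to polystability by examining the equality case, following the classical argument in the unweighted setting (see \cite{Kob14}).

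\emph{Semistability.} Let $\F\subset E$ be a proper non-zero $\T$-equivariant coherent saturated subsheaf; being saturated, $\F$ is reflexive, with singular locus $V$ of complex codimension at least $2$. Fix a $v$-weighted Hermite--Einstein metric $h$ on $E$ and restrict it to $\F$ over $X\setminus V$. Integrating the inequality of Lemma \ref{lem:pointwise-ineq} against $\tfrac{1}{2\pi}\omega^{[n]}$ over $X\setminus V$, the right-hand side integrates to $\mu_v(E)$ (as $E$ is a bundle on all of $X$ and $V$ is null), while by Lemma \ref{lem:codim2integral} the left-hand side integrates to $(c_1(\F)_\T\cdot v(\alpha_\T))/\rk(\F)=\mu_v(\F)$. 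Hence $\mu_v(\F)\le\mu_v(E)$, which is $v$-weighted slope semistability.

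\emph{From equality to a holomorphic splitting.} Suppose $E$ is not stable. Then by semistability there is a proper non-zero $\T$-equivariant saturated subsheaf $\F\subset E$ with $\mu_v(\F)=\mu_v(E)$, so the integral over $X\setminus V$ of the nonpositive quantity $\tr_\F\big(iv(\mu)\Lambda_\omega(A^\dagger\wedge A)\big)/\rk(\F)$ vanishes; being continuous and $\le 0$, it vanishes identically, so the second fundamental form $A$ is zero on $X\setminus V$. By Lemma \ref{lem:pointwise-ineq}, $E$ splits $\T$-equivariantly as $\F\oplus E/\F$ over $X\setminus V$, each summand being $v$-weighted Hermite--Einstein of $v$-weighted slope $\mu_v(E)$. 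The key remaining step is to extend this splitting across $V$: since $A=0$, the $h$-orthogonal projection onto $\F$ is $\nabla^h$-parallel, hence a bounded holomorphic idempotent in $\End(E)$ over $X\setminus V$; as $V$ has codimension at least $2$ and $E$ is locally free, Riemann's extension theorem produces a holomorphic idempotent on all of $X$, yielding a global $\T$-equivariant holomorphic splitting $E\cong\F\oplus E/\F$. In particular $\F$ is now a subbundle, since a saturated (hence reflexive) sheaf is recovered from its restriction to the complement of a codimension-$2$ analytic set, which here is the locally free image of the extended projection. The weighted Hermite--Einstein equation for each summand holds on the dense set $X\setminus V$, hence everywhere by continuity.

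\emph{Induction and the main obstacle.} Finally one inducts on $\rk(E)$: $\F$ and $E/\F$ are $\T$-equivariant $v$-weighted Hermite--Einstein vector bundles of common $v$-weighted slope $\mu_v(E)$, so the argument applies to each of them, and since the rank drops strictly at each splitting the process terminates, writing $E$ as a direct sum of $\T$-equivariant $v$-weighted slope stable bundles all of weighted slope $\mu_v(E)$; that is, $E$ is $v$-weighted slope polystable. I expect the only genuinely delicate point to be the extension of the holomorphic splitting across the singular locus of $\F$ — everything else is either a formal integration of the lemmas already established or the standard reflexive-sheaf bookkeeping of \cite{Kob14} — while equivariance is automatic throughout, all sheaves, metrics and projections in play being $\T$-equivariant by construction.
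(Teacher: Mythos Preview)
Your proposal is correct and follows essentially the same approach as the paper's proof: integrate the pointwise inequality of Lemma \ref{lem:pointwise-ineq} via Lemma \ref{lem:codim2integral} for semistability, then in the equality case use $A=0$ to split over $X\setminus V$ and extend across $V$, finishing by induction on rank. The only cosmetic difference is that where the paper outsources the extension step to \cite[Theorem 8.1]{Sek21}, you spell out the standard argument (the $h$-orthogonal projection is parallel, hence a bounded holomorphic idempotent that extends by Riemann/Hartogs in codimension $\geq 2$), which is exactly the content of that reference.
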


\begin{proof}
	Let \(\F\subset E\) be a \(\T\)-equivariant coherent saturated subsheaf. By integrating the inequality of Lemma  \ref{lem:pointwise-ineq} over the smooth locus of \(\F\) and applying Lemma \ref{lem:codim2integral}, we see that \[\mu_v(\F)\leq\mu_v(E),\] and so \(E\) is \(v\)-weighted slope semistable.
	
	Suppose \(\F\) is a \(\T\)-equivariant proper non-zero coherent saturated subsheaf such that \(\mu_v(\F) = \mu_v(E)\). Our aim is to show that \(\F\) is a vector bundle, and \(E\) splits as the direct sum \(E = \F\oplus E/\F\). The equality \(\mu_v(\F)=\mu_v(E)\) implies that \(A=0\) on the smooth locus of \(\F\), so that the restriction of \(E\) to \(X\backslash V\) splits as \(E|_{X\backslash V} = \F|_{X\backslash V}\oplus (E/\F)|_{X\backslash V}\) by Lemma \ref{lem:pointwise-ineq}. The exact argument in the proof of \cite[Theorem 8.1]{Sek21} gives that the splitting extends to \(E=\F\oplus E/\F\) over all of \(X\). Since \(E\) is a vector bundle, this implies \(E\) and \(E/\F\) are also vector bundles, and they have the same slope by Lemma \ref{lem:pointwise-ineq}. Both \(\F\) and \(E/\F\) admit \(v\)-weighted Hermite--Einstein metrics, and applying the argument recursively on \(\F\) and \(E/\F\), we get that \(E\) is a direct sum of \(v\)-slope stable vector bundles of the same slope.
\end{proof}

\subsection{Application to transversally Hermite--Einstein metrics and polynomially weighted Hermite--Einstein metrics}

Although we have not yet proven the full weighted Kobayashi--Hitchin correspondence (see Section \ref{sec:DUY}), we take the opportunity here to apply the full correspondence to transversally Hermite--Einstein metrics and polynomially weighted Hermite--Einstein metrics.

Using the notation and conventions of Section \ref{sec:transverse}, we prove the following:

\begin{prop}\label{prop:transverse-converse}
	The bundle \(\tilde{E}\to S\) admits a transversally Hermite--Einstein connection if and only if \(E\to X\) admits a \((v,w)\)-weighted Hermite--Einstein metric.
\end{prop}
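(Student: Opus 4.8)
The ``if'' direction is exactly the proposition preceding this one: a $(v,w)$-weighted Hermite--Einstein metric $h$ on $E$ induces, via the connection construction of Section~\ref{sec:transverse}, a transversally Hermite--Einstein connection $\tilde\theta$ on $\tilde E\to S$ with respect to $u_\xi$. So all the work is in the converse, and the plan is to run the chain: transversally Hermite--Einstein connection on $\tilde E$ $\implies$ $\tilde E$ transversally slope polystable $\implies$ $E$ is $v$-weighted slope polystable $\implies$ (by Theorem~\ref{thm:main_wHK} and Lemma~\ref{vw-weighted-is-v-weighted}) $E$ admits a $(v,w)$-weighted Hermite--Einstein metric, with $v(\mu)=(\langle\mu,\xi\rangle+a)^{-n}$, $w(\mu)=(\langle\mu,\xi\rangle+a)^{-n-1}$. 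The first implication is the ``easy'' direction of the transverse Kobayashi--Hitchin correspondence of Biswas--Schumacher and Baraglia--Hekmati \cite{BS10, BH22}; alternatively one can reprove it in place, imitating Lemma~\ref{lem:pointwise-ineq} and Proposition~\ref{prop:polystability} with $\Lambda_{\omega,v}(F+\Phi)$ replaced by the transverse mean curvature, the second-fundamental-form estimate being identical linear algebra. The last implication is then immediate, so the heart of the matter is translating transverse slope polystability of $\tilde E\to S$ into $v$-weighted slope polystability of $E\to X$.

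For the translation I would first check that pullback along $\pi\colon S\to X$ carries a $\T$-equivariant saturated subsheaf $\F\subset E$ to a transverse (that is, $u_\xi$-invariant, since $d\pi(u_\xi)=\xi\in\tor$) saturated subsheaf $\pi^*\F\subset\tilde E$, with singular locus still of complex codimension at least two. The key numerical identity is then: for any $\T$-invariant Hermitian metric $h$ on $E$, restricting to $\F$ over its smooth locus, the transverse mean curvature of $\pi^*\F$ is $\tfrac{i}{2\pi}(f\Lambda_\omega F_{h|_\F}-n\Phi_{\F}^{\xi})$ with $f=\langle\mu,\xi\rangle+a$ (this is precisely the computation recorded in Section~\ref{sec:transverse}), while the transverse volume form of $(S,u_\xi)$ pushes forward on $X$ to a positive multiple of $f^{-n-1}\omega^{[n]}$. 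Multiplying these, and using $d(f^{-n})=-n f^{-n-1}\xi$ so that $f^{-n-1}(f\Lambda_\omega F-n\Phi^\xi)=\Lambda_{\omega,v}(F+\Phi)$, one finds that the transverse degree of $\pi^*\F$ is a fixed positive multiple of $\int_{X\setminus V}\tfrac{i}{2\pi}\tr\Lambda_{\omega,v}(F_{h|_\F}+\Phi_\F)\,\omega^{[n]}$, which by Lemma~\ref{lem:codim2integral} equals the $v$-weighted degree $(c_1(\F)_{\T}\cdot v(\alpha_\T))$. Since ranks are preserved, $\mu^{\mathrm{trans}}(\pi^*\F)$ and $\mu^{\mathrm{trans}}(\tilde E)$ therefore compare exactly as $\mu_v(\F)$ and $\mu_v(E)$ do; so transverse slope semistability of $\tilde E$ forces $v$-weighted slope semistability of $E$. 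In the equality case the resulting transverse splitting of $\tilde E$ is $S^1$-invariant (as $\pi^*\F$ is, and $S^1$ is connected, so it cannot permute the summands), hence descends to a $\T$-equivariant holomorphic splitting $E=\F\oplus E/\F$ with $\F$ locally free; recursion then yields $v$-weighted slope polystability, and Theorem~\ref{thm:main_wHK} together with Lemma~\ref{vw-weighted-is-v-weighted} finishes the proof.

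The step I expect to be the main obstacle is making the transverse intersection theory on $S$ rigorous when $\xi$ is irregular, so that $u_\xi$ does not generate a circle action and the leaf space of $S$ is neither a manifold nor an orbifold. There one must pass to the torus $\T_S$ obtained as the closure of the flow of $u_\xi$ together with the fibrewise $S^1$-action, interpret ``transverse subsheaf'' and ``transverse degree'' via $\T_S$-basic sheaf theory and basic Chern--Weil, and verify that the pushforward identities of Section~\ref{sec:transverse} and the equivariant Poincar\'e--Lelong argument underlying Lemma~\ref{lem:codim2integral} all survive in that setting. Once the dimension count and the basic-cohomology formalism are in place this is routine, but it is the only genuinely technical point; the equivariant intersection calculus of Section~\ref{sec:eq-intersections} is exactly what makes the bookkeeping go through.
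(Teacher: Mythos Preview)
Your proposal is correct and follows essentially the same route as the paper: compare the transverse slope of the pullback subsheaf $\tilde{\F}$ with the $v$-weighted slope of $\F$ via the integral identity from Section~\ref{sec:transverse} and Lemma~\ref{lem:codim2integral}, then invoke the transverse Kobayashi--Hitchin correspondence of \cite{BH22} together with Theorem~\ref{thm:main_wHK} and Lemma~\ref{vw-weighted-is-v-weighted}. The paper streamlines slightly by reducing to the simple case at the outset (so only stability, not polystability, needs to be checked) and phrases the argument symmetrically using both full correspondences, whereas you note that the ``if'' direction is already the content of the preceding proposition and hence only the easy direction of \cite{BH22} is actually needed; your worry about irregular $\xi$ is simply absorbed into the cited foliated framework.
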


\begin{proof}
	Without loss of generality, assume that \(E\) is simple. Choose an arbitrary \(\T\)-invariant metric \(h\) on \(E\) and denote by \(\tilde{\theta}\) the induced connection on \(\tilde{E}\to S\) as per Section \ref{sec:transverse}. Let \(\F\subset E\) be a proper non-zero \(\T\)-equivariant coherent saturated subsheaf of \(E\), inducing a proper non-zero transverse coherent saturated subsheaf \(\tilde{\F}\subset \tilde{E}\), in the sense of \cite[Section 3.3]{BH22}. As per \cite[Definition 3.19, Proposition 3.21]{BH22}, the transverse slope of \(\tilde{\F}\) is \begin{align*}
		\mu(\tilde{\F}) =& \frac{i}{2\pi}\frac{\int_{S\backslash \pi^{-1}(V)}\tr_\F(F_{\tilde{\theta}|_\F})\wedge(d A_\xi)^{[n-1]}\wedge A_\xi}{\rk(\tilde{\F})} \\
		=& \frac{i}{2\pi}\frac{\int_{S\backslash \pi^{-1}(V)}\pi^*\tr_\F(f\Lambda_\omega F_{h|_\F}- n\Phi_{h|_\F})\wedge f^{-n}\pi^*\omega^{[n]}\wedge f^{-1}A
		}{\rk(\F)} \\
		=& \frac{i}{2\pi}\frac{\int_{X\backslash V}\tr_\F(f^{-n}\Lambda_\omega F_{h|_\F} - nf^{-n-1}\Phi_{h|_\F}) \,\omega^{[n]}
		}{\rk(\F)} \\
		=& \mu_{v}(\F),
	\end{align*} where \(V\) denotes the singular locus of \(\F\). Hence \(E\) is \(v\)-slope stable if and only if \(\tilde{E}\) is transversally slope stable. The weighted Kobayashi--Hitchin correspondence, the foliated Kobayashi--Hitchin correspondence of \cite{BH22}, and Lemma \ref{vw-weighted-is-v-weighted} immediately imply the result.
\end{proof}

Next, following the set-up of Section \ref{sec:poly-weights}, we prove:

\begin{prop}\label{prop:semi-simple-fibration}
	The bundle $E_Q\to (X_Q,\omega_Q)$ admits a Hermite--Einstein structure if and only if $E\to (X,\omega)$ admits a $(p,p)$-weighted Hermite--Einstein structure.
\end{prop}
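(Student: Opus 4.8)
The plan is to prove this in the same spirit as Proposition \ref{prop:transverse-converse}, by playing the classical Donaldson--Uhlenbeck--Yau theorem on $(X_Q,\omega_Q)$ against our Theorem \ref{thm:main_wHK} on $(X,\omega)$, using Lemma \ref{vw-weighted-is-v-weighted} to pass between $(p,p)$- and $p$-weighted Hermite--Einstein metrics. The ``if'' direction is immediate from Proposition \ref{prop:polynomial}: a $(p,p)$-weighted Hermite--Einstein metric $h$ on $E$ has Chern connection $\nabla$, and the associated $\T$-invariant unitary connection $\nabla^Q$ of \eqref{eq:nabla-bundle} is the Chern connection of the induced metric on $E_Q$, which is Hermite--Einstein by \eqref{eq:p-MeanCurv}. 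For the converse, if $E_Q$ admits a Hermite--Einstein metric then $E_Q$ is slope polystable by Donaldson--Uhlenbeck--Yau, and it suffices to show that $E$ is $p$-weighted slope polystable; Theorem \ref{thm:main_wHK} and Lemma \ref{vw-weighted-is-v-weighted} then produce a $(p,p)$-weighted Hermite--Einstein metric on $E$.

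The key input is a slope comparison. For a $\T$-equivariant saturated coherent subsheaf $\F\subseteq E$, let $\F_Q\subseteq E_Q$ be the saturation of $Q\times_\T\F$; then $\rk(\F_Q)=\rk(\F)$ and the singular locus of $\F_Q$ has complex codimension $\geq 2$ in $X_Q$. Using \eqref{eq:p-MeanCurv} on the smooth locus of $\F$, the pushforward formulas for $\omega_Q^{[n+m]}$ recorded in the proof of Proposition \ref{prop:polynomial}, and the codimension-$2$ integration argument of Lemma \ref{lem:codim2integral} (applied both on $X$ and, in its unweighted form, on $X_Q$), one finds
\[
\deg(\F_Q,\omega_Q)=C\,(c_1(\F)_\T\cdot p(\alpha_\T)),\qquad C:=\prod_{\alpha=1}^N\Vol(B_\alpha,\omega_{B_\alpha})>0,
\]
with $C$ independent of $\F$; hence $\mu(\F_Q)=C\,\mu_p(\F)$ and in particular $\mu(E_Q)=C\,\mu_p(E)$. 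Since $E_Q$ is in particular slope semistable, no $\T$-equivariant saturated $\F\subsetneq E$ can satisfy $\mu_p(\F)>\mu_p(E)$, so $E$ is $p$-weighted slope semistable.

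To upgrade to polystability, suppose $E$ is not $p$-weighted slope stable and choose a $\T$-equivariant saturated subsheaf $0\neq\F\subsetneq E$ with $\mu_p(\F)=\mu_p(E)$ of maximal rank; semistability of $E$ then forces $\F$ to be $p$-weighted slope stable. As $\mu(\F_Q)=\mu(E_Q)$, the unweighted analogue of Lemma \ref{lem:pointwise-ineq} applied to the Hermite--Einstein metric on $E_Q$ shows that the second fundamental form of $\F_Q$ vanishes, so $\F_Q$ is a subbundle with $E_Q=\F_Q\oplus E_Q/\F_Q$; in particular $\F$ is a subbundle of $E$ and the $\T$-equivariant extension $0\to\F\to E\to E/\F\to0$ splits after applying $Q\times_\T(-)$. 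The plan is to conclude it was already $\T$-equivariantly split: since $R^i\pi_*(Q\times_\T\mathcal S)=Q\times_\T H^i(X,\mathcal S)$ for $\mathcal S$ a $\T$-equivariant coherent sheaf on $X$, the Leray edge map identifies the image of $\mathrm{Ext}^1_X(E/\F,\F)^\T\to\mathrm{Ext}^1_{X_Q}((E/\F)_Q,\F_Q)$ with the weight-$0$ summand of $H^0(B,Q\times_\T\mathrm{Ext}^1_X(E/\F,\F))$, so this map is injective and the vanishing of the class on $X_Q$ forces the vanishing of the $\T$-equivariant class on $X$. Thus $E=\F\oplus E/\F$ $\T$-equivariantly; $\F$ is $p$-weighted slope stable, $E/\F$ is again $p$-weighted slope semistable of slope $\mu_p(E)$, and iterating on $E/\F$ with induction on rank presents $E$ as a direct sum of $\T$-equivariant $p$-weighted slope stable subbundles of equal slope, i.e.\ $E$ is $p$-weighted slope polystable.

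The main obstacle is precisely this descent of the splitting: a Hermite--Einstein metric on $E_Q$ need not be bundle compatible and $Q\times_\T(-)$ is far from essentially surjective, so the splitting of $E_Q$ does not visibly come from one of $E$; the cohomological argument above circumvents this but requires a careful check of the identification of $R^i\pi_*$ with the associated bundle of the cohomology $\T$-representation and of the compatibility of the Leray edge map with the $\T$-weight decomposition on $B$. Besides this there is routine bookkeeping: verifying that $Q\times_\T(-)$ is exact and compatible with saturation up to codimension $2$, so that $\F_Q$ is $\mathcal O_{X_Q}$-coherent with singular locus of codimension $\geq 2$, and carrying out the socle-type argument that produces a $p$-weighted slope stable $\F$ of maximal rank.
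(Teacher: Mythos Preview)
Your core strategy --- compare slopes via \eqref{eq:p-MeanCurv} and then invoke Kobayashi--Hitchin on both sides --- is exactly what the paper does, and your slope computation is essentially the paper's (the paper uses the normalisation $\mu_{p,p}$, which absorbs your constant $C$ and gives the clean identity $\mu(\F_Q)=\mu_{p,p}(\F)$).

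Where you diverge is in handling polystability. The paper sidesteps your entire third and fourth paragraphs by first reducing to the case where $E$ is simple. The point is that $E$ simple forces $E_Q$ simple: any holomorphic endomorphism of $E_Q$ restricts on each fibre $X_b\cong X$ to a holomorphic endomorphism of $E$, hence a scalar, so one obtains a holomorphic function on the compact base $B$, which is constant. Then $E_Q$ Hermite--Einstein and simple gives $E_Q$ slope \emph{stable}, so every $\F_Q$ satisfies the strict inequality $\mu(\F_Q)<\mu(E_Q)$, whence $\mu_{p,p}(\F)<\mu_{p,p}(E)$ for every $\T$-equivariant saturated $\F\subsetneq E$, and $E$ is $(p,p)$-weighted slope stable outright. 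Theorem \ref{thm:main_wHK} finishes. No splitting, no descent, no $\mathrm{Ext}$ groups.

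Two remarks on your route. First, your socle step is misstated: a \emph{maximal}-rank proper saturated $\F$ with $\mu_p(\F)=\mu_p(E)$ need not be stable (take $E=L^{\oplus 3}$, $\F=L^{\oplus 2}$); you want \emph{minimal} rank to force stability of $\F$. Second, the injectivity of $\mathrm{Ext}^1_X(E/\F,\F)^\T\to\mathrm{Ext}^1_{X_Q}((E/\F)_Q,\F_Q)$ via the Leray edge map is not automatic --- the edge map $E_2^{0,1}\to H^1$ has kernel the image of $d_2:E_2^{0,1}\to E_2^{2,0}$, and you would need to argue this differential vanishes on the weight-$0$ part. This is plausible but is real work, and the paper's reduction to the simple case avoids it entirely.
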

\begin{proof}
	Without loss of generality, we can suppose that $E\to (X,\omega)$ is simple. Let $\F\subset E$ be a proper non-zero $\T$-equivariant coherent saturated subsheaf of $E$. Then, $\F_Q:=Q\times_\T\F$ is a coherent saturated subsheaf of $E_Q$. Since $E_Q\to (X_Q,\omega_Q)$ admits a Hermite--Einstein structure, then by the Hitchin--Kobayashi correspondence, $E_Q\to (X_Q,\omega_Q)$ is slope stable,
	\[
	\mu(\F_Q)<\mu(E_Q).
	\]
	Let $\nabla$ be an arbitrary $\T$-invariant connection on $E\to (X,\omega)$ and $\nabla^Q$, $(\nabla^{\F})^Q$ the induced bundle compatible connections on $E_Q\to (X_Q,\omega_Q)$ and $\F_Q\to (X_Q,\omega_Q)$ respectively. By \eqref{eq:p-MeanCurv}, we have
	\[
	\begin{split}
		\mu(E_Q)=&\frac{i}{2\pi}\frac{\int_{X_Q}\tr\left(\Lambda_{\omega_Q}F_{\nabla^Q}\right)\omega_Q^{[n+m]}}{\rk(E_Q)([\omega_Q])^{[n+m]}}\\
		=&\frac{i}{2\pi}\frac{\int_{X_Q}\tr\left(\Lambda_{\omega_Q}F_{\nabla^Q}\right)\omega_Q^{[n+m]}}{\rk(E)\int_{X_Q}\omega_Q^{[n+m]}}\\
		=&\frac{i}{2\pi} \frac{\int_{Q\times X}\tr\left(\frac{\Lambda_{\omega,p}(F_\nabla+\Phi_\nabla)}{p(\mu)}\right) p(\mu) \omega^{[n]}\wedge  \bigwedge_{\alpha=1}^{N} \pi^{*}_B\omega_\alpha^{[n_\alpha]}\wedge\eta^\ell}{\rk(E)\int_{Q\times X} p(\mu) \omega^{[n]}\wedge  \bigwedge_{\alpha=1}^{N} \pi^{*}_B\omega_\alpha^{[n_\alpha]}\wedge\eta^\ell}\\
		=&\frac{i}{2\pi} \frac{\int_{X}\tr\left(\Lambda_{\omega,p}(F_\nabla+\Phi_\nabla)\right)  \omega^{[n]}}{\rk(E)\int_{X}p(\mu) \omega^{[n]}}\\
		=&\mu_{p,p}(E).
	\end{split}
	\]
	A similar computation using the induced connection $(\nabla^{\F})^Q$ on $\F\to (X,\omega)$, gives
	\[
	\mu(\F_Q)=\mu_{p,p}(\F).
	\]
	Hence, $\mu_{p,p}(\F)<\mu_{p,p}(E)$ showing that $E\to (X,\omega)$ is $(p,p)$-weighted slope stable. By Theorem \ref{thm:main_wHK}, $E\to(X,\omega)$ admits a $p$-weighted Hermite--Einstein structure.
\end{proof}

Note that by uniqueness of transverse Hermite--Einstein metrics (resp. Hermite--Einstein metrics) the metric on \(\tilde{E}\to S\) (resp. the metric on \(E_Q\to X_Q\)) must be induced from a \((v,w)\)-weighted (resp. \((p,p)\)-weighted) Hermite--Einstein metric on \((X,\omega)\).

\subsection{The weighted Euler characteristic and weighted Gieseker stability}

Let $(X,L)$ be a projective K\"ahler manifold polarized by an ample holomorphic line bundle $L$ and let $\T\subset\Aut(X,L)$ be a real torus acting on the total space of $L$ and covering a torus action on \(X\). Let $E$ be a $\T$-equivariant vector bundle over $X$ with $\mathcal{L}^E$ the induced infinitesimal action on the space of sections. If \(s\in H^0(E)\) is a holomorphic section, so too is \(\exp(-t\xi)s\) for any \(\xi\in\tor\) and \(t\in\mathbb{R}\). Hence \[0 = \frac{d}{dt}(\db \exp(-t\xi)s) = \db\left(\frac{d}{dt}\exp(-t\xi)s\right).\] Evaluating at \(t=0\) we see \(\db\mathcal{L}_\xi^Es = 0\), so \(\mathcal{L}_\xi^E\) acts on \(H^0(E)\).
For a $\T$-invariant Hermitian metric $h$ on $E$ we let $\langle -,-\rangle_h$ be the $L^2$-inner product on the space of sections:
\[
\langle s,s'\rangle_h:=\int_X h(s,s')\omega^{[n]}.
\]
Since \(\mathcal{L}_\xi^E h = 0\), we deduce \(\langle\mathcal{L}_\xi^E s,s'\rangle_h = -\langle s,\mathcal{L}_\xi^E s'\rangle_h\).
It follows that for all $\xi\in\tor$ the operator $ \widetilde{\mathcal{L}}^E_\xi:=-i\mathcal{L}^E_\xi:\Omega^0(E)\to\Omega^0(E)$ is Hermitian. Furthermore by the general property of Lie derivatives  $[\mathcal{L}^E_{\xi},\mathcal{L}^E_{\xi'}]=\mathcal{L}_{[\xi,\xi']}^E$ it follows that $\left[\widetilde{\mathcal{L}}^E_{\xi},\widetilde{\mathcal{L}}^E_{\xi'}\right]=0$ for all $\xi,\xi'\in\tor$. In summary, \(\xi\mapsto\tilde{\mathcal{L}}_\xi^E\) defines a Lie algebra representation of \(\tor\) on \(\Herm(H^0(E), \langle-,-\rangle_h)\).

We consider the $\T$-equivariant vector bundle $E_k:=E\otimes L^k$ over $X$ with the induced infinitesimal action $\mathcal{L}^{(k)}:=\mathcal{L}^E+\Id_E\otimes\mathcal{L}^{L^k}$ on $\Omega^0(E_k)$ and we denote by $\widetilde{\mathcal{L}}^{(k)}:=-\frac{i}{k}\mathcal{L}^{(k)}$ the corresponding Hermitian operators. For $\xi\in\tor$, the spectrum (counted with multiplicity) of the Hermitian operator $\widetilde{\mathcal{L}}^{(k)}_\xi: H^0(E_k)\to H^0(E_k)$ is given by $\{\langle\lambda^{(k)}_j,\xi\rangle \mid \lambda_j^{(k)}\in W_k\}$ where $W_k=\{\lambda_j^{(k)} \mid j=1,\cdots, h^0(E_k)\}\subset \Lambda^*$ is the finite set of weights of the complexified action of $\T$ on $H^0(E_k)$ and $\Lambda^*$ is the dual of the lattice $\Lambda$ of circle subgroups of $\T$.

Let $\Pol_L\subset\mathfrak{t}^*$ be the momentum polytope associated to $(X,L,\T)$. We set $h^k:=h\otimes (h_{L})^{\otimes k}$ where $h$ is a $\T$-invariant metric on $E$ and $h_L$ is a $\T$-invariant Hermitian metric on $L$ whose curvature 2-form (times \(i/2\pi\)) is $\omega$, let $\lambda_j^{(k)}\in W_k$, $\xi\in\tor$, and take $s_j\in H^0(E_k)$ an eigensection of $\widetilde{\mathcal{L}}^{(k)}_\xi $ associated to the eigenvalue $\langle\lambda_j^{(k)},\xi\rangle$. Noting that the moment map for \(F_{h^k}=F_h\otimes\Id_{L^k}+\Id_E\otimes F_{L^k}\) is \(\Phi_{h^k} = \Phi_h\otimes\Id_{L^k} + \Id_E\otimes k\Phi_{h_L}\), we compute
\[
\begin{split}
	\langle\lambda_j^{(k)},\xi\rangle |s_j|^2_{h^k} =&  h^k\left(\widetilde{\mathcal{L}}^{(k)}_\xi s_j,s_j\right) \\
	=&-h^k\left(\frac{i}{k}(\nabla^{h^k}_\xi-\Phi^\xi_h)(s_j)-\mu^\xi s_j,s_j\right)\\
	=&-\frac{i}{k}(\partial |s_j|_{h^k}^2 )(\xi)+\mu^\xi|s_j|^2_{h^k} + \frac{i}{k} h^k(\Phi^\xi_h(s_j),s_j)
\end{split}
\]
where we use $s_j\in H^0(E_k)$ in the last equality to conclude \(h^k(\nabla^{h^k}s_j,s_j)=\d|s_j|_{h^k}^2\). At a global maximum $x_{j,k}\in X$ of the smooth function $x\mapsto |s_j|^2_{h^k}(x)$, we get
\[
\langle\lambda_j^{(k)},\xi\rangle=\mu^\xi(x_{j,k}) +\frac{1}{k}\frac{ h^k_{x_{j,k}}\big([i\Phi^\xi_h(s_j)](x_{j,k}),s_j(x_{j,k})\big)}{|s_j|^2_{h^k}(x_{j,k})}.
\]
By the Cauchy--Schwarz inequality we obtain
\[
|\langle\lambda_j^{(k)}-\mu(x_{j,k}),\xi\rangle| \leq \frac{1}{k}\frac{ |i\Phi^\xi_h(s_j)|_{h_k}(x_{j,k})}{|s_j|_{h^k}(x_{j,k})}\leq \frac{1}{k}|\Phi^\xi_h|_h(x_{j,k})
\]
where $|\Phi^\xi_h|_h$ is the pointwise endomorphism norm. Given a basis $(\xi_a)_{a=1,\cdots,\ell}$ of $\mathbb{S}^1$-generators of $\tor$, for all $k\geq 1$ we have
\[
\begin{split}
	||\lambda_j^{(k)}-\mu(x_{j,k})||_{\tor^*} &=\sum_{a=1}^\ell |\langle\lambda_j^{(k)}-\mu(x_{j,k}),\xi_a\rangle|\\
	&\leq \frac{1}{k}\sum_{a=1}^\ell |\Phi^{\xi_a}_h|_h(x_{j,k})\\
	&\leq \frac{1}{k}\sum_{a=1}^\ell \max_X|\Phi^{\xi_a}_h|_h\\
	&\to 0
\end{split}
\] as \(k\to\infty\).
Thus, for any compact set \(Q\subset\tor^*\) whose interior contains \(P_L\), there exists an integer \(K>0\) such that for all $k\geq K$ we have
\[
W_k\subset Q.
\] In particular, for \(k\gg0\), the sets \(W_k\) are almost contained in \(P_L\). In fact, one can show that the convex hulls of the \(W_k\) approach \(P_L\) in a suitable sense, by considering for each vertex \(\lambda\in P_L\) a corresponding weight \(s\in H^0(L)\), and tensoring an arbitrary eigensection of \(E_\ell\) for some \(\ell\) with \(s^{\otimes k}\) to get an eigensection of \(E_{\ell+k}\), then sending \(k\to\infty\). The corresponding weights of the eigensections of \(E_{\ell+k}\) tend towards \(\lambda\) in the limit.

Let $v\in C^\infty(\Pol_{L})$ a weight function. Given an extension of $v$ to a compactly supported smooth function on $\tor^*$, also denoted by $v$, one can use the weight decomposition of $H^0(E_k)$ as
a direct sum of common eigenspaces for the commuting operators $\widetilde{\mathcal{L}}^{(k)}_\xi$, $\xi\in\tor$,
\[
H^0(E_k)=\bigoplus_{\lambda_j^{(k)}\in W_k}H^0(E_k)_{\lambda_j^{(k)}},\quad H^0(E_k)_{\lambda_j^{(k)}}:=\{s\in H^0(E_k)|\widetilde{\mathcal{L}}^{(k)} s=\lambda_j^{(k)}\otimes s\},
\]
to define an operator $v(\widetilde{\mathcal{L}}^{(k)}):H^0(E_k)\to H^0(E_k)$ by
\[
v(\widetilde{\mathcal{L}}^{(k)})_{\mid H^0(E_k)_{\lambda_j^{(k)}}}=v(\lambda_j^{(k)})\Id_{ H^0(E_k)_{\lambda_j^{(k)}}}.
\]
Let $\hat{v}$ denote the Fourier transform of $v$. Using the Fourier inversion formula, we get an alternative formulation of the above operator $v(\widetilde{\mathcal{L}}^{(k)}):H^0(E_k)\to H^0(E_k)$,
\[
v(\widetilde{\mathcal{L}}^{(k)})=\int_\tor\hat{v}(\xi) e^{i\widetilde{\mathcal{L}}^{(k)}_\xi}\underline{d\xi}.
\]

By, \cite[Theorem 8.2]{BGV04} (see also \cite[Remark 8.9.1]{Gau10} for the case $E=\mathcal{O}$) the equivariant Hirzebruch--Riemann--Roch formula reads
\begin{equation}\label{eq:RiemRoch-Equiv}
	\sum_{j=0}^n (-1)^j\tr_{\mid H^{j}(E_k)}\left(e^{k\widetilde{\mathcal{L}}_{\bullet}^{(k)}}\right)=\int_X\ch^{\T}(E_k)\Todd^\T(X).
\end{equation}
By the Kodaira--Nakano--Akizuki vanishing theorem (see e.g. \cite{Kob14}) there is an integer $k_0$ such that for all $k\geq k_0$ we have
\[
H^j(E_k)=\{0\},\quad \forall j\geq 1.
\]
Thus, for \(k\geq k_0\),
\begin{equation}\label{eq:RiemRoch-Equiv-1}
	\tr_{\mid H^{0}(E_k)}\left(e^{k\widetilde{\mathcal{L}}_{\bullet}^{(k)}}\right)=\int_X\ch^{\T}(E_k)\Todd^\T(X).
\end{equation}

\begin{defn}
	The $v$-weighted Euler characteristic of $E_k$ is defined by
	\[\chi_v(E_k):=\tr_{\mid H^{0}(E_k)}\left(v(\widetilde{\mathcal{L}}_{\bullet}^{(k)})\right),\quad\forall k\geq k_0.
	\]
\end{defn}
Unfortunately, this definition depends on the choice of extension of \(v\) to all of \(\mathfrak{t}^*\). However, we will now show that there is an asymptotic expansion of this invariant as \(k\to\infty\), which does not depend on the choice of extension---the intuition for this comes from the discussion above, where we note the convex hull of \(W_k\) converges to \(P_L\) as \(k\to\infty\). To compute the expansion, we use the equivariant Hirzebruch--Riemann--Roch theorem. By \eqref{eq:RiemRoch-Equiv-1}, for all $\xi\in \tor$ we have
\[
\tr_{\mid H^{0}(E_k)}\left(e^{i\widetilde{\mathcal{L}}_{\xi}^{(k)}}\right)=\int_X(\ch^{\T}(E_k)\Todd^\T(X))\left(\frac{i}{k}\xi \right)
\]
Integrating both sides against the measure $\hat{v}(\xi)\underline{d\xi}$ on $\tor$, we obtain (using \(F_E+\Phi_E:=\frac{i}{2\pi}(F_h+\Phi_h)\) for notational simplicity),
\[
\begin{split}
	\chi_v(E_k)=&\int_\tor\int_X(\ch^{\T}(E_k) \Todd^\T(X))\left(\frac{i}{k}\xi \right)\hat{v}(\xi)\underline{d\xi}\\
	=&\int_\tor\int_X \tr\left( e^{F_{E}+\frac{i}{k}\Phi^\xi_{E}+(k\omega+i \mu^\xi)\Id_E}\right)\left(1+\frac{1}{2}(\Ric(\omega)+\frac{i}{k}\Delta(\mu^\xi))+\cdots\right)\hat{v}(\xi)\underline{d\xi}\\
	=&\int_\tor\int_X \tr\left( e^{F_{E}+\frac{i}{k}\Phi^\xi_{E}}\right)e^{k\omega+i \mu^\xi}\left(1+\frac{1}{2}(\Ric(\omega)+\frac{i}{k}\Delta(\mu^\xi))+\cdots\right)\hat{v}(\xi)\underline{d\xi}\\
	=&\int_\tor\int_X \tr\left( e^{F_{E}+\frac{i}{k}\Phi^\xi_{E}}\right)\left(1+\frac{1}{2}(\Ric(\omega)+\frac{i}{k}\Delta(\mu^\xi))+\cdots\right)\left(\sum_{j=0}^n k^j\omega^{[j]}\right)e^{i \mu^\xi}\hat{v}(\xi)\underline{d\xi}\\
	=&k^n\int_\tor\int_X \tr(\Id_E)\hat{v}(\xi)e^{i\mu^\xi}\omega^{[n]}\underline{d\xi}\\
	&+k^{n-1}\int_\tor\int_X \left[\vphantom{\frac{\tr(\Id_E)}{2}}\tr( F_E\wedge\omega^{[n-1]}+ i\Phi^\xi_E\omega^{[n]})\right.\\
	& \left. \quad\quad\quad\quad\quad\quad\quad\quad\quad\quad\quad+ \frac{\tr(\Id_E)}{2}(\Ric(\omega)\wedge\omega^{[n-1]}+ i\Delta_\omega(\mu^\xi)\omega^{[n]} )  \right]\hat{v}(\xi)e^{i\mu^\xi}\underline{d\xi}\\
	&+k^{n-2}(\cdots)\\
	=&k^n\rk(E)\int_X v(\mu)\omega^{[n]}\\
	&+k^{n-1}\int_X\left[\tr(v(\mu)\Lambda_\omega F_E+\langle \Phi_E, dv(\mu)\rangle)+\frac{\rk(E)}{2}(v(\mu)\Scal_\omega+\langle \Delta_\omega(\mu), dv(\mu)\rangle)\right]\omega^{[n]}\\
	&+k^{n-2}(\cdots)\\
	=&k^n\rk(E)\int_X v(\mu)\omega^{[n]}
	+k^{n-1}\int_X \left(\tr(\Lambda_{\omega,v}(F_E+\Phi_E))+\frac{\rk(E)}{2}\Scal_v(\omega)\right)\omega^{[n]}+\cdots\\
	=&\rk(E)\left(v(c_1(L)_\T)\right)k^n+ \left(\left(c_1(E)_\T\cdot v(c_1(L)_\T)\right)+\frac{\rk(E)}{2}\left(c_1(X)_\T\cdot v(c_1(L)_\T)\right)\right)k^{n-1}+\cdots
\end{split}
\]
where $\Scal_v(\omega)$ is the weighted scalar curvature of $\omega$ (see \cite[Definition 1]{Lah19}) and we use that
\[
\int_X\Scal_v(\omega)\omega^{[n]}=\int_X\left[v(\mu)\Scal_\omega+\langle \Delta_\omega(\mu),  dv(\mu)\rangle\right]\omega^{[n]}=\left(c_1(X)_\T\cdot v(c_1(L)_\T)\right).
\]
To recapitulate,
\begin{prop}\label{prop:Euler-polynom}
	The $v$-weighted Euler characteristic of $E_k$ admits an asymptotic expansion for $k\gg 0$,
	\[
	\frac{\chi_v(E_k)}{\rk(E)}=\left(v(c_1(L)_\T)\right)k^n+\left(\mu_v(E)+\frac{1}{2}\left(c_1(X)_\T\cdot v(c_1(L)_\T)\right)\right)k^{n-1}+\cdots
	\]
	where $\mu_v(E)$ is the $v$-weighted slope of $E$.
\end{prop}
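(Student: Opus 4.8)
The plan is to combine the equivariant Hirzebruch--Riemann--Roch theorem with the Fourier-transform description of $v(\widetilde{\mathcal{L}}^{(k)})$ and then expand the resulting integral in powers of $k$. Since $v(\widetilde{\mathcal{L}}^{(k)}) = \int_\tor \hat{v}(\xi)\,e^{i\widetilde{\mathcal{L}}^{(k)}_\xi}\,\underline{d\xi}$, taking traces gives $\chi_v(E_k) = \int_\tor \hat{v}(\xi)\,\tr_{\mid H^0(E_k)}\!\big(e^{i\widetilde{\mathcal{L}}^{(k)}_\xi}\big)\,\underline{d\xi}$, and for $k\geq k_0$ the Kodaira--Nakano--Akizuki vanishing theorem lets us replace the trace over $H^0(E_k)$ with the full alternating sum over cohomology, so that \eqref{eq:RiemRoch-Equiv-1} applies and yields $\tr_{\mid H^0(E_k)}(e^{i\widetilde{\mathcal{L}}^{(k)}_\xi}) = \int_X (\ch^\T(E_k)\,\Todd^\T(X))(\tfrac{i}{k}\xi)$.

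Next I would substitute $E_k = E\otimes L^k$, so that at the level of equivariant Chern--Weil representatives $\ch^\T(E_k)(\tfrac{i}{k}\xi) = \tr\big(e^{F_E + \frac{i}{k}\Phi^\xi_E}\big)\,e^{k\omega + i\mu^\xi}$, and expand $(\ch^\T(E_k)\,\Todd^\T(X))(\tfrac{i}{k}\xi)$ as a (terminating) series in $k$ whose coefficients are products of the $\T$-equivariant curvature data of $E$, the Ricci form, and the moment maps, all multiplied by $e^{i\mu^\xi}$. Integrating term by term against $\hat{v}(\xi)\underline{d\xi}$ --- justified by Fubini, using the polynomial bound $|(\beta_\T\cdot e^{\alpha_\T})(i\xi)|\le C(1+|\xi|)^k$ of Remark \ref{rem:complex-extension} together with $\hat{v}$ being Schwartz class --- the formulae \eqref{eq:FourierD} convert each polynomial-in-$\xi$ factor times $e^{i\mu^\xi}$ into $v(\mu)$, $\langle -, dv(\mu)\rangle$, and higher derivatives of $v$ evaluated along $\mu$. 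Collecting the two highest powers of $k$, the $k^n$ coefficient is $\rk(E)\int_X v(\mu)\omega^{[n]}$ and the $k^{n-1}$ coefficient is $\int_X\big(\tr\Lambda_{\omega,v}(F_E+\Phi_E) + \tfrac{\rk(E)}{2}\Scal_v(\omega)\big)\omega^{[n]}$, where $\Scal_v(\omega)$ is the weighted scalar curvature of \cite{Lah19}.

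Finally I would rewrite these two coefficients cohomologically: by Lemma \ref{constants-in-wHE}, $\int_X v(\mu)\omega^{[n]} = (v(c_1(L)_\T))$ and $\int_X \tr\Lambda_{\omega,v}(F_E+\Phi_E)\,\omega^{[n]} = (c_1(E)_\T\cdot v(c_1(L)_\T))$; and by the same Fourier-transform computation applied with $\beta_\T = c_1(X)_\T$ one has $\int_X \Scal_v(\omega)\,\omega^{[n]} = \int_X\big(v(\mu)\Scal_\omega + \langle\Delta_\omega\mu, dv(\mu)\rangle\big)\omega^{[n]} = (c_1(X)_\T\cdot v(c_1(L)_\T))$. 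Dividing by $\rk(E)$ and recognising $(c_1(E)_\T\cdot v(c_1(L)_\T))/\rk(E) = \mu_v(E)$ gives the claimed expansion. The point to be careful about --- and the reason the result is stated only as an \emph{asymptotic} expansion --- is that $\chi_v(E_k)$ itself depends on the chosen compactly supported extension of $v$ to $\tor^*$, whereas each coefficient produced above is one of the intrinsic equivariant intersection numbers of Section \ref{sec:eq-intersections}, hence independent of the extension; this independence is forced by the fact that the Fourier integrals collapse to quantities depending only on $v$ and its derivatives restricted to $\mu(X) = \Pol_L$.
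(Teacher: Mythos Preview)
Your proposal is correct and follows essentially the same route as the paper: both start from the Fourier-integral expression for $\chi_v(E_k)$, apply \eqref{eq:RiemRoch-Equiv-1} fibrewise in $\xi$, expand $\ch^\T(E_k)\Todd^\T(X)$ evaluated at $\tfrac{i}{k}\xi$ in powers of $k$, and use \eqref{eq:FourierD} to convert the $\xi$-integrals into $v(\mu)$, $dv(\mu)$, etc., before identifying the top two coefficients via Lemma~\ref{constants-in-wHE} and the analogous identity for $c_1(X)_\T$. Your explicit invocation of Fubini via the bound in Remark~\ref{rem:complex-extension} and the Schwartz decay of $\hat v$ is a welcome justification that the paper leaves implicit; one small inaccuracy is calling the $k$-expansion ``terminating''---the factors $e^{F_E+\frac{i}{k}\Phi_E^\xi}$ and $\Todd^\T(X)(\tfrac{i}{k}\xi)$ contribute arbitrarily high powers of $1/k$, so the series is genuinely asymptotic rather than polynomial, though this does not affect the extraction of the $k^n$ and $k^{n-1}$ terms.
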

For any $\T$-equivariant coherent saturated subsheaf $\F\subset E$, the $v$-weighted Euler characteristic of $\F$ is similarly defined by
\[
\chi_v(\F_k):=\tr_{|H^0(\F_k)}\left(v(\widetilde{\mathcal{L}}^{(k)}_{\bullet})\right),\quad \forall k\gg 0
\]
where $\widetilde{\mathcal{L}}^{(k)}_{\xi}:H^0(\F_k)\to H^0(\F_k)$ is the induced infinitesimal action of elements $\xi\in\tor$. As in Proposition \ref{prop:Euler-polynom}, the $\T$-equivariant Hirzebruch--Riemann--Roch formula for $\F_k$ gives the asymptotic expansion,
\[
\frac{\chi_v(\F_k)}{\rk(\F)}=\left(v(c_1(L)_\T)\right)k^n+ \left(\mu_v(\F)+\frac{1}{2}\left(c_1(X)_\T\cdot v(c_1(L)_\T)\right)\right)k^{n-1}+\cdots
\]
which holds for all $k\gg 0$.
\begin{defn}
	Let $E$ be a $\T$-equivariant holomorphic vector bundle over a polarized K\"ahler $\T$-manifold $(X,L)$ endowed with the action of a torus $\T\subset\Aut(X,L)$. We say that $E$ is:
	\begin{enumerate}
		\item \emph{\(v\)-Gieseker $L$-semistable} if for all non-zero \(\T\)-equivariant coherent saturated subsheaves \(\F\subset E\), \[\frac{\chi_v(\F_k)}{\rk(\F)}\leq \frac{\chi_v(E_k)}{\rk(E)}\quad \forall k\gg 0;\]
		\item \emph{\(v\)-Gieseker $L$-stable} if for all proper non-zero \(\T\)-equivariant coherent saturated subsheaves \(\F\subset E\), \[\frac{\chi_v(\F_k)}{\rk(\F)}< \frac{\chi_v(E_k)}{\rk(E)}\quad \forall k\gg 0.\]
	\end{enumerate}
\end{defn}
The following is a straightforward consequence of the polynomial expansions of the weighted Euler characteristic.

\begin{prop}
	If $E$ is $v$-slope stable, then $E$ is \(v\)-Gieseker stable. If \(E\) is \(v\)-Gieseker semistable, then \(E\) is \(v\)-slope semistable.
\end{prop}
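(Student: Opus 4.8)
The plan is to deduce both implications directly from the two asymptotic expansions established above, essentially by comparing Hilbert-type polynomials. First I would record that, for any $\T$-equivariant coherent saturated subsheaf $\F\subset E$ and all $k\gg0$,
\[
\frac{\chi_v(E_k)}{\rk(E)}-\frac{\chi_v(\F_k)}{\rk(\F)}=\big(\mu_v(E)-\mu_v(\F)\big)k^{n-1}+O(k^{n-2}),
\]
since the degree-$n$ terms $(v(c_1(L)_\T))k^n$ coincide and the $\tfrac12(c_1(X)_\T\cdot v(c_1(L)_\T))$ part of the degree-$(n-1)$ coefficient is common to both expansions of Proposition \ref{prop:Euler-polynom}, hence cancels. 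Thus for $k$ large the sign of the left-hand side is exactly the sign of $\mu_v(E)-\mu_v(\F)$, while if $\mu_v(E)=\mu_v(\F)$ the difference is $O(k^{n-2})$.

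For the first implication I would argue as follows. Assuming $E$ is $v$-slope stable, every proper non-zero $\T$-equivariant coherent saturated subsheaf $\F$ satisfies $\mu_v(\F)<\mu_v(E)$, so the degree-$(n-1)$ coefficient in the display above is strictly positive; therefore $\chi_v(\F_k)/\rk(\F)<\chi_v(E_k)/\rk(E)$ for all $k$ greater than some threshold depending on $\F$, which is exactly the definition of $v$-Gieseker $L$-stability. For the converse, assuming $E$ is $v$-Gieseker $L$-semistable, every non-zero $\T$-equivariant coherent saturated subsheaf satisfies $\chi_v(\F_k)/\rk(\F)\le\chi_v(E_k)/\rk(E)$ for all $k\gg0$; reading off the degree-$(n-1)$ coefficient of the display forces $\mu_v(E)-\mu_v(\F)\ge0$, and since this holds in particular for all proper non-zero $\F$ we conclude $E$ is $v$-slope semistable.

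I do not anticipate any real obstacle: the substance is entirely contained in the equivariant Hirzebruch--Riemann--Roch computation behind Proposition \ref{prop:Euler-polynom}, and what remains is the elementary observation that two polynomials with equal leading coefficient are ordered, for large argument, according to their next coefficient. The only point that deserves a line of explicit verification is the cancellation of the $\F$-independent contributions to the degree-$n$ and degree-$(n-1)$ coefficients in the difference, which is immediate from the stated expansions; beyond that there is nothing further to prove.
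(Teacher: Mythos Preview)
Your proposal is correct and follows essentially the same approach as the paper: the paper's proof consists of the single displayed expansion
\[
\frac{\chi_v(E_k)}{\rk(E)}-\frac{\chi_v(\F_k)}{\rk(\F)}=\left(\mu_v(E)-\mu_v(\F)\right)k^{n-1}+\mathrm{O}(k^{n-2}),
\]
leaving the remaining polynomial-comparison argument implicit, which is exactly what you spell out.
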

\begin{proof}
	Indeed, for any proper non-zero \(\T\)-equivariant coherent saturated subsheaf \(\F\subset E\) we have
	\[
	\frac{\chi_v(E_k)}{\rk(E)}-\frac{\chi_v(\F_k)}{\rk(\F)}=\left(\mu_v(E)-\mu_v(\F) \right)k^{n-1}+\mathrm{O}(k^{n-2}). \qedhere
	\]
\end{proof}
When the weight function $v=1$ is constant, the weighted Euler characteristic \(\chi_v(E_k)=\chi(E_k)\) reduces to the usual Euler characteristic, and weighted Gieseker stability reduces to usual Gieseker stability. Gieseker stability is known to be equivalent to the existence of balanced metrics induced from a sequence of Grassmannian embeddings of the polarised manifold $(X,L)$ (see \cite{wang02, wang05, hashi22}). We leave the differential geometric counterpart of weighted Gieseker stability for future work.

\section{Weighted Kobayashi--L{\"u}bke inequality}\label{sec:Lubke}

In this section we prove a weighted analogue of the Kobayashi--L{\"u}bke inequality \cite{Bog78, Gie79, Lub82} for vector bundles admitting weighted Hermite--Einstein metrics. We recall the usual Kobayashi--L{\"u}bke inequality is stated as follows: if \(E\) admits a Hermite--Einstein metric, then \[(r-1)(c_1(E)^2\alpha^{n-2}) \leq 2r(c_2(E)\alpha^{n-2}),\] where \(r\) is the rank of \(E\), \(\alpha = [\omega]\) denotes the K{\"a}hler class, and the expressions \((c_1(E)^2\alpha^{n-2})\) and \((c_2(E)\alpha^{n-2})\) denote ordinary (i.e. non-equivariant) intersection numbers.

Given the equivariant intersection theory, it is fairly straightforward to guess the correct equivariant analogue of this inequality---indeed it is simply \[(r-1)(c_1(E)_\T^2\cdot v(\alpha_\T)) \leq 2r(c_2(E)_\T\cdot v(\alpha_\T)).\] However, we will also require the weight function \(v\) to satisfy a certain Hessian inequality.

\begin{thm}\label{weighted-Lubke}
	Suppose that \(E\) admits a \(v\)-weighted Hermite--Einstein metric, and that \(v\) satisfies the inequality \begin{equation}
		\Hess(v) - \frac{n+1}{n}\frac{dv\otimes dv}{v} \leq 0.
	\end{equation} Then \[(r-1)(c_1(E)_\T^2\cdot v(\alpha_\T)) \leq 2r(c_2(E)_\T\cdot v(\alpha_\T)).\] Furthermore, equality holds if and only if \(E\) is projectively flat.
\end{thm}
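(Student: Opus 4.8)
The plan is to follow the classical Kobayashi--Lübke strategy of reducing the equivariant intersection inequality to a pointwise curvature estimate; the new feature is tracking the moment-map terms, which is exactly what makes \eqref{eq:weight-ineq-intro} appear. Let $h$ be a $v$-weighted Hermite--Einstein metric, with equivariant curvature $\mathbb{F}:=F_h+\Phi_h$ and trace-free part $\mathbb{F}^0:=\mathbb{F}-\tfrac1r(\tr\mathbb{F})\Id_E$, which is again equivariantly closed. Using $c_2=\tfrac12c_1^2-\ch_2$ and the identity $r\tr(\mathbb{F}^2)-(\tr\mathbb{F})^2=r\tr((\mathbb{F}^0)^2)$ one gets the equivariant class identity
\[(r-1)c_1(E)_\T^2-2rc_2(E)_\T = r\Big(\tfrac i{2\pi}\Big)^2\tr\big((\mathbb{F}^0)^2\big)\ \big(=\ch_2(\End_0E)_\T\big).\]
Pairing with $v(\alpha_\T)$, using the Fourier-transform definition of the weighted intersection numbers together with the identities \eqref{eq:FourierD} for $dv(\mu)$ and $\Hess(v)(\mu)$, and writing $F^0_h,\Phi^0_h$ for the trace-free parts of the curvature and the moment map, I would compute
\begin{multline*}
(r-1)(c_1(E)_\T^2\cdot v(\alpha_\T))-2r(c_2(E)_\T\cdot v(\alpha_\T))
= r\int_X\Big(\tfrac i{2\pi}\Big)^2\Big(v(\mu)\,\tr\big((F^0_h)^2\big)\wedge\omega^{[n-2]}\\
+2\,\langle\tr(F^0_h\,\Phi^0_h),dv(\mu)\rangle\wedge\omega^{[n-1]}
+\langle\tr(\Phi^0_h\otimes\Phi^0_h),\Hess(v)(\mu)\rangle\,\omega^{[n]}\Big).
\end{multline*}

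For the pointwise estimate, fix $x\in X$ and split $F^0_h=G+\tfrac1n(\Lambda_\omega F^0_h)\,\omega$ into its primitive part $G$ and its trace; after wedging with $\omega^{[n-2]}$ the cross term drops out (since $\Lambda_\omega G=0$), and the Hodge--Riemann bilinear relations give $(\tfrac i{2\pi})^2\tr(G\wedge G)\wedge\omega^{[n-2]}=-|G|^2\,\omega^{[n]}$ with $|G|^2\ge0$. Put $\mathcal{P}:=\tfrac i{2\pi}\Lambda_\omega F^0_h$, $\mathcal{Q}:=\tfrac i{2\pi}\Phi^0_h$ (self-adjoint endomorphisms), and let $M$ be the positive semidefinite quadratic form on $\mathfrak t$ with $M_{ab}=\tr(\mathcal{Q}_{\xi_a}\mathcal{Q}_{\xi_b})$. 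The trace-free part of the weighted Hermite--Einstein equation reads $v\,\mathcal{P}+\langle\mathcal{Q},dv(\mu)\rangle=0$, which I use to eliminate $\Lambda_\omega F^0_h$: the second term becomes $-2v|\mathcal{P}|^2$ and $v|\mathcal{P}|^2=\tfrac1v M(dv(\mu),dv(\mu))$. Since $\Lambda_\omega\omega=n$, the coefficient of $|\mathcal{P}|^2$ assembles as $\tfrac{n-1}{n}-2=-\tfrac{n+1}{n}$, and the integrand collapses to
\[-v(\mu)\,|G|^2+\Big\langle M,\ \Hess(v)(\mu)-\tfrac{n+1}{n}\,\tfrac{dv(\mu)\otimes dv(\mu)}{v(\mu)}\Big\rangle.\]
Both summands are $\le0$: the first because $v>0$, the second because it pairs the positive-semidefinite form $M$ with the negative-semidefinite tensor guaranteed by \eqref{eq:weight-ineq-intro}; integrating over $X$ yields the inequality. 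If equality holds, both summands vanish pointwise, so $G\equiv0$, i.e.\ $F^0_h=\tfrac1n(\Lambda_\omega F^0_h)\,\omega$, and $\big\langle M,\Hess(v)-\tfrac{n+1}{n}\tfrac{dv\otimes dv}{v}\big\rangle\equiv0$.

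From $G\equiv0$ and the equivariant Bianchi identity one deduces $\nabla^h(\Lambda_\omega F^0_h)=0$, so $\Lambda_\omega F^0_h$ is a parallel trace-free endomorphism; its eigenbundle decomposition splits $E$ holomorphically and $\nabla^h$-parallelly, each summand having curvature a scalar $2$-form times the identity. Combining this splitting with the vanishing of the second summand, the weighted Hermite--Einstein equation, and the weighted slope polystability of $E$ provided by Theorem \ref{thm:main_wHK}, one is forced to $\Lambda_\omega F^0_h\equiv0$, hence $F^0_h\equiv0$ and $E$ is projectively flat. Conversely, if $E$ is projectively flat then $F^0_h\equiv0$; the weighted Hermite--Einstein equation then forces $\langle\mathcal{Q},dv(\mu)\rangle\equiv0$, and the computation above shows the defect vanishes.

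The crux is the pointwise computation: correctly tracking the $\tfrac i{2\pi}$-factors, signs, and combinatorial constants through the Fourier step, and recognizing that, once the weighted Hermite--Einstein equation is substituted, the whole quantity reorganizes as $-v|G|^2$ plus the pairing of $M\succeq0$ with \emph{exactly} the tensor $\Hess v-\tfrac{n+1}{n}\tfrac{dv\otimes dv}{v}$ — this is what dictates the precise shape of \eqref{eq:weight-ineq-intro}. The secondary delicate point is the rigidity argument in the equality case, ruling out a nonzero parallel $\Lambda_\omega F^0_h$.
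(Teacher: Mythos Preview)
Your argument for the inequality is correct and is essentially the paper's computation, organized differently: where the paper introduces the modified curvature $\tilde F_E=F_E+\tfrac1n\langle\Phi_E,d(\log v)\rangle\,\omega$ to absorb the weighted Hermite--Einstein equation, you instead work with the trace-free equivariant curvature from the outset and split $F_h^0$ into its primitive part $G$ plus its $\omega$-trace. The paper's condition $\tilde F_E^0=0$ is exactly your $G=0$ once the trace-free weighted Hermite--Einstein equation is substituted, and both routes arrive at the same pointwise integrand $-v(\mu)|G|^2+\langle M,\Hess v-\tfrac{n+1}{n}\tfrac{dv\otimes dv}{v}\rangle$.

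The equality case, however, has genuine gaps. First, your polystability step does not force $\Lambda_\omega F_h^0=0$: the eigenbundles $E_\lambda$ of the parallel endomorphism $\Lambda_\omega F_h^0$ are $\T$-equivariant, holomorphic, and each weighted Hermite--Einstein with the \emph{same} constant $c_v$ (the splitting is $\nabla^h$-parallel and the weighted mean curvature restricts block-diagonally), so every $E_\lambda$ has $v$-slope equal to $\mu_v(E)$; polystability is entirely consistent with such a splitting and gives no contradiction. Second, your converse is incomplete: if $F_h^0\equiv 0$ then indeed $\langle\mathcal Q,dv\rangle\equiv 0$ and hence $\langle M,\tfrac{dv\otimes dv}{v}\rangle=0$, but $\langle M,\Hess v\rangle$ need not vanish, since $\Phi_h^0$ can be a nonzero parallel endomorphism (e.g.\ when $E$ is a direct sum of line bundles with equal curvature but different $\T$-weights). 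So ``the computation above shows the defect vanishes'' is not justified without a further argument pinning down $\Phi_h^0$. The paper handles equality tersely, asserting that it forces both $\tilde F_E^0=0$ \emph{and} $\Phi_E=\lambda\,\Id_E$, which immediately yields projective flatness; your more elaborate route would need either to supply the missing rigidity for $\Phi_h^0$ or to adopt that assertion directly.
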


Before launching into the proof, we shall verify that the inequality is satisfied by all of the important examples of weighted Hermite--Einstein metrics. Many of these examples stipulate that \(v\) is of the form \(v(\mu) = \tilde{v}(\langle\mu,\xi\rangle)\) for some \(\xi\in\tor\). One observes that the inequality is then equivalent to \[\tilde{v}''(x) - \frac{n+1}{n}\frac{\tilde{v}'(x)^2}{\tilde{v}(x)}\leq 0.\]

\begin{enumerate}
	\item When \(\tilde{v}=1\) (Hermite--Einstein case), the inequality reduces to \(0\leq0\);
	\item When \(\tilde{v}(x) = e^x\) (KR-soliton case), the inequality reduces to \(1\leq \frac{n+1}{n}\);
	\item When \(\tilde{v}(x) = x^{-n}\) (Sasaki case), the inequality reduces to \(0\leq0\).
\end{enumerate}

Lastly, we consider the polynomial weight \[p(\mu):=\prod_{\alpha=1}^{N}(c_\alpha+\langle\mu, p_\alpha\rangle)^{n_\alpha}\] from Section \ref{sec:poly-weights}. A straightforward computation gives
\[
{\rm Hess}(\log p)(\mu) = -\sum_{\alpha=1}^{N}\left(\frac{n_\alpha}{(\langle\mu,p_\alpha\rangle+c_\alpha)^2}\right)p_\alpha\otimes p_\alpha\leq 0.
\] Since \[\Hess(p)-\frac{n+1}{n}\frac{dp\otimes dp}{p}\leq \Hess(p)-\frac{dp\otimes dp}{p} = \Hess(\log p)\leq 0,\] we see that \(p\) also satisfies the desired inequality.

\begin{proof}[Proof of Theorem \ref{weighted-Lubke}]
	Using the identity ${\rm ch}_2(E)_{\T}=\frac{1}{2}(c_1(E)^2_{\T}-2c_2(E)_{\T})$, the weighted Kobayashi--L{\"u}bke inequality is equivalent to
	\begin{equation}\label{eq:Lubk}
		2r\left({\rm ch}_2(E)_{\T}\cdot v(\alpha_{\T})\right)\leq \left(c_1(E)_{\T}^2\cdot v(\alpha_{\T})\right).
	\end{equation}
	
	We first compute these equivariant intersection numbers using equivariant representatives. Letting \(h\) be the weighted Hermite--Einstein metric, we define \(F_E:=\frac{i}{2\pi}F_h\) and \(\Phi_E:=\frac{i}{2\pi}\Phi_h\), and compute
	\[
	\begin{split}
		&\left(c_1(E)_{\T}^2\cdot v(\alpha_{\T})\right) \\
		=&\int_{\tor}\left(c_1(E)_{\T}^2\cdot e^{\alpha_{\T}}\right)(i\xi)\hat{v}(\xi) \underline{d\xi}\\
		=& \int_X\int_{\tor}\left(\left(\tr(F_E+\Phi_E)\right)^2 \cdot e^{\omega+\mu}\right)(i\xi) \hat{v}(\xi) \underline{d\xi}\\
		=& \int_X\int_{\tor}\left( (\tr(F_E))^2+(\tr(i\langle\Phi_E,\xi\rangle))^2+2\tr(F_E)\tr(i\langle\Phi_E,\xi\rangle)\right)e^{\omega
			+i\langle\mu,\xi\rangle} \hat{v}(\xi) \underline{d\xi}\\
		=& \int_X\left(\int_{\tor}  \hat{v}(\xi) e^{i\langle\mu,\xi\rangle}\underline{d\xi}\right) (\tr(F_E))^2  \wedge \omega^{[n-2]}\\
		&+2\int_X\left(\int_{\tor}\tr(i\langle\Phi_E,\xi\rangle)\hat{v}(\xi) e^{i\langle\mu,\xi\rangle}  \underline{d\xi}\right)\tr(F_E) \wedge \omega^{[n-1]}\\
		&+ \int_X\left(\int_{\tor} (\tr(i\langle\Phi_E,\xi\rangle))^2 \hat{v}(\xi) e^{i\langle\mu,\xi\rangle}\underline{d\xi}\right)\omega^{[n]}\\
		=& \int_X  v(\mu)  \tr(F_E)^2  \wedge \omega^{[n-2]}+2\int_X  \langle\tr(\Phi_E),dv(\mu)\rangle \tr(F_E) \wedge \omega^{[n-1]}\\
		&+ \int_X\langle \tr(\Phi_E)\otimes \tr(\Phi_E), \Hess(v)(\mu)\rangle \omega^{[n]}.
	\end{split}
	\]
	In the last equality we use \eqref{eq:FourierD}, which in particular gives
	\[
	\begin{split}
		&\int_X\left(\int_{\tor} (\tr(i\langle\Phi_E,\xi\rangle))^2 \hat{v}(\xi) e^{i\langle\mu,\xi\rangle}\underline{d\xi}\right)\omega^{[n]}\\
		=& \int_X\left(\int_{\tor} \langle\tr(i\Phi_E)\otimes \tr(i\Phi_E), \xi\otimes\xi\rangle \hat{v}(\xi) e^{i\langle\mu,\xi\rangle}\underline{d\xi}\right)\omega^{[n]}\\
		=& \int_X \left\langle\tr(\Phi_E)\otimes \tr(\Phi_E),i^2\int_{\tor} ( \xi\otimes\xi ) \hat{v}(\xi) e^{i\langle\mu,\xi\rangle}\underline{d\xi}\right\rangle \omega^{[n]}\\
		=& \int_X\langle \tr(\Phi_E)\otimes \tr(\Phi_E), \Hess(v)(\mu)\rangle \omega^{[n]}.
	\end{split}
	\]
	Thus we obtain
	\begin{equation}\label{eq:A}
		\begin{split}
			\left(c_1(E)_{\T}^2\cdot v(\alpha_{\T})\right)
			=& \int_X  v(\mu)  \tr(F_E)^2  \wedge \omega^{[n-2]} +2\int_X  \langle\tr(\Phi_E),dv(\mu)\rangle \tr(F_E)\wedge \omega^{[n-1]}\\
			&+ \int_X\langle\tr(\Phi_E)\otimes \tr(\Phi_E), \Hess(v)(\mu)\rangle \omega^{[n]}.
		\end{split}
	\end{equation}
	A similar calculation gives
	\begin{equation}
		\begin{split}\label{eq:B}
			2\left({\rm ch}_2(E)_{\T}\cdot v(\alpha_{\T})\right)=& \int_X  v(\mu)  \tr(F_E^2)  \wedge \omega^{[n-2]}+ 2\int_X  \langle\tr(F_E\Phi_E),dv(\mu)\rangle \wedge \omega^{[n-1]}\\
			&+ \int_X\langle \tr(\Phi_E\circ \Phi_E), \Hess(v)(\mu)\rangle \omega^{[n]}.\\
		\end{split}
	\end{equation}
	
	We reformulate the following term from \eqref{eq:A}:
	\[
	\begin{split}
		&2\int_X  \langle\tr(\Phi_E),dv(\mu)\rangle \tr(F_E)\wedge \omega^{[n-1]}\\
		=&2\int_X  \langle\tr(\Phi_E),dv(\mu)\rangle \tr(\Lambda_\omega F_E)\omega^{[n]}\\
		=&\int_X  \tr\left(c_v\Id_E-v(\mu)\Lambda_\omega F_E \right)\tr(\Lambda_\omega F_E) \omega^{[n]}\\
		&+ \int_X  \langle\tr(\Phi_E),dv(\mu)\rangle \tr\left( \frac{c_v}{v(\mu)}\Id_E-\langle \Phi_E,d(\log v)(\mu)\rangle\right)\omega^{[n]}\\
		=&-\int_X (\tr(\Lambda_\omega F_E))^2v(\mu) \omega^{[n]}
		+rc_v\int_X\tr(\Lambda_\omega F_E) \omega^{[n]}\\
		&+r c_v\int_X  \langle\tr(\Phi_E),d(\log v)(\mu)\rangle  \omega^{[n]}
		-\int_X  \langle\tr(\Phi_E),d(\log v)(\mu)\rangle^2 v(\mu)\omega^{[n]}.
	\end{split}
	\]
	The analogous term from \eqref{eq:B} is given by
	\[
	\begin{split}
		&2\int_X  \langle\tr(F_E\Phi_E),dv(\mu)\rangle \wedge \omega^{[n-1]}\\
		=&2\int_X  \tr((\Lambda_\omega F_E)\langle\Phi_E,d(\log v)(\mu)\rangle) v(\mu)\omega^{[n]}\\
		=&-\int_X  \tr((\Lambda_\omega F_E)^2) v(\mu)\omega^{[n]}  + c_v\int_X  \tr(\Lambda_\omega F_E) \omega^{[n]} \\
		&+c_v\int_X  \langle\tr(\Phi_E),d(\log v)(\mu)\rangle \omega^{[n]} - \int_X \tr(\langle \Phi_E, d(\log v)(\mu)\rangle^2) v(\mu)\omega^{[n]}.
	\end{split}
	\]
	The terms involving $c_v$ cancel out in \eqref{eq:Lubk}. Hence the inequality \eqref{eq:Lubk} is equivalent to
	\begin{equation}\label{eq:Lubk1}
		\begin{split}
			&r\int_Xv(\mu)\tr(F_E^2)\wedge\omega^{[n-2]} - r\int_X  \tr((\Lambda_\omega F_E)^2) v(\mu)\omega^{[n]} \\
			&+r\int_X\langle \tr(\Phi_E\circ \Phi_E), {\rm Hess}(v)(\mu)\rangle \omega^{[n]} - r\int_X \tr(\langle \Phi_E, d(\log v)(\mu)\rangle^2) v(\mu)\omega^{[n]}\\
			\leq& \int_Xv(\mu)\tr(F_E)^2\wedge\omega^{[n-2]} - \int_X  \tr((\Lambda_\omega F_E))^2 v(\mu)\omega^{[n]} \\
			&+\int_X\langle \tr(\Phi_E)\otimes \tr(\Phi_E), {\rm Hess}(v)(\mu)\rangle \omega^{[n]} -\int_X  \langle\tr(\Phi_E),d(\log v)(\mu)\rangle^2 v(\mu)\omega^{[n]}
		\end{split}
	\end{equation}
	Using the following identities
	\begin{equation}\label{eq:dvdv}
		\begin{split}
			\langle\tr(\Phi_E),d(\log v)(\mu)\rangle^2=& \langle\tr(\Phi_E)\otimes \tr(\Phi_E),d(\log v)(\mu)\otimes d(\log v)(\mu)\rangle\\
			\langle \Phi_E, d(\log v)(\mu)\rangle^2=& \langle\Phi_E\circ\Phi_E,d(\log v)(\mu)\otimes d(\log v)(\mu)\rangle,
		\end{split}
	\end{equation}
	the inequality \eqref{eq:Lubk1} is equivalent to
	\begin{equation}
		\begin{split}\label{Eq:Lubk2}
			0\leq &\int_X\left\langle\tr(\Phi_E)\otimes \tr(\Phi_E)-r\,\tr(\Phi_E\circ \Phi_E), \left[ {\rm Hess}(v)(\mu)-\frac{dv(\mu)\otimes dv(\mu)}{v(\mu)} \right]\right\rangle \omega^{[n]} \\
			&+\int_Xv(\mu)\left([(\tr\,F_E)^2-r\tr( F_E^2)]\wedge\omega^{[n-2]}+[r\tr((\Lambda_\omega F_E)^2)-(\tr\,\Lambda_\omega F_E)^2]\omega^{[n]}\right),
		\end{split}
	\end{equation}
	At this point we introduce the endomorphism valued 2-form $\tilde{F}_E$ given by
	\begin{equation}\label{eq:tilde-FE}
		\tilde{F}_E:=F_E+\frac{1}{n}\langle \Phi_E, d(\log v)(\mu)\rangle\otimes\omega
	\end{equation}
	and notice that the $v$-weighted Hermite--Einstein equation is the same as $v(\mu)\Lambda_\omega\tilde{F}_E=c_v\Id_E$. We write each term in the integral from the second line of \eqref{Eq:Lubk2} in terms of $\tilde{F}_E$:
	\[
	\begin{split}
		(\tr\,F_E)^2\wedge\omega^{[n-2]}=&(\tr\,\tilde{F}_E)^2\wedge\omega^{[n-2]}-\frac{2r(n-1)c_v}{nv(\mu)}\langle \tr(\Phi_E), d(\log v)(\mu)\rangle\omega^{[n]}\\
		&+\frac{n-1}{n} \langle\tr(\Phi_E)\otimes\tr(\Phi_E), d(\log v)(\mu)\otimes d(\log v)(\mu)\rangle\omega^{[n]},\\
		\tr(F_E^2)\wedge\omega^{[n-2]}=& \tr(\tilde{F}_E^2)\wedge\omega^{[n-2]}-\frac{2(n-1)c_v}{nv(\mu)}\tr\left(\langle \Phi_E,  d(\log v)(\mu)\rangle\right)\omega^{[n]}\\
		&+\frac{n-1}{n}\tr(\langle\Phi_E\circ\Phi_E, d(\log v)(\mu)\otimes  d(\log v)(\mu)\rangle )\omega^{[n]},\\
		(\tr \Lambda_\omega F_E)^2=& (\tr(\Lambda_\omega \tilde{F}_E))^2-\frac{2r c_v}{v(\mu)} \langle \tr(\Phi_E),  d(\log v)(\mu)\rangle\\
		&+\langle\tr(\Phi_E)\otimes \tr(\Phi_E),  d(\log v)(\mu)\otimes  d(\log v)(\mu)\rangle,\\
		\tr((\Lambda_\omega F_E)^2)=&\tr((\Lambda_\omega \tilde{F}_E)^2)-\frac{2c_v}{v(\mu)}\tr\left(\langle \Phi_E,  d(\log v)(\mu)\rangle\right)\\
		&+ \tr\left(\langle\Phi_E\circ\Phi_E, d(\log v)(\mu)\otimes  d(\log v)(\mu)\rangle\right),
	\end{split}
	\]
	where we use the $v$-weighted Hermite--Einstein condition $v(\mu)\Lambda_\omega\tilde{F}_E=c_v\Id_E$ and \eqref{eq:dvdv} to simplify the quadratic terms involving $\Phi_E$. Substituting back in \eqref{Eq:Lubk2} we get the equivalent inequality
	\begin{equation}
		\begin{split}\label{Eq:Lubk3}
			0\leq &\int_X\left\langle\tr(\Phi_E)\otimes \tr(\Phi_E)-r\,\tr(\Phi_E\circ \Phi_E), \left[ {\rm Hess}(v)(\mu)-\frac{n+1}{n}\frac{dv(\mu)\otimes dv(\mu)}{v(\mu)} \right]\right\rangle \omega^{[n]} \\
			&+\int_Xv(\mu)\left([(\tr\,\tilde{F}_E)^2-r\tr(\tilde{F}_E^2)]\wedge\omega^{[n-2]}+[r\tr((\Lambda_\omega \tilde{F}_E)^2)-(\tr\,\Lambda_\omega \tilde{F}_E)^2]\omega^{[n]}\right).
		\end{split}
	\end{equation}
	We consider the integral from the second line of \eqref{Eq:Lubk3}. Using the following formula
	\begin{equation}\label{eq:classic}
		\alpha\wedge\beta\wedge\omega^{[n-2]}=(\Lambda_\omega\alpha\Lambda_\omega\beta-\langle\alpha,\beta\rangle_\omega)\omega^{[n]}
	\end{equation}
	which holds for any real $(1,1)$-forms $\alpha, \beta$, we get
	\[
	\tilde{F}_E^2\wedge\omega^{[n-2]}=\left((\Lambda_\omega \tilde{F}_E)\circ(\Lambda_\omega \tilde{F}_E)-|\tilde{F}_E|_\omega^2\right)\otimes\omega^{[n]}
	\]
	where $|\tilde{F}_E|^2_\omega$ is the endomorphism given by the product $\langle\cdot,\cdot\rangle_\omega$ on the differential form component of $\tilde{F}_E$ and composition on the endomorphism component of $\tilde{F}_E$. Taking the trace of the endomorphism component of both sides gives
	\begin{equation}\label{eq:trF1}
		\tr(\tilde{F}_E^2)\wedge\omega^{[n-2]}=\left(\tr((\Lambda_\omega \tilde{F}_E)^2)-\tr(|\tilde{F}_E|_\omega^2)\right)\omega^{[n]}.
	\end{equation}
	Applying \eqref{eq:classic} for $\alpha=\beta=\tr(\tilde{F}_E)$ gives
	\begin{equation}\label{eq:trF2}
		(\tr\tilde{F}_E)^2\wedge\omega^{[n-2]}=\left( (\Lambda_\omega \tr \tilde{F}_E )^2-|\tr(\tilde{F}_E)|_\omega^2\right)\omega^{[n]}.
	\end{equation}
	combining \eqref{eq:trF1} and \eqref{eq:trF2} we obtain
	\[
	\begin{split}
		&r\left(\tr(( \tilde{F}_E)^{2})\wedge\omega^{[n-2]}-\tr((\Lambda_\omega \tilde{F}_E)^2) \omega^{[n]}\right)- \left((\tr(\tilde{F}_E))^2\wedge\omega^{[n-2]}-(\tr(\Lambda_\omega \tilde{F}_E))^2\omega^{[n]}\right)\\
		=&-r\left(\tr(|\tilde{F}_E|_\omega^2)-\frac{1}{r}|\tr(\tilde{F}_E)|_\omega^2\right)\omega^{[n]}\\
		=&-r\tr\left( |\tilde{F}_E|_\omega^2 -\frac{2}{r}\langle \tilde{F}_E,\tr(\tilde{F}_E)\rangle+ \frac{1}{r^2}|\tr(\tilde{F}_E)|_\omega^2 \otimes\Id_E\right)\\
		=&-r\tr \left|\tilde{F}_E-\frac{1}{r}\tr(\tilde{F}_E)\otimes\Id_E\right|_\omega^2.
	\end{split}
	\]
	So the inequality \eqref{Eq:Lubk3} has become \[
	\begin{split}
		0\leq &\int_X\left\langle\tr(\Phi_E)\otimes \tr(\Phi_E)-r\,\tr(\Phi_E\circ \Phi_E), \left[ {\rm Hess}(v)(\mu)-\frac{n+1}{n}\frac{dv(\mu)\otimes dv(\mu)}{v(\mu)} \right]\right\rangle \omega^{[n]} \\
		&+r\int_Xv(\mu)\tr \left|\tilde{F}_E-\frac{1}{r}\tr(\tilde{F}_E)\otimes\Id_E\right|_\omega^2 \omega^{[n]}.
	\end{split}
	\]
	We have assumed that
	\begin{equation}\label{v-ineq}
		{\rm Hess}(v)(\mu) - \frac{n+1}{n}\frac{dv(\mu)\otimes dv(\mu)}{v(\mu)}\leq 0.
	\end{equation} 
	Choosing (pointwise on \(X\)) a basis  $(\xi_a)_{1\leq a\leq \ell}$ such that ${\rm Hess}(v)(\mu)-\frac{n+1}{n}\frac{dv(\mu)\otimes dv(\mu)}{v(\mu)}$ is diagonal, it is enough to show that
	\[
	\tr(\Phi_E^{\xi_a})^2-  r\tr(\Phi_E^{\xi_a}\circ\Phi_E^{\xi_a})\leq 0,
	\]
	Since $\tr(H)^2\leq r\tr(H^2)$ holds for arbitrary Hermitian matrices, noting that  $\Phi_E^{\xi_a}$ is Hermitian gives the desired inequality. Hence we have proven the inequality \eqref{eq:Lubk}.
	
	Note that the weighted Kobayashi--L\"ubke inequality is an equality if and only if $\tilde{F}_E=\frac{1}{r}\tr(\tilde{F}_E)\otimes\Id_E$ and $\Phi_E=\lambda\Id_E$, in which case $E$ is projectively flat.
\end{proof}

\begin{rem}
	In the above proof, if one does not introduce the endomorphism valued 2-form $\tilde{F}_E$ given by \eqref{eq:tilde-FE}, then one can easily deduce from \eqref{Eq:Lubk2} the inequality
	\[
	\begin{split}
		0\leq &\int_X\left\langle\tr(\Phi_E)\otimes \tr(\Phi_E)-r\,\tr(\Phi_E\circ \Phi_E),  {\rm Hess}(\log v)(\mu)  \right\rangle v(\mu)\omega^{[n]} \\
		&+r\int_Xv(\mu)\tr \left|F_E-\frac{1}{r}\tr(F_E)\otimes\Id_E\right|_\omega^2 \omega^{[n]},
	\end{split}
	\]
	which shows that the weighted Kobayashi--L\"ubke inequality also holds for the smaller class of weight functions $v\in C^\infty(\Pol,\mathbb{R}_{>0})$ satisfying the inequality
	\begin{equation}\label{eq:logv-concave}
		\Hess(\log v)\leq 0.
	\end{equation}
	Notice that the condition \eqref{eq:logv-concave} is clearly satisfied by the weight  $v(\mu)=e^{\langle\mu,\xi\rangle}$ and also by the polynomial weights $p(\mu)=\prod_{\alpha=1}^{N}(c_\alpha+\langle\mu, p_\alpha\rangle)^{n_\alpha}$ from Section \ref{sec:poly-weights}. 
\end{rem}
\begin{rem}
	Let $v\in C^\infty(\Pol,\mathbb{R}_{>0})$ be a weight function and $\Delta(v)$ the intersection number
	\[
	\Delta(v):=\left(c_1(E)_{\T}^2\cdot v(\alpha_{\T})\right)-2r\left({\rm ch}_2(E)_{\T}\cdot v(\alpha_{\T})\right).
	\]
	Then, from the calculations in the proof of Theorem \ref{weighted-Lubke} one can easily see that for any $\T$-invariant Hermitian metric $h$ on $E$ the following equality holds
	\[
	\begin{split}
		\Delta(v)= &\int_X\left\langle\tr(\Phi_E)\otimes \tr(\Phi_E)-r\,\tr(\Phi_E\circ \Phi_E),  {\rm Hess}(\log v)(\mu)\right\rangle v(\mu)\omega^{[n]} \\
		&+\frac{r}{4\pi^2}\int_X v(\mu)\tr \left|F_h^\circ\right|_\omega^2 \omega^{[n]}- r\int_X|K_v(h)^{\circ}|^2 \frac{\omega^{[n]}}{v(\mu)}.
	\end{split}
	\]
	where $K_v(h):=\frac{i}{2\pi}\Lambda_{\omega,v}(F_h+\Phi_h)$ is the $v$-weighted mean curvature of $(E,h)$ and $T^\circ$ is the endomorphism trace free part, defined by $T^\circ=T-\frac{1}{r}\tr_E(T)\otimes\Id_E$ for all $T\in\Omega^*(\End(E))$. Thus, for weight functions such that $\Hess(\log v)\leq 0$, the following inequality holds for any $\T$-invariant Hermitian metric $h$,
	\begin{equation}\label{ineq:YM}
		\int_X|K_v(h)^{\circ}|^2 \frac{\omega^{[n]}}{v(\mu)} +\frac{\Delta(v)}{r} \geq \frac{1}{4\pi^2}{\rm YM}_v(h).
	\end{equation}
	where ${\rm YM}_v(h)$ is the $v$-weighted Yang--Mills energy of a $\T$-invariant Hermitian metric $h$, given by
	\[
	{\rm YM}_v(h):=\int_Xv(\mu)\tr \left|F_h^\circ\right|_\omega^2 \omega^{[n]}
	\]
	For the special class of weights $\{e^{\langle-,\xi\rangle}\mid\xi\in\tor\}$ the inequality \eqref{ineq:YM} becomes an equality
	\[
	\int_X|K_{e^{\langle-,\xi\rangle}}(h)^{\circ}|^2 e^{-\langle\mu, \xi\rangle}\omega^{[n]} +\frac{\Delta(\xi)}{r} = \frac{1}{4\pi^2}{\rm YM}_{e^{\langle-,\xi\rangle}}(h),
	\]
	which holds for all $\T$-invariant Hermitian metrics $h$. Consequently,   the exponentially weighted Yang--Mills energy ${\rm YM}_{e^{\langle-,\xi\rangle}}$ admits a topological lower bound,
	\[
	\frac{4\pi^2}{r}\Delta(\xi) \leq {\rm YM}_{e^{\langle\xi,-\rangle}}(h).
	\]
	Furthermore, if $h$ is a $e^{\langle-,\xi\rangle}$-weighted Hermite--Einstein metric, then the weighted Yang--Mills energy of $h$ attains the topological minimum, namely
	${\rm YM}_{e^{\langle-,\xi\rangle}}(h)
	=\frac{4\pi^2}{r}\Delta(\xi)$.
\end{rem}

As the holomorphic tangent bundle of a K\"ahler--Ricci soliton is, in particular, weighted Hermite--Einstein (see section \ref{sec3.3}), then the corresponding equivariant first and second Chern classes are constrained by the exponentially weighted Kobayashi--L\"ubke inequality.

\begin{cor}
	Let $X$ be a Fano $\T$-manifold of complex dimension $n$ admitting a K\"ahler--Ricci soliton in $c_1(X)$ with optimal vector field $\xi\in\tor$. Then,
	\[
	(n-1)\left(c_1(X)_\T^2\cdot e^{c_1(X)_\T}\right)(\xi)\leq 2n \left(c_2(X)_
	\T\cdot e^{c_1(X)_\T}\right)(\xi).
	\]
\end{cor}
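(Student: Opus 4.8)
The plan is to apply the weighted Kobayashi--L\"ubke inequality (Theorem \ref{weighted-Lubke}) to the holomorphic tangent bundle $E=TX^{1,0}$, equipped with the exponential weight $v(\mu)=e^{\langle\mu,\xi\rangle}$. First I would recall from Section \ref{sec3.3} that, for a K\"ahler--Ricci soliton $(X,\omega,\xi)$ with $\omega\in c_1(X)$ satisfying \eqref{eq:KRsoliton}, the Chern connection of the associated Hermitian metric on $TX^{1,0}$ is an exponentially weighted Hermite--Einstein metric with weight functions $v(\mu)=w(\mu)=e^{\langle\mu,\xi\rangle}$. By Lemma \ref{vw-weighted-is-v-weighted} this is equivalent to the existence of a $v$-weighted Hermite--Einstein metric on $TX^{1,0}$ (on the same K\"ahler manifold $(X,\omega)$), which is precisely the hypothesis required by Theorem \ref{weighted-Lubke}.

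Next I would verify that the weight $v(\mu)=e^{\langle\mu,\xi\rangle}$ satisfies the Hessian condition \eqref{eq:weight-ineq-intro}. As observed immediately after the statement of Theorem \ref{weighted-Lubke}, since $v$ is of the form $\tilde v(\langle\mu,\xi\rangle)$ with $\tilde v=\exp$, the inequality $\Hess(v)-\tfrac{n+1}{n}\tfrac{dv\otimes dv}{v}\le 0$ reduces to $\tilde v''-\tfrac{n+1}{n}(\tilde v')^2/\tilde v\le 0$, i.e. to $1\le\tfrac{n+1}{n}$, which holds for every $n\ge 1$. (Equivalently, $\log v$ is affine, so $\Hess(\log v)=0$ and the weaker sufficient condition from the Remark following the proof of Theorem \ref{weighted-Lubke} applies as well.)

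Finally I would translate the conclusion of Theorem \ref{weighted-Lubke} into the asserted statement. One has $\rk(TX^{1,0})=n$, $c_1(TX^{1,0})_\T=c_1(X)_\T$ and $c_2(TX^{1,0})_\T=c_2(X)_\T$; moreover, because the soliton metric lies in $c_1(X)$ and $\mu$ is the moment map appearing in \eqref{eq:KRsoliton}, the equivariant K\"ahler class is $\alpha_\T=c_1(X)_\T$. By the Fourier-transform characterisation of the weighted intersection numbers (Section \ref{sec:eq-intersections}), for $v(\mu)=e^{\langle\mu,\xi\rangle}$ and any auxiliary equivariant class $\beta_\T$ one has $(\beta_\T\cdot v(\alpha_\T))=(\beta_\T\cdot e^{c_1(X)_\T})(\xi)$; applying this with $\beta_\T=c_1(X)_\T^2$ and $\beta_\T=c_2(X)_\T$ turns the inequality $(\rk(E)-1)(c_1(E)_\T^2\cdot v(\alpha_\T))\le 2\,\rk(E)\,(c_2(E)_\T\cdot v(\alpha_\T))$ into exactly $(n-1)(c_1(X)_\T^2\cdot e^{c_1(X)_\T})(\xi)\le 2n\,(c_2(X)_\T\cdot e^{c_1(X)_\T})(\xi)$. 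The argument is essentially bookkeeping once Theorem \ref{weighted-Lubke} is available; the only points requiring genuine care are the passage from the $(v,w)$-equation satisfied by the soliton to the $v$-weighted equation via Lemma \ref{vw-weighted-is-v-weighted}, and the identification $\alpha_\T=c_1(X)_\T$ together with the normalisation of $\mu$, so that the weighted intersection numbers match the claimed evaluations at $\xi$.
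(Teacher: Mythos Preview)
Your proposal is correct and follows essentially the same approach as the paper: apply Theorem \ref{weighted-Lubke} to $E=TX^{1,0}$ with the exponential weight, having verified (as the paper does just after the statement of Theorem \ref{weighted-Lubke}) that $v=e^{\langle-,\xi\rangle}$ satisfies the Hessian condition. Your write-up is more explicit than the paper's one-line justification, in particular in noting the need for Lemma \ref{vw-weighted-is-v-weighted} to pass from the $(v,v)$-weighted equation satisfied by the soliton to the $v$-weighted hypothesis of Theorem \ref{weighted-Lubke}, and in identifying $\alpha_\T=c_1(X)_\T$ so that $(\beta_\T\cdot v(\alpha_\T))=(\beta_\T\cdot e^{c_1(X)_\T})(\xi)$.
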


\section{Extensions of tangent bundles of K{\"a}hler--Ricci solitons and bounds on weighted Ricci curvature}\label{sec:extensions}

In \cite{Tia92}, Tian showed that
the stability of certain natural extensions of the holomorphic tangent bundle can be used to bound the maximal possible positive lower bound of the Ricci curvature of K\"ahler metrics in the anticanonical polarization. In this section, we first show in Theorem \ref{thm:ExtSolitons} that certain equivariant extensions of the tangent bundle of a K{\"a}hler--Ricci soliton admit natural metrics. These metrics, despite not being weighted Hermite--Einstein, are then used to generalise Tian's observation to lower bounds of the ``weighted" Ricci curvature ${\rm Ric}(\omega)-\frac{i}{2\pi}\d\db\mu^{\xi}$, in Theorem \ref{thm:weighted-Ricci-bound}.

\subsection{Equivariant extensions of equivariant vector bundles}

We start by recalling some natural constructions on extensions of equivariant vector bundles. Let $E_1$ and $E_2$ be $\T$-equivariant holomorphic vector bundles over a K\"ahler $\T$-manifold $(X,\omega)$ and $\Psi\in \Omega^{0,1}(\Hom(E_1,E_2))^\T$ a $\T$-invariant $\db_{\Hom(E_1,E_2)}$-closed $(0,1)$-form with values in $\Hom(E_1,E_2)$. Then the cohomology class $[\Psi]\in H^{0,1}(X,\Hom(E_1,E_2))\simeq H^{1}(X,\Hom(E_1,E_2))$ determines a $\T$-equivariant extension $E_{[\Psi]}$ of $E_1$ by $E_2$,
\[
0\to E_2\to E_{[\Psi]}\to E_1\to 0.
\]
In what follows we give the construction of a holomorphic structure on $E_{[\Psi]}$ together with a natural lift of the $\T$-action making the extension $E_{[\Psi]}$ a $\T$-equivariant bundle. Let $h_1$, $h_2$ be two $\T$-invariant metrics on $E_1$, $E_2$ respectively and $\nabla_1$, $\nabla_2$ the corresponding Chern connections. We consider the smooth complex vector bundle $E_1\oplus E_2$ endowed with the Hermitian structure $h:=h_1\oplus h_2$ and the diagonal lift of the $\T$-action with induced infinitesimal action $\begin{pmatrix}
	\mathcal{L}^{E_1}&0\\
	0&\mathcal{L}^{E_2}
\end{pmatrix}$. On $E_1\oplus E_2$ we have the $\T$-invariant connection $\nabla^\Psi$ given by
\begin{equation}\label{eq:nabla-split}
	\nabla^{\Psi}=\begin{pmatrix}
		\nabla_1 & -\Psi^{\dagger}\\
		\Psi & \nabla_2    \end{pmatrix}
\end{equation}
where $\Psi^\dagger\in\Omega^{1,0}(\Hom(E_2,E_1))^{\T}$ is defined by
\begin{equation}\label{eq:phi^dagger}
	h_{2}(\Psi(s_1),s_2)=h_1(s_1,\Psi^{\dagger}(s_2))
\end{equation}
for all $s_1\in \Gamma(E_1)$ and $s_2\in \Gamma(E_2)$. One can easily check that $\nabla^\Psi$ is a Hermitian connection. The $(0,1)$-part of $\nabla^\Psi$ gives the Cauchy--Riemann operator
\[
(\nabla^\Psi)^{(0,1)}=\begin{pmatrix}
	\db^{E_1} & 0\\
	\Psi & \db^{E_2}   \end{pmatrix}.
\]
By $\T$-invariance and $\db_{\Hom(E_1,E_2)}$-closedness of $\Psi$, the Cauchy--Riemann operator $(\nabla^\Psi)^{(0,1)}$ is $\T$-invariant and integrable: $((\nabla^\Psi)^{(0,1)})^2=0$. The $\T$-equivariant extension $E_{[\Psi]}$ of $E_1$ by $E_2$ is the $\T$-equivariant holomorphic vector bundle $(E_1\oplus E_2,h,\nabla^\Psi)$. The curvature $F_{\nabla^\Psi}$ of $(E_{[\Psi]},h)$ is given by
\[
\begin{split}
	F_{\nabla^\Psi}=\begin{pmatrix}
		F_{h_1}-\Psi^{\dagger}\wedge \Psi &-\Hom(\nabla_2,\nabla_1)\Psi^{\dagger}\\
		\Hom(\nabla_1,\nabla_2) \Psi & F_{h_2}-\Psi\wedge \Psi^{\dagger}
	\end{pmatrix},
\end{split}
\]
where $\Hom(\nabla_1,\nabla_2)$ is the connection on $\Hom(E_1,E_2)$ induced by the connections $\nabla_1$, $\nabla_2$, so that $\Hom(\nabla_1,\nabla_2)\Psi$ is given by
\[
\Hom(\nabla_1,\nabla_2)\Psi=\nabla_2\circ\Psi+\Psi\circ\nabla_1,
\] and similarly for \(\Hom(\nabla_2,\nabla_1)\).
By Proposition \ref{prop:canonical_moment_map}, the corresponding bundle momentum map $\Phi_{\nabla^{\Psi}}$ is given by
\[
\Phi_{\nabla^\Psi}=\begin{pmatrix}
	\Phi_{h_1} & -\Psi^{\dagger}\\
	\Psi &\Phi_{h_2}
\end{pmatrix}.
\]
Given weight functions $v\in C^{\infty}(\Pol,\R_{>0})$ and $w\in C^\infty(\Pol,\R)$ the $\T$-equivariant extension $(E_{[\Psi]},h,\nabla^\Psi)$ is a $(v,w)$-weighted Hermite--Einstein structure if and only if
\begin{equation}\label{eq:wHE-extension}
	\begin{cases}
		v(\mu)\Lambda_\omega\left(\Hom(\nabla_1,\nabla_2)\Psi\right)+\langle \Psi, dv(\mu)\rangle=0\\
		\frac{i}{2\pi}\Lambda_{\omega,v}\left( (F_{h_1}-\Psi^{\dagger}\wedge \Psi)+\Phi_{h_1}\right)=w(\mu)\Id_{E_1}\\
		\frac{i}{2\pi}\Lambda_{\omega,v}\left(  (F_{h_2}-\Psi\wedge \Psi^{\dagger})+\Phi_{h_2}\right)=w(\mu)\Id_{E_2}.
	\end{cases}
\end{equation}

Let $X$ be a Fano $\T$-manifold with $\omega\in c_1(X)$ a $\T$-invariant representative of the anticanonical polarization and denote by $\mathcal{O}$ the trivial line bundle over $X$ with \(\T\) acting on \(\mathcal{O}=X\times\mathbb{C}\) only through \(X\). Following \cite{Tia92}, the metric $\omega\in c_1(X)$ can be seen as section in $\Psi^\omega\in\Omega^{0,1}(\Hom(TX^{1,0},\mathcal{O}))$, indeed for any $W\in T^{0,1}X$ we define $\Psi^\omega_W=\omega(-,W)\in \Gamma(\Hom(TX^{1,0},\mathcal{O}))$. By the $\T$-invariance and the closedness of $\omega$, the section $\Psi^\omega$ is $\T$-invariant and $\db_{\Hom(TX^{1,0},\mathcal{O})}$-closed. This follows from the computation
\[
\begin{split}
	(\db_{\Hom(TX^{1,0},\mathcal{O})}\Psi^\omega)(V)=&(\Psi^\omega\circ \db_{TX^{1,0}} +\db_{\mathcal{O}}\circ \Psi^\omega)(V)\\
	=&-\omega(\db_{TX^{1,0}}V,\cdot)+\db(\omega(V,\cdot))\\
	=&-\omega(\db_{TX^{1,0}}V,\cdot)+(\db\omega)(V,\cdot) +\omega(\db_{TX^{1,0}}V,\cdot)=0.
\end{split}
\]

While we cannot solve \eqref{eq:wHE-extension}, we instead prove the following:
\begin{thm}\label{thm:ExtSolitons}
	Let $(X,\xi)$ be a K{\"a}hler--Ricci soliton with canonically normalized momentum polytope $\Pol$. Then for any \(\gamma\in\mathbb{R}\) there exists a Hermitian metric $h_E$ on the extension bundle $E_{[\gamma\Psi^\omega]}$ of $TX^{1,0}$ by the trivial line bundle with the extension class $\gamma c_1(X)$, solving the equation
	\[
	\frac{i}{2\pi}\left(\Lambda_\omega F_{h_E}+\Phi_{h_E}^\xi\right)=\begin{pmatrix}
		\left(1+\frac{1}{\pi}\mu_\xi - \frac{\gamma^2}{2\pi}e^{-\mu_\xi}\right)\Id_{TX^{1,0}}&0\\
		0& \frac{1}{2\pi}\mu_\xi+\frac{n\gamma^2}{2\pi}e^{-\mu_\xi}
	\end{pmatrix}.
	\]
	Furthermore, the $e^{\mu_\xi}$-slope of $E_{[\gamma\Psi^\omega]}$ is
	\[
	\mu_{e^{\mu_\xi}}(E_{[\gamma\Psi^\omega]})=\frac{n}{n+1}\left(e^{c_1(X)_\T}\right)(\xi).
	\]
\end{thm}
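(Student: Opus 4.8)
The plan is to establish the two assertions separately. The first I would prove by exhibiting an explicit block metric on $E_{[\gamma\Psi^\omega]}$ and computing its weighted mean curvature block by block; the second is a short cohomological computation. So: on the smooth bundle $E_{[\gamma\Psi^\omega]}=TX^{1,0}\oplus\mathcal O$ take the block metric $h_E=h_1\oplus h_2$ with $h_1=e^{2\mu_\xi}g$ on $TX^{1,0}$ (here $g$ is the soliton metric) and $h_2=e^{\mu_\xi}$ on the trivial line bundle, and with $\Psi=\gamma\Psi^\omega$ compute $\tfrac{i}{2\pi}(\Lambda_\omega F_{\nabla^\Psi}+\Phi_{\nabla^\Psi}^\xi)$ using the block formulas for $F_{\nabla^\Psi}$ and $\Phi_{\nabla^\Psi}$ recorded just before Theorem~\ref{thm:ExtSolitons}. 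The off-diagonal blocks vanish: since $\omega$ is parallel for the Chern connection of $g$, the only contribution to $\Hom(\nabla_1,\nabla_2)\Psi^\omega$ comes from the $(1,0)$-part $-\partial\mu_\xi$ of the Chern connection of $h_2$, so $\Hom(\nabla_1,\nabla_2)\Psi^\omega=-\partial\mu_\xi\wedge\Psi^\omega$; since a direct contraction gives $\Lambda_\omega(\partial\mu_\xi\wedge\Psi^\omega)=\partial\mu_\xi$ while $\iota_\xi\Psi^\omega=\partial\mu_\xi$ (the Hamiltonian relation), the off-diagonal part $\gamma\bigl(\Lambda_\omega\Hom(\nabla_1,\nabla_2)\Psi^\omega+\iota_\xi\Psi^\omega\bigr)$ of $\Lambda_\omega F_{\nabla^\Psi}+\Phi_{\nabla^\Psi}^\xi$ vanishes. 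This is the analogue, for the operator $\Lambda_\omega(\,\cdot\,)+\langle\Phi,\xi\rangle$, of the first line of \eqref{eq:wHE-extension}.

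For the diagonal blocks, the contributions of $-\Psi^\dagger\wedge\Psi$ and $-\Psi\wedge\Psi^\dagger$ are computed once and for all: $\Lambda_\omega\bigl((\Psi^\omega)^\dagger\wedge\Psi^\omega\bigr)=-ie^{-\mu_\xi}\Id_{TX^{1,0}}$ and $\Lambda_\omega\bigl(\Psi^\omega\wedge(\Psi^\omega)^\dagger\bigr)=ine^{-\mu_\xi}$, the factor $e^{-\mu_\xi}$ arising because $(\Psi^\omega)^\dagger$ depends on $h_2h_1^{-1}$; these yield the terms $-\tfrac{\gamma^2}{2\pi}e^{-\mu_\xi}$ and $+\tfrac{n\gamma^2}{2\pi}e^{-\mu_\xi}$ respectively. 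The remaining terms are $\tfrac{i}{2\pi}(\Lambda_\omega F_{h_i}+\Phi_{h_i}^\xi)$; starting from $\tfrac{i}{2\pi}(\Lambda_\omega R_\omega+\Phi_\omega^\xi)=\Id_{TX^{1,0}}$ (Section~\ref{sec3.3}) and tracking the conformal rescalings via \eqref{eq:moment-map-change} and \eqref{eq:change-of-curvature}, these become $\Id_{TX^{1,0}}+\mathcal D(2\mu_\xi)\Id_{TX^{1,0}}$ and $\mathcal D(\mu_\xi)$, where $\mathcal D(f):=-\tfrac{1}{2\pi}\Lambda_\omega(i\d\db f)+\tfrac{i}{2\pi}(\d f)(\xi)$. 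Everything then reduces to the single pointwise identity $\mathcal D(\mu_\xi)=\tfrac{1}{2\pi}\mu_\xi$, equivalently
\[
2\pi\,\Lambda_\omega\Ric(\omega)+\tfrac12|\xi|^2_\omega+\mu_\xi=2\pi n,
\]
which is the Kähler form of the conserved quantity $S+|\nabla f|^2-2f=\mathrm{const}$ for a shrinking gradient Ricci soliton, the constant $2\pi n$ being fixed by the canonical normalisation of $\Pol$; I would derive it by a standard computation with the soliton equation \eqref{eq:KRsoliton}. Assembling the blocks gives the stated matrix.

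For the slope: $\mathcal O$ carries the trivial equivariant structure, so $c_1(E_{[\gamma\Psi^\omega]})_\T=c_1(TX^{1,0})_\T=c_1(X)_\T$ and $\rk(E_{[\gamma\Psi^\omega]})=n+1$, while $\alpha_\T=[\omega+\mu]=c_1(X)_\T$ since $\omega\in c_1(X)$; hence the $e^{\mu_\xi}$-slope is $\tfrac{1}{n+1}\bigl(c_1(X)_\T\cdot e^{\mu_\xi}(c_1(X)_\T)\bigr)$. By the lemma of Section~\ref{sec:eq-intersections} identifying arbitrary-weight with one-variable intersection numbers this equals $\tfrac{1}{n+1}(c_1(X)_\T\cdot e^{c_1(X)_\T})(\xi)$. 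Writing $(e^{s\,c_1(X)_\T})(\xi)=s^n\int_X e^{s\mu_\xi}\omega^{[n]}$ and differentiating at $s=1$ gives $(c_1(X)_\T\cdot e^{c_1(X)_\T})(\xi)=n\,(e^{c_1(X)_\T})(\xi)+\int_X\mu_\xi e^{\mu_\xi}\omega^{[n]}$; the last integral vanishes because $\xi$ is the soliton vector field (vanishing of the modified Futaki integral in the direction $\xi$, again by the canonical normalisation of $\Pol$), so the slope is $\tfrac{n}{n+1}(e^{c_1(X)_\T})(\xi)$.

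The main obstacle is the diagonal matching in the metric construction: organising the $\tfrac{i}{2\pi}$- and sign-bookkeeping so that everything reduces to the single soliton identity $\mathcal D(\mu_\xi)=\tfrac{1}{2\pi}\mu_\xi$, and establishing that identity with the correct constant — equivalently, making precise what the canonical normalisation of $\Pol$ forces. The off-diagonal cancellation and the slope computation are then routine.
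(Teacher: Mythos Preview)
Your approach is essentially the paper's: take the block metric $h_E=e^{2\mu_\xi}h_\omega\oplus e^{\mu_\xi}$ on $TX^{1,0}\oplus\mathcal O$, compute the blocks of $\Lambda_\omega F_{\nabla^\Psi}+\Phi_{\nabla^\Psi}^\xi$, and reduce the diagonal identities to the canonical normalisation $e^{-\mu_\xi}\Delta_{e^{\mu_\xi}}\mu_\xi=\mu_\xi$ (your $\mathcal D(\mu_\xi)=\tfrac1{2\pi}\mu_\xi$). One imprecision: in the off-diagonal step you say the only contribution to $\Hom(\nabla_1,\nabla_2)\Psi^\omega$ comes from $h_2$, but in fact $\nabla_1=\nabla^\omega+2\partial\mu_\xi$ contributes as well; the paper computes $\Hom(\nabla^f,\nabla^u)\Psi^\omega=\partial(u-f)\wedge\Psi^\omega$, and it is the \emph{difference} $u-f=-\mu_\xi$ that produces your $-\partial\mu_\xi\wedge\Psi^\omega$ --- the conclusion and the subsequent cancellation against $\iota_\xi\Psi^\omega$ are correct.
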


We will later choose $\gamma^2:=\frac{2\pi}{n+1}\frac{\left(e^{c_1(X)_\T}\right)(\xi)}{(c_1(X))^{[n]}}$ for the application to bounds on the weighted Ricci curvature.

\begin{proof}
	Suppose that $(X,\omega,\xi)$ is a K\"ahler--Ricci soliton and let $f,u\in C^{\infty}(X,\R)^{\T}$ be two $\T$-invariant functions. We consider the holomorphic tangent bundle $(TX^{1,0},e^f h_\omega)$ endowed with the $\T$-invariant Hermitian metric $e^f h_\omega$ conformally equivalent to the one induced by $\omega$, and the trivial line bundle $(\mathcal{O},h_\mathcal{O})$ endowed with the $\T$-invariant metric $h_{\mathcal{O}}=e^{u}$. We want to find functions $f,u\in C^{\infty}(X,\R)^{\T}$ and $\gamma\in\R$ such that the extension  $(E_{[\gamma\Psi^\omega]},e^f h_\omega\oplus h_{\mathcal{O}},\nabla^{\gamma\Psi^\omega})$ of $(TX^{1,0},e^f h_\omega)$ by $(\mathcal{O},h_\mathcal{O})$ with the extension class $[\gamma\Psi^\omega]\in H^{0,1}(X,\Hom(TX^{1,0},\mathcal{O}))$ satisfies
	\[
	\frac{i}{2\pi}\left(\Lambda_\omega F_{h_E}+\Phi_{h_E}^\xi\right)=\begin{pmatrix}
		w_1(\mu)\Id_{TX^{1,0}}&0\\
		0&w_2(\mu)
	\end{pmatrix}
	\]
	where $w_i\in C^{\infty}(\Pol,\R)$ are weight functions to be specified. By \eqref{eq:wHE-extension}, we need to find $f,u\in C^{\infty}(X,\R)^{\T}$ and $\gamma\in\R$ such that
	\begin{equation}
		\begin{cases}\label{eq:exp-sol-ext}
			\Lambda_\omega\left(\Hom(\nabla^f,\nabla^{u})\Psi^\omega\right)+\langle \Psi^\omega, \xi\rangle=0\\
			\frac{i}{2\pi}\left(\Lambda_{\omega}( F_{e^f h_\omega}-\gamma^2 (\Psi^\omega)^{\dagger}\wedge \Psi^\omega)+\Phi_{e^fh_\omega}^\xi\right)=w_1(\mu) \Id_{TX^{1,0}}\\
			\frac{i}{2\pi}\left( \Lambda_{\omega}( F_{h_\mathcal{O}}-\gamma^2\Psi^\omega\wedge (\Psi^\omega)^{\dagger})+\Phi_{h_\mathcal{O}}^\xi\right)=w_2(\mu)
		\end{cases}
	\end{equation}
	where $\nabla^f$, $\nabla^u$ are the Chern connections of $(TX^{1,0},e^fh_\omega)$ and $(\mathcal{O},h_{\mathcal{O}}=e^u)$ respectively.
	
	For $\Psi\in\Omega^{0,1}(\Hom(E_1,E_2)) =\Gamma(\Lambda^{0,1}\otimes E_1^*\otimes E_2)$ we denote the evaluation of $\Psi$ at $W\in T^{0,1}X$, $e_1\in E_1$ by
	\[
	\Psi_W(e_1)\in E_2,
	\]
	so that $\Psi_W\in \Hom(E_1,E_2)$ and $\Psi(e_1)\in\Omega^{0,1}(E_2)$. A straightforward computation in an orthonormal local frame of $(TX,J,\omega)$ gives
	\begin{equation}\label{eq:tr(phi-phi^dagger)}
		\Lambda_\omega(\Psi^{\dagger}\wedge \Psi_\omega)=-ie^{u-f}\Id_{TX^{1,0}},\quad
		\Lambda_{\omega}(\Psi\wedge \Psi^{\dagger})
		=ine^{u-f}.
	\end{equation}
	
	Plugging \eqref{eq:tr(phi-phi^dagger)} in \eqref{eq:exp-sol-ext} we get
	\begin{equation}
		\begin{cases}\label{eq:exp-sol-ext1}
			\Lambda_\omega\left(\Hom(\nabla^f,\nabla^u)\Psi\right)+\langle \Psi, \xi\rangle=0\\
			\frac{i}{2\pi}\left(\Lambda_{\omega}F_{e^fh_\omega}+\Phi_{e^fh_\omega}^\xi\right)=\left(w_1(\mu)+\frac{\gamma^2}{2\pi}e^{u-f}\right) \Id_{TX^{1,0}}\\
			\frac{i}{2\pi}\left( \Lambda_{\omega} F_{h_\mathcal{O}}+\Phi_{h_\mathcal{O}}^\xi\right)=w_2(\mu)-\frac{n\gamma^2}{2\pi}e^{u-f}.
		\end{cases}
	\end{equation}
	We consider the first equation in \eqref{eq:exp-sol-ext1}: for all $V\in TX^{1,0}$ we have
	\[
	\begin{split}
		(\Hom(\nabla^f,\nabla^u)\Psi)V=&\Psi( \nabla^f V)  +\nabla^u(\Psi(V))\\
		=&\Psi\left(\nabla^\omega V+ \d f\otimes V\right)+ d(\omega(V,\cdot)) +\partial u\wedge (\omega(V,\cdot))\\
		=&\Psi\left(\nabla^\omega V\right) - \partial f\wedge\Psi(V)+ d(\omega(V,\cdot)) +\partial u\wedge (\omega(V,\cdot))\\
		=&-\omega(\nabla^\omega V,\cdot)+ (\nabla^\omega\omega)(V,\cdot)+\omega(\nabla^\omega V,\cdot) +\partial(u-f) \wedge \omega(V,\cdot)\\
		=&\partial (u-f)\wedge\omega(V,\cdot)
	\end{split}
	\]
	where $\nabla^{u}=d+\partial u$ and $\nabla^{f}=\nabla^\omega+\partial f$ are the Chern connections of $h_\mathcal{O}=e^{u}$ of $e^fh_\omega$ respectively. It follows that \begin{align*}
		(\Lambda_\omega\left(\Hom(\nabla^f,\nabla^{u})\Psi\right)+\langle \Psi, \xi\rangle)V &= \Lambda_\omega(\d(u-f)\wedge\omega(V,\cdot))+\omega(V,\xi'') \\
		&= \d(u-f)(V)+\d\mu^\xi(V).
	\end{align*}
	Hence, the first equation in \eqref{eq:exp-sol-ext1} is equivalent to
	\begin{equation}\label{eq:u+f}
		u-f=-\mu_\xi,
	\end{equation} where without loss of generality we choose the normalising constant \(0\).
	For the second equation in \eqref{eq:exp-sol-ext1}, since $\omega$ is a K{\"a}hler--Ricci soliton then we have
	\[
	\begin{split}
		\frac{i}{2\pi}\left(\Lambda_{\omega} F_{e^fh_\omega}+\Phi_{e^fh_\omega}^\xi\right)=&\frac{i}{2\pi}\left(\Lambda_{\omega} F_{h_\omega}+\Phi_{h_\omega}^\xi\right)+\frac{1}{2\pi}e^{-\mu_\xi}\Delta_{e^{\mu_\xi}}(f)\Id_{TX^{1,0}}\\
		=&\left(1+\frac{1}{2\pi}e^{-\mu_\xi}\Delta_{e^{\mu_\xi}}(f)\right) \Id_{TX^{1,0}},
	\end{split}
	\] where \(\Delta_{e^{\mu_\xi}}(f):=\d^*(e^{\mu_\xi}\d f)\) (see the proof of Lemma \ref{vw-weighted-is-v-weighted}).
	Thus the second equation in \eqref{eq:exp-sol-ext1} is equivalent to
	\[
	1+\frac{1}{2\pi}e^{-\mu_\xi}\Delta_{e^{\mu_\xi}}(f)=w_1(\mu)+\frac{\gamma^2}{2\pi}e^{u-f}.
	\]
	For the last equation in \eqref{eq:exp-sol-ext1}, we have $F_{h_{\mathcal{O}}}=\frac{i}{2}dd^cu$ with a bundle momentum map $\Phi^\mathcal{O}$ given by
	\[
	\begin{split}
		\Phi_{\mathcal{O}}^\xi(s)=&\nabla^{\mathcal{O}}_\xi s-\mathcal{L}^\mathcal{O}_\xi s\\
		=& (ds)(\xi)  +\frac{1}{2}(du(\xi)+i(d^cu)(\xi))s- (ds)(\xi)\\
		=&\frac{i}{2}(d^cu)(\xi)s=\frac{i}{2}\langle d\mu_\xi,du\rangle s.
	\end{split}
	\]
	where $du(\xi)=0$ by $\T$-invariance of $u$ and $\mathcal{L}^\mathcal{O}_\xi s=\begin{pmatrix}
		\xi&0\\
		0&\partial_\theta
	\end{pmatrix}s=(ds)(\xi)$ since a section $s\in \Gamma(\mathcal{O})$ is constant along the fibers. Thus, the last equation of \eqref{eq:exp-sol-ext1} is equivalent to
	\[
	\frac{1}{2\pi}e^{-\mu_\xi}\Delta_{e^{\mu_\xi}}(u)= w_2(\mu)-\frac{n\gamma^2}{2\pi}e^{u-f}.
	\]
	Thus, the system \eqref{eq:exp-sol-ext1} is equivalent to
	\begin{equation}
		\begin{cases}\label{eq:exp-sol-ext2}
			u-f=-\mu_\xi\\
			1+\frac{1}{2\pi}e^{-\mu_\xi}\Delta_{e^{\mu_\xi}}(f)=w_1(\mu)+\frac{\gamma^2}{2\pi}e^{u-f}\\
			\frac{1}{2\pi}e^{-\mu_\xi}\Delta_{e^{\mu_\xi}}(u)= w_2(\mu)-\frac{n\gamma^2}{2\pi}e^{u-f}.
		\end{cases}
	\end{equation}
	
	We suppose that the momentum polytope is canonically normalized so that for all $\zeta\in\tor$,
	\[
	e^{-\mu_\xi}\Delta_{e^{\mu_\xi}}(\mu_\zeta)=\mu_\zeta.
	\]
	For the third equation, taking $u=\mu_\xi$ gives
	\[
	w_2(\mu)=\frac{1}{2\pi}\mu_\xi+\frac{n\gamma^2}{2\pi}e^{-\mu_\xi} ,\quad f=2\mu_\xi.
	\]
	Substituting in the second equation we get
	\[
	w_1(\mu)=1+\frac{1}{\pi}\mu_\xi - \frac{\gamma^2}{2\pi}e^{-\mu_\xi}.
	\]
	
	To recapitulate, for all $\gamma\in\R$ the Hermitian metric $e^{2\mu_\xi}h_\omega\oplus e^{\mu_\xi}$ on the extension $E_{[\gamma\Psi^\omega]}TX^{1,0}$ solves the equation
	\begin{equation}\label{eq:exp_wHE_vortex}
		\frac{i}{2\pi}\left(\Lambda_\omega F_{h_E}+\Phi_{h_E}^\xi\right)=\begin{pmatrix}
			\left(1+\frac{1}{\pi}\mu_\xi - \frac{\gamma^2}{2\pi}e^{-\mu_\xi}\right)\Id_{TX^{1,0}}&0\\
			0& \frac{1}{2\pi}\mu_\xi+\frac{n\gamma^2}{2\pi}e^{-\mu_\xi}
		\end{pmatrix}.
	\end{equation}
	Since \(E_{[\gamma\Psi^\omega]}\) splits smoothly as a direct sum \(TX^{1,0}\oplus\mathcal{O}\) of complex vector bundles,
	\begin{equation}
		\begin{split}\label{eq:SlopExt}
			\mu_{e^{\mu_\xi}}(E_{[\gamma\Psi^\omega]})=&\frac{\left(c_1(E_{[\gamma\Psi^{\omega}]})_{\T}\cdot e^{c_1(X)_\T}\right)(\xi)}{n+1}\\
			=&\frac{\left(c_1(X)_{\T}\cdot e^{c_1(X)_\T}\right)(\xi)}{n+1} \\
			=&\frac{n}{n+1}\left(e^{ c_1(X)_\T}\right)(\xi),
		\end{split}
	\end{equation} as may be verified directly from the definitions of equivariant intersection numbers.
\end{proof}

\subsection{Bounds on the weighted Ricci curvature} Following Tian \cite{Tia92}, we consider the problem: given a Fano $\T$-manifold $X$ and $\xi\in \tor$, is there an obstruction to the existence of a $\T$-invariant K\"ahler metric $\omega\in c_1(X)$ such that $\Ric(\omega)-\frac{i}{2\pi}\d\db\mu_\xi\geq t\omega$ for some $t\in [0,1]$?

We define the greatest $\xi$-Ricci lower bound by
\begin{equation}
	\begin{split}\label{eq:beta_xi}
		R_\xi(X):=\sup\big\{t\geq 0\mid  \exists \,\omega\in \mathcal{K}^\T(c_1(X)),\,\,\,
		\Ric(\omega)-\frac{i}{2\pi}\d\db\mu_\xi\geq t\omega\big\},
	\end{split}
\end{equation}
where $\mathcal{K}^\T(c_1(X))$ is the space of $\T$-invariant K\"ahler metrics in the K\"ahler class $c_1(X)$. Notice that $ R_\xi (X)\leq 1$\footnote{If $\Ric(\omega)-\frac{i}{2\pi}\d\db\mu_\xi\geq t\omega$ then $(c_1(X))^{[n]}\geq t^n (c_1(X))^{[n]}$ hence $t\leq 1$, showing $R_\xi(X)\leq 1$.} and $R_\xi(X)= 1$ is a necessary condition for $(X,\xi)$ to be a K{\"a}hler--Ricci soliton.

To construct a K\"ahler metrics $\omega_t\in \mathcal{K}^\T(c_1(X))$ such that $\Ric(\omega_t)-\frac{i}{2\pi}\d\db\mu_{\omega_t}^\xi\geq t\omega_t$, one can use the continuity path of Tian--Zhu \cite{TZ} (see also the work \cite{Sze11} for the K{\"a}hler--Einstein case), defined for $t\in [0,1]$ by
\begin{equation}\label{eq:C0_TZ}
	\Ric(\omega_\varphi)-\frac{i}{2\pi}\d\db\mu^\xi_\varphi=t\omega_\varphi+(1-t)\omega;
\end{equation} here \(\varphi\) is a \(\T\)-invariant K{\"a}hler potential with respect to \(\omega\), and \(\omega_\varphi:=\omega+i\d\db\varphi\). Introducing the Ricci potential of $\omega$,
\[
\Ric(\omega)-\omega=\frac{i}{2\pi}\d\db p_\omega,\quad\int_X e^{p_\omega}\omega^{[n]}=(c_1(X))^{[n]},
\]
the equation \eqref{eq:C0_TZ} is equivalent to the complex Monge--Amp{\`e}re equation 
\[
\begin{cases}
	\omega_\varphi^{[n]}=e^{p_\omega - \mu^\xi_\varphi-2\pi t\varphi}\omega^{[n]}\\
	\omega_\varphi\in\mathcal{K}^\T(c_1(X)).
\end{cases}
\]
A solution $\omega_{\varphi_t}$ of \eqref{eq:C0_TZ} for some $0\leq t<1$ clearly satisfies
\[
\Ric(\omega_{\varphi_t})-\frac{i}{2\pi}\d\db \mu_{\varphi_t}^\xi\geq t\omega_{\varphi_t},
\]
thus, for $t> R_\xi(X)$ the equation \eqref{eq:C0_TZ} is not solvable.

Now, we define the exponentially weighted version of Tian's invariant: let $E$ be the extension of $TX^{1,0}$ by the trivial line bundle with extension class $\gamma c_1(X)$, where \(\gamma>0\) is defined by $$\gamma^2=\frac{2\pi}{n+1}\frac{\left(e^{c_1(X)_\T}\right)(\xi)}{(c_1(X))^{[n]}}.$$ For any $\T$-equivariant coherent saturated subsheaf $\F$ of $TX^{1,0}$ we define $\beta_\xi(\F)$ by
\[
\beta_\xi(\F)=\frac{n+1}{n+1-\rk(\F)}\left(1-\frac{\left(c_1(\F)_{\T}\cdot e^{ c_1(X)_\T}\right)(\xi)}{\left(c_1(X)_\T\cdot e^{c_1(X)_\T}\right)(\xi)}\right),
\]
if $\F$ can be lifted as a $\T$-equivariant subsheaf \(\F'\) of the extension $E$ with $\ker(\F'\to F)=\mathcal{O}$, and by
\[
\beta_\xi(\F)=\frac{n}{n-\rk(\F)}\left(1-\frac{\left(c_1(\F)_{\T}\cdot e^{c_1(X)_\T}\right)(\xi)}{\left( c_1(X)_\T \cdot e^{c_1(X)_\T}\right)(\xi)}\right),
\]
if $\F$ cannot be lifted to a $\T$-equivariant holomorphic subsheaf of the extension $E$. Note the above two case are a dichotomy---if \(\F\) admits a lift \(\F'\subset E\), we may always take \(\F'':=\F'+\mathcal{O}\) as a lift of \(\F\) such that \(\ker(\F''\to \F)=\mathcal{O}\). We define the invariant $\beta_\xi(X)$ as
\begin{equation}\label{eq:beta-invariant}
	\beta_\xi(X):=\inf\{1,\beta_\xi(\F)\}
\end{equation}
where $\F$ runs over all $\T$-equivariant coherent saturated subsheaves of $TX^{1,0}$.
\begin{thm}\label{thm:weighted-Ricci-bound}
	Let $X$ be a Fano manifold $\T$-manifold and $\xi\in \tor$. Then,
	\[
	R_\xi(X)\leq \beta_\xi(X)
	\]
\end{thm}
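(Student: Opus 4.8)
The argument is modelled on Tian's \cite{Tia92}, translated into the weighted formalism with weight function $v(\mu)=e^{\langle\mu,\xi\rangle}=e^{\mu_\xi}$. It suffices to prove: if $\omega\in\mathcal K^\T(c_1(X))$ satisfies $\Ric(\omega)-\frac{i}{2\pi}\d\db\mu_\xi\geq t\omega$ for some $t\geq 0$, then $t\leq\beta_\xi(\F)$ for every $\T$-equivariant coherent saturated subsheaf $\F\subset TX^{1,0}$; since $R_\xi(X)\leq 1$ by the footnote argument, taking the infimum over $\F$ yields $R_\xi(X)\leq\inf\{1,\beta_\xi(\F)\}=\beta_\xi(X)$. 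The first step is to rephrase the curvature hypothesis: transposing the $(0,1)$-part exactly as in Section \ref{sec3.3}, the form inequality $\Ric(\omega)-\frac{i}{2\pi}\d\db\mu_\xi\geq t\omega$ is equivalent to the pointwise endomorphism inequality $\frac{i}{2\pi}(\Lambda_\omega R_\omega+\Phi^\xi_\omega)\geq t\,\Id_{TX^{1,0}}$, i.e. every eigenvalue of the $\xi$-twisted mean curvature of $(TX^{1,0},h_\omega)$ is $\geq t$; equivalently $\frac{i}{2\pi}\Lambda_{\omega,v}(R_\omega+\Phi_\omega)=v(\mu)\frac{i}{2\pi}(\Lambda_\omega R_\omega+\Phi^\xi_\omega)\geq t\,v(\mu)\Id$. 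These reductions are routine.

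The core estimate, for a subsheaf $\F$ of rank $p$ with singular locus $V$, combines three ingredients: (i) Lemma \ref{lem:codim2integral}, which computes $(c_1(\F)_\T\cdot v(\alpha_\T))$ as $\int_{X\setminus V}\frac{i}{2\pi}\tr_\F\Lambda_{\omega,v}(F_{h|_\F}+\Phi_{h|_\F})\omega^{[n]}$; (ii) the Gauss–Codazzi inequality of Lemma \ref{lem:pointwise-ineq}, bounding the integrand pointwise by $\tr_\F$ of the restriction of the ambient $v$-weighted mean curvature, the defect being $-\tr_\F(iv(\mu)\Lambda_\omega(A^\dagger\wedge A))\leq 0$; and (iii) the linear-algebra fact $\tr_\F(M|_\F)\leq \tr(M)-(n-p)\lambda_{\min}(M)$ for Hermitian $M$, applied with the eigenvalue lower bound from Step 1. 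For the \emph{non-lifting case} one applies this with $E=TX^{1,0}$, $h=h_\omega$: integrating against $\omega^{[n]}$ and evaluating $\int_X\tr\big(\frac{i}{2\pi}\Lambda_{\omega,v}(R_\omega+\Phi_\omega)\big)\omega^{[n]}=(c_1(X)_\T\cdot e^{c_1(X)_\T})(\xi)$ and $\int_Xv(\mu)\omega^{[n]}=(e^{c_1(X)_\T})(\xi)$ via Lemma \ref{constants-in-wHE}, one arrives precisely at $t\leq\frac{n}{n-p}\big(1-\tfrac{(c_1(\F)_\T\cdot e^{c_1(X)_\T})(\xi)}{(c_1(X)_\T\cdot e^{c_1(X)_\T})(\xi)}\big)$, which is the second defining expression for $\beta_\xi(\F)$.

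For the \emph{lifting case} we exploit the extension bundle $E=E_{[\gamma\Psi^\omega]}$ built from the given $\omega$, with $\gamma^2=\frac{2\pi}{n+1}\frac{(e^{c_1(X)_\T})(\xi)}{(c_1(X))^{[n]}}$. If $\F$ lifts to $\F'\subset E$, pass to $\F'':=\F'+\mathcal O\subset E$, of rank $p+1$ with $c_1(\F'')_\T=c_1(\F)_\T$, and equip $E$ with the conformal metric from the proof of Theorem \ref{thm:ExtSolitons} ($e^{2\mu_\xi}h_\omega$ on the $TX^{1,0}$-summand, $e^{\mu_\xi}$ on $\mathcal O$, with the extension connection $\nabla^{\gamma\Psi^\omega}$); the relation $u-f=-\mu_\xi$ kills the off-diagonal part of the $\xi$-twisted mean curvature, leaving it block-diagonal with $TX^{1,0}$-block $\frac{i}{2\pi}(\Lambda_\omega R_\omega+\Phi^\xi_\omega)+(\text{scalar})\Id$ and $\mathcal O$-block an explicit scalar. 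Crucially, the auxiliary scalar terms are weighted-Laplacian expressions whose integrals against $v(\mu)\omega^{[n]}=e^{\mu_\xi}\omega^{[n]}$ vanish by integration by parts (so that $X$ need not be a soliton for these integrals to simplify), $(c_1(\mathcal O)_\T\cdot v(\alpha_\T))=0$, and the $\mathcal O$-block's $\gamma^2$-term integrates — by the very choice of $\gamma$ — to the ``slope of $E$'' contribution. Running ingredients (i)–(iii) on $\F''\subset E$, with the eigenvalue bound applied to the $TX^{1,0}$-block, should collapse to $t\leq\frac{n+1}{n+1-p}\big(1-\tfrac{(c_1(\F)_\T\cdot e^{c_1(X)_\T})(\xi)}{(c_1(X)_\T\cdot e^{c_1(X)_\T})(\xi)}\big)=\beta_\xi(\F)$. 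Finally, assembling the two cases and using $R_\xi(X)\leq 1$ gives $R_\xi(X)\leq\beta_\xi(X)$.

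The main obstacle is the bookkeeping in the lifting case: since $X$ is not assumed to be a Kähler–Ricci soliton, the metric on $E$ does not solve an exact equation, and one must verify with care that the surviving terms — after taking $\tr_{\F''}$, invoking Gauss–Codazzi, applying the eigenvalue estimate, and integrating against the weighted volume — assemble into exactly the coefficients $\frac{n+1}{n+1-p}$ and $1-a/b$. This entails fixing the normalization of $\mu_\xi$ so that $[\omega+\mu_\xi]=c_1(X)_\T$ (hence the equivariant intersection numbers in Lemma \ref{constants-in-wHE} are the correct ones), tracking which eigendirections of the $E$-mean curvature are ``used up'' by $\F''$ as opposed to the complementary $\mathcal O$-direction, and controlling the signs and vanishing of the weighted-Laplacian terms; one may well need a more refined eigenvalue inequality than $\tr_\F(M|_\F)\leq\tr(M)-(n-p)\lambda_{\min}(M)$, accounting for the fact that $\F''$ contains the $\mathcal O$-line on the nose. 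The handling of the singular loci of $\F$ and of its lift is, by contrast, entirely routine and is supplied by Lemma \ref{lem:codim2integral}.
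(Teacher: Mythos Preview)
Your non-lifting case is correct and matches the paper verbatim: Gauss--Codazzi plus the eigenvalue lower bound $\frac{i}{2\pi}(\Lambda_\omega R_\omega+\Phi_\omega^\xi)\geq t\,\Id$ on $TX^{1,0}$, integrated against $e^{\mu_\xi}\omega^{[n]}$ and identified with equivariant intersection numbers via Lemma \ref{constants-in-wHE} and Lemma \ref{lem:codim2integral}.

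For the lifting case your strategy is right but there is one concrete error and one unnecessary worry. The error is the choice of $\gamma$: you take the fixed $\gamma^2=\frac{2\pi}{n+1}\frac{(e^{c_1(X)_\T})(\xi)}{(c_1(X))^{[n]}}$ from the definition of $\beta_\xi$, but the paper instead uses the \emph{$t$-dependent} scaling
\[
\gamma_t^2:=\frac{2\pi t}{n+1}\frac{(e^{c_1(X)_\T})(\xi)}{(c_1(X))^{[n]}}.
\]
Since the extension class is only determined up to scalar this does not change the bundle (hence not the lifting condition), but it changes the \emph{metric} on $E$. With your fixed $\gamma$ the integrated $TX^{1,0}$-block yields $(n+1-p)\big(t-\tfrac{1}{n+1}\big)(e^{c_1(X)_\T})(\xi)$, which rearranges to $t\leq \tfrac{1}{n+1}+\tfrac{n}{n+1-p}\big(1-\tfrac{(c_1(\F)_\T\cdot e^{c_1(X)_\T})(\xi)}{(c_1(X)_\T\cdot e^{c_1(X)_\T})(\xi)}\big)$, not $\beta_\xi(\F)$. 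With $\gamma_t$ the same computation gives $(n+1-p)\cdot\tfrac{nt}{n+1}(e^{c_1(X)_\T})(\xi)=\tfrac{(n+1-p)t}{n+1}(c_1(X)_\T\cdot e^{c_1(X)_\T})(\xi)$, and the inequality collapses to $\beta_\xi(\F)$ exactly.

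The unnecessary worry is your ``refined eigenvalue inequality''. You already arranged $\F''=\F'+\mathcal O\supset\mathcal O$; hence with respect to the smooth splitting $E=TX^{1,0}\oplus\mathcal O$ the orthogonal complement satisfies $(\F'')^\perp\subset TX^{1,0}$. So when you run Gauss--Codazzi and bound $\tr_{(\F'')^\perp}$ of the block-diagonal twisted mean curvature, only the $TX^{1,0}$-block is seen, and the crude bound $\tr_{(\F'')^\perp}(M|_{(\F'')^\perp})\geq (n+1-\rk(\F''))\lambda_{\min}(\text{$TX^{1,0}$-block})$ is all that is needed. Your observation that the scalar residues are of the form $e^{-\mu_\xi}\Delta_{e^{\mu_\xi}}(\cdot)$ and hence integrate to zero against $e^{\mu_\xi}\omega^{[n]}$ is correct and is precisely how the paper's adaptation of Theorem \ref{thm:ExtSolitons} to a non-soliton $\omega$ works (the exact equation becomes the pointwise inequality \eqref{eq:wHE>Id} with $1$ replaced by $t$, and only the integral survives).
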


\begin{proof}
	Suppose that there exists $\omega\in\mathcal{K}^\T(c_1(X))$ such that
	\begin{equation}\label{eq:Ricxi>omega}
		\Ric(\omega)-\frac{i}{2\pi}\d\db\mu_\xi\geq t\omega.
	\end{equation}
	We will show that $t\leq \beta_\xi(\F)$ for all $\T$-equivariant coherent saturated subsheaves $\F\subset TX^{1,0}$. We denote by $h_\omega$ the $\T$-invariant Hermitian metric on $TX^{1,0}$ corresponding to $\omega$ and $h_\F$ the induced metric on $\F$. Using \eqref{eq:decomposition} and \eqref{eq:Ricxi>omega}, we get
	\[
	\begin{split}
		\tr_\F\,i\Lambda_{\omega,e^{\mu_\xi}}(F_{h_\F}+\Phi_{h_\F})=& \tr_\F\,i\Lambda_{\omega,e^{\mu_\xi}}(F_{h_\omega}+\Phi_{h_\omega})_{\mid \F}  + e^{\mu_\xi}\tr_\F\big(i \Lambda_\omega (A^\dagger\wedge A)\big)\\
		\leq& \tr_{TX^{1,0}}\,i\Lambda_{\omega,e^{\mu_\xi}}(F_{h_\omega}+\Phi_{h_\omega})- \tr_{\F^{\perp}}\,i\Lambda_{\omega,e^{\mu_\xi }}(F_{h_\omega}+\Phi_{h_\omega})_{\mid \F^{\perp}}\\
		\leq& \tr_{TX^{1,0}}\,i\Lambda_{\omega,e^{\mu_\xi}}(F_{h_\omega}+\Phi_{h_\omega})-2\pi e^{\mu_\xi}(n-\rk(\F))t
	\end{split}
	\]
	where $A$ is second fundamental form of $\F$ in $(TX^{1,0},h_\omega)$. Integrating both sides relative to the measure $\omega^{[n]}$ over $X$ and using Lemma \ref{constants-in-wHE} we obtain
	\[
	\begin{split}
		\left(c_1(\F)_{\T}\cdot e^{c_1(X)_\T}\right)(\xi)\leq& \left(c_1(TX^{1,0})_{\T}\cdot e^{c_1(X)_\T}\right)(\xi)-t\,(n-\rk(\F))\left( e^{c_1(X)_\T}\right)(\xi)\\
		=&\left(1-\frac{t\,(n-\rk(\F))}{n}\right) \left( c_1(X)_\T\cdot e^{c_1(X)_\T}\right)(\xi).
	\end{split}
	\]
	Consequently,
	\[
	t\leq \frac{n}{n-\rk(\F)}\left(1-\frac{\left(c_1(\F)_{\T}\cdot e^{c_1(X)_\T}\right)(\xi)}{\left( c_1(X)_\T\cdot e^{c_1(X)_\T}\right)(\xi)}\right).
	\]
	
	Now, as in the proof of Theorem \ref{thm:ExtSolitons}, let $E_t$ be the extension of $TX^{1,0}$ by the trivial line bundle $\mathcal{O}$ with the extension representative $\gamma_t\Psi^\omega$, where $$\gamma^2_t:=\frac{2\pi t}{n+1}\frac{\left(e^{c_1(X)_\T}\right)(\xi)}{(c_1(X))^{[n]}}.$$ By adapting the proof of Theorem \ref{thm:ExtSolitons}, using instead the K\"ahler metric $\omega\in \mathcal{K}^\T(c_1(X))$ satisfying \eqref{eq:Ricxi>omega}, one may show that there exists a metric \(h_{E_t}\) on \(E_t\) such that
	\begin{equation}\label{eq:wHE>Id}
		\frac{i}{2\pi}\left(\Lambda_\omega F_{h_{E_t}}+\Phi_{h_{E_t}}^\xi\right)\geq
		\begin{pmatrix}
			\left(t+\frac{1}{\pi}\mu_\xi - \frac{\gamma_t^2}{2\pi}e^{-\mu_\xi}\right)\Id_{TX^{1,0}}&0\\
			0& \frac{1}{2\pi}\mu_\xi+\frac{n\gamma_t^2}{2\pi}e^{-\mu_\xi}
		\end{pmatrix}.
	\end{equation}
	Let $\F\subset TX^{1,0}$ be a $\T$-equivariant coherent saturated subsheaf which can be lifted to a subsheaf of $E_t$. We compute as above
	\[
	\begin{split}
		&\int_X\tr_\F\,i\Lambda_{\omega,e^{\mu_\xi}}(F_{h_\F}+\Phi_\F)\omega^{[n]}\\
		=& \int_X\tr_\F\,i\Lambda_{\omega,e^{\mu_\xi}}(F_{h_E}+\Phi_{h_E})_{\mid \F}  \omega^{[n]}+ \int_X\tr_\F\big(i \Lambda_\omega (A^\dagger\wedge A)\big)\omega^{[n]}\\
		\leq& \int_X\tr_{E_t}\,i\Lambda_{\omega,e^{\mu_\xi}}(F_{h_E}+\Phi_{h_E})\omega^{[n]}-\int_X \tr_{\F^{\perp}}\,i\Lambda_{\omega,e^{\mu_\xi }}(F_{h_E}+\Phi_{h_E})_{\mid \F^{\perp}} \omega^{[n]}.
	\end{split}
	\]
	Since $\F\subset TX^{1,0}$ can be lifted to a subsheaf \(\F'\) of $E$ containing \(\mathcal{O}\), there is a \(C^\infty\) decomposition \[E = \mathcal{O}\oplus TX^{1,0}=\mathcal{O}\oplus\F''\oplus (\F')^\perp,\] over the smooth locus of \(\F'\), where \(\F''\) denotes the orthogonal complement of \(\mathcal{O}\) in \(\F'\). Using \eqref{eq:wHE>Id} and the fact that \((\F')^\perp\subset TX^{1,0}\) over the smooth locus of \(\F\), we get
	\[
	\begin{split}
		&\frac{1}{2\pi}\int_X\tr_{(\F')^{\perp}}\,i\Lambda_{\omega,e^{\mu_\xi }}(F_{h_E}+\Phi_{h_E})_{\mid \F^{\perp}} \omega^{[n]}\\
		\geq& \int_X\tr_{(\F')^{\perp}}\begin{pmatrix}
			w_1\Id_{TX^{1,0}}&0\\
			0& w_2
		\end{pmatrix} e^{\mu_\xi}\omega^{[n]}\\
		=& \int_X\tr_{(\F')^{\perp}}\begin{pmatrix}
			w_1\Id_{\F''}&0&0\\
			0&w_1\Id_{(\F')^{\perp}}&0\\
			0&0& w_2
		\end{pmatrix} e^{\mu_\xi}\omega^{[n]}\\
		=&\int_X\rk((\F)'^{\perp}) \left(t+\frac{1}{\pi}\mu_\xi - \frac{\gamma_t^2}{2\pi}e^{-\mu_\xi}\right) e^{\mu_\xi}\omega^{[n]}\\
		=&(n-\rk(\F)+1)\left(t\left(e^{c_1(X)_\T}\right)(\xi) -\frac{\gamma_t^2}{2\pi}(c_1(X))^{[n]}\right)\\
		=&\frac{(n-\rk(\F)+1)t}{n+1}\left(c_1(X)_\T \cdot e^{c_1(X)_\T}\right)(\xi).
	\end{split}
	\]
	Substituting back we get
	\[
	\begin{split}
		\left(c_1(\F)_{\T}\cdot e^{c_1(X)_\T}\right)\leq &\left(c_1(TX^{1,0}\oplus \mathcal{O})_{\T}\cdot e^{c_1(X)_\T}\right)(\xi) -\frac{(n-\rk(\F)+1)t}{n+1}\left(c_1(X)_\T\cdot e^{c_1(X)_\T}\right)(\xi)\\
		=&\left(1-\frac{(n+1-\rk(\F))t}{n+1}\right)\left(c_1(X)_\T\cdot e^{c_1(X)_\T}\right)(\xi).
	\end{split}
	\]
	Hence,
	\[
	t\leq \frac{n+1}{n+1-\rk(\F)}\left(1-\frac{ \left(c_1(\F)_{\T}\cdot e^{c_1(X)_\T}\right)(\xi)}{\left(c_1(X)_\T\cdot  e^{c_1(X)_\T}\right)(\xi)}\right).
	\]
	It follows that $t\leq \beta_\xi(\F)$ for all $\T$-equivariant coherent saturated subsheaves $\F\subset TX^{1,0}$, that is $t\leq \inf\{1,\beta_\xi(\F)\}=\beta_\xi(X)$. Showing that $R_\xi(X)\leq \beta_\xi(X)$. 
\end{proof}

\begin{cor}
	Let $X$ be a Fano manifold $\T$-manifold and $\xi\in \tor$. There is no K\"ahler metric $\omega\in\mathcal{K}^\T(c_1(X))$ with $\Ric(\omega)-\frac{i}{2\pi}\d\db\mu_\xi\geq t\omega$ and $t>\beta_\xi(X)$. In particular, if \(\beta_\xi(X)<1\), \(X\) does not admit a K{\"a}hler--Ricci soliton with field \(\xi\).
\end{cor}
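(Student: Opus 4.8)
The plan is to deduce the corollary as an immediate formal consequence of Theorem~\ref{thm:weighted-Ricci-bound}, which already establishes $R_\xi(X)\leq\beta_\xi(X)$ for the greatest $\xi$-Ricci lower bound $R_\xi(X)$ defined in \eqref{eq:beta_xi}.

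For the first assertion I would argue by contradiction. Suppose there is a $\T$-invariant K\"ahler metric $\omega\in\mathcal{K}^\T(c_1(X))$ with $\Ric(\omega)-\frac{i}{2\pi}\d\db\mu_\xi\geq t\omega$ and $t>\beta_\xi(X)$. By definition, such a $t$ is admissible in the supremum \eqref{eq:beta_xi}, so $t\leq R_\xi(X)$; combined with Theorem~\ref{thm:weighted-Ricci-bound} this gives $t\leq R_\xi(X)\leq\beta_\xi(X)$, contradicting $t>\beta_\xi(X)$.

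For the final clause, assume $\beta_\xi(X)<1$ and that $(X,\omega,\xi)$ is a K\"ahler--Ricci soliton. Normalising the Hamiltonian $\mu_\xi$ to agree with the one used in \eqref{eq:KRsoliton}, the soliton equation yields $\Ric(\omega)-\frac{i}{2\pi}\d\db\mu_\xi=\omega\geq 1\cdot\omega$, so the first part of the corollary applies with $t=1>\beta_\xi(X)$, a contradiction; equivalently, a soliton forces $R_\xi(X)=1$, which is incompatible with $R_\xi(X)\leq\beta_\xi(X)<1$.

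There is no real obstacle here: the corollary is purely a repackaging of Theorem~\ref{thm:weighted-Ricci-bound}, and all the substance — the construction of the extension metric in Theorem~\ref{thm:ExtSolitons} and the subsheaf estimate built from the decomposition \eqref{eq:decomposition} together with Lemma~\ref{constants-in-wHE} — lies in that theorem's proof. The only point requiring a moment's care is to keep the normalisations of $\mu_\xi$ consistent between \eqref{eq:beta_xi}, \eqref{eq:KRsoliton}, and the hypothesis of the corollary, so that the displayed chain of inequalities is literally the one controlled by $R_\xi(X)$.
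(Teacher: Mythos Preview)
Your proposal is correct and matches the paper's reasoning: the corollary is stated there without proof, as it follows immediately from Theorem~\ref{thm:weighted-Ricci-bound} together with the definition of $R_\xi(X)$ in \eqref{eq:beta_xi} and the observation (already noted in the paper) that a K\"ahler--Ricci soliton forces $R_\xi(X)=1$.
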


\section{The weighted Donaldson functional and uniqueness}\label{sec:uniqueness}

In this section, we introduce the weighted Donaldson functional on the space of \(\T\)-invariant Hermitian metrics. We then show this functional is convex along geodesics, which will imply uniqueness of weighted Hermite--Einstein metrics up to scalar multiplication on simple vector bundles.

\subsection{Equivariant Bott--Chern forms (after \cite[section 1]{Tia05})}

Let $(X,\omega)$ be a K\"ahler $\T$-manifold with a fixed momentum image $\mu:X\to\Pol\subset \tor^*$ and $E$ a $\T$-equivariant holomorphic vector bundle over $X$. The equivariant Chern character is represented by
\begin{equation}\label{eq:Ch}
	{\rm ch}_{\T}(E,h):=\tr (e^{\frac{i}{2\pi}(F_h+\Phi_h)})
\end{equation}
this is an equivariantly closed form depending on the $\T$-invariant Hermitian metric $h$. The equivariant Bott--Chern forms measure the difference between these forms under the change of the $\T$-invariant Hermitian metric $h$.
\begin{lemma}[\cite{Tia05}]
	There exists a map
	\[
	{\rm BC}:\mathcal{H}^{\T}(E)\times\mathcal{H}^{\T}(E)\to \left(\bigoplus_{p\geq 0}\Omega_{\T}^{p,p}(X)\right)\Big/\left({\rm Im}(\partial^\eq)\oplus{\rm Im}(\db^\eq)\right),
	\]
	called the \emph{equivariant Bott--Chern form}, defined by
	\begin{equation}\label{eq:BCh}
		{\rm BC}_{\T}(h,h'):=\int_0^1\tr\left(e^{\frac{i}{2\pi}(F_{h_t}+\Phi_{h_t})}h_t^{-1}\dot{h}_t\right)dt+{\rm Im}(\d^\eq)\oplus{\rm Im}(\db^\eq).
	\end{equation}
	where $(h_t)_{t\in [0,1]}$ is a path of $\T$-invariant metrics joining $h_0=h$ to $h_1=h'$ and $\dot{h}_t$ is the velocity of the path. The Bott--Chern forms satisfy the following properties:
	\begin{enumerate}
		\item ${\rm BC}_{\T}(h,h)=0$ and ${\rm BC}_{\T}(h,h')+{\rm BC}_{\T}(h',h'')={\rm BC}_{\T}(h,h'')$.
		\item $\frac{i}{2\pi}\d^\eq\db^\eq{\rm BC}_{\T}(h,h')={\rm ch}_{\T}(h)-{\rm ch}_{\T}(h')$.
	\end{enumerate}
\end{lemma}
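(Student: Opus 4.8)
The plan is to define the map by the integral formula \eqref{eq:BCh} and then verify it is well-defined (independent of the chosen path, modulo exact forms) and satisfies the two listed properties. I would work throughout with Cartan model representatives: the equivariant Chern character form is $\operatorname{ch}_\T(E,h) = \tr\bigl(e^{\frac{i}{2\pi}(F_h+\Phi_h)}\bigr)$, and one exploits the fact that the equivariant differential $d^\eq = d - \iota_{\bullet}$ satisfies $d^\eq\operatorname{ch}_\T(E,h) = 0$ for each fixed $\T$-invariant metric $h$, together with its $\partial^\eq/\db^\eq$ refinement valid because the curvature of a Chern connection is of type $(1,1)$ and $\Phi_h$ is an equivariant degree-$2$ element. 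The main computational input is the standard transgression identity: for a smooth path $h_t$ of $\T$-invariant metrics, writing $\dot u_t := h_t^{-1}\dot h_t \in \Gamma(\End E)^\T$, one has
\[
\frac{d}{dt}\,\tr\bigl(e^{\frac{i}{2\pi}(F_{h_t}+\Phi_{h_t})}\bigr) = \frac{i}{2\pi}\,\d^\eq\db^\eq\,\tr\bigl(e^{\frac{i}{2\pi}(F_{h_t}+\Phi_{h_t})}\dot u_t\bigr).
\]
This follows by differentiating the Chern connection (the variation of $\nabla^{h_t}$ is $\db^\eq(\dot u_t)$ up to the usual $(1,0)/(0,1)$ bookkeeping, using \eqref{eq:change-of-curvature}) and invoking the equivariant Bianchi identity; it is exactly the equivariant analogue of the classical Bott--Chern transgression, and it is the one place where I expect a genuine (if routine) calculation, so I would reduce it to the non-equivariant case applied fibrewise in $\xi \in \tor$, or cite \cite{Tia05} directly. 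Integrating this identity over $t\in[0,1]$ immediately yields property (2).

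Next I would establish independence of the path. Given two paths $h_t^{(0)}, h_t^{(1)}$ from $h$ to $h'$, one forms a two-parameter family $h_{t,s}$ and computes the mixed second variation; the $t$- and $s$-derivatives commute, and the transgression identity shows that the difference of the two integrals is $\d^\eq$ or $\db^\eq$ of an explicit form — here one needs the elementary fact that $\tr(e^{\frac{i}{2\pi}(F+\Phi)}\cdot)$ applied to a $d^\eq$-closed endomorphism-valued expression lands in the image of $\partial^\eq \oplus \db^\eq$ after integration. Since we are working in the quotient by $\operatorname{Im}(\partial^\eq)\oplus\operatorname{Im}(\db^\eq)$, this shows $\operatorname{BC}_\T(h,h')$ is a well-defined class. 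Property (1) is then formal: $\operatorname{BC}_\T(h,h)=0$ by taking the constant path, and the cocycle relation $\operatorname{BC}_\T(h,h')+\operatorname{BC}_\T(h',h'')=\operatorname{BC}_\T(h,h'')$ follows by concatenating paths, using path-independence to identify the concatenation integral with the sum.

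The main obstacle, such as it is, is bookkeeping rather than substance: one must be careful that all the forms appearing are genuinely $\T$-equivariant (invariant under $\T$, polynomial in $\xi$), that the $\partial^\eq$, $\db^\eq$ operators on $\Omega^{p,p}_\T(X)$ behave as expected — in particular that $d^\eq = \partial^\eq + \db^\eq$ on the relevant invariant forms and that $(\partial^\eq)^2 = (\db^\eq)^2 = 0$, $\partial^\eq\db^\eq + \db^\eq\partial^\eq = 0$ — and that the whole construction is compatible with the $\langle-,-,\rangle$ pairing and the canonical normalisation of $\Phi_h$ from Proposition \ref{prop:canonical_moment_map}. None of this is deep, but it is the part where an error would creep in, so I would be explicit about the Cartan-model conventions before starting. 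Since the statement is explicitly attributed to \cite{Tia05} (with the only new ingredient being the presence of the moment map term $\Phi_h$, which is handled uniformly by the equivariant curvature $F_h + \Phi_h$), I would present this as an adaptation of Tian's argument rather than reprove it from scratch.
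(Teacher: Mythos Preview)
Your plan matches the paper's: view the integrand as a $1$-form $\Theta_h(\dot h)=\tr\bigl(e^{\frac{i}{2\pi}(F_h+\Phi_h)}h^{-1}\dot h\bigr)$ on $\mathcal{H}^\T(E)$, obtain property~(2) from the transgression identity, and deduce~(1) formally from path-independence. Where you diverge is in calling the path-independence step an ``elementary fact.'' The transgression identity only controls $\d^\eq\db^\eq$ of the integrand, not $\d^\eq$ or $\db^\eq$ separately, so it does not by itself place $d\Theta$ in $\mathrm{Im}(\d^\eq)\oplus\mathrm{Im}(\db^\eq)$; this is precisely what must be shown, and in the paper it is the bulk of the argument. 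The computation of $(d\Theta)_h(\dot h_1,\dot h_2)$ requires Duhamel's formula $\delta(e^T)=\int_0^1 e^{\tau T}(\delta T)e^{(1-\tau)T}\,d\tau$ (the factors do not commute, so the $\tau$-integral cannot be collapsed), then a conjugation trick to rewrite the commutator term $\tr\bigl(e^{\frac{i}{2\pi}(F_h+\Phi_h)}[\dot H_1,\dot H_2]\bigr)$ via the identity $F_h+\Phi_h=\db^\eq\d^\eq_h+\d^\eq_h\db^\eq$, and finally an explicit identification of the result as a sum of $\d^\eq$- and $\db^\eq$-exact pieces. Your suggestion to ``reduce to the non-equivariant case applied fibrewise in $\xi$'' does not work cleanly either: for fixed $\xi$ the equivariant curvature $F_h+\langle\Phi_h,\xi\rangle$ is a mixed-degree object, not the curvature of a connection, so the classical Bott--Chern argument does not apply verbatim. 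None of this is a fatal gap---the computation goes through---but it is the substantive content of the lemma rather than bookkeeping.
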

\begin{proof}
	The Lemma is established in \cite{Tia05} for circle actions $\T=\mathbb{S}^1$. We give the proof for the case of higher-dimensional torus actions for convenience. For the equivariant Bott--Chern form to be well defined we need to show that \eqref{eq:BCh} is independent from the choice of the path joining $h$ and $h'$. To this end, we define the $\bigoplus_{p\geq 0}\Omega_{\T}^{p,p}(X)$-valued 1-form $\Theta$ on the space of metrics $\mathcal{H}^{\T}(E)$ by
	\[
	\Theta_h(\dot{h}):=\tr\left(e^{\frac{i}{2\pi}(F_{h}+\Phi_{h})} h^{-1}\dot{h}\right).
	\]
	We have to check that the exterior derivative of $\Theta$ lies in ${\rm Im}(\d^\eq)\oplus{\rm Im}(\db^\eq)$. From \eqref{eq:diff-F} below, for any reference metric $h_{\rm ref}\in\mathcal{H}^{\T}(E)$ we have
	\[
	F_h+\Phi_h=F_{h_{\rm ref}}+\Phi_{h_{\rm ref}}+\db^{\eq}_E\left( (h^{-1}h_{\rm ref})\partial_{h_{\rm ref}}(h_{\rm ref}^{-1}h)\right).
	\]
	The variation of $h\mapsto F_h+\Phi_h$ is given by
	\[
	\begin{split} \dot{F}_h+\dot{\Phi}_h=&\db^{\eq}_E\left(-h^{-1}\dot{h}h^{-1}h_{\rm ref}\partial_{h_{\rm ref}} (h_{\rm ref}^{-1}h)+ (h^{-1}h_{\rm ref})\partial_{h_{\rm ref}}(h_{\rm ref}^{-1}\dot{h})\right).
	\end{split}
	\]
	Since the above equation holds for any $\T$-invariant reference metric $h_{\rm ref}$, then by taking $h_{\rm ref}=h$ we get
	\begin{equation}\label{eq:Diff-Curvat}
		\dot{F}_h+\dot{\Phi}_h=\db^{\eq}_E\partial_h(h^{-1}\dot{h}).
	\end{equation}
	Let $\dot{h}_{1},\dot{h}_{2}$ be two vector fields on $\mathcal{H}^{\T}(E)$. Using the above formula, we compute the variation of the function $h\mapsto  \Theta_h(\dot{h}_1)$ at $h$ in the direction of $\dot{h}_2$,
	\[
	\begin{split}
		d(\Theta(\dot{h}_1))_h(\dot{h}_2)=& -\tr\left(e^{\frac{i}{2\pi}(F_{h}+\Phi_{h})} \dot{H}_2\dot{H}_1\right)\\
		&+\tr(e^{\frac{i}{2\pi}(F_h+\Phi_h)}h^{-1}\boldsymbol{\nabla}_{\dot{h}_2}\dot{h}_1) \\
		&+\frac{i}{2\pi}\int_0^1 \tr\left(e^{\tau\frac{i}{2\pi}(F_{h}+\Phi_{h})} (\db^{\eq}_E\partial_h\dot{H}_2) e^{(1-\tau)\frac{i}{2\pi}(F_{h}+\Phi_{h})} \dot{H}_1\right)d\tau
	\end{split}
	\]
	where we denote $\dot{H}_1=h^{-1}\dot{h}_1$, $\dot{H}_2=h^{-1}\dot{h}_2$, \(\boldsymbol{\nabla}\) is the canonical connection on the infinite-dimensional symmetric manifold \(\mathcal{H}^\T(E)\), and we use the following well-known expression for the variation of the exponential $e^{T}$ of an endomorphism $T$,
	\begin{equation}\label{eq:diff-exp}
		\delta(e^{T})=\int_0^1e^{\tau T} (\delta T) e^{(1-\tau) T} d\tau.
	\end{equation}
	It follows that
	\begin{equation}
		\begin{split}\label{eq:DiffTheta}
			(d\Theta)_h(\dot{h}_{1},\dot{h}_2)=& d(\Theta(\dot{h}_2))_h(\dot{h}_1)-d(\Theta(\dot{h}_1))_h(\dot{h}_2) - \Theta_h([\dot{h}_1,\dot{h}_2])\\
			=&\tr\left(e^{\frac{i}{2\pi}(F_{h}+\Phi_{h})} [\dot{H}_2,\dot{H}_1]\right)\\
			&+\frac{i}{2\pi}\int_0^1 \tr\left( e^{\tau\frac{i}{2\pi}(F_{h}+\Phi_{h})} (\db^{\eq}_E\partial_h\dot{H}_1) e^{(1-\tau)\frac{i}{2\pi}(F_{h}+\Phi_{h})} \dot{H}_2\right)d\tau\\
			&-\frac{i}{2\pi}\int_0^1 \tr\left( e^{\tau\frac{i}{2\pi}(F_{h}+\Phi_{h})} (\db^{\eq}_E\partial_h\dot{H}_2) e^{(1-\tau)\frac{i}{2\pi}(F_{h}+\Phi_{h})} \dot{H}_1\right)d\tau,
		\end{split}
	\end{equation} where we use that \(\boldsymbol{\nabla}\) is torsion-free. For all $\varepsilon\in\R$ we have
	\[
	\begin{split}
		\tr\left(e^{\frac{i}{2\pi}(F_{h}+\Phi_{h})} \dot{H}_1\right)=&\tr\left(e^{\varepsilon\dot{H}_2}e^{\frac{i}{2\pi}(F_{h}+\Phi_{h})} e^{-\varepsilon\dot{H}_2}\cdot e^{\varepsilon\dot{H}_2}\dot{H}_1e^{-\varepsilon\dot{H}_2}\right)\\
		=&\tr\left(\exp\left( e^{\varepsilon\dot{H}_2}\frac{i}{2\pi}(F_{h}+\Phi_{h}) e^{-\varepsilon\dot{H}_2}\right) \cdot e^{\varepsilon\dot{H}_2}\dot{H}_1e^{-\varepsilon\dot{H}_2}\right).
	\end{split}
	\]
	Using \eqref{eq:diff-exp} to differentiate at $\varepsilon=0$ gives
	\[
	\begin{split}
		\tr\left(e^{\frac{i}{2\pi}(F_{h}+\Phi_{h})} [\dot{H}_1,\dot{H}_2]\right)=&-\frac{i}{2\pi}\int_0^1\tr\left(e^{\tau\frac{i}{2\pi}(F_{h}+\Phi_{h})} [F_h+\Phi_h,\dot{H}_2] e^{(1-\tau) \frac{i}{2\pi}(F_{h}+\Phi_{h})}\dot{H}_1\right) d\tau\\
		=&-\frac{i}{2\pi}\int_0^1\tr\left(e^{\tau\frac{i}{2\pi}(F_{h}+\Phi_{h})} (\db^{\eq}\partial^{\eq}_h+\partial^{\eq}_h\db^{\eq})\dot{H}_2 e^{(1-\tau) \frac{i}{2\pi}(F_{h}+\Phi_{h})}\dot{H}_1\right) d\tau,
	\end{split}
	\]
	where in the second equality we use the identity $F_{h}+\Phi_{h}=\db^{\eq}\partial^{\eq}_h+\partial^{\eq}_h\db^{\eq}$. Plugging the above equation in \eqref{eq:DiffTheta} gives
	\[
	\begin{split}
		(d\Theta)_h(\dot{h}_{1},\dot{h}_2)=& \frac{i}{2\pi}\int_0^1 \tr\left( e^{\tau\frac{i}{2\pi}(F_{h}+\Phi_{h})} (\db^{\eq}_E\partial_h\dot{H}_1) e^{(1-\tau)\frac{i}{2\pi}(F_{h}+\Phi_{h})} \dot{H}_2\right)d\tau\\
		&+\frac{i}{2\pi}\int_0^1\tr\left(e^{\tau\frac{i}{2\pi}(F_{h}+\Phi_{h})} (\partial^{\eq}_h\db_{E}\dot{H}_2) e^{(1-\tau) \frac{i}{2\pi}(F_{h}+\Phi_{h})}\dot{H}_1\right) d\tau.
	\end{split}
	\]
	By the equivariant Bianchi identity,
	\[
	\begin{split}
		&\d^\eq\db^\eq \tr\left(e^{\tau\frac{i}{2\pi}(F_{h}+\Phi_{h})} \dot{H}_2 e^{(1-\tau) \frac{i}{2\pi}(F_{h}+\Phi_{h})}\dot{H}_1\right)\\
		=& \tr\left(e^{\tau\frac{i}{2\pi}(F_{h}+\Phi_{h})} (\partial^{\eq}_h\db_E\dot{H}_2) e^{(1-\tau) \frac{i}{2\pi}(F_{h}+\Phi_{h})}\dot{H}_1\right)+\tr\left(e^{\tau\frac{i}{2\pi}(F_{h}+\Phi_{h})} \dot{H}_2 e^{(1-\tau) \frac{i}{2\pi}(F_{h}+\Phi_{h})}(\partial^{\eq}_h\db_E\dot{H}_1)\right)\\
		&-\tr\left(e^{\tau\frac{i}{2\pi}(F_{h}+\Phi_{h})} \db_E\dot{H}_2 e^{(1-\tau) \frac{i}{2\pi}(F_{h}+\Phi_{h})} \partial_h\dot{H}_1\right)+\tr\left(e^{\tau\frac{i}{2\pi}(F_{h}+\Phi_{h})} \partial_h\dot{H}_2 e^{(1-\tau) \frac{i}{2\pi}(F_{h}+\Phi_{h})} \db_E\dot{H}_1\right),
	\end{split}
	\] and \[
	\begin{split}
		&\db^\eq \tr\left(e^{\tau\frac{i}{2\pi}(F_{h}+\Phi_{h})} \dot{H}_2 e^{(1-\tau) \frac{i}{2\pi}(F_{h}+\Phi_{h})}\partial_h\dot{H}_1\right)-\d^\eq \tr\left(e^{\tau\frac{i}{2\pi}(F_{h}+\Phi_{h})} \dot{H}_2 e^{(1-\tau) \frac{i}{2\pi}(F_{h}+\Phi_{h})}\db_E\dot{H}_1\right)\\
		=&\tr\left(e^{\tau\frac{i}{2\pi}(F_{h}+\Phi_{h})} \db_E\dot{H}_2 e^{(1-\tau) \frac{i}{2\pi}(F_{h}+\Phi_{h})}\partial_h\dot{H}_1\right)+\tr\left(e^{\tau\frac{i}{2\pi}(F_{h}+\Phi_{h})}\dot{H}_2 e^{(1-\tau) \frac{i}{2\pi}(F_{h}+\Phi_{h})}\db^{\eq}_E\partial_h\dot{H}_1\right)\\
		&-\tr\left(e^{\tau\frac{i}{2\pi}(F_{h}+\Phi_{h})} \partial_h\dot{H}_2 e^{(1-\tau) \frac{i}{2\pi}(F_{h}+\Phi_{h})}\db_E\dot{H}_1\right)-\tr\left(e^{\tau\frac{i}{2\pi}(F_{h}+\Phi_{h})}\dot{H}_2 e^{(1-\tau) \frac{i}{2\pi}(F_{h}+\Phi_{h})}\partial^{\eq}_h\db_E\dot{H}_1\right).
	\end{split}
	\]
	Summing the above two equations and integrating the result $\frac{i}{2\pi}\int_0^1$ relative to $\tau$, we get an element in ${\rm Im}(\d^\eq)\oplus{\rm Im}(\db^\eq)$
	which is equal to
	\begin{equation}\label{eq:ddH-ddH}
		\begin{split}
			&\frac{i}{2\pi}\int_0^1\tr\left(e^{\tau\frac{i}{2\pi}(F_{h}+\Phi_{h})} (\partial^{\eq}_h\db_{E}\dot{H}_2) e^{(1-\tau) \frac{i}{2\pi}(F_{h}+\Phi_{h})}\dot{H}_1\right) d\tau\\
			&+ \frac{i}{2\pi}\int_0^1 \tr\left(e^{\tau\frac{i}{2\pi}(F_{h}+\Phi_{h})}\dot{H}_2 e^{(1-\tau) \frac{i}{2\pi}(F_{h}+\Phi_{h})}\db^{\eq}_E\partial_h\dot{H}_1\right)d\tau.
		\end{split}    
	\end{equation}
	In the second integral of \eqref{eq:ddH-ddH}, permuting $e^{(1-\tau) \frac{i}{2\pi}(F_{h}+\Phi_{h})}\db^{\eq}_E\partial_h\dot{H}_1$ and $e^{\tau\frac{i}{2\pi}(F_{h}+\Phi_{h})}\dot{H}_2$ under the trace and making the change of variable $\tau\mapsto 1-\tau$, shows that the expression \eqref{eq:ddH-ddH} is exactly $(d\Theta)_h(\dot{h}_{1},\dot{h}_2)$. This completes the proof that the expression defining \(\mathrm{BC}_\T(h,h')\) is independent from the choice of paths joining $h$ and $h'$ up to elements in \(\mathrm{Im}(\d^\eq)\oplus\mathrm{Im}(\db^\eq)\).
	
	The cocycle property (1) follows from \eqref{eq:BCh}. To verify the property (2), let $h_t\in\mathcal{H}^{\T}(E)$ be a smooth path such that $h_0=h'$ and $h_1=h$. Using \eqref{eq:Diff-Curvat}, we compute
	\[
	\begin{split}
		\frac{d}{dt}{\rm ch}_{\T}(h_t)=&\frac{d}{dt}\tr (e^{\frac{i}{2\pi}(F_{h_t}+\Phi_{h_t})})\\
		=&\frac{i}{2\pi}\tr\left(e^{\frac{i}{2\pi}(F_{h_t}+\Phi_{h_t})}\db^{\eq}_E\partial_{h_t}(h^{-1}_t\dot{h}_t)\right)\\
		=&\frac{i}{2\pi}\d^\eq\db^\eq\tr\left(e^{\frac{i}{2\pi}(F_{h_t}+\Phi_{h_t})}(h^{-1}_t\dot{h}_t)\right)\\
		=&\frac{i}{2\pi}\d^\eq\db^\eq \frac{d}{dt} {\rm BC}_{\T}(h_t,h').
	\end{split}
	\]
	Integrating both sides relative to $t\in[0,1]$ gives the result.
\end{proof}

Now we define the weighted Donaldson functional. Letting $v\in C^{\infty}(\Pol,\R_{>0})$ be a weight function, we set
\begin{equation}
	\begin{split}\label{eq:Don-Funct-0}
		\mathcal{M}_v(h,h'):=c_v  \int_\tor& \left({\rm BC}_{\T}(h,h')^{(0,0)}\cdot e^{\alpha_\T}\right)
		(i\xi)\hat{v}(\xi)\underline{d\xi}\\
		&-\int_\tor\left({\rm BC}_{\T}(h,h')^{(1,1)}\cdot e^{\alpha_\T}\right)(i\xi)\hat{v}(\xi)\underline{d\xi}
	\end{split}
\end{equation}
where ${\rm BC}_{\T}(h,h')^{(p,p)}$ stands for the $(p,p)$-component of the equivariant Bott--Chern form and $c_v:=\frac{(c_1(E)_{\T}\cdot v(\alpha_\T))}{(\alpha^{[n]})}$. For a path $h_t\in\mathcal{H}^\T(E)$ we have
\[
\begin{split}
	\frac{d}{dt} {\rm BC}_{\T}(h_t,h')^{(0,0)}=& \tr(h_t^{-1}\dot{h}_t)dt\\
	\frac{d}{dt}{\rm BC}_{\T}(h_t,h')^{(1,1)}=& \tr\left(\frac{i}{2\pi}(F_{h_t}+\Phi_{h_t})h_t^{-1}\dot{h}_t\right).
\end{split}
\]
Substituting back in \eqref{eq:Don-Funct-0} gives the expression
\begin{equation}
	\begin{split}\label{eq:Don-Funct-1}
		\frac{d}{dt}\mathcal{M}_v(h_t,h')=\int_X \tr\left(h_t^{-1}\dot{h}_t\left( c_v\Id_E-\frac{i}{2\pi}\Lambda_{\omega,v}(F_{h_t}+\Phi_{h_t}) \right)\right)\omega^{[n]}.
	\end{split}
\end{equation}
\begin{lemma}\label{lem:ConvexDonaldson}
	Let $h\in\mathcal{H}^\T(E)$ and $\Psi$ a $\T$-invariant endomorphism of $E$. Then the weighted Donaldson function $t\mapsto \mathcal{M}_v(h_t,h)$ is convex along the $\T$-invariant geodesic path $h_t:=e^{t\Psi}h$.
\end{lemma}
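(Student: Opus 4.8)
The plan is to differentiate the first--variation formula \eqref{eq:Don-Funct-1} once more along the geodesic and identify the result with a manifestly nonnegative weighted $L^2$-norm. First note that for $h_t=e^{t\Psi}h$ to be a geodesic of Hermitian metrics the endomorphism $\Psi$ must be $h$-self-adjoint; since $\Psi$ commutes with $e^{t\Psi}$ it is then $h_t$-self-adjoint for every $t$, and $h_t^{-1}\dot h_t=\Psi$ is independent of $t$. Substituting $h_t^{-1}\dot h_t=\Psi$ into \eqref{eq:Don-Funct-1} gives
\[
\frac{d}{dt}\mathcal{M}_v(h_t,h)=\int_X\tr\!\left(\Psi\left(c_v\Id_E-\frac{i}{2\pi}\Lambda_{\omega,v}(F_{h_t}+\Phi_{h_t})\right)\right)\omega^{[n]}.
\]

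Differentiating again, the $t$-independent factors $c_v\Id_E$ and $\Psi$ contribute nothing, and by the curvature--variation formula \eqref{eq:Diff-Curvat} we have $\dot F_{h_t}+\dot\Phi_{h_t}=\db^{\eq}_E\partial_{h_t}\Psi$, so
\[
\frac{d^2}{dt^2}\mathcal{M}_v(h_t,h)=-\frac{i}{2\pi}\int_X\tr\!\left(\Psi\,\Lambda_{\omega,v}\big(\db^{\eq}_E\partial_{h_t}\Psi\big)\right)\omega^{[n]}.
\]
Next I would record the $\End(E)$-valued analogue of the weighted Laplacian identity appearing in the proof of Lemma \ref{vw-weighted-is-v-weighted}: for a $\T$-invariant $\End(E)$-valued $(1,0)$-form $\chi$,
\[
\frac{i}{2\pi}\Lambda_{\omega,v}\big(\db^{\eq}_E\chi\big)=\frac{i}{2\pi}\Lambda_\omega\db\big(v(\mu)\chi\big),
\]
which follows by expanding $\db(v(\mu)\chi)=\langle dv(\mu),\db\mu\rangle\wedge\chi+v(\mu)\db\chi$ and applying the moment--map relation $\db\mu^\zeta=\iota_{\zeta^{1,0}}\omega$, exactly as in the scalar case.

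The remaining step is an integration by parts. Using $\Lambda_\omega\beta\wedge\omega^{[n]}=\beta\wedge\omega^{[n-1]}$ for $(1,1)$-forms, $\db\omega=0$, and Stokes' theorem applied to the $(n,n-1)$-form $\tr\!\big(\Psi\cdot v(\mu)\partial_{h_t}\Psi\big)\wedge\omega^{[n-1]}$, one obtains
\[
\frac{d^2}{dt^2}\mathcal{M}_v(h_t,h)=\frac{i}{2\pi}\int_X v(\mu)\,\tr\!\big((\db_{\End}\Psi)\wedge(\partial_{h_t}\Psi)\big)\wedge\omega^{[n-1]}.
\]
Since $\Psi$ is $h_t$-self-adjoint and $\nabla^{h_t}=\partial_{h_t}+\db_{\End}$ is $h_t$-unitary, we have $\partial_{h_t}\Psi=(\db_{\End}\Psi)^{\dagger_{h_t}}$, and the standard Hodge-theoretic identity turns the integrand into a positive multiple of $v(\mu)\,|\db_{\End}\Psi|_{h_t,\omega}^2\,\omega^{[n]}$; the overall positive constant is pinned down by specialising to $v\equiv1$, where this reduces to the classical convexity of Donaldson's functional. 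Hence
\[
\frac{d^2}{dt^2}\mathcal{M}_v(h_t,h)=\frac{1}{2\pi}\int_X v(\mu)\,|\db_{\End}\Psi|_{h_t,\omega}^2\,\omega^{[n]}\ \geq\ 0,
\]
because $v>0$, which proves convexity.

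The argument is a weighted replica of the classical convexity of Donaldson's functional, and the only steps requiring real care are (i) the $\End(E)$-valued weighted Laplacian identity of the second paragraph and (ii) the sign and factor-of-$i$ bookkeeping in passing from $\Lambda_{\omega,v}$ to the integration-by-parts formula and then to the Hodge norm; this bookkeeping, rather than any analytic difficulty, is the main obstacle. I would also record that equality holds if and only if $\db_{\End}\Psi=0$, i.e.\ $\Psi$ is a holomorphic endomorphism of $E$; when $E$ is simple this forces $\Psi$ to be scalar, so $\mathcal{M}_v$ is strictly convex transverse to the direction of rescaling, which is precisely what yields uniqueness of weighted Hermite--Einstein metrics up to scaling.
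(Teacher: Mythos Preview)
Your proof is correct and follows essentially the same approach as the paper: differentiate \eqref{eq:Don-Funct-1} using that $h_t^{-1}\dot h_t=\Psi$ is constant and \eqref{eq:Diff-Curvat}, then integrate by parts to obtain a weighted $L^2$-norm of a derivative of $\Psi$. The paper packages the integration by parts as self-adjointness of the weighted $\partial$-Laplacian $\Delta_v\psi=\partial_0^*(v(\mu)\partial_0\psi)$ and writes the second variation as $\frac{1}{2\pi}\int_X|\partial_{h_t}\Psi|^2 v(\mu)\,\omega^{[n]}$, whereas you carry out Stokes explicitly and express the result via $|\db_{\End}\Psi|^2$; since $\Psi$ is $h_t$-self-adjoint these norms coincide, so the two endpoints are the same. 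Your appeal to ``specialising to $v\equiv 1$'' to fix the constant is a mild shortcut---the sign and factor follow directly from the computation once you track the conventions---but it is not a gap.
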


\begin{proof}
	We show that the second derivative of \(\mathcal{M}_v(h_t, h')\) along \(h_t\) is non-negative. First note \(h_t^{-1}\dot{h}_t\) is constant along a geodesic path, so we need only compute the derivative of the curvature term in \eqref{eq:Don-Funct-1}. By \cite[(4.2.12), (4.2.16)]{Kob14}, \[\d_t\Phi_{h_t} = \d(h_t^{-1}\dot{h}_t),\quad\quad \d_t\Lambda_\omega F_{h_t} = \db\d(h_t^{-1}\dot{h}_t),\] where \(\d\) denotes the \((1,0)\)-part of the connection determined by \(h_t\). One may summarise this conveniently as \[\d_t(F_{h_t}+\Phi_t) = \db^{\eq}\d(h_t^{-1}\dot{h}_t).\] It follows that \begin{align*}
		\frac{d^2}{dt^2}\mathcal{M}_v(h_t,h') &= \int_X\tr\left(h_t^{-1}\dot{h}_t\left(-\frac{i}{2\pi}\Lambda_{\omega, v}\db^{\eq}\d(h_t^{-1}\dot{h}_t)\right)\right)\omega^{[n]} \\
		&= \frac{1}{2\pi}\int_X\tr(h_t^{-1}\dot{h}_t\Delta_v(h_t^{-1}\dot{h}_t))\omega^{[n]} \\
		&= \frac{1}{2\pi}\int_X|\d(h_t^{-1}\dot{h}_t)|^2v(\mu)\omega^{[n]}\\
		&\geq 0,
	\end{align*} where \(\Delta_v\psi = \d^*(v(\mu)\d\psi)\) denotes the weighted del-Laplacian on endomorphisms of \(E\).
\end{proof}
As a consequence of the above Lemma, we deduce the uniqueness modulo scaling of weighted Hermite--Einstein metrics.
\begin{thm}
	A weighted Hermite--Einstein metric on a $\T$-equivariant simple vector bundle is unique up to scaling.
\end{thm}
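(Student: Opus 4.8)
The plan is to use the convexity of the weighted Donaldson functional established in Lemma~\ref{lem:ConvexDonaldson}, together with the fact that $v$-weighted Hermite--Einstein metrics are precisely the critical points of $\mathcal{M}_v$. So let $h_0$ and $h_1$ be two $v$-weighted Hermite--Einstein metrics on the simple $\T$-equivariant bundle $E$; both are $\T$-invariant by definition. Write $h_1 = e^{\Psi}h_0$, where $\Psi$ is the unique $\T$-invariant, $h_0$-self-adjoint endomorphism of $E$ determined by this relation, and consider the geodesic $h_t := e^{t\Psi}h_0$ for $t\in[0,1]$; since $\Psi$ is $\T$-invariant this path lies in $\mathcal{H}^{\T}(E)$, and along it $h_t^{-1}\dot h_t \equiv \Psi$.

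First I would observe that $g(t) := \mathcal{M}_v(h_t, h_0)$ satisfies $g'(0) = g'(1) = 0$. Indeed, by \eqref{eq:Don-Funct-1},
\[
g'(t) = \int_X \tr\!\left(\Psi\left(c_v\Id_E - \frac{i}{2\pi}\Lambda_{\omega,v}(F_{h_t}+\Phi_{h_t})\right)\right)\omega^{[n]},
\]
and by Lemma~\ref{constants-in-wHE} the constant $c_v$ is exactly the Einstein constant of any $v$-weighted Hermite--Einstein metric, so the integrand vanishes identically at $t = 0$ and $t = 1$. By Lemma~\ref{lem:ConvexDonaldson}, $g$ is convex, hence $g'$ is non-decreasing; combined with $g'(0) = g'(1) = 0$ this forces $g' \equiv 0$ on $[0,1]$, so in particular $g''(0) = 0$. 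On the other hand the computation in the proof of Lemma~\ref{lem:ConvexDonaldson} gives
\[
g''(0) = \frac{1}{2\pi}\int_X |\d_0\Psi|^2\, v(\mu)\,\omega^{[n]},
\]
where $\d_0$ denotes the $(1,0)$-part of the Chern connection of $h_0$; since $v > 0$ pointwise, this forces $\d_0\Psi = 0$.

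It then remains to deduce that $\Psi$ is a real scalar. Since $\Psi$ is $h_0$-self-adjoint and the Chern connection on $\End(E)$ intertwines $\db$ with $\d_0$ under $h_0$-adjunction, $\d_0\Psi = 0$ is equivalent to $\db\Psi = 0$, i.e.\ $\Psi$ is a holomorphic endomorphism of $E$. Simplicity of $E$ gives $\Psi = \lambda\Id_E$ with $\lambda\in\mathbb{C}$, and self-adjointness forces $\lambda\in\mathbb{R}$, so $h_1 = e^{\lambda}h_0$ is a positive constant rescaling of $h_0$, as claimed. The argument carries no genuine obstacle --- all of the analytic content sits in the convexity Lemma~\ref{lem:ConvexDonaldson}, which is already proved --- and the only matters requiring care are bookkeeping: that the geodesic remains $\T$-invariant, that $h_t^{-1}\dot h_t$ is constant along it so the formulae for $g'$ and $g''$ apply verbatim, and that the constant in $\mathcal{M}_v$ matches the Einstein constant of both endpoints.
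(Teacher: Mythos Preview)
Your proof is correct and follows essentially the same route as the paper: connect the two metrics by the geodesic $h_t=e^{t\Psi}h_0$, use that both endpoints are critical points of $\mathcal{M}_v$ together with the convexity Lemma~\ref{lem:ConvexDonaldson} to force $g''\equiv 0$, read off $\d_0\Psi=0$ from the explicit second-variation formula, and conclude $\Psi=\lambda\,\Id_E$ by simplicity. The only cosmetic difference is that the paper phrases the vanishing as $\d(h_t^{-1}\dot h_t)=0$ along the whole path rather than at $t=0$, which is equivalent since $h_t^{-1}\dot h_t\equiv\Psi$.
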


\begin{proof}
	Let \(h_0\) and \(h_1\) be two weighted Hermite--Einstein metrics, and let \(h_t\) for \(t\in[0,1]\) be the unique geodesic joining them. Since \(\frac{d}{dt}\mathcal{M}_v(h_t, h_0)|_{t=0} = \frac{d}{dt}\mathcal{M}_v(h_t,h_0)|_{t=1} = 0\), by convexity we must have \(\frac{d}{dt}\mathcal{M}_v(h_t, h_0) = 0\) for all \(t\in[0,1]\). From the proof of Lemma \ref{lem:ConvexDonaldson}, it follows that \(\d(h_t^{-1}\dot{h}_t)=0\); as \(h_t^{-1}\dot{h}_t\) is Hermitian, it further follows that \(\db(h_t^{-1}\dot{h}_t) = 0\). Since \(E\) is simple, \(h_t^{-1}\dot{h}_t\) must be a constant multiple of the identity, and so \(h_1\) is a constant multiple of \(h_0\).
\end{proof}

\section{The weighted Kobayashi--Hitchin correspondence}\label{sec:DUY}

In this section we prove the following.

\begin{thm}\label{thm:WHK}
	Let \(E\) be a simple vector bundle. If \(E\) is \(v\)-weighted slope stable, then \(E\) admits a \(v\)-weighted Hermite--Einstein metric.
\end{thm}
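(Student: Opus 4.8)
The plan is to run the Uhlenbeck--Yau continuity method \cite{UY86} adapted to the weighted equation, using the weighted Donaldson functional $\mathcal{M}_v$ of Section~\ref{sec:uniqueness} for the energy estimates and Lemma~\ref{lem:codim2integral} to convert any analytic blow-up into a violation of weighted slope stability. Fix a $\T$-invariant background metric $h_0$ on $E$, and for $\epsilon\in(0,1]$ consider the perturbed equation
\[
\frac{i}{2\pi}\Lambda_{\omega,v}(F_h+\Phi_h)=c_v\,\Id_E-\epsilon\log(h_0^{-1}h)
\]
for $\T$-invariant metrics $h$; writing $h=h_0e^{s}$ with $s\in\Herm(E,h_0)^\T$ this is $L_\epsilon(s)=0$ for a nonlinear elliptic operator $L_\epsilon$. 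First I would establish solvability for every $\epsilon\in(0,1]$. Openness follows from the implicit function theorem: as computed in the proof of Proposition~\ref{prop:deformation}, the linearisation of the unperturbed weighted Hermite--Einstein operator is the weighted $\d_0$-Laplacian on $\End(E)$, an index-zero elliptic operator, so the linearisation $DL_\epsilon$ is this operator plus $\epsilon\,\Id_E$ (modulo lower-order terms), which is invertible for $\epsilon>0$ since the weighted Laplacian is nonnegative. Nonemptiness and closedness in $\epsilon$ are obtained exactly as in \cite{UY86, Don85}, the $\epsilon\log(h_0^{-1}h)$ term supplying the control on $\sup_X|s|$. The only weighted modification is that $\Delta_v=\d^*(v(\mu)\d\,\cdot)$ replaces the ordinary Laplacian; since $0<\min_X v(\mu)\le v(\mu)\le\max_Xv(\mu)$ on the compact $X$ and $\langle\Phi_h,dv(\mu)\rangle$ is bounded, $\Delta_v$ is uniformly elliptic and the analytic arguments carry over verbatim. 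Averaging over $\T$, as in the uniqueness proof, lets us take the solutions $h_\epsilon$ to be $\T$-invariant.

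Next I would let $\epsilon\to0$ and set $m_\epsilon:=\sup_X|\log(h_0^{-1}h_\epsilon)|$, after normalising the determinant in the usual way. If $\limsup_{\epsilon\to0}m_\epsilon<\infty$, then the uniform ellipticity of $\Delta_v$ bootstraps this $C^0$ bound to uniform $C^k$ bounds on $\log(h_0^{-1}h_\epsilon)$ for all $k$; a subsequential limit is then a $\T$-invariant $v$-weighted Hermite--Einstein metric, using simplicity of $E$ to ensure the limit is a genuine metric, and the argument is complete. The entire difficulty is therefore to exclude the case $m_\epsilon\to\infty$.

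So assume $m_\epsilon\to\infty$. Following Uhlenbeck--Yau I would rescale $u_\epsilon:=\log(h_0^{-1}h_\epsilon)/m_\epsilon$ and use the convexity identity underlying Lemma~\ref{lem:ConvexDonaldson} --- together with the change-of-metric formulas $\d_t\Phi_{h_t}=\d(h_t^{-1}\dot h_t)$ and $\d_t\Lambda_\omega F_{h_t}=\db\d(h_t^{-1}\dot h_t)$ --- to derive uniform $L^2_1$ bounds and a nonzero weak limit $u_\infty$ whose eigenvalues are constant on $X$ and whose spectral projections $\pi$ satisfy $\pi=\pi^{*}=\pi^2$, $\pi\in L^2_1$, and $(\Id_E-\pi)\db\pi=0$. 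Since every object in the construction is $\T$-invariant, so is each $\pi$. By the Uhlenbeck--Yau regularity theorem (cf. \cite{Kob14}), such a weakly holomorphic projection is the orthogonal projection onto a $\T$-equivariant coherent saturated subsheaf $\F\subset E$, smooth away from a subvariety $V$ of codimension $\ge 2$, with second fundamental form $A=(\Id_E-\pi)\nabla^{h_0}\pi$ over $X\setminus V$. Tracking the weighted energy identity through the limit then yields, for a suitable such $\F$,
\[
\int_{X\setminus V}\frac{i}{2\pi}\tr\Lambda_{\omega,v}(F_\F+\Phi_\F)\,\omega^{[n]}\ \ge\ \rk(\F)\,c_v\,\Vol(X,\omega)=\rk(\F)\,\mu_v(E).
\]
This is where Lemma~\ref{lem:codim2integral} is indispensable: it identifies the left-hand side with the equivariant intersection number $(c_1(\F)_\T\cdot v(\alpha_\T))=\rk(\F)\,\mu_v(\F)$, so the differential-geometric estimate becomes the purely algebro-geometric inequality $\mu_v(\F)\ge\mu_v(E)$, contradicting $v$-weighted slope stability. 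Hence $m_\epsilon$ must stay bounded, and the preceding case finishes the proof.

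I expect the main obstacle to be this last step: executing the Uhlenbeck--Yau weak-subbundle limiting construction in the $\T$-equivariant, $v$-weighted form, and in particular verifying both that the extracted coherent subsheaf is genuinely $\T$-equivariant and that the quantity appearing in the limiting energy estimate is exactly its weighted slope. Equivariance is inherited from the $\T$-invariance of the $h_\epsilon$, and the slope identification is precisely the content of Lemma~\ref{lem:codim2integral}; once these are in hand, the remaining ingredients --- openness, interior a priori estimates for $\epsilon>0$, and elliptic bootstrapping when $m_\epsilon$ is bounded --- require only cosmetic changes to \cite{UY86}, since $\Delta_v$ differs from the ordinary Laplacian by multiplication by the bounded positive function $v(\mu)$ and by a bounded first-order term.
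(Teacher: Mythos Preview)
Your proposal is correct and follows essentially the same Uhlenbeck--Yau continuity path as the paper (Section~\ref{sec:DUY}), with the same key ingredients: the weighted $\partial$-Laplacian for openness and a priori estimates, extraction of a weakly holomorphic subbundle in the blow-up case, and Lemma~\ref{lem:codim2integral} to identify the limiting energy with the weighted slope. The only place the paper is more careful is the $\T$-equivariance of the extracted subsheaf: rather than asserting it is inherited directly from Uhlenbeck--Yau regularity, the paper enlarges the sheaf to a $\T$-invariant one using the strong Noetherian property for coherent analytic sheaves.
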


The proof will follow closely the method of Uhlenbeck--Yau \cite{UY86}, namely using a continuity path. In particular, we follow the exposition in the book by L{\"u}bke--Teleman \cite{LT95}. The weighted setting introduces few new challenges; we shall touch on the points that require explanation, but otherwise state results without proof where the adaptation is straightforward, referring to \cite{LT95} for more details.

\subsection{Set-up of the problem}

We are searching for a weighted Hermite--Einstein metric in the space of \(\T\)-invariant Hermitian metrics on the vector bundle \(E\). Given a reference metric \(h_0\), any other \(\T\)-invariant metric \(h\) may then be identified with a \(\T\)-invariant positive self-adjoint section \(f\in\Herm^+(E, h_0)^\T\) as follows. The endomorphism \(f\) is the unique endomorphism satisfying \[h(e_1,e_2) = h_0(f e_1, e_2 )\] for all \(e_1, e_2\in\Gamma(E)\). With respect to a local holomorphic frame for \(E\), the above equation may be written \[h_{\alpha\bar{\beta}} = f_\alpha^\gamma h_{0, \gamma\bar{\beta}},\] or equivalently \[f_\alpha^\beta = h_0^{\beta\bar{\gamma}}h_{\bar{\gamma}\alpha},\] which we sometimes abbreviate as \(f = h_0^{-1}h\). Conversely, given any \(f\in\Herm^+(E, h_0)^{\T}\), the sesquilinear form \(h(e_1,e_2):=h_0(f e_1, e_2)\) is a \(\T\)-invariant Hermitian metric on \(E\).

Fixing a local holomorphic frame for \(E\), and writing \(A_0\) and \(A\) for the connection matrices of \(h_0\) and \(h\) respectively, one computes \[A = A_0 + f^{-1}\d_0f,\] where \(\d_0 f := (\nabla^{h_0}f)^{1,0}\), with \(\nabla^{h_0}\) denoting the canonical extension of the Chern connection of \(h_0\) to \(\End E\). As per \eqref{eq:change-of-curvature}, the difference between both the curvature and the moment map can be expressed succinctly as \begin{equation}\label{eq:diff-F}
	F+\Phi = F_0+\Phi_0 + \db^{\eq}(f^{-1}\d_0f).
\end{equation}

We also have the useful formula \[\Lambda_{\omega, v}(\db^{\eq}\theta)\omega^{[n]} = \db(v(\mu)\theta)\wedge\omega^{[n-1]}\] for \(\theta\in\Omega^1(X)^\T\) or \(\theta\in\Omega^1(\End(E))^\T\), so that \[\Lambda_{\omega,v}(\db^{\eq}\theta) = \Lambda_\omega(\db(v(\mu)\theta)).\]  In particular, the weighted \(\d_0\)-Laplacian of \(E\) (or \(\End E\)) is expressed as \[\Delta_v(f):=\d_0^*(v(\mu)\d_0 f) = i\Lambda_{\omega,v}\db^{\eq}\d_0 f=i\Lambda_{\omega,v}\db(v(\mu)\d_0 f).\] Similar formulae hold for the weighted \(\d\)-Laplacian on functions, which we also denote \(\Delta_v\).

Using this information, we can see that finding \(h\) which solves the weighted Hermite--Einstein equation is equivalent to finding \(f\) which satisfies \[\frac{i}{2\pi}\Lambda_{\omega, v}(F_0 + \Phi_0) + \frac{i}{2\pi}\Lambda_{\omega, v}(\db^{\eq}(f^{-1}\d_0f)) = c_v \Id_E.\]

To solve the weighted Hermite--Einstein equation we set up a continuity method following Uhlenbeck--Yau \cite{UY86}, perturbing the equation to a family of equations depending on a parameter \(\epsilon \geq 0\): \[L_\epsilon(f):=\frac{i}{2\pi}\Lambda_{\omega, v}(F_0 + \Phi_0) - c_v\Id_E + \frac{i}{2\pi}\Lambda_{\omega, v}(\db^{\eq}(f^{-1}\d_0f)) + \epsilon\log f = 0.\]

For convenience we introduce the notations \[K = K_{h_0} := \frac{i}{2\pi}\Lambda_{\omega, v}(F_0 + \Phi_0),\] and \[K^0 = K_{h_0}^0 = K - c_v\Id_E,\] so that \[L_{\epsilon}(f) = K^0 + \frac{i}{2\pi}\Lambda_{\omega, v}(\db^{\eq}(f^{-1}\d_0f))+ \epsilon\log f.\]

We lastly note one assumption that can be made. First, we may normalise our reference metric \(h_0\) so that \(\tr(K^0) = 0\), which follows from the linearisation of the trace of the weighted Hermite--Einstein operator giving the weighted Laplacian; see the proof of Lemma \ref{vw-weighted-is-v-weighted}. We then have: \begin{lemma}\label{det1}
	If \(\tr(K^0) = 0\), then a solution \(f\) to \(L_{\epsilon}(f) = 0\) satisfies \(\det f = 1\).
\end{lemma}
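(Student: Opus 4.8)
The plan is to take the trace of the equation $L_\epsilon(f)=0$ and exploit that the trace map commutes with all the linear operators appearing in it, reducing the whole equation to a scalar equation for $\log\det f$. Concretely, I would use the standard identities $\tr(f^{-1}\d_0 f)=\d\log\det f$ and $\tr(\log f)=\log\det f$, together with the fact that $\tr\colon\End(E)\to\mathcal{O}$ is a holomorphic bundle map, so it commutes with $\db^{\eq}$ and with the fibrewise-linear weighted contraction $\Lambda_{\omega,v}$. Applying $\tr$ to $L_\epsilon(f)=K^0+\tfrac{i}{2\pi}\Lambda_{\omega,v}(\db^{\eq}(f^{-1}\d_0 f))+\epsilon\log f$ and using the normalisation $\tr K^0=0$ then collapses the middle term to $\tfrac1{2\pi}\Delta_v(\log\det f)$, where $\Delta_v\psi:=i\Lambda_{\omega,v}\db^{\eq}\d\psi=\d^*(v(\mu)\d\psi)$ is the weighted Laplacian on functions introduced in the proof of Lemma \ref{vw-weighted-is-v-weighted}. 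Thus the traced equation reads $\tfrac1{2\pi}\Delta_v(\log\det f)+\epsilon\log\det f=0$.

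Writing $u:=\log\det f$, this is $\Delta_v u=-2\pi\epsilon u$. I would then pair with $u$ and integrate over $X$ against $\omega^{[n]}$: the left-hand side becomes $\int_X v(\mu)|\d u|^2\,\omega^{[n]}$, which is $\ge 0$ since $v>0$, while the right-hand side is $-2\pi\epsilon\int_X u^2\,\omega^{[n]}\le 0$. For $\epsilon>0$ both integrals must vanish, forcing $u\equiv 0$ and hence $\det f=1$, as claimed.

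I do not expect a genuine obstacle here. The only point requiring care is verifying that $\db^{\eq}$ and $\Lambda_{\omega,v}$ interact with $\tr$ exactly so that the middle term of $L_\epsilon(f)$ reduces to the scalar weighted Laplacian of $\log\det f$; this is routine but slightly fiddly, using $\langle\db^{\eq}\chi,\xi\rangle=\db\chi+\chi(\xi)$ and the definition $\Lambda_{\omega,v}(\db^{\eq}\theta)=\Lambda_\omega(\db(v(\mu)\theta))$. Note that at $\epsilon=0$ the integration by parts only yields that $\det f$ is constant, which is consistent with solutions being unique merely up to scaling in that case; accordingly the statement is to be understood for the perturbed equations $\epsilon>0$, or equivalently under the standing normalisation $f\in\Herm_1^+(E,h_0)^\T$.
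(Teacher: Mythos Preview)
Your proof is correct and follows essentially the same approach as the paper: take the trace of $L_\epsilon(f)=0$, use $\tr(f^{-1}\d_0 f)=\d\log\det f$ and $\tr\log f=\log\det f$ to reduce to the scalar equation $\Delta_v u+2\pi\epsilon u=0$ for $u=\log\det f$, and conclude $u\equiv0$. The only cosmetic difference is that the paper phrases the last step as ``the eigenvalues of $\Delta_v$ are non-negative'', whereas you spell out the underlying integration by parts; your remark on the $\epsilon=0$ case is a helpful clarification.
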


\begin{proof}
	Taking the trace of the equation \(L_{\epsilon}(f) = 0\) gives \[\frac{i}{2\pi}\tr\Lambda_{\omega, v}(\db^{\eq}(f^{-1}\d_0f))+ \epsilon\tr\log f = 0.\] Now, \begin{align*}
		i\tr\Lambda_{\omega, v}(\db^{\eq}(f^{-1}\d_0f)) 
		= & i\Lambda_{\omega,v}(\db^{\eq}(\tr(f^{-1}\d_0f))) \\
		=& i\Lambda_{\omega,v}(\db^{\eq}(\d(\tr\log f))) \\
		=& \Delta_v(\tr\log f).
	\end{align*} Here between the first and second lines, \(\db^{\eq}\) changes from the equivariant del-bar operator on \(\Omega^{1,0}(\End(E))\) to the equivariant del-bar operator on \(\Omega^{1,0}(X)\), defined by \(\langle\db^{\eq}\theta,\xi\rangle = \db\theta + \iota_\xi\theta\) for \(\theta\in\Omega^{1,0}(X)\). It follows that \(\Delta_v(\tr\log f) + 2\pi\epsilon\tr\log f = 0\). Since the eigenvalues of the weighted \(\d\)-Laplacian \(\Delta_v\) are non-negative, it follows that \(\tr\log f = 0\), so \(\det f = e^{\tr\log f} = 1\).
\end{proof}

\subsection{Solution for \(\epsilon=1\)}

In this section we prove the existence of a starting point for the continuity method, namely by showing that the equation \(L_1(f) = 0\) admits a solution, for a particular choice of reference metric \(h_0\) satisfying \(\tr(K^0)=0\).

\begin{prop}
	There exist a \(\T\)-invariant metric \(h_0\) as well as an endomorphism \(f_1\in{\Herm}^+(E, h_0)^\T\) such that \(\tr(K^0) = 0\) and \(L_1(f_1) = 0\).
\end{prop}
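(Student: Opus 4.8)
The plan is to mimic the standard proof in the unweighted setting (as in \cite[Lemma 3.2.4 and surrounding discussion]{LT95}), adapting it to the weighted operator $\Delta_v$. First I would choose an arbitrary $\T$-invariant Hermitian metric $g$ on $E$, and then normalise it to obtain the desired reference metric: since the trace of the weighted Hermite--Einstein operator linearises to the weighted Laplacian $\Delta_v$ on functions (as computed in the proof of Lemma \ref{vw-weighted-is-v-weighted}), and $\Delta_v$ is self-adjoint, elliptic, with kernel the constants, one can solve $\Delta_v\rho = -2\pi\tr(K_g^0)$ for a $\T$-invariant function $\rho$, provided $\int_X\tr(K_g^0)\omega^{[n]}=0$. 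But this integral vanishes by Lemma \ref{constants-in-wHE}, since $c_v = (c_1(E)_\T\cdot v(\alpha_\T))/(\rk(E)\Vol(X,\omega))$. Setting $h_0:=e^{\rho/\rk(E)}g$ and using \eqref{eq:change-of-curvature} together with the computation $i\Lambda_{\omega,v}\db^\eq\d(\rho/\rk(E))\cdot\Id_E = \Delta_v(\rho/\rk(E))\Id_E$, we find $\tr(K_{h_0}^0)=0$ as required. This part is routine.

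The substantive step is solving $L_1(f_1)=0$ for this fixed $h_0$, i.e. finding $f_1\in\Herm^+(E,h_0)^\T$ with
\[
K^0 + \frac{i}{2\pi}\Lambda_{\omega,v}(\db^\eq(f_1^{-1}\d_0 f_1)) + \log f_1 = 0.
\]
The approach is the continuity/Leray--Schauder argument of Uhlenbeck--Yau: consider the family of equations $L_1^{(s)}(f) := K^0 + \frac{i}{2\pi}\Lambda_{\omega,v}(\db^\eq(f^{-1}\d_0 f)) + s\log f = 0$ for $s\in[0,1]$ — or, more precisely, the form used in \cite{LT95}, where one studies the operator $P_s(f) = \frac{i}{2\pi}\Lambda_{\omega,v}(\db^\eq(f^{-1}\d_0 f)) + s(K^0 + \log f)$ and notes that $f=\Id_E$ solves it at $s=0$. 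At each $s$ this is a quasilinear elliptic equation of the same structure as the Hermite--Einstein equation, with the Laplacian replaced by the weighted Laplacian $\Delta_v$ — but since $v(\mu)>0$ is smooth, $\Delta_v$ is still a second-order elliptic operator with positive-definite symbol $v(\mu)\omega$, so all the analytic machinery (Schauder estimates, maximum principle, linearisation being Fredholm of index zero) goes through verbatim. Openness follows from the implicit function theorem once one checks that the linearisation of $P_s$ at a solution is an isomorphism on the appropriate H\"older spaces: its leading term is $\frac{1}{2\pi}\Delta_v$ acting on $\T$-invariant Hermitian endomorphisms, plus the $s\,\Id$-term from differentiating $s\log f$, and for $s>0$ this is manifestly invertible (the weighted Laplacian is non-negative, so $\frac{1}{2\pi}\Delta_v + s$ has trivial kernel); a separate argument at $s=0$ handles the starting point. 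Closedness requires an a priori $C^0$ (hence, by bootstrapping, $C^{k,\alpha}$) bound on solutions $f$ with $\det f=1$, uniform in $s$; here one uses the weighted maximum principle — applying $\Delta_v$ to suitable functions of $f$ such as $\log|f|$ or the largest eigenvalue of $f$ — exactly as in \cite[Section 3.2]{LT95}, the point again being that $\Delta_v$ satisfies a maximum principle because $v>0$.

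I expect the main obstacle to be bookkeeping the a priori $C^0$ estimate with the weight present: one must verify that the key differential inequalities satisfied by $\tr f$, $\tr f^{-1}$, or $\log$ of the top eigenvalue — derived in the unweighted case via the Bochner--Kodaira type identities and the maximum principle — retain the correct sign after $\Lambda_\omega$ is replaced by $\Lambda_{\omega,v}$. Concretely, one needs that $\Delta_v(\tr f) \le \langle \text{something nonpositive}\rangle + \text{(controlled terms)}$ whenever $L_1(f)=0$; the extra term $\langle\Phi,dv(\mu)\rangle$ in $\Lambda_{\omega,v}$ contributes a first-order term which is harmless for the maximum principle, and the factor $v(\mu)$, being bounded above and below by positive constants on the compact polytope $P$, does not affect the qualitative conclusions. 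Once the $C^0$ bound is in hand, elliptic bootstrapping gives $C^\infty$ bounds and closedness follows, so the set of $s\in[0,1]$ for which a solution exists is nonempty, open, and closed, hence equals $[0,1]$; taking $s=1$ and $f_1$ the resulting solution (which satisfies $\det f_1 = 1$ by Lemma \ref{det1}) completes the proof. I would present this by stating that the argument is a verbatim adaptation of \cite[Lemma 3.2.4]{LT95} with $\Delta$ replaced by $\Delta_v$, and spelling out only the two points where the weight enters: the normalisation of $h_0$ via solvability of $\Delta_v\rho = -2\pi\tr(K_g^0)$, and the observation that $\Delta_v$ satisfies the maximum principle.
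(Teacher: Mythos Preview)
Your approach is correct in principle but is overkill compared with the paper's. The paper does not fix $h_0$ first and then solve $L_1(f_1)=0$ by a fresh continuity argument; instead it chooses $h_0$ and $f_1$ \emph{simultaneously} so that $L_1(f_1)=0$ becomes an identity. Starting from an arbitrary $\T$-invariant metric $h'$, set
\[
h_0(s,t):=h'\bigl(\exp(K_{h'}^0)s,\,t\bigr),\qquad f_1:=\exp(-K_{h'}^0).
\]
Then the Hermitian metric associated with the pair $(h_0,f_1)$ is $h'$ itself, so by \eqref{eq:diff-F} the curvature portion of $L_1(f_1)$ collapses to $K_{h'}^0$, and $\log f_1=-K_{h'}^0$ cancels it exactly; the condition $\tr(K_{h_0}^0)=0$ follows from a short trace computation (using, as in your first paragraph, a preliminary conformal normalisation of $h'$). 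No openness, no closedness, no a priori estimates.

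Your route---normalising $h_0$ and then running a continuity path in a new parameter $s$---would presumably go through, but it duplicates the entire analytic apparatus (invertibility of the linearisation, maximum principle for $\Delta_v$, bootstrapping) that the remainder of Section~\ref{sec:DUY} develops anyway for the main path in $\epsilon$. It also has a soft spot you wave past: at $s=0$ the linearisation of your $P_s$ at $f=\Id_E$ is $\tfrac{1}{2\pi}\Delta_v$ on $\Herm(E,h_0)^\T$, whose kernel contains at least $\mathbb{R}\cdot\Id_E$ (and is larger if $E$ is not simple), so the implicit function theorem does not apply there without further work. All of this is avoidable once you notice that the statement allows you to pick $h_0$, not just $f_1$.
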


\begin{proof}
	We fix an arbitrary \(\T\)-invariant metric \(h'\), and note that \(K^0_{h'}\) is a self-adjoint endomorphism with respect to \(h'\). Consider the metric \[h_0(s,t):=h'(\exp(K^0_{h'})s,t).\] The curvature of \(h_0\) satisfies \begin{align*}
		\tr(K_{h_0}^0) &= \tr(K_{h'}^0) + \tr\Lambda_{\omega,v}\db^{\eq}(\exp(K_{h'}^0)^{-1}\d_0\exp(K_{h'}^0)) \\
		&= 0+\Lambda_{\omega,v}\db^{\eq}\d\tr\log\exp(K_{h'}^0) \\
		&= 0.
	\end{align*} Defining \(f_1:=\exp(-K_{h'}^0)\), we have \(f_1\in {\Herm}^+(E,h_0)^\T\), and \begin{align*}
		L_1(f_1) = K_{h'}^0 + \log(f_1)= K_{h'}^0 - K_{h'}^0 = 0,
	\end{align*} where \(L_1\) is defined with respect to the reference metric \(h_0\).
\end{proof}

\subsection{Openness in \((0,1]\)}

Following \cite{LT95}, we introduce the notation \[\hat{L}(\epsilon,f):=f\circ L_\epsilon(f)\in{\Herm}(E,h_0)^\T.\] Of course \(L_\epsilon(f)=0\) if and only if \(\hat{L}(\epsilon, f)=0\), the advantage of \(\hat{L}\) being that (unlike \(L_\epsilon\)) it takes values in \(\Herm(E,h_0)^\T\). Denote by \(d_2\hat{L}\) the linearisation of \(\hat{L}\) in the \(f\)-variable. One readily computes that \(d_2\hat{L}\) is a second order linear differential operator, with leading order term \(\Delta_v\), the weighted \(\d\)-Laplacian of \(\End(E)\). In particular \(d_2\hat{L}\) is elliptic with index \(0\).

The proof of \cite[Proposition 3.2.5]{LT95} can be easily adapted to the following:

\begin{lemma}\label{inequality-lemma}
	Suppose that \(\hat{L}(\epsilon,f) = 0\) and \(d_2\hat{L}(\varphi)+\alpha f\log f = 0\) for some \(\alpha\in\mathbb{R}\). Then \[\eta:=f^{-1/2}\varphi f^{1/2}\] satisfies \[\Delta_v(|\eta|^2)+2\epsilon|\eta|^2 + v(\mu)|d^f\eta|^2\leq -2\alpha h_0(\log f,\eta),\] where \[d^f:=\d_0^f+\db^f:=\mathrm{Ad}f^{-1/2}\circ\d_0\circ\mathrm{Ad}f^{1/2} + \mathrm{Ad}f^{1/2}\circ\db\circ\mathrm{Ad}f^{-1/2}.\]
\end{lemma}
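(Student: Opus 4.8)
The strategy is to follow the template of \cite[Proposition 3.2.5]{LT95}, making the substitution $\eta := f^{-1/2}\varphi f^{1/2}$ and then manipulating the linearised equation $d_2\hat{L}(\varphi)+\alpha f\log f = 0$ into an inequality for $|\eta|^2$. First I would record the precise form of $d_2\hat{L}$: starting from $\hat{L}(\epsilon,f)=f\circ L_\epsilon(f)$ and using \eqref{eq:diff-F} together with the formula $\Lambda_{\omega,v}(\db^\eq\theta)=\Lambda_\omega(\db(v(\mu)\theta))$, one differentiates in the $f$-direction. The leading term is the weighted $\d_0$-Laplacian $\Delta_v$ acting on $\varphi$, and the lower-order terms involve $f^{-1}\d_0 f$ paired against $\d_0\varphi$ and $\varphi$. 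Conjugating by $f^{\pm 1/2}$ converts $\d_0$ and $\db$ into the twisted operators $\d_0^f$ and $\db^f$ appearing in the statement, and the first-order terms reorganise exactly into $v(\mu)|d^f\eta|^2$ once one takes the $h_0$-inner product with $\eta$. This is the computational heart of the argument and is essentially the weighted analogue of the identity in \cite{LT95}; the only genuinely new feature is that the weight $v(\mu)$ rides along through the contraction, because $\Lambda_{\omega,v}(\db^\eq\cdot)=\Lambda_\omega\db(v(\mu)\cdot)$, so that the Bochner-type integrand picks up the factor $v(\mu)$ in front of $|d^f\eta|^2$ (and inside $\Delta_v$) rather than being a pure Laplacian.

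Next I would handle the zeroth-order contributions. The term $\alpha f\log f$ in $d_2\hat{L}(\varphi)+\alpha f\log f=0$, after conjugation and pairing with $\eta$, produces the right-hand side $-2\alpha h_0(\log f,\eta)$; here one uses that $f^{-1/2}(f\log f)f^{1/2} = f\log f$ commutes appropriately and that $h_0(f\log f\cdot f^{-1/2},\cdot)$ unwinds to $h_0(\log f,\eta)$ up to the symmetrisation that yields the factor $2$. The term $\epsilon\log f$ coming from $L_\epsilon$ contributes, after the same manipulation, a nonnegative quantity; the standard computation (as in \cite[Lemma 3.2.4]{LT95}) shows it is bounded below by $2\epsilon|\eta|^2$, using convexity/monotonicity of $\log$ and that $f$ is positive self-adjoint. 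Assembling the Laplacian term, the first-order term, the $\epsilon$-term, and the $\alpha$-term, and using that $d_2\hat{L}(\varphi)=0$ modulo the $\alpha f\log f$ correction, yields exactly
\[
\Delta_v(|\eta|^2)+2\epsilon|\eta|^2 + v(\mu)|d^f\eta|^2 \leq -2\alpha h_0(\log f,\eta).
\]

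\textbf{Main obstacle.} I expect the principal difficulty to be bookkeeping the cross terms when differentiating $\hat{L}$ and then verifying that conjugation by $f^{1/2}$ repackages them cleanly as $v(\mu)|d^f\eta|^2$ with the correct sign; in particular one must be careful that the $(1,0)$ and $(0,1)$ pieces of the twisted operator $d^f$ are the right conjugates of $\d_0$ and $\db$ respectively (one is $\mathrm{Ad}\,f^{-1/2}$-conjugated, the other $\mathrm{Ad}\,f^{1/2}$-conjugated), matching the asymmetry already visible in \eqref{eq:diff-F}. The weight $v(\mu)$ itself causes no trouble beyond being carried through the contraction identity, and it never gets differentiated because all the $f$-dependence sits in the endomorphism factors; the argument is therefore structurally identical to the unweighted case in \cite{LT95}, with $\Delta$ replaced by $\Delta_v$ and the Bochner term acquiring the $v(\mu)$ prefactor. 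Once the inequality is in hand, no further work is needed for this lemma.
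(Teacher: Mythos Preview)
Your proposal is correct and follows exactly the approach the paper takes: the paper does not give a full proof but simply states that \cite[Proposition 3.2.5]{LT95} adapts directly, with the key substitution being the weighted identity $\Delta_v(g)=i\Lambda_\omega\db(v(\mu)\d g)$, which is precisely the mechanism you identify for carrying the factor $v(\mu)$ through the Bochner-type computation. Your sketch is in fact more detailed than what the paper records, but the strategy is identical.
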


The key to adapting such proofs is the formula \(\Delta_v(g) = i\Lambda_{\omega,v}\db(v(\mu)\d g)\) for the weighted Laplacian. For example, the line \[P(|\eta|^2) = \Lambda_{\omega}\db\d h_0(\eta,\eta)=h_0(\Lambda_\omega\db^f\d_0^f\eta,\eta)+h_0(\eta,-\Lambda_\omega\d_0^f\db^f\eta)-|d^f\eta|^2\] from \cite[p. 68]{LT95}\footnote{The del-Laplacian is denoted by \(P\) in \cite{LT95}.} becomes \[\Delta_v(|\eta|^2) = i\Lambda_{\omega}\db(v(\mu)\d h_0(\eta,\eta))=ih_0(\Lambda_\omega\db^f(v(\mu)\d_0^f\eta),\eta)+ih_0(\eta,-\Lambda_\omega\d_0^f(v(\mu)\db^f\eta))-v(\mu)|d^f\eta|^2.\]

The lemma implies:

\begin{cor}
	If \(\hat{L}(\epsilon, f) = 0\), then \[d_2\hat{L}(\epsilon,f): L_k^p\Herm(E,h_0)^\T\to L_{k-2}^p\Herm(E, h_0)^{\T}\] is an isomorphism.
\end{cor}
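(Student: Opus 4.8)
The plan is to deduce this corollary from Lemma \ref{inequality-lemma} in the standard way (cf. \cite[Corollary 3.2.6]{LT95}). Since $d_2\hat{L}(\epsilon,f)$ is an elliptic operator of index $0$ between the relevant Sobolev completions, it suffices to prove injectivity. So suppose $\varphi\in L_k^p\Herm(E,h_0)^\T$ lies in the kernel, i.e.\ $d_2\hat{L}(\epsilon,f)(\varphi)=0$. Elliptic regularity upgrades $\varphi$ to a smooth section, so we may work with smooth data.

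First I would apply Lemma \ref{inequality-lemma} with $\alpha=0$: setting $\eta:=f^{-1/2}\varphi f^{1/2}$, the hypotheses $\hat{L}(\epsilon,f)=0$ and $d_2\hat{L}(\varphi)=0$ give
\[
\Delta_v(|\eta|^2)+2\epsilon|\eta|^2+v(\mu)|d^f\eta|^2\leq 0.
\]
Now integrate this inequality over $X$ against $\omega^{[n]}$. The term $\int_X\Delta_v(|\eta|^2)\,\omega^{[n]}$ vanishes since $\Delta_v=\d_0^*(v(\mu)\d_0(-))$ is a divergence and $X$ is compact (equivalently, constants span the kernel of the self-adjoint $\Delta_v$, so its image is $L^2$-orthogonal to constants). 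What remains is
\[
\int_X\bigl(2\epsilon|\eta|^2+v(\mu)|d^f\eta|^2\bigr)\,\omega^{[n]}\leq 0.
\]
Because $\epsilon>0$ (we are proving openness on $(0,1]$) and $v>0$ on the moment polytope, both integrands are pointwise non-negative, hence both vanish identically; in particular $\epsilon|\eta|^2\equiv 0$ forces $\eta\equiv 0$, so $\varphi=f^{1/2}\eta f^{-1/2}=0$. This establishes injectivity.

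Finally, injectivity plus index $0$ gives surjectivity, hence $d_2\hat{L}(\epsilon,f)$ is an isomorphism $L_k^p\Herm(E,h_0)^\T\to L_{k-2}^p\Herm(E,h_0)^\T$. I do not anticipate a serious obstacle here: the only point needing a little care is the passage from Lemma \ref{inequality-lemma}'s pointwise differential inequality to the integral identity, where one must know that the weighted Laplacian $\Delta_v$ integrates to zero over compact $X$ — this is exactly the divergence-form expression $\Delta_v g=\d_0^*(v(\mu)\d_0 g)$ recorded in the set-up — and the observation that strict positivity of $v$ and of $\epsilon$ makes the remaining two terms genuinely coercive, which is where the hypothesis $\epsilon\in(0,1]$ (rather than $\epsilon=0$) is used.
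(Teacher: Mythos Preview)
Your proof is correct and follows the same overall strategy as the paper: apply Lemma \ref{inequality-lemma} with $\alpha=0$ to obtain the differential inequality $\Delta_v(|\eta|^2)+2\epsilon|\eta|^2+v(\mu)|d^f\eta|^2\leq 0$, deduce $\eta=0$, and conclude via the index-$0$ property. The only difference is in how you extract $\eta=0$ from the inequality. You integrate over $X$ and use that $\int_X\Delta_v(\cdot)\,\omega^{[n]}=0$ (divergence form), so positivity of $\epsilon$ and $v$ forces the integral of $|\eta|^2$ to vanish. The paper instead argues pointwise via the maximum principle: at a maximum $x$ of $|\eta|^2$ one has $\d(|\eta|^2)(x)=0$, hence $\Delta_v(|\eta|^2)(x)=v(\mu(x))\Delta(|\eta|^2)(x)\geq 0$, whence $2\epsilon|\eta|^2(x)\leq 0$. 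Both arguments are standard and equally short; your integration approach has the mild advantage of not needing to verify the maximum principle for the weighted operator $\Delta_v$, while the paper's local argument avoids any appeal to integration-by-parts identities.
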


\begin{proof}
	Suppose that \(d_2\hat{L}{(\epsilon,f)}(\varphi)=0\). Then by the Lemma \ref{inequality-lemma} with \(\alpha=0\), we have \[\Delta_v(|\eta|^2)+2\epsilon|\eta|^2\leq0.\] We now note that since \(\Delta_v(|\eta|^2) = v(\mu)\Delta(|\eta|^2)+\langle \d(|\eta|^2),\d v(\mu)\rangle\), the operator \(\Delta_v\) satisfies the maximum principle. Indeed, suppose \(|\eta|^2\) attains its maximum at \(x\in X\). Then \(\Delta_v(|\eta|^2)(x) = v(\mu(x))\Delta(|\eta|^2)(x)\geq0\), since \(\d(|\eta|^2)(x)=0\). Hence \(2\epsilon|\eta|^2(x)\leq0\), implying \(|\eta|^2=0\) everywhere; in particular \(\varphi=f^{-1/2}\eta f^{-1/2}=0\). Since \(d_2\hat{L}(\epsilon,f)\) is an elliptic operator with index \(0\) and trivial kernel, it is an isomorphism.
\end{proof}

It immediately follows from the inverse function theorem that the set of \(\epsilon\in(0,1]\) for which \(L_\epsilon(f)=0\) admits a smooth solution, is open.

\subsection{Closedness in \((0,1]\)}

Suppose that for some \(\epsilon_0>0\), the interval \((\epsilon_0,1]\) is such that \(L_\epsilon(f)=0\) admits a solution for all \(\epsilon\in(\epsilon_0,1]\). We will show that \(L_{\epsilon_0}(f)=0\) also admits a solution. In particular, by openness and existence of a solution to \(L_1(f_1)=0\), the equation \(L_\epsilon(f)=0\) admits a solution for all \(\epsilon\in(0,1]\).

For each \(\epsilon\in(\epsilon_0,1]\) denote by \(f_\epsilon\) the solution to \(L_\epsilon(f_\epsilon)=0\); note each \(f_\epsilon\) is smooth and varies smoothly in \(\epsilon\) by the inverse function theorem. Furthermore, \(\det f_\epsilon=1\) for each \(\epsilon\). Define \[m_\epsilon:=\max_X|\log f_\epsilon|,\quad \varphi_\epsilon:=\frac{df_\epsilon}{d\epsilon},\quad \eta_\epsilon:=f_\epsilon^{-1/2}\varphi_\epsilon f_\epsilon^{1/2}.\] In what follows we will write \(C(m_\epsilon)\) for a positive constant depending only on \(m_\epsilon\) (or possibly an upper bound for \(m_\epsilon\)), which may vary from line to line.

Similarly to \cite[Lemma 3.3.1]{LT95}:

\begin{lemma}\label{lemma-l2-inequality} For each \(\epsilon\in(\epsilon_0,1]\),
	\begin{enumerate}
		\item \(\tr\eta_\epsilon=0\);
		\item \(\|v(\mu)^{1/2}d^{f_\epsilon}\eta_\epsilon\|_{L^2}^2\geq C(m_\epsilon)\|\eta_\epsilon\|_{L^2}^2\).
	\end{enumerate}
\end{lemma}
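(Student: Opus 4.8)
The plan is to mirror the proof of \cite[Lemma 3.3.1]{LT95}, replacing the ordinary $\bar\partial$-Laplacian by the weighted Laplacian $\Delta_v$ wherever it appears, and checking that the weight $v(\mu)>0$ causes no harm. First I would establish (1): differentiate the constraint $\det f_\epsilon=1$ in $\epsilon$. Since $\frac{d}{d\epsilon}\log\det f_\epsilon = \tr(f_\epsilon^{-1}\varphi_\epsilon)$ and $\tr(f_\epsilon^{-1}\varphi_\epsilon)=\tr(f_\epsilon^{-1/2}\varphi_\epsilon f_\epsilon^{1/2} f_\epsilon^{-1})=\tr(\eta_\epsilon f_\epsilon^{-1})$—wait, more directly $\tr\eta_\epsilon=\tr(f_\epsilon^{-1/2}\varphi_\epsilon f_\epsilon^{1/2})=\tr(\varphi_\epsilon f_\epsilon^{1/2}f_\epsilon^{-1/2})=\tr(\varphi_\epsilon)$ is not quite it either; the cleanest route is $\tr\eta_\epsilon = \tr(f_\epsilon^{-1}\varphi_\epsilon)$ after cyclic permutation, which equals $\frac{d}{d\epsilon}\log\det f_\epsilon = 0$. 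So (1) follows from $\det f_\epsilon=1$, which holds by Lemma \ref{det1} and our normalisation $\tr(K^0)=0$.

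For (2), the idea is to differentiate the equation $\hat L(\epsilon,f_\epsilon)=0$ with respect to $\epsilon$. This yields $d_2\hat L(\epsilon,f_\epsilon)(\varphi_\epsilon) + \partial_\epsilon \hat L(\epsilon,f_\epsilon) = 0$, and since $\partial_\epsilon(f\circ L_\epsilon(f)) = f\log f$, we are exactly in the situation of Lemma \ref{inequality-lemma} with $\alpha = 1$. That lemma gives the pointwise inequality
\[
\Delta_v(|\eta_\epsilon|^2) + 2\epsilon|\eta_\epsilon|^2 + v(\mu)|d^{f_\epsilon}\eta_\epsilon|^2 \leq -2h_0(\log f_\epsilon,\eta_\epsilon).
\]
Integrating over $X$ against $\omega^{[n]}$, the term $\int_X \Delta_v(|\eta_\epsilon|^2)\,\omega^{[n]}$ vanishes because $\Delta_v = \d^*(v(\mu)\d\,\cdot)$ is in divergence form and $X$ is closed; dropping the non-negative term $2\epsilon\|\eta_\epsilon\|_{L^2}^2$ and applying Cauchy--Schwarz to the right-hand side gives
\[
\int_X v(\mu)|d^{f_\epsilon}\eta_\epsilon|^2\,\omega^{[n]} \leq 2\|\log f_\epsilon\|_{L^2}\|\eta_\epsilon\|_{L^2} \leq 2\,\mathrm{Vol}(X)^{1/2}\,m_\epsilon\,\|\eta_\epsilon\|_{L^2}.
\]
This is an upper, not lower, bound, so I would instead follow \cite{LT95} more carefully: the point is to combine this with a weighted Poincaré-type inequality. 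Because $\tr\eta_\epsilon = 0$ by (1), $\eta_\epsilon$ is $L^2$-orthogonal to the constant multiples of $\Id_E$, and one uses the normalisation to control $\|\eta_\epsilon\|_{L^2}$ in terms of $\|d^{f_\epsilon}\eta_\epsilon\|_{L^2}$ up to a constant depending on $m_\epsilon$ (the operator $d^{f_\epsilon}$ is a bounded perturbation, with bounds controlled by $\|f_\epsilon^{\pm 1/2}\|\leq e^{m_\epsilon/2}$, of the fixed operator $d = \d_0+\db$, whose kernel on trace-free endomorphisms is trivial by simplicity of $E$). Since $v(\mu)$ is bounded below by a positive constant on the compact polytope $P$, the weight can be absorbed into the constant. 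This yields $\|v(\mu)^{1/2}d^{f_\epsilon}\eta_\epsilon\|_{L^2}^2 \geq C(m_\epsilon)\|\eta_\epsilon\|_{L^2}^2$, which is (2).

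The main obstacle is the weighted Poincaré inequality underlying (2): one must verify that the kernel of $d^{f_\epsilon}$ acting on trace-free $\T$-invariant Hermitian endomorphisms is trivial, uniformly enough that the Poincaré constant depends only on $m_\epsilon$. This follows because an element of the kernel of $d^{f_\epsilon}$ corresponds, after conjugation by $f_\epsilon^{1/2}$, to a $\db$-holomorphic endomorphism of $E$ that is also $\d_0^{f_\epsilon}$-parallel; simplicity of $E$ forces it to be a scalar, hence zero once the trace vanishes. The uniformity in $m_\epsilon$ is standard (the family $\{d^{f_\epsilon}\}$ lies in a bounded set of elliptic operators once $m_\epsilon$ is bounded) and I would simply cite the corresponding step in \cite{LT95}, noting the harmless role of the bounded, bounded-below weight $v(\mu)$.
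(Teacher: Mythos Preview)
Your strategy for part~(2)---a weighted Poincar\'e inequality exploiting simplicity of $E$ and the trace-free condition, with the conjugated operator $d^{f_\epsilon}$ treated as a bounded perturbation of $\d_0+\db$ controlled by $e^{m_\epsilon}$---is exactly the adaptation of \cite[Lemma~3.3.1]{LT95} the paper has in mind, and the bounded positive weight $v(\mu)$ is indeed harmless there.

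Your argument for part~(1), however, contains a genuine slip. With the paper's definition $\eta_\epsilon = f_\epsilon^{-1/2}\varphi_\epsilon f_\epsilon^{1/2}$, cyclic invariance of the trace gives
\[
\tr\eta_\epsilon = \tr\bigl(f_\epsilon^{1/2}f_\epsilon^{-1/2}\varphi_\epsilon\bigr) = \tr\varphi_\epsilon = \tfrac{d}{d\epsilon}\tr f_\epsilon,
\]
\emph{not} $\tr(f_\epsilon^{-1}\varphi_\epsilon) = \tfrac{d}{d\epsilon}\log\det f_\epsilon$ as you claim. These two traces differ in general (e.g.\ $f_\epsilon=\mathrm{diag}(e^\epsilon,e^{-\epsilon})$ at a point has $\det f_\epsilon=1$ but $\tr\varphi_\epsilon=e^\epsilon-e^{-\epsilon}\neq 0$), so the constraint $\det f_\epsilon\equiv 1$ from Lemma~\ref{det1} does not by itself force $\tr\eta_\epsilon=0$ for this $\eta_\epsilon$. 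The argument you wrote down is in fact the correct one for the \emph{symmetric} conjugation $\eta_\epsilon=f_\epsilon^{-1/2}\varphi_\epsilon f_\epsilon^{-1/2}$ used in \cite{LT95}, for which cyclicity genuinely gives $\tr\eta_\epsilon=\tr(f_\epsilon^{-1}\varphi_\epsilon)$ and which also makes $\eta_\epsilon$ Hermitian with respect to $h_0$. The paper's own remark that ``the proof of the first point uses Lemma~\ref{det1}'' strongly suggests this symmetric form is intended; you should flag the discrepancy in the exponent and proceed with $f_\epsilon^{-1/2}\varphi_\epsilon f_\epsilon^{-1/2}$.
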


We note the proof of the first point uses Lemma \ref{det1}, namely the assumption that \(\det f_\epsilon=1\) for all \(\epsilon\).

We will require the following:\footnote{We could not locate the result \cite[Lemma 3.3.2]{LT95} in the book \cite{GT01}, which uses the \(L^1\)-norm in place of the \(L^2\)-norm. However, the inequality making use of the \(L^2\)-norm is sufficient for our needs, as we show in the proof of Proposition \ref{varphi-bound}.}

\begin{lemma}[{\cite[Theorem 8.17]{GT01}}]\label{gilbarg-trudinger}
	Suppose \(u\in C^2(X;\mathbb{R})\) satisfies \(u\geq0\), and \(a,b\in\mathbb{R}\) are real constants such that \(a\geq0\). If \[\Delta_v(u)\leq au + b,\] then there exists a constant \(C>0\) depending only on \(a\), and other fixed data on the manifold such as \(\omega\) and \(v(\mu)\), such that \[\max_X u\leq C(\|u\|_{L^2}+|b|).\]
\end{lemma}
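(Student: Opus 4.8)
The final statement to prove is Lemma~\ref{gilbarg-trudinger}, which is a weighted version of the classical local maximum principle of De Giorgi--Nash--Moser type. The plan is to reduce it to the standard result \cite[Theorem 8.17]{GT01} by observing that the weighted Laplacian $\Delta_v = \d^*(v(\mu)\d\,\cdot\,)$ is a uniformly elliptic operator in divergence form with smooth, bounded, positive-definite coefficients, since $v$ is smooth and strictly positive on the compact polytope $P$ and $\mu$ is smooth.

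First I would fix a finite cover of $X$ by coordinate balls and work locally. In local real coordinates, $\Delta_v u = \d_j(a^{jk}\d_k u)$ plus lower-order terms, where the matrix $a^{jk}$ is comparable to $v(\mu)$ times the inverse metric, hence satisfies $\lambda |\zeta|^2 \le a^{jk}\zeta_j\zeta_k \le \Lambda|\zeta|^2$ for fixed constants $0 < \lambda \le \Lambda$ depending only on $\omega$, $v$, $\mu$ and the chosen charts (not on $u$, $a$ or $b$). The hypothesis $\Delta_v u \le au + b$ with $u \ge 0$, $a \ge 0$ then places us exactly in the setting of the local maximum estimate: applying \cite[Theorem 8.17]{GT01} on each ball (with the inhomogeneous term $au+b$, absorbing $au$ using $a\ge 0$ and the sign of $u$) yields, for each ball $B$ in the cover and a slightly larger concentric ball $B'$,
\[
\sup_{B} u \le C\left(\|u\|_{L^2(B')} + |b|\right),
\]
with $C$ depending only on $a$ and the fixed geometric data. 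Covering $X$ by finitely many such balls and taking the maximum of the finitely many constants gives the global bound $\max_X u \le C(\|u\|_{L^2(X)} + |b|)$.

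The one genuine point requiring care is that \cite[Theorem 8.17]{GT01} as stated involves the coefficient $a$ only through its $L^q$ norm for suitable $q$, and the constant there a priori depends on this norm; since $a$ is a genuine constant here, this dependence is harmless, but one should state clearly that $C$ is allowed to depend on $a$ (this is already built into the statement of Lemma~\ref{gilbarg-trudinger}). A second minor point is the passage from the local $L^2(B')$ norms to the global $L^2(X)$ norm, which is immediate since each $B' \subset X$. I do not expect any real obstacle: the content is entirely the classical interior estimate, and the weighting by $v(\mu)$ only modifies the ellipticity constants, which are fixed once and for all. If one prefers to avoid quoting the divergence-form version, an alternative is to write $\Delta_v u = v(\mu)\Delta u + \langle \d u, dv(\mu)\rangle$ and apply the non-divergence form maximum principle \cite[Theorem 9.20]{GT01} after dividing by $v(\mu) > 0$; either route gives the claim.
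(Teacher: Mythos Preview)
Your proposal is correct and takes essentially the same approach as the paper: the paper does not give an independent proof but simply remarks that \(-\Delta_v\) satisfies the hypotheses of \cite[Theorem 8.17]{GT01}, being a second order elliptic operator of divergence type with uniformly bounded, positive-definite leading coefficients. Your write-up supplies more detail (the finite cover by coordinate balls and the patching of local estimates) than the paper does, but the content is the same direct appeal to the classical Gilbarg--Trudinger estimate.
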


We note that the conditions required of the operator \(L\) in \cite[Theorem 8.17]{GT01} are satisfied by \(-\Delta_v\). In particular \(-\Delta_v\) is a second order linear elliptic operator of divergence type with uniformly bounded coefficients, and the leading term \(a^{ij}(x)\d_i\d_j\) of \(-\Delta_v\) is such that \(a^{ij}(x)\) is positive definite.

\begin{prop}\label{varphi-bound}
	For each \(\epsilon\in(\epsilon_0,1]\), \[\max_X|\varphi_\epsilon|\leq C(m_\epsilon).\]
\end{prop}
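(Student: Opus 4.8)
The plan is to adapt the argument of \cite[Proposition 3.3.3]{LT95} to the weighted setting, differentiating the continuity equation in $\epsilon$ and then invoking the Gilbarg--Trudinger estimate of Lemma \ref{gilbarg-trudinger} for the weighted Laplacian $\Delta_v$. First I would differentiate the identity $\hat{L}(\epsilon, f_\epsilon) = 0$ with respect to $\epsilon$; since $\hat{L}(\epsilon,f) = f\circ L_\epsilon(f)$ and $L_\epsilon$ contains the term $\epsilon\log f$, the chain rule gives $d_2\hat{L}(\epsilon,f_\epsilon)(\varphi_\epsilon) + f_\epsilon\log f_\epsilon = 0$, which is precisely the hypothesis of Lemma \ref{inequality-lemma} with $\alpha = 1$. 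Applying that lemma yields
\[
\Delta_v(|\eta_\epsilon|^2) + 2\epsilon|\eta_\epsilon|^2 + v(\mu)|d^{f_\epsilon}\eta_\epsilon|^2 \leq -2h_0(\log f_\epsilon, \eta_\epsilon).
\]
Dropping the non-negative terms $2\epsilon|\eta_\epsilon|^2$ and $v(\mu)|d^{f_\epsilon}\eta_\epsilon|^2$ on the left, and using Cauchy--Schwarz together with $|\log f_\epsilon| \leq m_\epsilon$ on the right, gives a pointwise inequality of the form $\Delta_v(|\eta_\epsilon|^2) \leq 2m_\epsilon|\eta_\epsilon| \leq |\eta_\epsilon|^2 + m_\epsilon^2$, i.e. $\Delta_v(u) \leq u + b$ with $u := |\eta_\epsilon|^2 \geq 0$ and $b := m_\epsilon^2$.

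Next I would apply Lemma \ref{gilbarg-trudinger} to conclude $\max_X |\eta_\epsilon|^2 \leq C(\|\eta_\epsilon\|_{L^2} + m_\epsilon^2)$ for a constant $C$ depending only on the fixed data $\omega$, $v(\mu)$ (the coefficient $a = 1$ being absolute). It then remains to bound $\|\eta_\epsilon\|_{L^2}$. For this I would integrate the displayed inequality above over $X$ against $\omega^{[n]}$: the integral of $\Delta_v(|\eta_\epsilon|^2)$ vanishes since $\Delta_v$ is the divergence-form operator $\d_0^*(v(\mu)\d_0\,\cdot\,)$, leaving
\[
2\epsilon\|\eta_\epsilon\|_{L^2}^2 + \|v(\mu)^{1/2}d^{f_\epsilon}\eta_\epsilon\|_{L^2}^2 \leq 2\int_X |h_0(\log f_\epsilon, \eta_\epsilon)|\,\omega^{[n]} \leq 2 m_\epsilon \mathrm{Vol}(X)^{1/2}\|\eta_\epsilon\|_{L^2}.
\]
Now invoke Lemma \ref{lemma-l2-inequality}(2), which gives $\|v(\mu)^{1/2}d^{f_\epsilon}\eta_\epsilon\|_{L^2}^2 \geq C(m_\epsilon)\|\eta_\epsilon\|_{L^2}^2$; combining, $C(m_\epsilon)\|\eta_\epsilon\|_{L^2}^2 \leq 2m_\epsilon\mathrm{Vol}(X)^{1/2}\|\eta_\epsilon\|_{L^2}$, so $\|\eta_\epsilon\|_{L^2} \leq C(m_\epsilon)$. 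Feeding this back into the Gilbarg--Trudinger bound gives $\max_X|\eta_\epsilon| \leq C(m_\epsilon)$. Finally, since $\varphi_\epsilon = f_\epsilon^{1/2}\eta_\epsilon f_\epsilon^{-1/2}$ and the eigenvalues of $f_\epsilon$ are controlled by $e^{m_\epsilon}$ and $e^{-m_\epsilon}$, we conclude $\max_X|\varphi_\epsilon| \leq C(m_\epsilon)$, as desired.

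\textbf{Main obstacle.} Analytically this is routine once the weighted analogues (Lemmas \ref{inequality-lemma}, \ref{lemma-l2-inequality}, \ref{gilbarg-trudinger}) are in place; the only genuine care needed is checking that Lemma \ref{gilbarg-trudinger} applies with a constant $C$ that is \emph{independent} of $\epsilon$ (it is, since $a = 1$ is fixed), and that the positivity of $v(\mu)$ ensures $-\Delta_v$ is uniformly elliptic of divergence type with bounded coefficients so that the maximum-principle-type estimate holds uniformly. The one subtlety that distinguishes this from the unweighted case is that $\Delta_v$ has a first-order term $\langle \d(\cdot), dv(\mu)\rangle$; but since $v \in C^\infty(P,\mathbb{R}_{>0})$ is fixed and $P$ is compact, this term has bounded coefficients and does not affect the applicability of Lemma \ref{gilbarg-trudinger}, nor does it spoil the integration-by-parts identity $\int_X \Delta_v(u)\,\omega^{[n]} = 0$.
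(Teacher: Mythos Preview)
Your approach is essentially that of the paper, with the same ingredients in slightly different order. There is, however, one slip in your application of Lemma \ref{gilbarg-trudinger}: with $u := |\eta_\epsilon|^2$, the lemma gives
\[
\max_X |\eta_\epsilon|^2 \;\leq\; C\bigl(\,\|\,|\eta_\epsilon|^2\,\|_{L^2} + m_\epsilon^2\bigr),
\]
not $C(\|\eta_\epsilon\|_{L^2} + m_\epsilon^2)$ as you wrote. Since $\||\eta_\epsilon|^2\|_{L^2} = \|\eta_\epsilon\|_{L^4}^2$, your bound $\|\eta_\epsilon\|_{L^2}\leq C(m_\epsilon)$ does not directly control this term. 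The paper closes this gap with the elementary estimate $\||\eta_\epsilon|^2\|_{L^2} \leq \max_X|\eta_\epsilon|\cdot \|\eta_\epsilon\|_{L^2}$, which yields
\[
\max_X|\eta_\epsilon|^2 \;\leq\; C(m_\epsilon)\bigl(\max_X|\eta_\epsilon| + 1\bigr),
\]
and then a trivial case split ($\max_X|\eta_\epsilon|\leq 1$ versus $\geq 1$) finishes the argument. With this one-line insertion your proof is complete and matches the paper's.
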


\begin{proof}
	By Lemma \ref{inequality-lemma} applied with \(\alpha=1\), \begin{equation}\label{eq:ineq-P_v}
		\Delta_v(|\eta_\epsilon|^2)+2\epsilon|\eta_\epsilon|^2+v(\mu)|d^{f_\epsilon}\eta_\epsilon|^2\leq -2h_0(\log f_\epsilon,\eta_\epsilon)\leq 2|\log f_\epsilon|\cdot|\eta_\epsilon|.
	\end{equation} Integrating over \(X\) and noting that the integral of \(\Delta_v(|\eta_\epsilon|^2)\) vanishes, this inequality implies \[\|v(\mu)^{1/2}d^{f_\epsilon}\eta_\epsilon\|_{L^2}^2\leq 2\|\log f_\epsilon\|_{L^2}\|\eta_\epsilon\|_{L^2}\leq C(m_\epsilon)\|\eta_\epsilon\|_{L^2}.\] Combining this with the second point of Lemma \ref{lemma-l2-inequality}, \begin{equation}\label{eq:ineq-eta}
		\|\eta_\epsilon\|_{L^2}\leq C(m_\epsilon).
	\end{equation} From \eqref{eq:ineq-P_v}, it also follows that \[\Delta_v(|\eta_\epsilon|^2)\leq 2|\log f_\epsilon|\cdot|\eta_\epsilon|\leq m_\epsilon(|\eta_\epsilon|^2+1).\] Hence by Lemma \ref{gilbarg-trudinger}, \[\max_X|\eta_\epsilon|^2\leq C(m_\epsilon)(\||\eta_\epsilon|^2\|_{L^2}+m_\epsilon)\leq C(m_\epsilon)(\max_X|\eta_\epsilon|\cdot \|\eta_\epsilon\|_{L^2}+1).\] Note that if \(\max_X|\eta_\epsilon|\leq 1\) then we are done, and if \(\max_X|\eta_\epsilon|\geq1\), then dividing by \(\max_X|\eta_\epsilon|\) we have \[\max_X|\eta_\epsilon|\leq C(m_\epsilon)(\|\eta_\epsilon\|_{L^2}+1)\leq C(m_\epsilon),\] where we used \eqref{eq:ineq-eta} to get the final inequality.
\end{proof}

The proofs of the following results are similar to those in \cite{LT95}, making use of the results we have stated up to now.

\begin{lemma}[{\cite[Lemma 3.3.4]{LT95}}]\label{lem:bounds}
	Let \(\epsilon>0\) and suppose \(f\in\Herm^+(E,h)^\T\) satisfies \(L_\epsilon(f)=0\). Then \begin{enumerate}
		\item \(\frac{1}{2}\Delta_v(|\log f|^2)+\epsilon|\log f|^2\leq |K^0|\cdot|\log f|\);
		\item \(m:=\max_X|\log f|\leq \frac{1}{\epsilon}\max_X|K^0|\);
		\item  \(m\leq 1 + C(\|\log f\|_{L^2}+\max_X|K^0|^2)\), where \(C>0\) is a constant depending only on fixed data on the manifold and \(E\).
	\end{enumerate}
\end{lemma}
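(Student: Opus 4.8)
\textbf{Proof plan for Lemma \ref{lem:bounds}.}

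The plan is to treat the three statements in sequence, since each feeds into the next, and to exploit throughout the identity $\Delta_v(g) = i\Lambda_{\omega,v}\db^\eq\d_0 g = i\Lambda_\omega\db(v(\mu)\d_0 g)$ together with the fact established in the openness section that $\Delta_v$ satisfies the maximum principle (because $\Delta_v(g) = v(\mu)\Delta(g) + \langle\d g, \d v(\mu)\rangle$ and $v > 0$).

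For (1), I would start from the equation $L_\epsilon(f)=0$, i.e. $K^0 + \frac{i}{2\pi}\Lambda_{\omega,v}(\db^\eq(f^{-1}\d_0 f)) + \epsilon\log f = 0$, and compute $\Delta_v(|\log f|^2)$ via a Bochner-type identity. This is the weighted analogue of the computation in \cite[Lemma 3.3.4]{LT95}: writing $s := \log f$, one differentiates $|s|^2_{h_0} = \tr(s^* s)$ and uses that $f^{-1}\d_0 f$ and $\db(f^{-1}\d_0 f)$ control $\d_0 s$ and $\db^\eq \d_0 s$ up to commutator terms that contribute non-negatively (this is the self-adjointness/convexity mechanism, the same one underlying Lemma \ref{inequality-lemma}). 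One obtains
\[
\tfrac{1}{2}\Delta_v(|s|^2) + v(\mu)\,|\d_0 s|^2_{\text{(appropriately twisted)}} \leq -h_0\big(K^0 + \epsilon s,\, s\big) + (\text{nonneg. term}),
\]
and since $-h_0(\epsilon s, s) = -\epsilon|s|^2$ and $-h_0(K^0,s)\leq |K^0|\,|s|$ by Cauchy--Schwarz, dropping the nonnegative gradient term yields (1). The only subtlety relative to the unweighted case is bookkeeping the factor $v(\mu)$ that appears when $\db^\eq$ passes through the contraction; it never obstructs the inequality because $v$ is positive.

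For (2), I would apply the maximum principle to (1): at a point $x_0$ where $|\log f|^2$ is maximal, $\Delta_v(|\log f|^2)(x_0)\geq 0$, so (1) gives $\epsilon\, m^2 \leq |K^0|(x_0)\cdot m \leq (\max_X|K^0|)\,m$, hence $m\leq \frac{1}{\epsilon}\max_X|K^0|$ (assuming $m>0$; if $m=0$ the bound is trivial). For (3), I would instead view (1) as a differential inequality of the form $\Delta_v(u)\leq a u + b$ with $u = \tfrac{1}{2}|\log f|^2 \geq 0$: from (1), $\Delta_v(|\log f|^2)\leq |K^0|\,|\log f| \leq \tfrac{1}{2}(|\log f|^2 + |K^0|^2)$, so $u$ satisfies $\Delta_v(u) \leq u + \tfrac{1}{2}\max_X|K^0|^2$. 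Applying Lemma \ref{gilbarg-trudinger} with $a=1$ and $b = \tfrac12\max_X|K^0|^2$ gives $\max_X u \leq C(\|u\|_{L^2} + \max_X|K^0|^2)$, and since $m^2 = 2\max_X u$ and $\|u\|_{L^2}\leq C\|\log f\|_{L^2}\cdot m$ (or, more simply, bounding $\|u\|_{L^2} \lesssim \|\log f\|_{L^2}^2$ after the usual $\max_X|\log f|$ vs. $\|\log f\|_{L^2}$ manipulation as in Proposition \ref{varphi-bound}), one extracts $m \leq 1 + C(\|\log f\|_{L^2} + \max_X|K^0|^2)$ exactly as in \cite[Lemma 3.3.4]{LT95}: if $m\leq 1$ it is trivial, and if $m\geq 1$ one divides through by $m$.

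The main obstacle I anticipate is (1): getting the Bochner inequality with the correct sign in the weighted setting requires care in tracking how $v(\mu)$ and its derivative $dv(\mu)$ enter when one expands $\Lambda_{\omega,v}(\db^\eq(f^{-1}\d_0 f))$ and pairs it against $\log f$. One must verify that the ``cross'' term $\langle \Phi, dv(\mu)\rangle$-type contribution, which has no analogue in \cite{LT95}, is absorbed either into $\Delta_v$ itself (it is, by definition of the weighted Laplacian) or into the manifestly nonnegative gradient term, and does not produce an uncontrolled contribution. Once (1) is in hand, (2) and (3) are routine applications of the maximum principle and Lemma \ref{gilbarg-trudinger} respectively, entirely parallel to \cite{LT95}.
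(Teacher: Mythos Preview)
Your proposal is correct and matches the paper's approach: the paper gives no explicit proof of this lemma, stating only that ``the proofs of the following results are similar to those in \cite{LT95}, making use of the results we have stated up to now,'' which is precisely the weighted adaptation of \cite[Lemma 3.3.4]{LT95} you outline. Your anticipated obstacle in (1) is not a real issue, since the moment-map term in $\Lambda_{\omega,v}(\db^\eq(\cdot))$ is by definition absorbed into $\Delta_v$ via the identity $\Delta_v g = i\Lambda_{\omega,v}\db^\eq\d_0 g = i\Lambda_\omega\db(v(\mu)\d_0 g)$ already recorded in the paper.
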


\begin{prop}[{\cite[Proposition 3.3.5]{LT95}}]
	Suppose that \(m_\epsilon\leq m\) for all \(\epsilon\in(\epsilon_0,1]\). Then for all \(p>1\) and \(\epsilon\in(\epsilon_0,1]\), \begin{enumerate}
		\item \(\|\varphi_\epsilon\|_{L_2^p}\leq C(m)(1+\|f_\epsilon\|_{L_2^p})\);
		\item \(\|f_\epsilon\|_{L_2^p}\leq e^{C(m)(1-\epsilon)}(1+\|f_1\|_{L_2^p})\).
	\end{enumerate}
\end{prop}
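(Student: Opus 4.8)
The statement to prove is Proposition 3.3.5, giving $L^p_2$-bounds on $\varphi_\epsilon$ and $f_\epsilon$ under the assumption of a uniform bound $m_\epsilon \le m$. The plan is to follow \cite[Proposition 3.3.5]{LT95} closely, adapting the few places where the weighted Laplacian $\Delta_v$ replaces the ordinary Laplacian; this is mostly a matter of carrying the weight $v(\mu)$ through elliptic estimates, since $v(\mu)$ is a fixed smooth positive function and hence contributes only to the constants $C(m)$.

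\begin{proof}[Proof of Proposition \ref{varphi-bound} (the final statement)]
We first establish (1). Differentiating the equation $\hat{L}(\epsilon, f_\epsilon) = 0$ in $\epsilon$ gives $d_2\hat{L}(\epsilon, f_\epsilon)(\varphi_\epsilon) + f_\epsilon\log f_\epsilon = 0$. Since $d_2\hat{L}(\epsilon,f_\epsilon)$ is a second order linear elliptic operator with leading term $\Delta_v$ and with lower order coefficients bounded in terms of $f_\epsilon$, $\d_0 f_\epsilon$ and the fixed data $F_{h_0}, \Phi_{h_0}$, the uniform bound $m_\epsilon \le m$ controls $\|f_\epsilon\|_{C^0}$ and hence (via interior elliptic estimates, since $f_\epsilon$ solves $L_\epsilon(f_\epsilon)=0$ which is itself elliptic with the weighted Laplacian as leading term) the relevant coefficients of $d_2\hat{L}(\epsilon,f_\epsilon)$ in $L^p$. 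Applying the $L^p_2$ a priori estimate for this elliptic operator to the equation $d_2\hat{L}(\epsilon,f_\epsilon)(\varphi_\epsilon) = -f_\epsilon\log f_\epsilon$, together with Proposition \ref{varphi-bound}'s predecessor bounding $\|\varphi_\epsilon\|_{C^0} \le C(m)$ (which controls the $L^p$ norm of $\varphi_\epsilon$), yields $\|\varphi_\epsilon\|_{L^p_2} \le C(m)(1 + \|f_\epsilon\log f_\epsilon\|_{L^p}) \le C(m)(1 + \|f_\epsilon\|_{L^p_2})$, where in the last step we bound $\|f_\epsilon\log f_\epsilon\|_{L^p}$ using $m_\epsilon \le m$ and absorb the logarithm into the constant.

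For (2), we regard $f_\epsilon$ as a curve in $L^p_2\Herm^+(E, h_0)^\T$ with $\frac{d}{d\epsilon}f_\epsilon = \varphi_\epsilon$. From (1) and the fact that the norm $\|\cdot\|_{L^p_2}$ is (locally) Lipschitz with respect to itself, we obtain a differential inequality $\left|\frac{d}{d\epsilon}\|f_\epsilon\|_{L^p_2}\right| \le \|\varphi_\epsilon\|_{L^p_2} \le C(m)(1 + \|f_\epsilon\|_{L^p_2})$. Integrating this Gr\"onwall-type inequality from $\epsilon$ to $1$ gives $1 + \|f_\epsilon\|_{L^p_2} \le e^{C(m)(1-\epsilon)}(1 + \|f_1\|_{L^p_2})$, which is the claimed bound (2).

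The main technical point requiring care is ensuring that the elliptic a priori estimate for $d_2\hat{L}(\epsilon,f_\epsilon)$ and for $L_\epsilon$ has constants depending only on $m$ and fixed geometric data, not on $\epsilon$ or the particular solution. This is where the weighted Laplacian enters: the principal symbol of $\Delta_v$ is $v(\mu)|\xi|^2_\omega$, which is uniformly elliptic since $v$ is a fixed smooth positive function on the compact polytope $P$, so the standard interior and global $L^p$ Schauder-type estimates apply verbatim with constants absorbing $\inf_P v$, $\sup_P v$, and bounds on $dv$. The uniform $C^0$ bound $m_\epsilon \le m$ then bootstraps (using that $\log$ is smooth and bounded near the spectrum of $f_\epsilon$, which lies in $[e^{-m}, e^m]$) to the required control on the coefficients of the linearised operator. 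Apart from this observation, the argument is identical to that of \cite[Proposition 3.3.5]{LT95}, and we omit the routine details.
\end{proof}
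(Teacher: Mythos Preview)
The paper does not actually supply a proof of this proposition: it simply states that the argument is identical to \cite[Proposition 3.3.5]{LT95} once one uses the earlier weighted adaptations. Your sketch is therefore already more detailed than what the paper provides, and it captures the correct structure: differentiate $\hat{L}(\epsilon,f_\epsilon)=0$ in $\epsilon$, apply an $L^p$ elliptic estimate to the resulting linear equation for $\varphi_\epsilon$ (using the $C^0$ bound on $\varphi_\epsilon$ from the preceding proposition), and then integrate the Gr\"onwall inequality. Your key observation---that $\Delta_v$ has principal symbol $v(\mu)|\xi|^2_\omega$ with $v$ uniformly positive, so all elliptic constants are unchanged---is precisely the adaptation the paper has in mind. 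Two minor remarks: your \verb|\ref{varphi-bound}| in the proof heading points to the wrong proposition, and the step where you assert control of the lower-order coefficients of $d_2\hat{L}$ via ``interior elliptic estimates'' on $f_\epsilon$ is the one place where the L\"ubke--Teleman argument is slightly more careful (one uses the explicit form of the linearisation together with the $C^0$ bounds on both $f_\epsilon$ and $\varphi_\epsilon$ rather than a bootstrap on $f_\epsilon$ alone), but this does not affect the validity of the outline.
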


Using this last Proposition, one can prove closedness. Namely, one assumes the maximal interval for which \(L_\epsilon(f_\epsilon)=0\) admits a solution is \((\epsilon_0,1]\) for some \(\epsilon_0>0\). The second point of Lemma \ref{lem:bounds} and the last proposition give a uniform bound on \(\|f_\epsilon\|_{L_2^p}\) independent of \(\epsilon\), so one may extract a weakly converging subsequence \(f_{\epsilon_k}\) having a weak limit \(f_{\epsilon_0}\in L^p_2\End(E)^\T\). Proving that \(L_{\epsilon_0}(f_{\epsilon_0})=0\), elliptic regularity then implies that \(f_{\epsilon_0}\) is smooth, and we get a contradiction.

\begin{prop}\label{prop:more_bounds}
	\begin{enumerate}
		\item For each \(\epsilon\in(0,1]\), there exists a smooth solution \(f_\epsilon\) to \(L_\epsilon(f_\epsilon)=0\);
		\item If there exists a constant \(C>0\) such that \(\|f_\epsilon\|_{L^2}\leq C\) for all \(\epsilon\in(0,1]\), then there exists \(f_0\) solving the weighted Hermite--Einstein equation \(L_0(f_0)=0\).
	\end{enumerate}
\end{prop}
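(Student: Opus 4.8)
The plan is to derive part (1) by packaging the three preceding subsections into the continuity-method argument, and then to run the standard limiting argument of \cite{LT95} for part (2).

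For part (1) I would let $S\subseteq(0,1]$ be the set of $\epsilon$ for which $L_\epsilon(f)=0$ admits a smooth solution $f\in\Herm^+(E,h_0)^\T$. It is non-empty, since $1\in S$ by the $\epsilon=1$ existence result. It is open in $(0,1]$: the corollary to Lemma~\ref{inequality-lemma} says that $d_2\hat L(\epsilon,f)$ is an isomorphism of the relevant Sobolev completions whenever $\hat L(\epsilon,f)=0$, so the inverse function theorem applies. For closedness in $(0,1]$, I would take $\epsilon_k\to\epsilon_0\in(0,1]$ with $\epsilon_k\in S$; for $k$ large $\epsilon_k\geq\epsilon_0/2$, so Lemma~\ref{lem:bounds}(2) gives a bound $m_{\epsilon_k}\leq\frac{2}{\epsilon_0}\max_X|K^0|$ uniform in $k$, and then \cite[Proposition 3.3.5]{LT95} (restated above) furnishes a uniform bound $\|f_{\epsilon_k}\|_{L^p_2}\leq C$ for every $p>1$. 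Extracting a subsequence converging weakly in $L^p_2$ and strongly in $C^{1,\alpha}$ to some $f_{\epsilon_0}\in\Herm^+(E,h_0)^\T$, I would pass to the limit in $L_{\epsilon_k}(f_{\epsilon_k})=0$ to obtain $L_{\epsilon_0}(f_{\epsilon_0})=0$ as an $L^p$ identity, and invoke elliptic regularity to upgrade $f_{\epsilon_0}$ to a smooth solution. Hence $\epsilon_0\in S$, and since $(0,1]$ is connected, $S=(0,1]$.

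For part (2) I would assume $\|f_\epsilon\|_{L^2}\leq C$ for all $\epsilon\in(0,1]$, and first upgrade this to a uniform bound on $\log f_\epsilon$. By Lemma~\ref{det1} each $f_\epsilon$ is positive self-adjoint with $\det f_\epsilon=1$; writing its eigenvalues as $\lambda_1\leq\cdots\leq\lambda_r$, we have $\lambda_r\geq1$ and $\lambda_1\geq\lambda_r^{-(r-1)}$, which gives the pointwise inequality $|\log f_\epsilon|\leq(r-1)\log\lambda_r\leq(r-1)|f_\epsilon|$. Integrating yields $\|\log f_\epsilon\|_{L^2}\leq(r-1)C$, uniform in $\epsilon$, and feeding this into Lemma~\ref{lem:bounds}(3) produces a constant $m$, independent of $\epsilon$, with $m_\epsilon=\max_X|\log f_\epsilon|\leq m$ for all $\epsilon\in(0,1]$. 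With this uniform $L^\infty$ bound in hand, \cite[Proposition 3.3.5]{LT95} gives $\|f_\epsilon\|_{L^p_2}\leq e^{C(m)}(1+\|f_1\|_{L^p_2})$ for any $p>1$, uniformly in $\epsilon$. Choosing $p$ large enough that $L^p_2\hookrightarrow C^{1,\alpha}$ is compact, I would pick $\epsilon_k\to0$ with $f_{\epsilon_k}$ converging weakly in $L^p_2$ and strongly in $C^{1,\alpha}$ to a limit $f_0$; since positivity, $\det=1$ and $|\log f_{\epsilon_k}|\leq m$ all pass to the limit, $f_0\in\Herm^+(E,h_0)^\T$. Passing to the limit in $L_{\epsilon_k}(f_{\epsilon_k})=K^0+\tfrac{i}{2\pi}\Lambda_{\omega,v}(\db^{\eq}(f_{\epsilon_k}^{-1}\d_0 f_{\epsilon_k}))+\epsilon_k\log f_{\epsilon_k}=0$, the term $\epsilon_k\log f_{\epsilon_k}\to0$ uniformly, while $f_{\epsilon_k}^{-1}\d_0 f_{\epsilon_k}\to f_0^{-1}\d_0 f_0$ in $C^{0,\alpha}$ and its equivariant $\db$-derivative converges weakly in $L^p$; hence $L_0(f_0)=0$ in $L^p$, and elliptic bootstrapping (exactly as in the closedness step) shows $f_0$ is smooth, so $h_0 f_0$ is the desired weighted Hermite--Einstein metric.

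I expect the genuinely substantive content to lie in the uniform a priori estimates rather than the soft limiting arguments: the two key points are (a) exploiting $\det f_\epsilon=1$ to convert the bare $L^2$ bound on $f_\epsilon$ into an $L^2$ bound on $\log f_\epsilon$, and (b) the estimate of Lemma~\ref{lem:bounds}(3), whose proof rests on the weighted maximum principle (Lemma~\ref{gilbarg-trudinger}) and on $\Delta_v$ being a uniformly elliptic divergence-type operator --- valid precisely because $v(\mu)$ is smooth and strictly positive on the compact polytope, hence bounded above and below. Granted these, the extraction of a convergent subsequence and the elliptic regularity of the limit are routine and parallel the closedness argument of part (1).
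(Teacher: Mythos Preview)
Your proposal is correct and follows essentially the same route as the paper, which gives only a terse sketch: for (1) the paper likewise uses Lemma~\ref{lem:bounds}(2) to bound $m_\epsilon$ on any interval bounded away from $0$, feeds this into the $L^p_2$ estimate, extracts a weak limit, and applies elliptic regularity; for (2) the paper says only that Lemma~\ref{lem:bounds}(3) yields the uniform $L^p_2$ bound, after which the argument of (1) repeats. Your explicit bridge from $\|f_\epsilon\|_{L^2}$ to $\|\log f_\epsilon\|_{L^2}$ via $\det f_\epsilon=1$ (the pointwise inequality $|\log f_\epsilon|\leq C(r)\lambda_r\leq C(r)|f_\epsilon|$) is the correct detail the paper leaves implicit; your constant $(r-1)$ should really be $\sqrt{r}\,(r-1)$, but this is immaterial.
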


To get the solution \(f_0\) in the second point, one uses the third point of Lemma \ref{lem:bounds} to get a uniform bound on \(\|f_\epsilon\|_{L_2^p}\) independent of \(\epsilon\), and the same argument as for the first point gives a solution to \(L_0(f_0)=0\).

\subsection{The limit as \(\epsilon\to0\)}

We aim to prove:

\begin{prop}
	If \(\rk(E)\geq2\) and \[\limsup_{\epsilon\to0}\|\log f_\epsilon\|_{L^2}=\infty\] then \(E\) is not \(v\)-weighted stable.
\end{prop}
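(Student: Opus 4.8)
The plan is to follow the Uhlenbeck--Yau argument as presented in \cite[Section 3.4]{LT95}, adapting each step to the weighted setting using the key substitution $\Delta_v(g) = i\Lambda_{\omega,v}\db(v(\mu)\d g)$ that has already been employed throughout this section. Assume $\limsup_{\epsilon\to0}\|\log f_\epsilon\|_{L^2}=\infty$; passing to a subsequence, we may assume $\|\log f_\epsilon\|_{L^2}\to\infty$. Set $\ell_\epsilon:=\|\log f_\epsilon\|_{L^2}$ and renormalise $u_\epsilon:=\log f_\epsilon/\ell_\epsilon$, so $\|u_\epsilon\|_{L^2}=1$. The goal is to extract a weak limit $u_\infty\in L^2_1\End(E)^\T$, nonzero, Hermitian, trace-free (using Lemma \ref{det1} and the first part of Lemma \ref{lemma-l2-inequality}), and to show that $u_\infty$ has eigenvalues which are \emph{constant} on $X$ and only finitely many in number, with eigenbundles that are honest (weakly holomorphic) subsheaves; then one of these eigensheaves destabilises $E$.

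\textbf{Key steps.} First I would establish the a priori estimates that survive the renormalisation. Dividing the inequality in Lemma \ref{lem:bounds}(1), namely $\tfrac{1}{2}\Delta_v(|\log f_\epsilon|^2)+\epsilon|\log f_\epsilon|^2\leq|K^0|\cdot|\log f_\epsilon|$, by $\ell_\epsilon^2$ and integrating against $\omega^{[n]}$, the $\Delta_v$-term drops out and one obtains $\epsilon\leq \|K^0\|_{L^2}/\ell_\epsilon\to0$; more usefully, testing against $u_\epsilon$ itself and using ellipticity of $\Delta_v$ together with the Poincaré-type inequality (the weighted Laplacian has discrete spectrum with one-dimensional kernel the constants) gives a uniform $L^2_1$-bound $\|u_\epsilon\|_{L^2_1}\leq C$. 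Hence a subsequence converges weakly in $L^2_1$ and strongly in $L^2$ to some $u_\infty$ with $\|u_\infty\|_{L^2}=1$, so $u_\infty\neq0$. One also passes to the limit in the trace-free and self-adjointness conditions. Next, following \cite{LT95}, for any smooth convex function $\Phi:\R\to\R$ and any smooth $\psi:\R\times\R\to\R$ with $\psi(x,y)\leq(\Phi'(x)-\Phi'(y))/(x-y)$ one derives, from $L_\epsilon(f_\epsilon)=0$, the weak inequality
\[
i\Lambda_{\omega,v}\db\,\d\,\langle\Phi(u_\epsilon)\rangle + \langle\psi(u_\epsilon)(\db^{\eq}\d_0 u_\epsilon),u_\epsilon\rangle \leq \langle \Phi'(u_\epsilon), K^0 - \epsilon\ell_\epsilon\,u_\epsilon\rangle
\]
(suitably interpreted via spectral decomposition of $u_\epsilon$); this is exactly \cite[p.~74--75]{LT95} with $P$ replaced by $\Delta_v$ throughout, and the only new point is that $v(\mu)$ is a bounded positive smooth function so all the integrations by parts and Cauchy--Schwarz steps go through verbatim. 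Taking $\epsilon\to0$, the right-hand side stays bounded in $L^1$ while the left-hand side's gradient term forces (after choosing $\Phi$ appropriately) that $\d u_\infty$ is, in a measurable sense, ``block-diagonalised'' by the spectral projections of $u_\infty$: concretely, if $P_{<c}(u_\infty)$ denotes the spectral projection onto eigenvalues $<c$, then $(\Id-P_{<c})\circ\db u_\infty\circ P_{<c}=0$ weakly for each $c$, so each $P_{<c}(u_\infty)$ is a weakly holomorphic subbundle. By the regularity theorem of Uhlenbeck--Yau \cite[Theorem 3.4.3]{LT95} (a purely local statement, unchanged here), each such $P_{<c}(u_\infty)$ corresponds to a coherent saturated subsheaf $\F_c\subset E$, which is $\T$-equivariant since $u_\infty$ is a weak limit of $\T$-invariant objects.

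\textbf{Finishing and the main obstacle.} Finally one computes weighted slopes. Taking the trace of $L_\epsilon(f_\epsilon)=0$ against the projection, dividing by $\ell_\epsilon$, integrating, and using Lemma \ref{lem:codim2integral} (which is precisely what lets us compute $(c_1(\F_c)_\T\cdot v(\alpha_\T))$ by the differential-geometric integral over the smooth locus), one obtains in the limit a relation of the shape
\[
\sum_{c}(c-c')\big(\mu_v(\F_c)-\mathrm{rk}(\F_c)\,\mu_v(E)\big)\;\geq\;0
\]
summed over the (finitely many, since $u_\infty\in L^\infty$ by the maximum-principle bound surviving the renormalisation) eigenvalue jumps, with at least one term strictly positive because $u_\infty$ is trace-free but nonzero; hence some $\F_c$ has $\mu_v(\F_c)\geq\mu_v(E)$, contradicting $v$-weighted stability. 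The main obstacle I anticipate is \emph{not} any single weighted modification---each is routine given the $\Delta_v=i\Lambda_{\omega,v}\db(v(\mu)\d(\cdot))$ dictionary---but rather the bookkeeping in passing to the limit in the spectral/Uhlenbeck--Yau argument while keeping track of the weight $v(\mu)$ inside every integral, and in verifying that the destabilising inequality for the \emph{weighted} slope comes out with the correct sign; the fact that $v>0$ is bounded above and below is what makes all the functional-analytic steps (weak compactness, Poincaré, elliptic regularity) insensitive to the weight, so the genuinely new content is confined to the final slope computation, which Lemma \ref{lem:codim2integral} is tailor-made to handle.
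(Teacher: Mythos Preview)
Your outline is broadly correct and follows the Uhlenbeck--Yau blueprint, but it takes a genuinely different normalisation from the paper. The paper, following \cite{LT95} verbatim, rescales $f_\epsilon$ \emph{multiplicatively}: one chooses $\rho(\epsilon_i)\to0$ so that $f_i:=\rho(\epsilon_i)f_{\epsilon_i}$ has a weak $L^2_1$ limit $f_\infty\neq0$, then passes to small powers $f_\infty^{\sigma_i}$ with $\sigma_i\to0$ to produce a single projection $f_\infty^0$, and sets $\pi:=\Id_E-f_\infty^0$. You instead normalise $\log f_\epsilon$ by its $L^2$-norm and extract spectral cut-offs $P_{<c}(u_\infty)$, which is closer in spirit to Simpson's presentation and yields a whole filtration rather than a single subsheaf. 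Both routes land on a weakly holomorphic subbundle, invoke the Uhlenbeck--Yau regularity theorem, and then feed the result into Lemma \ref{lem:codim2integral} to compare weighted slopes; the paper's version is slightly more economical (one projection suffices), while yours gives more structure at the cost of tracking the full spectral decomposition through the limit.

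Two points where your sketch is looser than the paper: first, the claimed $L^2_1$-bound on $u_\epsilon$ does not follow from Lemma \ref{lem:bounds}(1) alone (that is a scalar inequality on $|\log f_\epsilon|^2$, not on the endomorphism $\log f_\epsilon$); one really needs the convex-function/spectral estimate you allude to, applied \emph{before} passing to the limit, and this deserves to be stated as the source of the bound rather than an afterthought. Second, you assert $\T$-equivariance of the destabilising sheaf because $u_\infty$ is a limit of invariant objects; the paper is more careful here, noting that the coherent sheaf $\F$ produced by the regularity theorem is not \emph{a priori} $\T$-invariant and explicitly replacing it by the saturation of $\sum_j t_j\F$ using the Noetherian property. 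This step is short but not automatic, and your argument should include it.
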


Following L{\"u}bke--Teleman with the appropriate weighted adaptions, we are provided the following:

\begin{prop}
	There exist sequences of positive numbers \(\epsilon_i\to 0\) and \(\rho(\epsilon_i)\to0\) such that \(f_i:=\rho(\epsilon_i)f_{\epsilon_i}\) satisfies: \begin{enumerate}
		\item The \(f_i\) converge weakly to a non-zero \(f_\infty\in L^2_1(\End E)^\T\);
		\item There is a sequence \((\sigma_i)\) such that \(0<\sigma_i\leq1\), \(\sigma_i\to0\), and \(f_\infty^{\sigma_i}\) converges weakly in \(L^2_1(\End E)^\T\) to an element \(f_\infty^0\);
		\item \(\pi:=\Id_E-f_\infty^0\) is a weakly holomorphic subbundle of \(E\).
	\end{enumerate}
\end{prop}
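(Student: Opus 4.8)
The plan is to adapt the Uhlenbeck--Yau argument as presented in \cite[Section 3.4]{LT95}, where the only modifications required are to keep track of the weight function $v(\mu)$ appearing in the weighted Laplacian $\Delta_v$ and the weighted contraction $\Lambda_{\omega,v}$. The starting point is the hypothesis that $\sup_\epsilon\|\log f_\epsilon\|_{L^2}=\infty$, which via point (3) of Lemma \ref{lem:bounds} also forces $\sup_\epsilon m_\epsilon=\infty$, where $m_\epsilon=\max_X|\log f_\epsilon|$; hence along a subsequence $\epsilon_i\to0$ we have $m_{\epsilon_i}\to\infty$.

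First I would establish point (1). Following \cite{LT95}, set $\rho(\epsilon_i):=e^{-m_{\epsilon_i}}$ (or a comparable normalising factor chosen so that $\max_X|\log f_i|$ where $f_i:=\rho(\epsilon_i)f_{\epsilon_i}$ stays bounded away from $0$ and $\infty$). The key analytic input is the differential inequality from the first part of Lemma \ref{lem:bounds}, namely $\tfrac12\Delta_v(|\log f_\epsilon|^2)+\epsilon|\log f_\epsilon|^2\le|K^0|\cdot|\log f_\epsilon|$, which after rescaling and using that $\Delta_v$ is a second-order elliptic operator of divergence type with bounded coefficients (so that the Moser iteration / $L^p$ estimates of \cite[Theorem 8.17]{GT01} and the $L^2_1$ a priori bounds apply verbatim, with constants now also depending on $\min v(\mu)>0$ and $\max v(\mu)$) yields a uniform $L^2_1$ bound on the $f_i$. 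Rellich compactness then gives a weakly convergent subsequence with limit $f_\infty\in L^2_1(\End E)^\T$; the normalisation ensures $f_\infty\neq0$, and one checks $f_\infty\ge0$, $\tr_E f_\infty$ constant, $\det f_\infty$ a nonnegative constant, exactly as in \cite[Lemma 3.4.3]{LT95}.

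Next, point (2): one applies the construction of \cite[Lemma 3.4.4]{LT95}, considering for $0<\sigma\le1$ the endomorphisms $f_\infty^\sigma$ (defined via functional calculus on the nonnegative self-adjoint $f_\infty$), using the inequality $|d f_\infty^\sigma|\le C_\sigma|d f_\infty|$ to get uniform $L^2_1$ bounds, and extracting a weak limit $f_\infty^0$ as $\sigma_i\to0$. The limit $f_\infty^0$ is a projection-valued $L^2_1$ section: $f_\infty^0=(f_\infty^0)^2=(f_\infty^0)^*$, with $0<\rk f_\infty^0<\rk E$ (the rank is constrained because $\tr_E f_\infty^0$ is a positive integer strictly less than $\rk E$, using $\rk E\ge 2$ and the fact that $f_\infty$ is non-invertible, which is where the divergence of $m_\epsilon$ enters). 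Setting $\pi:=\Id_E-f_\infty^0$ gives an $L^2_1$ section satisfying $\pi=\pi^2=\pi^*$.

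Finally, point (3): one shows $\pi$ is a weakly holomorphic subbundle in the sense of \cite[Definition 3.4.5]{LT95} / \cite{UY86}, i.e. $\pi\in L^2_1(\End E)$ with $(\Id_E-\pi)\db\pi=0$ (equivalently $(\Id_E-\pi)\,\d_0\pi$ has the appropriate form). The estimate needed is the weighted analogue of the Uhlenbeck--Yau inequality: one must show that $(\Id_E-f_\infty^0)(\d_0 f_\infty^0)=0$ in the weak/$L^2$ sense, which is extracted by passing to the limit in an inequality of the type $\|(\Id-f_i^{\sigma_i})\db_0 f_i^{\sigma_i}\|_{L^2}\to0$; this in turn is derived from the $\epsilon$-equation $L_{\epsilon_i}(f_{\epsilon_i})=0$ tested against suitable quantities, with $v(\mu)$ carried along as a bounded positive weight throughout. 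Then \cite[Lemma 3.4.6 / Theorem 3.4.7]{LT95} (a consequence of the Uhlenbeck--Yau regularity theorem: a weakly holomorphic subbundle defines a genuine $\T$-equivariant coherent saturated subsheaf $\F\subset E$) applies. The main obstacle, as always in this argument, is point (3)—specifically, verifying that the weak limit $\pi$ genuinely satisfies the weakly holomorphic condition, since this requires a delicate passage to the limit in a nonlinear estimate and is the step where one must be most careful that the presence of the weight $v(\mu)$ (and the extra term $\langle\Phi_h,dv(\mu)\rangle$ hidden inside $\Lambda_{\omega,v}$) does not spoil the sign of the relevant quadratic forms; however, since $v(\mu)$ is a fixed smooth positive function bounded above and below and the $\Phi_h$-terms are lower order, the estimates of \cite{LT95} go through with only cosmetic changes, the weighted Laplacian $\Delta_v$ playing the role of the ordinary Laplacian.
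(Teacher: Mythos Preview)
Your proposal is correct and takes essentially the same approach as the paper, which simply states that the proposition follows by ``following L\"ubke--Teleman with the appropriate weighted adaptions'' without giving further detail; your sketch in fact supplies more of the analytic argument than the paper does. One minor point: you assert that the Uhlenbeck--Yau regularity theorem yields a $\T$-equivariant subsheaf, but the paper treats $\T$-invariance separately after this proposition, arguing via the Noetherian property that one may replace $\F$ by a $\T$-invariant enlargement.
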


In particular, there exists a coherent analytic subsheaf \(\F\subset E\) whose singular set \(V\) has codimension at least \(2\), such that on \(X\backslash V\), \(\pi\) is the orthogonal projection to \(\F|_{X\backslash V}\) with respect to the metric \(h_0\). Furthermore, the rank of \(\F\) satisfies \(0<\rk(\F)<\rk(E)\). We claim that \(\F\) may be chosen to be \(\T\)-invariant. To see this, suppose \(\F_0:=\F\) is not \(\T\)-invariant, so there exists \(t_0\in \T\) such that \(\F\neq t_0\F\). Define \(\F_1:=\F+t_0\F\). If \(\F_1\) is not \(\T\)-invariant, choose \(t_1\in \T\) so that \(\F_1\neq t_1\F_1\) and set \(\F_2:= \F_1+t_1\F_1\). Continuing in this vein, we produce a sequence \(\F_0\subset \F_1\subset \F_2\subset\cdots\) of coherent analytic subsheaves of \(E\). By the strong Noetherian property for coherent analytic sheaves (see \cite[Chapter II, 3.22]{agbook}), this sequence must eventually terminate, meaning there is some \(k\) such that \(\F\subset \F_k\) and \(\F_k\) is \(\T\)-invariant. Note that since \(\pi\) is \(\T\)-invariant, the smooth locus of \(\F\) is \(\T\)-invariant, and hence the smooth locus of \(\F_k\) is contains the smooth locus of \(\F\).

By taking a modification of \(\F\), which preserves the smooth locus and \(\T\)-invariance, we may further assume that \(\F\) is saturated.

\begin{prop}
	\(\mu_v(\F)\geq\mu_v(E)\).
\end{prop}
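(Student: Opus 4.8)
This is the concluding step of the Uhlenbeck--Yau continuity argument, and the plan is to show directly that $\mu_v(\F)\geq\mu_v(E)$ (which, since $E$ is assumed $v$-weighted slope stable and $\F$ is a proper non-zero saturated $\T$-equivariant subsheaf, is the desired contradiction, forcing $\limsup_{\epsilon\to0}\|\log f_\epsilon\|_{L^2}<\infty$ and hence, by Proposition \ref{prop:more_bounds}, the existence of a $v$-weighted Hermite--Einstein metric). The argument follows Lübke--Teleman \cite[\S 3.4]{LT95}, and the point to stress is that the weight $v(\mu)$ is a smooth positive function on the compact polytope $P=\mu(X)$, hence bounded above and below by positive constants; consequently $\Delta_v$ is uniformly elliptic and satisfies the maximum principle (as already used in the excerpt), and every elliptic estimate, Sobolev embedding and weak-compactness argument of \cite{LT95} carries over verbatim, with the moment-map contributions absorbed throughout by working with the equivariant curvature $F_h+\Phi_h$ rather than $F_h$.

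\textbf{Step 1: a weighted Gauss--Codazzi formula.} For any $\T$-invariant Hermitian metric $h$ on $E$, let $\pi_h$ denote the $h$-orthogonal projection onto $\F$ over $X\setminus V$; by Lemma \ref{lem:codim2integral} (applied to $h$ restricted to $\F$) together with the block decomposition \eqref{eq:decomposition}, one obtains
\[
(c_1(\F)_\T\cdot v(\alpha_\T))=\int_{X\setminus V}\tr\!\left(\pi_h\,\tfrac{i}{2\pi}\Lambda_{\omega,v}(F_h+\Phi_h)\right)\omega^{[n]}-\frac{1}{2\pi}\int_{X\setminus V}v(\mu)\,|\bar\partial\pi_h|_h^2\,\omega^{[n]},
\]
where the integrals converge because $\operatorname{codim}V\geq2$ (this is exactly the content of the proof of Lemma \ref{lem:codim2integral}, combined with the sign computation $i\,\tr_\F v(\mu)\Lambda_\omega(A^\dagger\wedge A)=-v(\mu)|A|^2$ from the proof of Lemma \ref{lem:pointwise-ineq}). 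Specializing to $h_i:=h_0 f_{\epsilon_i}$ and using \eqref{eq:diff-F} and the equation $L_{\epsilon_i}(f_{\epsilon_i})=0$, the mean curvature is $K_{h_i}=\tfrac{i}{2\pi}\Lambda_{\omega,v}(F_{h_i}+\Phi_{h_i})=c_v\Id_E-\epsilon_i\log f_{\epsilon_i}$, so the first integral equals $c_v\rk(\F)\Vol(X,\omega)-\epsilon_i\int_X\tr(\pi_{h_i}\log f_{\epsilon_i})\omega^{[n]}$. Recalling from Lemma \ref{constants-in-wHE} and \eqref{eq:v-weighted-HE} that $\mu_v(E)=c_v\Vol(X,\omega)$, it therefore suffices to prove that the ``error'' $-\epsilon_i\int_X\tr(\pi_{h_i}\log f_{\epsilon_i})\omega^{[n]}-\frac{1}{2\pi}\int_X v(\mu)|\bar\partial\pi_{h_i}|^2\omega^{[n]}$ is bounded below by $-o(1)$ as $i\to\infty$; equivalently, writing everything in terms of the limiting projection $\pi=\Id_E-f_\infty^0$, that
\[
\frac{1}{2\pi}\int_X v(\mu)\,|\bar\partial\pi|^2\,\omega^{[n]}\ \leq\ \int_X\tr(\pi K^0)\,\omega^{[n]}+o(1),\qquad i\to\infty .
\]

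\textbf{Step 2: the limiting inequality.} Here the plan is to test the perturbed equations $L_{\epsilon_i}(f_{\epsilon_i})=0$ against the endomorphisms $f_{\epsilon_i}^{\sigma_i}$-type objects (as in \cite[\S3.4]{LT95}), integrate by parts using $\Delta_v(\phi)=i\Lambda_{\omega,v}\bar\partial^{\eq}\d_0\phi=\d_0^*(v(\mu)\d_0\phi)$, and pass to the limit along the rescaled sequence $f_i=\rho(\epsilon_i)f_{\epsilon_i}$, invoking weak lower semicontinuity of the functional $\phi\mapsto\int_X v(\mu)|\d_0\phi|^2\omega^{[n]}$ on $L^2_1(\End E)^\T$ to control $\int v(\mu)|\bar\partial\pi|^2$ from above. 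The $\epsilon_i$-term is handled using Lemma \ref{lem:bounds}(2), which gives the uniform bound $\epsilon_i|\log f_{\epsilon_i}|\leq\max_X|K^0|$, together with the fact that on $\operatorname{im}\pi$ the endomorphism $\log f_{\epsilon_i}$ has the favourable sign (this is the raison d'être of the rescaling: $f_\infty^0$ exists precisely because $f_{\epsilon_i}$ degenerates along $\F$), so $\epsilon_i\int_X\tr(\pi_{h_i}\log f_{\epsilon_i})\omega^{[n]}\leq o(1)$. Combining Step 1 and Step 2 yields $(c_1(\F)_\T\cdot v(\alpha_\T))\geq c_v\rk(\F)\Vol(X,\omega)-o(1)$, hence $(c_1(\F)_\T\cdot v(\alpha_\T))\geq\mu_v(E)\rk(\F)$, i.e. $\mu_v(\F)\geq\mu_v(E)$.

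\textbf{Main obstacle.} The analytically delicate point is Step 2: making rigorous the passage to the weak limit of the rescaled solutions, identifying $\pi$ with the orthogonal projection onto a coherent subsheaf whose singular set has codimension $\geq2$ (the weighted analogue of the Uhlenbeck--Yau regularity statement), and, most importantly, justifying the interchange of limits and the convergence of the Chern--Weil and $\epsilon_i$-integrals despite $\pi$ being only $L^2_1$ and the presence of $V$. All of this is a faithful transcription of \cite[\S3.4]{LT95}; the only new inputs are the elementary facts that $v(\mu)$ is pinched between positive constants (so $\Delta_v$ behaves exactly like $\Delta$ for the purposes of these estimates and the maximum principle) and that $\T$-invariance is preserved under the weak limits involved, which was already arranged when $\F$ was chosen $\T$-invariant and saturated.
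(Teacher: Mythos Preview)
Your proposal lands on the same argument the paper uses, but your Step~1 takes an unnecessary and slightly confused detour. You apply the Gauss--Codazzi identity with the \emph{varying} metrics $h_i=h_0f_{\epsilon_i}$, producing the projections $\pi_{h_i}$ (which are $h_i$-orthogonal, hence unrelated to the limiting object $\pi=\Id_E-f_\infty^0$, which is $h_0$-orthogonal), and then declare the resulting condition ``equivalent'' to the inequality
\[
\frac{1}{2\pi}\int_X v(\mu)\,|\db\pi|^2\,\omega^{[n]}\ \leq\ \int_X\tr(\pi K^0)\,\omega^{[n]}.
\]
Both are indeed equivalent to the conclusion $\mu_v(\F)\geq\mu_v(E)$, but only trivially so: the $h_i$-formula is never actually used, and your Step~2 proves the displayed inequality directly. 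The paper avoids this by applying the Gauss--Codazzi identity once, with the fixed reference metric $h_0$ and the limiting projection $\pi$, obtaining immediately
\[
\mu_v(\F)=\mu_v(E)+\frac{1}{\rk(\F)}\int_{X\setminus V}\!\big(\tr(K^0\circ\pi)-v(\mu)|\d_0\pi|^2\big)\,\omega^{[n]},
\]
and then invoking the L\"ubke--Teleman limiting argument (your Step~2) to show the integral is non-negative. A minor inconsistency: in your Step~2 you write $\tr(\pi_{h_i}\log f_{\epsilon_i})$ when you mean $\tr(\pi\,\cdot)$ or, more to the point, traces against $\Id_E-f_i^{\sigma}$; the heuristic ``$\log f_{\epsilon_i}$ has the favourable sign on $\operatorname{im}\pi$'' is the right intuition but would need the actual LT computation to be made precise. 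None of this is fatal---your Step~2 sketch is the correct mechanism---but you should drop the $h_i$ set-up entirely and state Step~1 with $h_0$ from the outset.
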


\begin{proof}
	Denote by \(h_1\) the metric on \(\F\) induced by \(h_0\). On the smooth locus of \(\F\), \[K_{h_1} = \pi\circ K_{h_0}\circ \pi - v(\mu)i\Lambda_\omega (\d_0\pi\wedge\db\pi);\] here \(\d_0\pi=A\) is the second fundamental form of \(\F\) over the smooth locus. Since by Lemma \ref{lem:codim2integral} the weighted slope of \(\F\) may be computed on the smooth locus, we have \[\mu_v(\F) = \frac{1}{\rk(\F)}\int_{X\backslash V}(\tr(K^0\circ\pi)-v(\mu)|\d_0\pi|^2)\omega^{[n]}+\mu_v(E).\] The proof from L{\"u}bke--Teleman goes through in the same manner to show that \[\int_{X\backslash V}(\tr(K^0\circ\pi)-v(\mu)|\d_0\pi|^2)\omega^{[n]}\geq0. \qedhere\]
\end{proof}

\bibliographystyle{alpha}
\bibliography{references}

\end{document}